\theoremstyle{plain}
\newtheorem{thm}{Theorem}[section]
\newtheorem{cor}[thm]{Corollary} %%Delete [thm] to re-start numbering
\newtheorem{lemma}[thm]{Lemma} %%Delete [thm] to re-start numbering
\newtheorem{prop}[thm]{Proposition}
\newtheorem{thmsubs}{Theorem}[subsection]
\newtheorem{corsubs}[thmsubs]{Corollary} %%Delete [thm] to re-start numbering
\newtheorem{lemmasubs}[thmsubs]{Lemma} %%Delete [thm] to re-start numbering
\newtheorem{propsubs}[thmsubs]{Proposition}
\theoremstyle{remark}
\theoremstyle{definition}
\newtheorem{defi}[thm]{Definition}
\newtheorem{example}[thm]{Example}
\newtheorem{ques}[thm]{Question}
\newcommand\Afr{{\mathfrak A}}
\newcommand\alg{{\operatorname{alg}}}
\newcommand\Bfr{{\mathfrak B}}
\newcommand\clspan{{\overline{\mathrm{span}}\,}}
\newcommand\Cpx{{\mathbb C}}
\newcommand\eps{\epsilon}
\newcommand\Dc{{\mathcal{D}}}
\newcommand\diag{\operatorname{diag}}
\newcommand\dist{\operatorname{dist}}
\newcommand\dsp{d\hspace*{0.1em}}
\newcommand\Ec{{\mathcal{E}}}
\newcommand\EEu{{\EuScript E}}                   % requires package euscript
\newcommand\FEu{{\EuScript F}}                   % requires package euscript
\newcommand\GEu{{\EuScript G}}                   % requires package euscript
\newcommand\HEu{{\EuScript H}}                   % requires package euscript
\newcommand\Hol{\mathrm{Hol}}
\newcommand\id{{\operatorname{id}}}
\newcommand\Mcal{{\mathcal{M}}}
\newcommand\Nats{{\mathbb N}}
\newcommand\Nc{{\mathcal{N}}}
\newcommand\oneh{{\hat 1}}
\newcommand\oup{^{\mathrm o}}
\newcommand\partialsp{\partial\hspace*{0.1em}}
\newcommand\Pt{{\widetilde P}}
\newcommand\RealPart{{\mathrm{Re}\,}}
\newcommand\Reals{{\mathbb R}}
\newcommand\restrict{{\upharpoonright}}
\newcommand\Sp{{\operatorname{Sp}}}
\newcommand\spec{\operatorname{spec}}
\newcommand\supp{\operatorname{supp}}
\newcommand\Uc{{\mathcal{U}}}
\newcommand\Vc{{\mathcal{V}}}
\newcommand\Wc{{\mathcal{W}}}
\newcommand\xh{{\hat x}}
\newcommand\zbar{{\overline z}}
\begin{document}

\title[Simultaneous upper triangular forms]{Simultaneous upper triangular forms for commuting operators
in a finite von Neumann algebra}

\author[Charlesworth]{Ian Charlesworth$^\circ$}
\address{I.\ Charlesworth, Department of Mathematics, UC--Berkeley, Berkeley, CA 94720-3840, USA}
\email{ilc@math.berkeley.edu}
\thanks{{}$^\circ$Supported by a grant from the NSF (DMS-1803557).}

\author[Dykema]{Ken Dykema$^{\dag*}$}
\address{K.\ Dykema, Department of Mathematics, Texas A\&M University, College Station, TX 77843-3368, USA}
\email{kdykema@math.tamu.edu}
\thanks{{}$^*$Supported by a grant from the Simons Foundation/SFARI (524187, K.D.)
and by a grant from the NSF (DMS-1800335).}

\author[Sukochev]{Fedor Sukochev$^{\dag}$}
\address{F.\ Sukochev, School of Mathematics and Statistics, University of New South Wales, Kensington, NSW, Australia.}
\email{f.sukochev@unsw.edu.au}
\thanks{{}$^\dag$Supported by the ARC}

\author[Zanin]{Dmitriy Zanin$^{\dag}$}
\address{D.\ Zanin, School of Mathematics and Statistics, University of New South Wales, Kensington, NSW, Australia.}
\email{d.zanin@unsw.edu.au}

\subjclass[2010]{47C15 (47A60)}
\keywords{finite von Neumann algebra, joint spectral distribution measure, invariant projections, holomorphic functional calculus}

\date{April 26, 2019}

\begin{abstract}
The joint Brown measure and joint Haagerup--Schultz projections for tuples of commuting operators in
a von Neumann algebra equipped with a faithful tracial state are investigated, and several natural properties are proved for these.
It is shown that the support of the joint Brown measure is contained in the Taylor joint spectrum of the tuple,
and also in the ostensibly smaller left Harte spectrum.
A simultaneous upper triangularization result for finite commuting tuples is proved
and the joint Brown measure and joint Haagerup--Schultz projections are shown to behave well under
the Arens multivariate holomorphic functional calculus of such a commuting tuple.
\end{abstract}

\maketitle

\section{Introduction and statement of main results}

A well known classical result is that commuting matrices can be simultaneously upper-triangularized.
Namely, given a commuting family $A_1,\ldots, A_n$ in $M_k(\Cpx)$, there is a unitary $U$ such that
each $U^*A_jU$ is an upper triangular matrix.
Moreover, if $(\lambda^{(j)}_1,\ldots,\lambda^{(j)}_k)$ is the diagonal of $U^*A_jU$, then the
set
\[
\{(\lambda_p^{(1)},\ldots,\lambda_p^{(n)})\mid 1\le p\le k\}\subseteq\Cpx^n
\]
is the joint spectrum of $(A_1,\ldots,A_n)$.

Let $\Mcal$ be a von Neumann algebra equipped with a normal, faithful, tracial state $\tau$.
In this paper, we prove analogous results for commuting families of elements of $\Mcal$.

For an element $S\in\Mcal$, there is a spectral distribution measure $\nu_S$, that was found by L.\ Brown~\cite{B86};
it is a Borel probability measure whose support is contained in the spectrum of $S$
and is called the {\em Brown measure} of $S$.
In a fundamental paper~\cite{HS09}, Uffe Haagerup and Hanne Schultz found $S$-hyperinvariant projections $P(S,B)$
for Borel sets $B\subseteq\Cpx$;
these projections decompose the Brown measure (see \S\ref{subsec:HS} below for a more precise statement).
We call these $P(S,B)$ the Haagerup--Schultz projections of $S$.

In~\cite{S06}, building on results that were eventually published in~\cite{HS09},
Schultz constructed analogues of Brown measure and Haagerup--Schultz projections for an $n$-tuple $T=(T_1,\ldots,T_n)$
of commuting elements $T_j$ of $\Mcal$.
Her proof used elegant arguments involving an idempotent--valued measure in the algebra of (unbounded) operators affiliated to $\Mcal$.

The following is a combination of Theorems~4.1 and~5.3 of~\cite{S06}.
\begin{thm}[\cite{S06}]\label{thm:Schultz}
Let $T=(T_1,\ldots,T_n)$ be a tuple of commuting elements of $\Mcal$.
Then there is a probability measure $\nu_T$ on $\Cpx^n$ and there is a map $B\mapsto P(T:B)$ from Borel subsets of $\Cpx^n$ to projections in $\Mcal$ satisfying
\begin{enumerate}[(a)]
\item $\nu_T(B)=\tau(P(T:B))$ for all Borel subsets $B$ of $\Cpx^n$;
\item if $B$ is a Borel rectangle, namely, if $B=B_1\times\cdots\times B_n$ for Borel sets $B_j$ of $\Cpx$, then
\[
P(T:B)=\bigwedge_{j=1}^n P(T_j,B_j);
\]
\item if $B=\bigcup_{k=1}^\infty B^{(k)}$ is a countable disjoint union of Borel rectangles $B^{(k)}$, then
\[
P(T:B)=\bigvee_{k=1}^\infty P(T:B^{(k)});
\]
\item for a general Borel subset $B$ of $\Cpx^{(n)}$,
\[
P(T:B)=\bigwedge_{\substack{B\subseteq U\subseteq\Cpx^n\\[0.5ex] U\text{open}}}P(T:U).
\]
\end{enumerate}
\end{thm}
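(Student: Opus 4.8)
The plan is to build $\nu_T$ and the map $B\mapsto P(T:B)$ out of the single-variable Brown measures $\nu_{T_j}$ and Haagerup--Schultz projections $P(T_j,\cdot)$, using from \S\ref{subsec:HS} that each $P(T_j,B)$ is $T_j$-hyperinvariant, that $\tau(P(T_j,B))=\nu_{T_j}(B)$, that $B\mapsto P(T_j,B)$ takes unions to joins and intersections to meets, and (from regularity of $\nu_{T_j}$ and faithfulness of $\tau$) that it is inner and outer regular. The starting observation is that for $i\ne j$ the projections $P(T_i,B_i)$ and $P(T_j,B_j)$ commute: $T_i$ lies in the commutant of $T_j$, so the $T_j$-hyperinvariant projection $P(T_j,B_j)$ commutes with $T_i$, whence $P(T_j,B_j)$ lies in the commutant of $T_i$ and so commutes with the $T_i$-hyperinvariant projection $P(T_i,B_i)$. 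Thus for a Borel rectangle $B=B_1\times\cdots\times B_n$ the element $\bigwedge_{j=1}^nP(T_j,B_j)=\prod_{j=1}^nP(T_j,B_j)$ is a projection, which I call $P(T:B)$, and I set $\mu_0(B):=\tau(P(T:B))$.

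The heart of the argument is to show $\mu_0$ is a premeasure on the semiring of Borel rectangles. After refining a finite family of rectangles by a common product grid, this reduces to the additivity $\mu_0(R)=\sum_{C\subseteq R}\mu_0(C)$ of $\mu_0$ over the cells $C$ of such a grid (in particular $\sum_C\tau(P(T:C))=1$), which I would prove by induction on $n$. The essential single-variable input is that for pairwise disjoint Borel sets $C_1,C_2,\dots$ the projections $q_k:=P(T_j,C_k)$, though generally \emph{not} mutually orthogonal, are lattice-independent: $q_k\wedge\bigvee_{k'\ne k}q_{k'}=q_k\wedge P(T_j,\bigcup_{k'\ne k}C_{k'})=0$. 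Lattice-independence of projections in a finite von Neumann algebra already forces $\tau(\bigvee_kq_k)=\sum_k\tau(q_k)$, by telescoping $\tau(p\vee q)+\tau(p\wedge q)=\tau(p)+\tau(q)$ over finite subfamilies and invoking normality of $\tau$. Moreover, if a projection $p$ commutes with all $q_k$, one checks that $\bigvee_k(pq_k)=p\bigvee_kq_k$ and that the compressions $pq_k$ are again lattice-independent; hence, whenever $p$ commutes with every $P(T_j,B')$, the set function $B\mapsto\tau(p\,P(T_j,B))$ is a measure on $\Cpx$ of total mass $\tau(p)$. Applied at the $j$th stage with $p=\prod_{i<j}P(T_i,B_i)$, this closes the induction, and Carath\'eodory extension then yields a Borel probability measure $\nu_T$ on $\Cpx^n$ agreeing with $\mu_0$ on rectangles. (Equivalently, following Schultz, one can replace the $P(T_j,B_j)$ by mutually commuting idempotents in the $\ast$-algebra $U(\Mcal)$ of operators affiliated with $\Mcal$ --- the lattice-independence above is exactly what permits this --- where additivity is automatic since an idempotent has the same trace as the projection onto its range.)

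It remains to define $P(T:B)$ for arbitrary Borel $B$ and to verify (a)--(d). For open $U\subseteq\Cpx^n$, write $U$ as a countable disjoint union of dyadic half-open rectangles $R^{(k)}$ and put $P(T:U):=\bigvee_kP(T:R^{(k)})$; by the preceding paragraph $P(T:R^{(k)})\le P(T:U)$ and $\tau(P(T:U))=\nu_T(U)$, so $P(T:U)$ is independent of the decomposition. For general Borel $B$ set $P(T:B):=\bigwedge\{P(T:U)\mid B\subseteq U\subseteq\Cpx^n\text{ open}\}$, which gives (d); squeezing $B$ between a compact $K\subseteq B$ (with $P(T:K):=\bigwedge_mP(T:U_m)$ for open $U_m\downarrow K$) and an open $U\supseteq B$, and using inner and outer regularity of $\nu_T$ together with normality and faithfulness of $\tau$, gives $\tau(P(T:B))=\nu_T(B)$, which is (a). Then (b) holds because a rectangle $B$ satisfies $P(T:B)\le\bigwedge_{U\supseteq B}P(T:U)$ by monotonicity while both have trace $\nu_T(B)$; and (c) holds because for disjoint rectangles $B^{(k)}$ the $P(T:B^{(k)})$ are lattice-independent with $\sum_k\tau(P(T:B^{(k)}))=\nu_T(\bigcup_kB^{(k)})=\tau(P(T:\bigcup_kB^{(k)}))$ while $\bigvee_kP(T:B^{(k)})\le P(T:\bigcup_kB^{(k)})$.

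The main obstacle is precisely this departure from orthogonality: since $P(S,B_1)$ and $P(S,B_2)$ need not be orthogonal for disjoint $B_1,B_2$, countable additivity of $\tau\circ P(T:\cdot)$ cannot be read off from a projection-valued measure but must be squeezed out of lattice-independence, so the compatibility of meets, joins, and compressions by commuting projections is where the real work sits. The extension from rectangles to arbitrary Borel sets is routine in spirit but leans repeatedly on regularity of the $\nu_{T_j}$ and on $\tau$ being a faithful normal \emph{tracial state} (hence finite).
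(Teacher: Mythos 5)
The foundational claim that $P(T_i,B_i)$ and $P(T_j,B_j)$ commute for $i\ne j$ is false, and the paper itself flags the issue at the start of Section~\ref{sec:HS}: \lq\lq$P(T,B_1)$ and $P(T,B_2)$ need not commute for Borel sets $B_1$ and $B_2$\rq\rq{} --- so taking $T_1=T_2=T$ already gives a counterexample to your assertion. The error is in the reading of hyperinvariance. If $Q=P(T_j,B_j)$ is $T_j$-hyperinvariant, then the \emph{algebraic} commutant $\{T_j\}'$ leaves the range of $Q$ invariant; since $T_i\in\{T_j\}'$, this gives $(1-Q)T_iQ=0$, i.e., $T_i$-invariance of $Q$, but it does not give $T_i^*$-invariance and hence does not give $T_iQ=QT_i$. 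In particular $Q$ is not in the commutant of $T_i$, so the rest of the chain of reasoning collapses. (What is true, by Lemma~\ref{lem:hypinvvN}, is that $Q\in W^*(T_j)=\{T_j,T_j^*\}''$, so $Q$ commutes with every $X$ commuting with \emph{both} $T_j$ and $T_j^*$; but a $T_i$ that commutes with $T_j$ need not commute with $T_j^*$.)

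This gap propagates through the second paragraph of your proposal. Once commutativity is dropped, $\prod_jP(T_j,B_j)$ is not a projection and is not the meet; the compressions $pq_k$ you use are not projections; and the identity $\bigvee_k(p\wedge q_k)=p\wedge\bigvee_kq_k$ that drives your induction is a distributivity law that fails even in finite von Neumann algebras without an extra hypothesis --- see Lemma~\ref{lem:pvqwedger}, which requires $P\le R$, and the counterexamples in Appendix~\ref{app:projs}. Your one-variable additivity step (lattice-independence of the $P(T_j,C_k)$ for disjoint $C_k$, plus telescoping $\tau(p\vee q)+\tau(p\wedge q)=\tau(p)+\tau(q)$) is correct and is also what the paper uses. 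The correct mechanism for passing from one coordinate to the next is different: since $p=\bigwedge_{i<j}P(T_i,B_i)$ is $T_j$-invariant, Theorem~\ref{thm:PTQB} gives $P(T_j,B)\wedge p=P^{(p)}(T_jp,B)$, so that $B\mapsto\tau(P(T_j,B)\wedge p)$ is (after normalising) the Brown measure of $T_jp$ in $p\Mcal p$, hence a measure. This, together with the rectangle-complement estimate of Lemma~\ref{lem:rectcompl1} (whose proof is exactly where Lemma~\ref{lem:pvqwedger} is applied), yields finite and then countable additivity of $\mu_0$ in Sections~\ref{sec:meetsjoins}--\ref{sec:specdistr}. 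The remainder of your outline --- Carath\'eodory extension, defining $P(T:U)$ for open $U$ and $P(T:B)$ by infimum over open supersets, and matching traces to upgrade inequalities of projections to equalities --- tracks the paper's Sections~\ref{sec:specdistr}--\ref{sec:decproj} in spirit, but the commutativity shortcut at the base must be replaced by the compression argument before the rest can stand.
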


We will call $\nu_T$ the {\em joint Brown measure} and the projections $P(T:B)$ the {\em joint Haagerup--Schultz projections}
of the tuple $T=(T_1,\ldots,T_n)$.
Note the subtle notational difference, which we adopt in this paper:
$P(T,B)$ is the Haagerup-Schultz projection when $T$ is a single operator and $B$ is a Borel subset of $\Cpx$,
while $P(T:B)$ is the joint Haagerup--Schultz projection
when $T=(T_1,\ldots,T_n)$ is an $n$-tuple of commuting operators and $B$ is a Borel subset of $\Cpx^n$.

It is clear from the above that the marginal distributions of $\nu_T$ are the Brown measures $\nu_{T_j}$ of the individual operators.
Schultz also proved (Theorem~6.6 of~\cite{S06}) that $\nu_T$ is characterized by the equality
\[
\tau(\log|\alpha_1T_1+\cdots+\alpha_nT_n-1|)=\int_{\Cpx^n}\log|\alpha_1z_1+\cdots\alpha_nz_n-1|\,d\nu_T(z_1,\ldots,z_n),
\]
holding for all $\alpha_1,\ldots,\alpha_n\in\Cpx$,
and she shows (Theorem~7.1 of~\cite{S06}) that for every polynomial $q$ in $n$ commuting variables,
the Brown measure of the operator $q(T_1,\ldots,T_n)$
equals the push-forward measure $q_*\nu_T$ of $\nu_T$ via $q$.

\medskip

In this this paper we will need some stronger properties of the joint Brown measure and joint Haagerup--Schultz projections.
We did not see how to prove these directly from Schultz's derivation.
Thus, we make a different construction of these objects,
in Sections~\ref{sec:meetsjoins}, \ref{sec:specdistr} and~\ref{sec:decproj},
culminating in Theorems~\ref{thm:main.nu.P}, \ref{thm:PtTQ} and~\ref{thm:PTlargest} and Proposition~\ref{prop:PT*}.
Furthermore, we do this for arbitrary (not necessarily finite) families of commuting operators in $\Mcal$.
In order to make our paper self-contained, we do not use Schultz's results, though it is clear, comparing Schultz's Theorem~\ref{thm:Schultz}
and our construction, that the two constructions yield the same objects in the case of a finite tuple of commuting operators.

After constructing joint Brown measure and joint Haagerup--Schultz projections, 
we relate them,
in the case of a finite tuple of commuting operators,
to various notions of joint spectrum (including the Taylor joint spectrum)
in Section~\ref{sec:jointspec}.
These results show that the joint Brown measure is truly a spectral distribution.

In Section~\ref{sec:ut}
we use the joint Haagerup--Schultz subspaces to find simultaneous Schur-type upper triangular forms
of tuples $T=(T_1,\ldots,T_n)$ of commuting operators in $\Mcal$.
This extends the main result of~\cite{DSZ15}, where the case of a single operator was treated.
More specifically, let 
\begin{equation}\label{eq:rho}
\rho:[0,1]\to\prod_{k=1}^n\big\{z\in\mathbb{C}:\ |z|\leq\|T_k\|\big\}
\end{equation}
be a Peano curve (i.e., surjective and continuous; see Lemma \ref{n peano lemma} for a proof of existence).
We will call $\rho$ a {\em continuous spectral ordering} for $T=(T_1,\cdots,T_n)$.
Let
\begin{equation}\label{eq:D}
\Dc=W^*(\{P(T:\rho([0,t]))\mid 0\le t\le 1\}).
\end{equation}
Note that $\Dc\subseteq\Mcal$ is an abelian von Neumann algebra with separable predual.
We denote by $\Ec_\Dc$ and $\Ec_{\Mcal\cap\Dc'}$,
respectively, the normal, $\tau$-preserving conditional expectations from $\Mcal$ onto $\Dc$ and,
respectively, the relative commutant $\Mcal\cap\Dc'$.

Here is a simultaneous upper triangularization result.
\begin{thm}\label{thm:simultut}
Let $S$ belong to the unital algebra generated by $\{T_1,\ldots,T_n\}$
and let $N=\Ec_\Dc(S)$.
Then the Brown measures $\nu_N$ and $\nu_S$ agree and the Brown measure of $S-N$ is concentrated at $0$.
\end{thm}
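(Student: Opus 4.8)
The plan is to reduce the statement to the single-operator upper-triangularization theorem of \cite{DSZ15} by exploiting the structure of the joint Haagerup--Schultz projections. Write $P_t = P(T:\rho([0,t]))$, so that $(P_t)_{0\le t\le1}$ is an increasing family of projections in $\Dc$ with $P_0$ equal to the joint Haagerup--Schultz projection for the empty part of the curve and $P_1=1$; by property (a) of Theorem~\ref{thm:Schultz} together with the properties established for our construction (Theorems~\ref{thm:main.nu.P}, \ref{thm:PtTQ}, \ref{thm:PTlargest}), these $P_t$ are $T$-hyperinvariant, hence invariant under every $S$ in the unital algebra generated by $T_1,\dots,T_n$. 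The first step is therefore to record that for each $S$ in that algebra and each $t$, $P_t S P_t = P_t S$ and $(1-P_t)S(1-P_t) = S(1-P_t)$, i.e. $S$ is ``upper triangular'' with respect to the nest $(P_t)$. Since $\Dc$ is generated by this nest, a standard argument identifies $\Ec_\Dc$ on the nest algebra with the ``diagonal'' compression, so that $N = \Ec_\Dc(S)$ is the diagonal part of $S$ relative to $(P_t)$ and $S-N$ is strictly upper triangular.

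The second step is to compute $\nu_N$ and $\nu_{S-N}$. For $N$: because $N\in\Dc$ and $\Dc$ is abelian, $\nu_N$ is just the spectral distribution of $N$ as a multiplication operator on the measure space underlying $\Dc$; one shows this equals $\nu_S$ by a moment/logarithmic-potential computation, using that $\tau(\log|{\lambda-S}|) = \tau(\log|{\lambda-N}|)$ for all $\lambda$ — the key point being that $S-N$ is strictly upper triangular with respect to the nest, so $\lambda-S$ and $\lambda-N$ differ by a nilpotent-like perturbation that does not change the Fuglede--Kadison determinant. Concretely, one can invoke the single-variable result: apply the \cite{DSZ15} upper-triangularization theorem to a generic linear combination $\alpha_1T_1+\cdots+\alpha_nT_n$ (or directly to $S$) with respect to a refinement of the nest $(P_t)$, and use that the diagonal of a Schur decomposition carries the full Brown measure while the strictly-upper-triangular part has Brown measure $\delta_0$. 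Combined with Schultz's characterization of $\nu_T$ by the equalities $\tau(\log|\alpha\cdot T-1|)=\int\log|\alpha\cdot z-1|\,d\nu_T$ and the push-forward property for polynomials, this pins down $\nu_N=\nu_S$.

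The third step handles $S-N$: one must show its Brown measure is $\delta_0$, equivalently that $\tau(\log|{\lambda - (S-N)}|) = \log|\lambda|$ for all $\lambda\ne0$, i.e. that $S-N$ is ``quasinilpotent in the $\tau$-sense'' (s.o.t.-quasinilpotent). This follows from the single-operator theorem of \cite{DSZ15} applied to the operator $S-N$ together with the fact that $S-N$ is strictly upper triangular with respect to the nest $(P_t)$: its Haagerup--Schultz projection family with respect to $(P_t)$ is trivial in the sense that all the ``diagonal blocks'' of $S-N$ vanish, which by the main theorem of \cite{DSZ15} forces $\nu_{S-N}=\delta_0$. One needs here that $\Ec_\Dc(S-N)=0$ and that $\Ec_\Dc$ is exactly the diagonal-compression map for the nest, which is where the hypothesis that $\Dc$ is generated precisely by $\{P_t\}$ is used.

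The main obstacle I anticipate is the rigorous identification of $\Ec_\Dc$ with the diagonal part of the nest decomposition and the verification that ``strictly upper triangular with respect to the nest'' implies Brown measure $\delta_0$ in the continuous (type II) setting — in finite dimensions this is immediate from genuine triangular form, but here one only has an increasing continuum of invariant projections, not a flag, so the argument must go through the Fuglede--Kadison determinant and the results of \cite{DSZ15} rather than through an honest matrix triangularization. A secondary technical point is ensuring that the nest $(P_t)$ is ``multiplicity-free enough'' that $\Ec_\Dc$ really lands in $\Dc$ and agrees with the conditional expectation onto the nest algebra's diagonal; this should follow from $\Dc=W^*(\{P_t\})$ being maximal abelian relative to the nest and from the uniqueness of the trace-preserving conditional expectation.
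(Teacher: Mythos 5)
Your overall plan — exploit the upper‐triangularity of $S$ with respect to the nest $(q_t)$ and reduce to the single‐operator machinery of \cite{DSZ15} — points in the right direction, and the paper does begin exactly this way (Lemmas~\ref{lem:U} and~\ref{lem:22}).  However, there is a genuine gap in the way you handle the transition from the \emph{block‐diagonal} part of $S$ to the \emph{scalar‐diagonal} part, which is the technical heart of the theorem.

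Lemma~22 of \cite{DSZ15} (Lemma~\ref{lem:22} here) says that for $S$ upper triangular with respect to the nest, $\nu_S=\nu_{\Ec_{\Mcal\cap\Dc'}(S)}$; the expectation there is $\Ec_{\Mcal\cap\Dc'}$, onto the \emph{relative commutant}, not $\Ec_\Dc$.  You repeatedly treat $N=\Ec_\Dc(S)$ as ``the diagonal part of the nest decomposition'' and assert that $S-N$ is ``strictly upper triangular'' with ``all diagonal blocks vanishing.''  This is not so: writing $Y=\Ec_{\Mcal\cap\Dc'}(S)$ and decomposing $\Mcal\cap\Dc'$ as a direct integral over $\Dc\cong L^\infty([0,1],\sigma)$, the diagonal blocks of $S-N$ are the operators $Y(t)-\tau_t(Y(t))$, which in general are nonzero elements of the fibre algebras $\Nc(t)$.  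Consequently, neither Lemma~\ref{lem:22} nor the main theorem of \cite{DSZ15} can be applied to $S-N$ to conclude $\nu_{S-N}=\delta_0$, because $\Ec_{\Mcal\cap\Dc'}(S-N)=Y-N$ is not zero.  The missing ingredient is the fibrewise computation: one must show that for $\sigma$-a.e.\ $t$, the Brown measure of $Y(t)$ is the Dirac mass at $f(\rho(t))$, i.e.\ $Y(t)-\tau_t(Y(t))$ is s.o.t.-quasinilpotent.  This is Lemma~\ref{lem:fX} in the paper, and it relies crucially on Lemma~\ref{lem:di} (itself using Corollary~\ref{cor:middle} and the structure of the joint Haagerup--Schultz projections), which proves that each $X_j=\Ec_{\Mcal\cap\Dc'}(T_j)$ fibrewise has Brown measure $\delta_{\rho_j(t)}$.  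Only then does the direct-integral formula for Brown measures (Theorem~\ref{thm:5.6}) give $\nu_Y=\nu_N$ and $\nu_{Y-N}=\delta_0$.

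Put differently: applying the \cite{DSZ15} theorem ``directly to $S$'' would require the nest of Haagerup--Schultz projections of $S$ itself, whereas the nest $(q_t)$ is built from the \emph{joint} Haagerup--Schultz projections of the tuple $T$.  That these are compatible --- i.e.\ that the joint nest is fine enough to diagonalize every polynomial in the $T_i$ --- is exactly what Lemmas~\ref{lem:di} and~\ref{lem:fX} establish, and it cannot be asserted without proof.  Your argument also never produces $\Ec_{\Mcal\cap\Dc'}(S-N)=0$, which would be needed for the Fuglede--Kadison determinant comparison you invoke.  To close the gap you would need to (i) separate the two conditional expectations $\Ec_{\Mcal\cap\Dc'}$ and $\Ec_\Dc$, (ii) prove the fibrewise Dirac-mass result for the $X_j$ using the joint Haagerup--Schultz projections, (iii) use Proposition~\ref{prop:polyqn} to extend this to polynomials, and (iv) integrate over $\Dc$ via Theorem~\ref{thm:5.6}; this is the route the paper actually takes.
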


Note that, for $S$ as in the theorem, writing $S=f(T_1,\ldots,T_n)$ for a polynomial $f$ in $n$ commuting variables,
by Schultz's result mentioned above, 
$\nu_S$ is the push-forward measure $f_*\nu_T$ of the joint Brown measure of $T$ under $f$.

In Section~\ref{sec:hfcnu}, we consider the Arens multivariate holomorphic functional calculus applied to a commuting tuple $T$.
The above Theorem~\ref{thm:simultut} and the fact about push-forward measures will,
in Theorem~\ref{thm:holomut}, be extended to give the same conclusions also for operators $S$
that arise as $f(T_1,\ldots,T_n)$ for suitable multivariate holomorphic functions $f$.
Though Theorem~\ref{thm:holomut} includes Theorem~\ref{thm:simultut} as a subcase, we have stated and proved the special case separately,
because the proof avoids significant technical difficulties of the proof of the full result.
Furthermore, in Theorem~\ref{thm:PhT}, we prove a multivariate analogue of the push-forward result for joint Brown measures
and a similar property for joint Haagerup--Schultz projections.

In addition to the sections of the paper mentioned above,
Section~\ref{sec:prelims} contains preliminary results (many recalled from the literature),
including:
\S\ref{subsec:Notation} Notation;
\S\ref{subsec:proj} Projections in von Neumann algebras (with some additional examples provided in Appendix~\ref{app:projs});
\S\ref{subsec:sotqn} S.o.t.-quasinilpotent operators;
\S\ref{subsec:HS} Haagerup--Schultz projections;
\S\ref{subsec:hiproj} Hyperinvariant projections;
\S\ref{subsec:spfill} Space filling curves.
In Section~\ref{sec:HS}, we show that Haagerup--Schultz projections
satisfy the natural lattice properties.
In Section~\ref{sec:Pdi} we show that the joint Haagerup--Schultz projection of a direct integral of commuting operators is
the direct integral of the corresponding joint Haagerup--Schultz projections;
this result is needed later in the paper.
Section~\ref{sec:holofc} contains some remarks and results about the multivariate functional calculi
of Arens and Taylor for tuples of commuting operators on Hilbert space.
In particular, it is proved that the Taylor spectrum and the Taylor functional calculus behave well with respect
to direct integral constructions;
this result is used in Section~\ref{sec:hfcnu}.
Section~\ref{sec:sims} contains a proof that the joint spectral distribution measures and
the joint Haagerup--Schultz projections behave well with respect to conjugation by invertible operators.

\medskip
\noindent{\bf Acknowledgements:}
The authors thank Ernst Albrecht for valuable discussions about multivariate holomorphic functional calculus.
I.C.\ and K.D.\ gratefully acknowledge the support of the Hausdorff Institute in Mathematics,
during the Tri\-mester on Von Neumann Algebras,
where some of this research was conducted.
K.D.\ thanks faculty and staff of the University of New South Wales for their kind hospitality during a visit when some of this research
was conducted.
Finally, the authors thank an anonymous referee for many suggestions that improved the exposition of this paper.

\section{Preliminaries}
\label{sec:prelims}

\subsection{Notation}
\label{subsec:Notation}

$\Cpx$ denotes the complex plane.
For $\lambda\in\Cpx$ and $r>0$, $\overline{B_r(\lambda)}$ denotes the closed disk in $\Cpx$ with center $\lambda$ and radius $r$.

Given measurable spaces $X$ and $Y$, a measurable function $f:X\to Y$ and a measure $\mu$ on $X$, we employ the standard notation
$f_*\mu$ for the push--forward measure of $\mu$ under $f$, namely, the measure $\nu$ on $Y$ given by $\nu(B)=\mu(f^{-1}(B))$.

Throughout, $\Mcal$ is a von Neumann algebra equipped with a normal, faithful, tracial state $\tau$.
We will refer to the pair $(\Mcal,\tau)$ as a {\em tracial von Neumann algebra.}
As is standard, completion of $\Mcal$ with respect to the norm $\|a\|_2=\tau(a^*a)^{1/2}$ is denoted $L^2(\Mcal,\tau)$,
and $\Mcal$ will be understood to be represented on $L^2(\Mcal,\tau)$ via the standard representation,
i.e., the Gelfand--Naimark--Segal representation.
Note that, for bounded sequences, convergence in strong operator topology is equivalent to convergence with respect to $\|\cdot\|_2$.

As is standard, by {\em projection} we mean a self-adjoint idempotent element of a C$^*$-algebra or von Neumann algebra.

For $T\in\Mcal$ $\nu_T$ will denote the Brown measure
and $B\subseteq\Cpx$, $P(T,B)$ will denote the corresponding Haagerup--Schultz projection.
If $Q\in\Mcal$ is a nonzero projection and $T\in Q\Mcal Q$, then we will denote by $\nu_T^{(Q)}$ the Brown measure of $T$
as an element of $Q\Mcal Q$ endowed with the trace $\tau(Q)^{-1}\tau\restrict_{Q\Mcal Q}$.

\subsection{On projections in tracial von Neumann algebras.}
\label{subsec:proj}

 We don't claim novelty for any of the results in this subsection, but we do include some proofs, for convenience.

\begin{lemmasubs}\label{lem:PvQ}
Suppose $P$ and $Q$ are projections in $B(\HEu)$ and $(P_n)_{n=1}^\infty$, $(Q_n)_{n=1}^\infty$ are sequences of projections in $B(\HEu)$
that converge to $P$ and $Q$, respectively, in strong operator topology.
Suppose that for every $n$, we have $P_n\le P$ and $Q_n\le Q$.
Then $P_n\vee Q_n$ converges to $P\vee Q$ in strong operator topology as $n\to\infty$.
\end{lemmasubs}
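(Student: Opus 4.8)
The plan is to reduce the statement to a convergence on the subspace onto which everything lives. Write $R=P\vee Q$ and $R_n=P_n\vee Q_n$. Since $P_n\le P$ and $Q_n\le Q$, we have $R_n\le R$ for all $n$, so it suffices to prove $R_n\to R$ strongly on the range of $R$, i.e. that $R_n\xi\to\xi$ for every $\xi\in\operatorname{ran}(R)$. The key tool here is a classical fact (attributable to Halmos--von Neumann, or derivable from the theory of the angle between two subspaces): for projections $P,Q\in B(\HEu)$, the closed subspace $\operatorname{ran}(P\vee Q)$ equals the closure of $\operatorname{ran}(P)+\operatorname{ran}(Q)$, and moreover the algebraic sum $\operatorname{ran}(P)+\operatorname{ran}(Q)$ is already dense in $\operatorname{ran}(P\vee Q)$. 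Hence vectors of the form $\xi=\eta+\zeta$ with $\eta\in\operatorname{ran}(P)$ and $\zeta\in\operatorname{ran}(Q)$ are dense in $\operatorname{ran}(R)$.

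Next I would exploit the uniform bound $\|R_n\|\le 1$ together with this density: since the $R_n$ are uniformly bounded, it is enough to check strong convergence $R_n\xi\to R\xi$ on the dense set of such $\xi=\eta+\zeta$. For such a vector, $R\xi=\xi$ (as $\xi\in\operatorname{ran}(R)$), and I compute
\[
R_n\xi=R_n(\eta+\zeta)=R_n\eta+R_n\zeta.
\]
Now $R_n\eta = R_n P_n\eta + R_n(\eta-P_n\eta)=R_n P_n \eta + R_n(I-P_n)\eta$. Since $P_n\le R_n$ we have $R_nP_n=P_n$, so $R_n\eta = P_n\eta + R_n(I-P_n)\eta$. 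As $P_n\to P$ strongly and $\eta\in\operatorname{ran}(P)$, we get $P_n\eta\to P\eta=\eta$; and since $P_n\to P$ strongly with $P\eta=\eta$, also $(I-P_n)\eta\to 0$, whence $R_n(I-P_n)\eta\to 0$ by the uniform bound on $\|R_n\|$. Therefore $R_n\eta\to\eta$. Symmetrically $R_n\zeta\to\zeta$, so $R_n\xi\to\eta+\zeta=\xi=R\xi$. This establishes strong convergence on a dense subset of $\operatorname{ran}(R)$, and together with $R_n\le R$ and uniform boundedness this gives $R_n\to R$ strongly on all of $\HEu$ (on $\operatorname{ran}(I-R)$ both sides are $0$).

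The main obstacle, and the only nonroutine point, is the density of the algebraic sum $\operatorname{ran}(P)+\operatorname{ran}(Q)$ in $\operatorname{ran}(P\vee Q)$: one must be a little careful because in infinite dimensions the sum of two closed subspaces need not be closed, so $\operatorname{ran}(P)+\operatorname{ran}(Q)$ is in general a proper dense subspace of $\operatorname{ran}(P\vee Q)$, not all of it. This is exactly what is needed, though — density suffices for the argument above. I would either cite this as a standard fact or give the short proof: if $\xi\perp\operatorname{ran}(P)+\operatorname{ran}(Q)$ then $\xi\perp\operatorname{ran}(P)$ and $\xi\perp\operatorname{ran}(Q)$, so $\xi\in\operatorname{ran}(I-P)\cap\operatorname{ran}(I-Q)=\operatorname{ran}((I-P)\wedge(I-Q))=\operatorname{ran}(I-(P\vee Q))$, i.e. $\xi\perp\operatorname{ran}(P\vee Q)$; hence the orthogonal complement of $\operatorname{ran}(P)+\operatorname{ran}(Q)$ within $\operatorname{ran}(P\vee Q)$ is trivial, giving the claimed density. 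Everything else is bookkeeping with uniform boundedness and the definition of strong operator convergence.
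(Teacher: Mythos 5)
Your proof is correct and follows essentially the same route as the paper's. Both reduce to the range of $P\vee Q$, both use the density of $P\HEu+Q\HEu$ in $(P\vee Q)\HEu$, and both exploit $P_n x\to x$ for $x\in P\HEu$ and $Q_n y\to y$ for $y\in Q\HEu$; the paper phrases the conclusion as $\dist(\xi,(P_n\vee Q_n)\HEu)\to0$ via the test vector $P_nx+Q_ny\in(P_n\vee Q_n)\HEu$, while you compute $R_n\xi$ explicitly through the decomposition $R_n\eta=P_n\eta+R_n(I-P_n)\eta$ — a slightly longer but equivalent bookkeeping of the same observation.
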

\begin{proof}
Since $P_n\vee Q_n\le P\vee Q$ for every $n$, it will suffice to show that for every $\xi\in(P\vee Q)(\HEu)$, we have
\[
\lim_{n\to\infty}\dist(\xi,(P_n\vee Q_n)(\HEu))=0.
\]
Let $\eps>0$.
Then there exist $x\in P\HEu$ and $y\in Q\HEu$ such that $\|\xi-(x+y)\|<\eps$.
Since $P_nx$ converges to $x$ and $Q_ny$ converges to $y$, for all $n$ sufficiently large, we have $\|\xi-(P_nx+Q_ny)\|<\eps$.
Since $P_nx+Q_ny\in (P_n\vee Q_n)(\HEu)$, we are done.
\end{proof}

\begin{lemmasubs}\label{lem:meetDirected}
Let $I$ be a set directed by a partial ordering $\ge$
and suppose $(P_i)_{i\in I}$ is a decreasing net of projections in $B(\HEu)$, namely, such that $i_1\ge i_2$
implies $P_{i_1}\le P_{i_2}$.
Then
\[
\lim_{i\in I}P_i=\bigwedge_{i\in I}P_i,
\]
where the limit is in strong-operator topology.
Moreover, if $\HEu$ is separable,
then there exists a totally ordered sequence $i(1)\le i(2)\le\cdots$ in $I$ such that
\[
\lim_{n\to\infty}P_{i(n)}=\bigwedge_{i\in I}P_i,
\]
where the limit is in strong-operator topology.
Furthermore, if $P_i$ is a decreasing net of projections in a tracial von Neumann algebra $\Mcal$, then there
exists a totally ordered sequence $i(1)\le i(2)\le\cdots$ in $I$ such that 
\[
\lim_{n\to\infty}P_{i(n)}=\bigwedge_{i\in I}P_i,
\]
where the limit is with respect to $\|\cdot\|_2$.
\end{lemmasubs}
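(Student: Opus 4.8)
The plan is to prove the three assertions in succession, each bootstrapping from its predecessor; none of the arguments goes beyond standard Hilbert space and von Neumann algebra lattice theory. Set $P=\bigwedge_{i\in I}P_i$. For the first assertion I would fix $\xi\in\HEu$ and note that, since $i_1\ge i_2$ gives $P_{i_1}\le P_{i_2}$, the net $\bigl(\|P_i\xi\|^2\bigr)_{i\in I}=\bigl(\langle P_i\xi,\xi\rangle\bigr)_{i\in I}$ is non-increasing and bounded below, hence convergent. Using $P_{i_2}P_{i_1}=P_{i_1}$ when $i_1\ge i_2$ one computes $\|P_{i_1}\xi-P_{i_2}\xi\|^2=\|P_{i_2}\xi\|^2-\|P_{i_1}\xi\|^2$, and dominating any two indices by a third via directedness shows $(P_i\xi)_{i\in I}$ is a Cauchy net; it therefore converges in $\HEu$, defining a bounded operator $P'$. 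Then $P'$ is self-adjoint since $\langle P'\xi,\eta\rangle=\lim_i\langle\xi,P_i\eta\rangle$, and idempotent since $P_iP_j=P_i$ for $i\ge j$ gives $P'P_j=P'$ and hence $P'{}^2=\lim_jP'P_j=P'$; moreover $P_jP'=P'$ shows $P'\le P_j$ for all $j$, while any projection $Q$ with $Q\le P_j$ for all $j$ satisfies $P_jQ=Q$, hence $P'Q=\lim_iP_iQ=Q$, i.e.\ $Q\le P'$. Thus $P'$ is the largest projection below every $P_j$, so $P'=P$ and $P_i\to P$ in strong-operator topology.

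For the separable case, fix a countable dense set $\{\xi_m\}_{m\ge1}\subseteq\HEu$; by the first part $\inf_{i\in I}\|P_i\xi_m\|^2=\|P\xi_m\|^2$ for every $m$. I would build a chain $i(1)\le i(2)\le\cdots$ recursively: having chosen $i(n)$, for each $m\le n+1$ pick $j_m\in I$ with $\|P_{j_m}\xi_m\|^2<\|P\xi_m\|^2+\tfrac1{n+1}$ and, by directedness, choose $i(n+1)\ge i(n)$ with $i(n+1)\ge j_m$ for all $m\le n+1$, so that $P_{i(n+1)}\le P_{j_m}$ forces $\|P_{i(n+1)}\xi_m\|^2<\|P\xi_m\|^2+\tfrac1{n+1}$ for $m\le n+1$. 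Then, for fixed $m$ and $n\ge m$, $\|P\xi_m\|^2\le\|P_{i(n)}\xi_m\|^2<\|P\xi_m\|^2+\tfrac1n$, so $\|P_{i(n)}\xi_m-P\xi_m\|^2=\|P_{i(n)}\xi_m\|^2-\|P\xi_m\|^2\to0$ (using $P\le P_{i(n)}$), and a routine $3\eps$ estimate using $\|P_{i(n)}\|\le1$ and density of $\{\xi_m\}$ upgrades this to $P_{i(n)}\to P$ in strong-operator topology.

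For the tracial case, represent $\Mcal$ on $L^2(\Mcal,\tau)$ with canonical cyclic vector $\oneh$, so that $\|x\|_2=\|x\oneh\|_{L^2}$ and $\|P_i\oneh\|_{L^2}^2=\tau(P_i)$. The lattice meet $\bigwedge_{i\in I}P_i$ computed in $\Mcal$ coincides with the one computed in $B(L^2(\Mcal,\tau))$ (the join of projections from $\Mcal$ is a strong limit of finite joins, hence stays in $\Mcal$), so the first part gives $P_i\to P$ strongly on $L^2(\Mcal,\tau)$ and therefore $\inf_{i\in I}\tau(P_i)=\tau(P)$. Now choose $i(1)\le i(2)\le\cdots$ recursively with $\tau(P_{i(n)})<\tau(P)+\tfrac1n$, exactly as above but picking $j\in I$ with $\tau(P_j)<\tau(P)+\tfrac1{n+1}$ and $i(n+1)\ge i(n),\,j$. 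Since $P\le P_{i(n)}$ we have $(P_{i(n)}-P)^2=P_{i(n)}-P$, whence
\[
\|P_{i(n)}-P\|_2^2=\tau\bigl(P_{i(n)}-P\bigr)=\tau(P_{i(n)})-\tau(P)<\tfrac1n\longrightarrow0 ,
\]
which is the claim.

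The one step needing genuine care is the first: establishing the Cauchy estimate for the net $(P_i\xi)_{i\in I}$ over a general directed set, and — more importantly — checking that its strong limit is the lattice meet $\bigwedge_{i\in I}P_i$ rather than merely some projection dominated by every $P_i$. After that, both remaining parts are routine exhaustion arguments (along a countable dense set of test vectors in the separable case, and along the single vector $\oneh$, equivalently along the decreasing sequence $\tau(P_i)$ of reals, in the tracial case), so I anticipate no further obstacle.
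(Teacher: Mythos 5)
Your proof is correct, and for the first assertion it takes a genuinely different route from the paper's. The paper reduces without loss of generality to $\bigwedge_i P_i=0$, observes that then $\bigvee_i(1-P_i)=1$, and approximates a given $\xi$ by a finite sum of vectors $v_j\in(1-P_{i(j)})\HEu$; a single index dominating all the $i(j)$ then kills $\xi$ up to $\eps$. You instead work directly with the net $(P_i\xi)_{i\in I}$: the identity $\|P_{i_1}\xi-P_{i_2}\xi\|^2=\|P_{i_2}\xi\|^2-\|P_{i_1}\xi\|^2$ for $i_1\ge i_2$, combined with monotonicity of $\|P_i\xi\|^2$ and directedness, gives a Cauchy net, and you then verify by hand that the strong limit $P'$ is an orthogonal projection and is the lattice infimum. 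Both arguments are standard; the paper's is shorter because it leans on the join identity, while yours is more self-contained and makes fully explicit why the SOT limit coincides with $\bigwedge_i P_i$ rather than merely being dominated by every $P_i$ — a point the paper leaves tacit. For the separable and tracial parts you essentially flesh out what the paper dispatches as a "standard diagonalisation argument" and a one-line appeal to $\tau(P_{i(j)})\to\tau(\bigwedge_iP_i)$: in particular your identity $\|P_{i(n)}-P\|_2^2=\tau(P_{i(n)})-\tau(P)$, which uses $P\le P_{i(n)}$ so that $(P_{i(n)}-P)^2=P_{i(n)}-P$, is exactly the computation the paper omits. One small slip of the pen: in the tracial part you write "join" where you mean "meet" when arguing that $\bigwedge_iP_i$ computed in $B(L^2(\Mcal,\tau))$ lies in $\Mcal$; the claim itself is correct (for instance because $\bigwedge_iP_i$ commutes with every unitary in $\Mcal'$, hence lies in $\Mcal''=\Mcal$).
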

\begin{proof}
We may without loss of generality assume $\bigwedge_{i\in I}P_i=0$.
Then $\bigvee_{i\in I}(1-P_i)=1$.
Let $\xi\in\HEu$ and take $\eps>0$.
Then there exists $n\in\Nats$, $i(1),\ldots,i(n)\in I$ and $v_j\in(1-P_{i(j)})\HEu$
such that $\|\xi-\sum_{j=1}^nv_j\|<\eps$.
Thus, $\|(\bigwedge_{j=1}^nP_{i(j)})\xi\|<\eps$.
Thus, if $i\in I$ and $i\ge i(j)$ for all $j$, then $\|P_i\xi\|<\eps$.
This proves that $P_i$ converges in strong-operator topology to $0$.

In the case that $\HEu$ is separable, by choosing a countable dense subset of $\HEu$ and using a standard diagonalisation argument,
the desired sequence can be found.

In the case that $P_i$ is a net in $\Mcal$, convergence in strong operator topology (for any normal, faithful
representation of $\Mcal$)
implies convergence in $\|\cdot\|_2$.
Since $\tau(P_i)$ converges to $\tau(\bigwedge_{i\in I}P_i)$, we can find an increasing sequence $i(j)$ in $I$
so that $\lim_{j\to\infty}\tau(P_{i(j)})=\tau(\bigwedge_{i\in I}P_i)$.
This completes the proof.
\end{proof}

The following result, which we will use quite frequently, is standard.
(For a proof, one can use the description of the C$^*$-algebra generated by two projections, found in~\cite{RS89}.)
\begin{lemmasubs}\label{lem:tracevw}
Let $(\Mcal,\tau)$ be a tracial von Neumann algebra and suppose $P$ and $Q$ are projections in $\Mcal$.
Then
\[
\tau(P\vee Q)=\tau(P)+\tau(Q)-\tau(P\wedge Q).
\]
\end{lemmasubs}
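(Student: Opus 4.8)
The plan is to derive the identity from Kaplansky's parallelogram law, the standard fact that for any two projections $P,Q$ in a von Neumann algebra one has the Murray--von Neumann equivalence $P\vee Q-Q\sim P-P\wedge Q$; here both differences are genuine projections, since $Q\le P\vee Q$ and $P\wedge Q\le P$. Granting this, and using that $\tau$ is a trace (so Murray--von Neumann equivalent projections have equal trace) together with additivity of $\tau$ on orthogonal pairs, one obtains
\[
\tau(P\vee Q)-\tau(Q)=\tau(P\vee Q-Q)=\tau(P-P\wedge Q)=\tau(P)-\tau(P\wedge Q),
\]
and rearranging yields the assertion.

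To keep the argument self-contained I would prove the parallelogram law via the polar decomposition $(1-Q)P=V\,(P(1-Q)P)^{1/2}$, with $V\in\Mcal$ a partial isometry. Its right support, the support of $(P(1-Q)P)^{1/2}$, is $P-P\wedge Q$: indeed $\ker\big((1-Q)P\big)$ contains $(1-P)\HEu$, and for $x\in P\HEu$ one has $(1-Q)Px=0$ precisely when $x\in Q\HEu$, so the kernel meets $P\HEu$ in $(P\wedge Q)\HEu$. Its left support, the range projection, is $P\vee Q-Q$: since $Q\le P\vee Q$, $Q$ commutes with $P\vee Q$, whence $(1-Q)P\HEu\subseteq(1-Q)(P\vee Q)\HEu=(P\vee Q-Q)\HEu$; conversely a vector of $(P\vee Q-Q)\HEu$ orthogonal to $(1-Q)P\HEu$ lies in $(1-P)\HEu\cap(1-Q)\HEu\cap(P\vee Q)\HEu=(P\vee Q)^\perp\HEu\cap(P\vee Q)\HEu=\{0\}$. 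Hence $V^*V=P-P\wedge Q$ and $VV^*=P\vee Q-Q$, the desired equivalence.

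An alternative, indicated by the reference to~\cite{RS89}, is to work inside $\Nc=W^*(P,Q)$ and invoke Halmos's two-projections decomposition: $\HEu$ splits into the four mutually orthogonal ranges of $P\wedge Q$, $P\wedge Q^\perp$, $P^\perp\wedge Q$, $P^\perp\wedge Q^\perp$ together with a ``generic'' reducing subspace on which $\Nc\cong M_2(\Ac)$ for an abelian von Neumann algebra $\Ac$, with $P$ corresponding to the matrix unit $\left(\begin{smallmatrix}1&0\\0&0\end{smallmatrix}\right)$ and $Q$ to $\left(\begin{smallmatrix}c^2&cs\\cs&s^2\end{smallmatrix}\right)$ for commuting positive contractions $c,s$ with $c^2+s^2=1$. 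The identity is immediate on each of the four special summands; on the generic summand all four meets vanish, $P\vee Q=1$, and $P\sim Q\sim 1-P$ there, so $\tau(P)=\tau(Q)=\tfrac12\tau(1)$ and the identity reduces to $\tau(1)=\tau(P)+\tau(Q)$. Summing over the decomposition completes the proof.

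In all of this the trace computation is pure bookkeeping; the only real content is the parallelogram law. I expect the one delicate point to be the identification $\overline{(1-Q)P\HEu}=(P\vee Q-Q)\HEu$ in the self-contained route — equivalently, the assembly of the central decomposition of $W^*(P,Q)$ in the structural route — but both are entirely classical.
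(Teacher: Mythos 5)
Your proposal is correct on both routes. The paper itself offers no proof of this lemma; it simply calls the identity standard and points to~\cite{RS89} for the description of the C$^*$-algebra generated by two projections, which is precisely your second, Halmos-decomposition argument. Your first route is a genuinely different and more self-contained path: instead of structure theory you invoke Kaplansky's parallelogram law $P\vee Q - Q\sim P - P\wedge Q$ and verify it directly from the polar decomposition of $(1-Q)P$, computing $\ker\big((1-Q)P\big)=(P\wedge Q)\HEu\oplus(1-P)\HEu$ (so the right support is $P-P\wedge Q$) and $\overline{(1-Q)P\HEu}=(P\vee Q-Q)\HEu$ (so the left support is $P\vee Q-Q$); both identifications are carried out correctly, including the orthogonality argument showing a vector of $(P\vee Q-Q)\HEu$ orthogonal to $(1-Q)P\HEu$ must vanish. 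After that the trace identity follows from $\tau(V^*V)=\tau(VV^*)$. The polar-decomposition route needs no machinery beyond the existence of polar decompositions inside $\Mcal$, whereas the two-projections decomposition, once assembled, delivers the entire lattice of relations among $P$, $Q$, $P\wedge Q$, $P\vee Q$ at one stroke and is what the cited reference actually supplies. Either proof is a clean substitute for the paper's citation.
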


We will need the following easy consequence:
\begin{lemmasubs}\label{lem:tauPQcorners}
Let $(\Mcal,\tau)$ be a tracial von Neumann algebra and suppose $P$ and $Q$ are projections in $\Mcal$.
Then
\[
\tau(P\wedge Q)+\tau\big((P\vee Q)\wedge(1-Q)\big)=\tau(P).
\]
\end{lemmasubs}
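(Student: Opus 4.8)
The plan is to reduce the identity to the trace formula of Lemma~\ref{lem:tracevw} by recognizing $(P\vee Q)\wedge(1-Q)$ as a difference of projections. Write $R=P\vee Q$ and note that $Q\le R$, so that $R-Q$ is again a projection (since $QR=RQ=Q$ forces $(R-Q)^2=R-Q=(R-Q)^*$). The first key step is the purely order-theoretic claim that $R\wedge(1-Q)=R-Q$ whenever $Q\le R$. For one inclusion, $R-Q\le R$ and $(R-Q)Q=RQ-Q=0$, so $R-Q$ is a subprojection of both $R$ and $1-Q$, hence $R-Q\le R\wedge(1-Q)$. For the reverse, put $S=R\wedge(1-Q)$; then $SR=S$ because $S\le R$, and $SQ=0$ because $S\le 1-Q$, so $S(R-Q)=S$. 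Taking adjoints (both $S$ and $R-Q$ are self-adjoint) gives $(R-Q)S=S$, i.e.\ the range of $S$ is contained in that of $R-Q$, so $S\le R-Q$. This establishes $R\wedge(1-Q)=R-Q$.

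Applying the trace then gives $\tau\big((P\vee Q)\wedge(1-Q)\big)=\tau(R)-\tau(Q)=\tau(P\vee Q)-\tau(Q)$, where finiteness of $\tau$ is what makes this subtraction meaningful. Adding $\tau(P\wedge Q)$ to both sides and substituting the value $\tau(P\vee Q)=\tau(P)+\tau(Q)-\tau(P\wedge Q)$ from Lemma~\ref{lem:tracevw} yields $\tau(P\wedge Q)+\tau\big((P\vee Q)\wedge(1-Q)\big)=\tau(P\wedge Q)+\tau(P\vee Q)-\tau(Q)=\tau(P)$, which is exactly the assertion.

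There is essentially no obstacle here: the only substantive point is the lattice identity $R\wedge(1-Q)=R-Q$ for $Q\le R$, and even that requires neither the trace nor the finiteness hypothesis — it holds for projections in any von Neumann algebra (indeed in $B(\HEu)$). The tracial state enters only through the invocation of Lemma~\ref{lem:tracevw} and the legitimacy of cancelling the finite quantities $\tau(Q)$ and $\tau(P\wedge Q)$.
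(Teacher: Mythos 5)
Your proof is correct, and it does take a genuinely different route from the paper. The paper applies Lemma~\ref{lem:tracevw} twice: once to $P,Q$ and once to $P\vee Q,\,1-Q$, using $\big(P\vee Q\big)\vee(1-Q)=1$ to simplify the second application, and then adds the two trace identities. You instead isolate the purely operator-level identity $(P\vee Q)\wedge(1-Q)=(P\vee Q)-Q$, which holds for projections $Q\le P\vee Q$ in any von Neumann algebra (or in $B(\HEu)$) with no trace in sight, and only then invoke Lemma~\ref{lem:tracevw} a single time. Your verification of that identity is sound: $R-Q$ is a projection subordinate to both $R$ and $1-Q$, and conversely any $S\le R\wedge(1-Q)$ satisfies $S(R-Q)=SR-SQ=S$, whence $S\le R-Q$ after taking adjoints. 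The benefit of your approach is that it exposes the result as a consequence of a stronger projection-level fact rather than a coincidence of traces, and it makes explicit exactly where finiteness of $\tau$ is used (only in the final cancellation). The paper's double application of Lemma~\ref{lem:tracevw} is more mechanical but slightly shorter to write down. Both are valid proofs of the same length in substance.
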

\begin{proof}
Applying Lemma~\ref{lem:tracevw} twice, we have
\begin{align*}
\tau&(P\wedge Q)+\tau\big((P\vee Q)\wedge(1-Q)\big) \\
&=\big(\tau(P)+\tau(Q)-\tau(P\vee Q)\big)+\big(\tau(P\vee Q)+\tau(1-Q)-\tau(P\vee Q\vee(1-Q)\big) \\
&=\big(\tau(P)+\tau(Q)-\tau(P\vee Q)\big)+\big(\tau(P\vee Q)-\tau(Q)\big)=\tau(P).
\end{align*}
\end{proof}

\begin{lemmasubs}\label{lem:PwQ}
Let $(\Mcal,\tau)$ be a tracial von Neumann algebra and suppose $P$ and $Q$ are projections in $\Mcal$ and that
$(P_n)_{n=1}^\infty$, $(Q_n)_{n=1}^\infty$ are sequences of projections in $\Mcal$
that converge to $P$ and $Q$, respectively, in $\|\cdot\|_2$.
Suppose that for every $n$, we have $P_n\le P$ and $Q_n\le Q$.
Then $P_n\wedge Q_n$ converges to $P\wedge Q$ in $\|\cdot\|_2$ as $n\to\infty$.
\end{lemmasubs}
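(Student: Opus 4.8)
The plan is to reduce the whole statement to a convergence of traces. First I would note that since $P_n\le P$ and $Q_n\le Q$ for every $n$, we automatically have $P_n\wedge Q_n\le P\wedge Q$, so that $R_n:=(P\wedge Q)-(P_n\wedge Q_n)$ is again a projection in $\Mcal$. Hence $\|R_n\|_2^2=\tau(R_n)=\tau(P\wedge Q)-\tau(P_n\wedge Q_n)$, and it suffices to prove that $\tau(P_n\wedge Q_n)\to\tau(P\wedge Q)$ as $n\to\infty$.

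Next I would express traces of meets in terms of traces of joins via Lemma~\ref{lem:tracevw}, writing $\tau(P_n\wedge Q_n)=\tau(P_n)+\tau(Q_n)-\tau(P_n\vee Q_n)$ and the analogous identity for $P$ and $Q$. Since $P_n\to P$ in $\|\cdot\|_2$ and $P_n$ is a self-adjoint idempotent, $\tau(P_n)=\|P_n\|_2^2\to\|P\|_2^2=\tau(P)$, and similarly $\tau(Q_n)\to\tau(Q)$. Thus the lemma will follow once we know $\tau(P_n\vee Q_n)\to\tau(P\vee Q)$.

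For this last convergence I would invoke Lemma~\ref{lem:PvQ}. Representing $\Mcal$ on $L^2(\Mcal,\tau)$, the hypotheses that $P_n\to P$ and $Q_n\to Q$ in $\|\cdot\|_2$, together with the fact that these are bounded sequences, give convergence in strong operator topology; and $P_n\le P$, $Q_n\le Q$ hold by assumption. So Lemma~\ref{lem:PvQ} yields $P_n\vee Q_n\to P\vee Q$ in strong operator topology, hence (again because the sequence is bounded) in $\|\cdot\|_2$, and therefore $\tau(P_n\vee Q_n)\to\tau(P\vee Q)$. Combining this with the two limits $\tau(P_n)\to\tau(P)$, $\tau(Q_n)\to\tau(Q)$ and the identities from Lemma~\ref{lem:tracevw} gives $\tau(P_n\wedge Q_n)\to\tau(P\wedge Q)$, whence $\|R_n\|_2\to 0$, i.e.\ $P_n\wedge Q_n\to P\wedge Q$ in $\|\cdot\|_2$.

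There is no real obstacle in this argument; the only point requiring a bit of care is the passage between strong-operator convergence and $\|\cdot\|_2$-convergence, needed both to feed the hypotheses of Lemma~\ref{lem:PvQ} (which is stated for projections in $B(\HEu)$) and to convert its conclusion back into a statement about $\tau$. One could instead prove the needed join-convergence directly by a distance argument in $L^2(\Mcal,\tau)$ mimicking the proof of Lemma~\ref{lem:PvQ}, but routing through that lemma is cleaner.
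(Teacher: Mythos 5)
Your proof is correct and is essentially the paper's proof: both rely on $P_n\wedge Q_n\le P\wedge Q$, reduce the problem to convergence of $\tau(P_n\wedge Q_n)$, obtain it from $\tau(P_n\vee Q_n)\to\tau(P\vee Q)$ via Lemma~\ref{lem:tracevw}, and get the latter from Lemma~\ref{lem:PvQ}. Your explicit remarks on passing between strong-operator and $\|\cdot\|_2$ convergence are just filling in a step the paper leaves tacit.
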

\begin{proof}
By Lemma~\ref{lem:PvQ}, we have $P_n\vee Q_n\to P\vee Q$, in strong operator topology, which implies
$\lim_{n\to\infty}\tau(P_n\vee Q_n)=\tau(P\vee Q)$.
From the identity
\[
\tau(P_n\wedge Q_n)=\tau(P_n)+\tau(Q_n)-\tau(P_n\vee Q_n),
\]
and likewise for $P$ and $Q$, we get $\lim_{n\to\infty}\tau(P_n\wedge Q_n)=\tau(P\wedge Q)$.
Since we have $P_n\wedge Q_n\le P\wedge Q$ for every $n$, this completes the proof.
\end{proof}

Lemma~\ref{lem:PwQ} and an induction argument immediately give the following generalisation.
\begin{lemmasubs}\label{lem:wedgePin}
Let $(\Mcal,\tau)$ be a tracial von Neumann algebra.
Let $I$ be a finite set and suppose that for every $i\in I$ and $n\ge1$, $P^{(i)}\in\Mcal$ is a projection
and $(P^{(i)}_n)_{n=1}^\infty$ is a sequence of projections in $\Mcal$ satisfying $P^{(i)}_n\le P^{(i)}$ for all $n$
and such that $P^{(i)}_n$ converges in $\|\cdot\|_2$ to $P^{(i)}$.
Then
$\bigwedge_{i\in I}P^{(i)}_n$
converges in $\|\cdot\|_2$ to $\bigwedge_{i\in I}P^{(i)}$ as $n\to\infty$.
\end{lemmasubs}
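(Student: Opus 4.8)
The plan is to induct on the cardinality of the finite index set $I$. When $|I|=1$ there is nothing to prove, and when $|I|=2$ the assertion is exactly Lemma~\ref{lem:PwQ}. So I would assume $|I|\ge 3$ and that the result has been established for all index sets of strictly smaller cardinality.

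For the inductive step, fix some $i_0\in I$ and put $I'=I\setminus\{i_0\}$. Introduce the abbreviations $Q_n=\bigwedge_{i\in I'}P^{(i)}_n$ and $Q=\bigwedge_{i\in I'}P^{(i)}$. Applying the induction hypothesis to the family $\big(P^{(i)},(P^{(i)}_n)_{n=1}^\infty\big)_{i\in I'}$ yields $Q_n\to Q$ in $\|\cdot\|_2$. One then checks the domination $Q_n\le Q$: since $P^{(i)}_n\le P^{(i)}$ for every $i\in I'$, and the meet of projections is monotone in each argument, the range of $Q_n$ is contained in the range of $Q$. Hence the two sequences $(Q_n)_n$ and $(P^{(i_0)}_n)_n$ converge in $\|\cdot\|_2$ to $Q$ and $P^{(i_0)}$ respectively, with $Q_n\le Q$ and $P^{(i_0)}_n\le P^{(i_0)}$ for all $n$.

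Now I would apply Lemma~\ref{lem:PwQ} to this pair, obtaining $Q_n\wedge P^{(i_0)}_n\to Q\wedge P^{(i_0)}$ in $\|\cdot\|_2$. Since $Q_n\wedge P^{(i_0)}_n=\bigwedge_{i\in I}P^{(i)}_n$ and $Q\wedge P^{(i_0)}=\bigwedge_{i\in I}P^{(i)}$, this is precisely the claim, which completes the induction. There is no real obstacle in this argument; the only point needing a moment's attention is the verification that $Q_n\le Q$, which is required to be in a position to invoke Lemma~\ref{lem:PwQ}, but this is immediate from monotonicity of meets. I note that the finiteness of $I$ is used only to guarantee that the induction terminates; an infinite version would need a separate argument, e.g., a uniform control of the traces $\tau\big(\bigwedge_{i\in I}P^{(i)}_n\big)$.
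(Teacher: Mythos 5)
Your argument is correct and is exactly the induction on $|I|$ that the paper has in mind; indeed the paper dispenses with a written proof, noting only that the lemma follows from Lemma~\ref{lem:PwQ} by induction, and your write-up simply supplies the straightforward details of that induction.
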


\begin{lemmasubs}\label{lem:pvqwedger}
Let $\Mcal$ be a von Neumann algebra with normal, faithful, tracial state $\tau$.
Suppose $P$, $Q$ and $R$ are projections in $\Mcal$ with $P\le R$.
Then
\[
(P\vee Q)\wedge R=P\vee(Q\wedge R).
\]
\end{lemmasubs}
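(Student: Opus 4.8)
The plan is to prove the nontrivial inequality $P\vee(Q\wedge R)\le(P\vee Q)\wedge R$ using only the order structure of the projection lattice, and then to show that the two sides have the same trace; since one dominates the other and $\tau$ is faithful, they must coincide. This is, of course, just the statement that the projection lattice of a tracial von Neumann algebra is modular, but it is convenient to give the short trace-based argument.

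First I would check the easy inclusion. Since $P\le R$ and $P\le P\vee Q$, we get $P\le(P\vee Q)\wedge R$; since $Q\wedge R\le R$ and $Q\wedge R\le Q\le P\vee Q$, we get $Q\wedge R\le(P\vee Q)\wedge R$. Taking the join of these two gives $P\vee(Q\wedge R)\le(P\vee Q)\wedge R$, with no use of the trace.

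Next I would compute both traces using Lemma~\ref{lem:tracevw}. For the left-hand side, observe that $P\le R$ forces $P\wedge(Q\wedge R)=P\wedge Q$ (indeed $P\wedge Q\le P\le R$ and $P\wedge Q\le Q$, so $P\wedge Q\le Q\wedge R$, and the reverse inclusion is trivial), whence
\[
\tau\big(P\vee(Q\wedge R)\big)=\tau(P)+\tau(Q\wedge R)-\tau(P\wedge Q).
\]
For the right-hand side, $P\le R$ forces $(P\vee Q)\vee R=Q\vee R$, so
\[
\tau\big((P\vee Q)\wedge R\big)=\tau(P\vee Q)+\tau(R)-\tau(Q\vee R);
\]
expanding $\tau(P\vee Q)=\tau(P)+\tau(Q)-\tau(P\wedge Q)$ and $\tau(Q\vee R)=\tau(Q)+\tau(R)-\tau(Q\wedge R)$ and cancelling, the right-hand side also equals $\tau(P)+\tau(Q\wedge R)-\tau(P\wedge Q)$. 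Thus $\tau\big(P\vee(Q\wedge R)\big)=\tau\big((P\vee Q)\wedge R\big)$, and combined with the inclusion from the first step and faithfulness of $\tau$ we conclude equality of the two projections.

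There is essentially no serious obstacle here. The only points requiring a moment's care are the two simplifications $P\wedge(Q\wedge R)=P\wedge Q$ and $(P\vee Q)\vee R=Q\vee R$, both immediate consequences of $P\le R$, and the observation that one is allowed to upgrade equality of traces to equality of the projections precisely because one of them is known a priori to dominate the other.
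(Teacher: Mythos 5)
Your proof is correct and follows essentially the same route as the paper's: establish the inequality $P\vee(Q\wedge R)\le(P\vee Q)\wedge R$ directly from the lattice order, then compute both traces via Lemma~\ref{lem:tracevw} (using $P\le R$ to simplify $P\wedge Q\wedge R=P\wedge Q$ and $P\vee Q\vee R=Q\vee R$) and conclude by faithfulness of $\tau$. The paper carries out the exact same computation, so there is nothing meaningfully different between the two arguments.
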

\begin{proof}
Let $e=(P\vee Q)\wedge R$ and $f=P\vee(Q\wedge R)$.
Since $P\le R$, we have $P\le e$.
Also, we clearly have $Q\wedge R\le e$.
Thus, we have $f\le e$.
Using $P\le R$ and Lemma~\ref{lem:tracevw}, we compute
\begin{align*}
\tau(e)&=\tau(P\vee Q)+\tau(R)-\tau(P\vee Q\vee R) 
=\tau(P\vee Q)+\tau(R)-\tau(Q\vee R) \\
&=\big(\tau(P)+\tau(Q)-\tau(P\wedge Q)\big)+\tau(R)-\big(\tau(Q)+\tau(R)-\tau(Q\wedge R)\big) \\
&=\tau(P)-\tau(P\wedge Q)+\tau(Q\wedge R)
\end{align*}
and also
\begin{align*}
\tau(f)&=\tau(P)+\tau(Q\wedge R)-\tau(P\wedge Q\wedge R) \\
&=\tau(P)+\tau(Q\wedge R)-\tau(P\wedge Q).
\end{align*}
Thus, we have $\tau(e)=\tau(f)$ and we conclude $e=f$.
\end{proof}

See Appendix~\ref{app:projs} for examples showing that the conclusions of Lemmas~\ref{lem:PwQ} and \ref{lem:pvqwedger}
can fail if we don't require existence of a finite trace.

\subsection{S.O.T.-quasinilpotent operators}
\label{subsec:sotqn}

We let $\Mcal$ act on $L^2(\Mcal,\tau)$ via the standard representation, and the standard embedding of $\Mcal$ into $L^2(\Mcal,\tau)$ will be denoted
$x\mapsto\xh$.

The following is part of Theorem~8.1 of~\cite{HS09}, by Haagerup and Schultz:
\begin{thmsubs}\label{thm:HSsot}
For any $T\in\Mcal$,
$|T^n|^{1/n}$ converges as $n\to\infty$ in strong operator topology.
Moreover, letting $A\ge0$ be the strong operator limit of the above sequence, 
for every $r>0$, the spectral projection $1_{[0,r]}(A)$ is equal to the Haagerup--Schultz projection $P(T,\overline{B_r(0)})$
of $T$ for the closed disk of radius $r$ centered at $0$.
\end{thmsubs}
Recall that an operator $T\in B(\HEu)$
is said to be s.o.t.-quasinilpotent if $|T^n|^{1/n}$ converges in strong operator topology to $0$ as $n\to\infty$.
Haagerup and Schultz~\cite{HS09} prove that $T\in\Mcal$ is s.o.t.-quasinilpotent if and only if $\nu_T=\delta_0$.
We will prove (and recall) some basic results about s.o.t.-quasinilpotent operators.

\begin{lemmasubs}\label{lem:sotqnequivs}
Suppose $A_n$ is a bounded sequence in $\Mcal$.
Then the following are equivalent:
\begin{enumerate}[(a)]
\item\label{it:An} $A_n$ converges in strong operator topology to $0$,
\item\label{it:|An|} $|A_n|$ converges in strong operator topology to $0$,
\item\label{it:somet} for some $p>0$, we have $\lim_{n\to\infty}\tau(|A_n|^p)=0$,
\item\label{it:allt} for every $p>0$, we have $\lim_{n\to\infty}\tau(|A_n|^p)=0$.
\end{enumerate}
\end{lemmasubs}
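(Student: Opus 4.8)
The plan is to establish the cycle of implications (a)$\Rightarrow$(b)$\Rightarrow$(c)$\Rightarrow$(d)$\Rightarrow$(a). Two background facts recalled in \S\ref{subsec:Notation} will do most of the work: first, for a bounded sequence in $\Mcal$, convergence to $0$ in strong operator topology is the same as convergence to $0$ in $\|\cdot\|_2$; second, $\|x\|_2^2=\tau(x^*x)=\tau(|x|^2)$ for every $x\in\Mcal$. Write $C=\sup_n\|A_n\|<\infty$; note that the sequence $(|A_n|)_{n=1}^\infty$ is then also bounded by $C$.

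For (a)$\Rightarrow$(b): since $|A_n|\ge0$ and $\tau(|A_n|^2)=\tau(A_n^*A_n)=\|A_n\|_2^2$, we have $\bigl\|\,|A_n|\,\bigr\|_2=\|A_n\|_2$. The hypothesis $A_n\to0$ strongly gives $\|A_n\|_2\to0$, hence $\bigl\|\,|A_n|\,\bigr\|_2\to0$, and since $(|A_n|)$ is bounded this gives $|A_n|\to0$ strongly. For (b)$\Rightarrow$(c): take $p=2$; then $\tau(|A_n|^2)=\bigl\|\,|A_n|\,\bigr\|_2^2\to0$ because $(|A_n|)$ is bounded and converges strongly to $0$. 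For (d)$\Rightarrow$(a): take $p=2$; then $\|A_n\|_2^2=\tau(|A_n|^2)\to0$, so $A_n\to0$ in $\|\cdot\|_2$ and hence strongly. Each of these is immediate.

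The only implication with any content is (c)$\Rightarrow$(d). Suppose $\tau(|A_n|^{p_0})\to0$ for some $p_0>0$, and fix an arbitrary $q>0$. If $q\ge p_0$, then inside the abelian von Neumann subalgebra of $\Mcal$ generated by $|A_n|$ we have $0\le|A_n|^q\le C^{\,q-p_0}|A_n|^{p_0}$ — this is just the scalar inequality $t^q\le C^{\,q-p_0}t^{p_0}$ for $0\le t\le C$ transported through the functional calculus — so $\tau(|A_n|^q)\le C^{\,q-p_0}\tau(|A_n|^{p_0})\to0$. If $q<p_0$, apply the monotonicity of noncommutative $L^r$-norms with respect to the tracial \emph{state} $\tau$ (equivalently Hölder's inequality, or concavity of $t\mapsto t^{q/p_0}$ applied to the restriction of $\tau$ to the abelian algebra generated by $|A_n|$) to get $\tau(|A_n|^q)=\tau\bigl((|A_n|^{p_0})^{q/p_0}\bigr)\le\tau(|A_n|^{p_0})^{q/p_0}\to0$. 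In either case $\tau(|A_n|^q)\to0$, which is (d); and (d)$\Rightarrow$(c) is trivial, so the cycle closes. I do not expect a genuine obstacle here: the single point requiring care is invoking the correct $L^p$-comparison inequalities for $\tau$, and this is precisely where the uniform boundedness of $(A_n)$ and the finiteness (indeed normalization) of $\tau$ enter.
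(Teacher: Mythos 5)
Your proof is correct and follows essentially the same route as the paper's: you split the interesting implication (c)$\Rightarrow$(d) into the two cases according to whether the exponent increases (handled by the scalar inequality $t^q\le C^{q-p_0}t^{p_0}$ through the functional calculus) or decreases (handled by monotonicity of $\tau$-norms for a state), which is exactly the paper's argument, and the remaining implications are dispatched via the identity $\|A\|_2^2=\tau(|A|^2)$ and the equivalence of s.o.t.\ and $\|\cdot\|_2$-convergence on bounded sets recalled in \S\ref{subsec:Notation}. The only cosmetic difference is that the paper proves (a)$\Leftrightarrow$(b) directly via polar decomposition rather than passing through $\|\cdot\|_2$, but the underlying computation $\|A_n\xi\|=\||A_n|\xi\|$ is the same.
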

\begin{proof}
The equivalence \eqref{it:An}$\Leftrightarrow$\eqref{it:|An|} follows by considering the polar decompositions of the $A_n$.
The implication \eqref{it:An}$\Rightarrow$\eqref{it:somet} follows because $\|A_n\oneh\|^2=\tau(|A_n|^2)$, so from~\eqref{it:An}
we get $\lim_{n\to\infty}\tau(|A_n|^2)=0$.

Let us show \eqref{it:somet}$\Rightarrow$\eqref{it:allt}. We may, without loss of generality, assume $\|A_n\|\le1$ for all $n$.
Suppose $\|A_n\|_p\to0$ as $n\to\infty$.
If $r<p,$ then $\|A_n\|_r\leq\|A_n\|_p$ for all $n$ and, therefore, $\|A_n\|_r\to0$ as $n\to\infty$.
If $p<r,$ then,
because $\|A_n\|\le1$, by the continuous functional calculus, we have $|A_n|^r\le|A_n|^p$.
Hence, $\tau(|A_n|^r)\to0$
as $n\to\infty.$

The implication~\eqref{it:allt}$\Rightarrow$\eqref{it:An} is standard.
Taking $p=2$ and using $\|A_n\oneh\|^2=\tau(|A_n|^2)$, we conclude $\lim_{n\to\infty}\|A_n\oneh\|=0$.
Since the image of $\oneh$ under that action of the commutant of $\Mcal$ is dense in $L^2(\Mcal,\tau)$,
we get $\lim_{n\to\infty}\|A_nv\|=0$ for all $v$ in a dense subspace of $L^2(\Mcal,\tau)$.
Since the sequence $A_n$ is bounded in norm, we conclude~\eqref{it:An}.
\end{proof}

\begin{lemmasubs}\label{lem:sotqnsubseq}
Take positive integers $n(1)<n(2)<\cdots$ such that for some $p\in(0,1)$ and all $k\ge1$, the inequality $n(k+1)<n(k)/p$ holds.
Suppose $T\in\Mcal$ satisfies that $|T^{n(k)}|^{1/n(k)}$ converges in strong operator topology to $0$ as $k\to\infty$.
Then $T$ is s.o.t.-quasinilpotent.
\end{lemmasubs}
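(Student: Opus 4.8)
The plan is to observe first that, given Theorem~\ref{thm:HSsot}, the conclusion is essentially immediate: that theorem already guarantees that the \emph{whole} sequence $|T^n|^{1/n}$ converges in strong operator topology, say to a positive $A\in\Mcal$ with $\|A\|\le\|T\|$. Any subsequence of a strongly convergent bounded sequence converges to the same limit, so the hypothesis that $|T^{n(k)}|^{1/n(k)}\to 0$ forces $A=0$, i.e.\ $T$ is s.o.t.-quasinilpotent. (Note that for this argument the growth restriction $n(k+1)=O(n(k))$ is not used at all.)

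Since it may be preferable not to invoke the full strength of Theorem~\ref{thm:HSsot} here, I would also record a self-contained argument. First, replacing $T$ by $cT$ for suitable $c>0$ — which multiplies every $|T^n|^{1/n}$ by $c$ and so changes nothing — we may assume $\|T\|\le 1$. By Lemma~\ref{lem:sotqnequivs} applied to the bounded sequence $A_m=|T^m|^{1/m}$, it suffices to prove $\tau\bigl(|T^m|^{2/m}\bigr)\to 0$ as $m\to\infty$. Given a large $m$, let $k=k(m)$ be the largest index with $n(k)\le m$ and write $m=q\,n(k)+r$ with $q\ge 1$ and $0\le r<n(k)$; then $m<(q+1)n(k)\le 2q\,n(k)$, so $2q/m\ge 1/n(k)$. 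Using $T^m=(T^{n(k)})^q\,T^r$ together with the elementary bound $\tau(|XY|^s)\le\|Y\|^s\tau(|X|^s)$ (and $\|T^r\|\le\|T\|^r\le 1$), the Weyl-type inequality $\tau(|X^q|^s)\le\tau(|X|^{qs})$, and finally $2q/m\ge 1/n(k)$ with $\|T^{n(k)}\|\le 1$, one gets
\[
\tau\bigl(|T^m|^{2/m}\bigr)\;\le\;\tau\bigl(|(T^{n(k)})^q|^{2/m}\bigr)\;\le\;\tau\bigl(|T^{n(k)}|^{2q/m}\bigr)\;\le\;\tau\bigl(|T^{n(k)}|^{1/n(k)}\bigr).
\]
As $m\to\infty$ we have $k(m)\to\infty$, and $\tau\bigl(|T^{n(k)}|^{1/n(k)}\bigr)\to 0$ by the hypothesis and Lemma~\ref{lem:sotqnequivs}; hence $\tau\bigl(|T^m|^{2/m}\bigr)\to 0$, and Lemma~\ref{lem:sotqnequivs} then gives $|T^m|^{1/m}\to 0$ in strong operator topology, as required.

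I expect no serious obstacle: via Theorem~\ref{thm:HSsot} the result is a one-line statement about limits of subsequences, and the self-contained version needs only the bookkeeping of the Euclidean division $m=q\,n(k)+r$ plus two standard singular-number inequalities. The one point to watch is keeping the exponent $2q/m$ comparable to $1/n(k)$; in the chain above this is automatic from $m<2q\,n(k)$, but a cruder expansion of $|T^m|$ would instead need to bound the (a priori $k$-dependent) multiplicity $q$ by the constant implicit in $n(k+1)=O(n(k))$ — which is presumably why that hypothesis is stated.
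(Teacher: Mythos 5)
Both of your arguments are correct, and both are genuinely different from the paper's proof.

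Your first argument --- observing that Theorem~\ref{thm:HSsot} already gives strong-operator convergence of the \emph{full} sequence $|T^n|^{1/n}$, so a subsequence converging to $0$ forces the limit to be $0$ --- is the shortest possible proof and, as you note, does not use the hypothesis $n(k+1)=O(n(k))$ at all. The paper does not take this route. Its proof instead establishes the operator inequality $(T^*)^nT^n\le(T^*)^{n(k)}T^{n(k)}$ for $n(k)\le n\le n(k+1)$ (which follows from writing $(T^*)^nT^n=(T^*)^{n(k)}|T^{\,n-n(k)}|^2T^{n(k)}$ and using $\|T\|\le1$), takes the $(2n)$-th root by operator monotonicity of $s\mapsto s^{1/(2n)}$ to get $|T^n|^{1/n}\le\bigl(|T^{n(k)}|^{1/n(k)}\bigr)^{n(k)/n}$, and then weakens the exponent $n(k)/n$ to the constant $p$ from the growth hypothesis $n(k)>p\,n(k+1)$ before taking traces and invoking Lemma~\ref{lem:sotqnequivs}. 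The growth condition enters precisely at that exponent step; it is stated because it is exactly what is available in the application (Lemma~\ref{lem:sumqn} uses $n(k)=2k$). Your shortcut, by contrast, proves the slightly stronger statement with no growth assumption, at the cost of invoking the Haagerup--Schultz convergence theorem.

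Your second, ``self-contained'' argument is also correct, but I would push back a bit on describing it as needing only bookkeeping plus two standard singular-number inequalities. The bound $\tau(|XY|^s)\le\|Y\|^s\tau(|X|^s)$ is indeed routine from $\mu_t(XY)\le\|Y\|\mu_t(X)$, but the Weyl-type inequality $\tau(|X^q|^s)\le\tau(|X|^{qs})$ for $0<s\le1$ is a genuinely nontrivial fact: in the finite von Neumann setting it rests on the Horn log-majorization of generalized singular values for products, together with the convexity argument (applied to $t\mapsto e^{st}$) that converts log-majorization into comparison of $\tau(|\cdot|^s)$. That is a heavier tool than anything the paper uses here, whose proof needs only the elementary two-line operator inequality above plus operator monotonicity of fractional powers. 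Your Euclidean division $m=q\,n(k)+r$ is a nice device that makes the exponent comparison $2q/m\ge1/n(k)$ automatic without the growth hypothesis, but it is a different mechanism from the paper's pointwise comparison of $n(k)/n$ with $p$, not a variant of it.
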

\begin{proof}
We may, without loss of generality, assume $\|T\|\le1$.
Suppose $n(k)\le n\le n(k+1)$.
Then
\begin{multline*}
(T^*)^nT^n=(T^*)^{n(k)}(T^*)^{n-n(k)}T^{n-n(k)}T^{n(k)} \\
\leq\|T^{n-n(k)}\|^2(T^*)^{n(k)}T^{n(k)}\leq(T^*)^{n(k)}T^{n(k)}.
\end{multline*}
Since the function $s\mapsto s^{1/2n}$ is operator monotone, we have
\[
|T^n|^{\frac1n}=\big((T^*)^nT^n\big)^{\frac1{2n}}\le\big((T^*)^{n(k)}T^{n(k)}\big)^{\frac1{2n}}=|T^{n(k)}|^{\frac1n}=\left(|T^{n(k)}|^{\frac1{n(k)}}\right)^{\frac{n(k)}{n}}.
\]
Then, since $\|T\|\le1,$ we have
\[
\left(|T^{n(k)}|^{\frac1{n(k)}}\right)^{\frac{n(k)}{n}}\le\left(|T^{n(k)}|^{\frac1{n(k)}}\right)^{\frac{n(k)}{n(k+1)}}\le\left(|T^{n(k)}|^{\frac1{n(k)}}\right)^p.
\]
By hypothesis and Lemma~\ref{lem:sotqnequivs}, we have
\[
\lim_{k\to\infty}\tau(\left(|T^{n(k)}|^{\frac1{n(k)}}\right)^p)=0.
\]
Thus, we get $\lim_{n\to\infty}\tau(|T^n|^{\frac1n})=0$.
Using Lemma~\ref{lem:sotqnequivs}, we conclude that $T$ is s.o.t.-quasinilpotent.
\end{proof}

The next lemma follows from Lemma~3 of~\cite{DSZ15b}.
\begin{lemmasubs}\label{lem:prodqn}
If $A,T\in\Mcal$ commute and if $T$ is s.o.t.-quasinilpotent, then $AT$ is s.o.t.-quasinilpotent.
\end{lemmasubs}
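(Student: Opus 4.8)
The plan is to dominate $|(AT)^n|$ by $|T^n|$ through an operator inequality, and then transfer strong-operator convergence using Lemma~\ref{lem:sotqnequivs}. Since the case $A=0$ is trivial and replacing $A$ by a positive scalar multiple of itself changes neither the hypothesis nor the conclusion, I would first reduce to the case $\|A\|\le 1$.

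Because $A$ and $T$ commute, $(AT)^n=A^nT^n$, and hence
\[
\big((AT)^n\big)^*(AT)^n=(T^*)^n(A^n)^*A^nT^n\le(T^*)^nT^n,
\]
where the inequality uses $(A^n)^*A^n\le\|A^n\|^2\le\|A\|^{2n}\le 1$ together with the elementary fact that $X^*BX\le X^*X$ whenever $0\le B\le 1$ (apply this with $X=T^n$ and $B=(A^n)^*A^n$). In other words, $|(AT)^n|^2\le|T^n|^2$. Since the function $s\mapsto s^{1/(2n)}$ is operator monotone, it follows that
\[
|(AT)^n|^{\frac1n}=\big(|(AT)^n|^2\big)^{\frac1{2n}}\le\big(|T^n|^2\big)^{\frac1{2n}}=|T^n|^{\frac1n}.
\]

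Now, since $T$ is s.o.t.-quasinilpotent, Lemma~\ref{lem:sotqnequivs} gives $\lim_{n\to\infty}\tau(|T^n|^{1/n})=0$, so the previous inequality yields $0\le\tau(|(AT)^n|^{1/n})\le\tau(|T^n|^{1/n})\to 0$. As the sequence $|(AT)^n|^{1/n}$ is bounded in norm (by $\|AT\|$), a second application of Lemma~\ref{lem:sotqnequivs} shows that $|(AT)^n|^{1/n}$ converges to $0$ in strong operator topology, i.e., $AT$ is s.o.t.-quasinilpotent. I do not expect any serious obstacle here: the only step requiring care is the domination $|(AT)^n|\le|T^n|$, which relies on the commutation (so that $(AT)^n=A^nT^n$) and on the inequality $X^*BX\le X^*X$ for contractions $B$; everything else is bookkeeping via Lemma~\ref{lem:sotqnequivs}.
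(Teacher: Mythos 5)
Your proof is correct. The paper itself does not give a direct argument here; it simply cites Lemma~3 of~\cite{DSZ15b}. You supply a self-contained proof, and the method you use — normalize $A$ to a contraction, use commutativity to write $(AT)^n=A^nT^n$, dominate $|(AT)^n|^2=(T^*)^n(A^n)^*A^nT^n\le(T^*)^nT^n=|T^n|^2$ via the inequality $X^*BX\le X^*X$ for $0\le B\le 1$, apply operator monotonicity of $s\mapsto s^{1/(2n)}$, and then transfer through Lemma~\ref{lem:sotqnequivs} — is exactly the same toolkit the paper employs for the neighboring results (Lemmas~\ref{lem:sotqnsubseq} and~\ref{lem:sumqn}). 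So while it is a "different route" from the paper only in the trivial sense that the paper doesn't give a proof at all, your argument is sound and fits the paper's methodology. The scaling reduction is fine too: replacing $A$ by $cA$ with $c>0$ scales $|(AT)^n|^{1/n}$ by $c$, so the s.o.t.-quasinilpotence property is preserved in both directions. What your approach buys is a transparent, citation-free proof, which makes the s.o.t.-quasinilpotent preliminaries (\S\ref{subsec:sotqn}) self-contained.
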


\begin{lemmasubs}\label{lem:sumqn}
Suppose $T_1,T_2\in\Mcal$ commute and are both s.o.t.-quasinilpotent.
Then $T_1+T_2$ is s.o.t.-quasinilpotent.
\end{lemmasubs}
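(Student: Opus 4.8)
The plan is to reduce, via the two lemmas just proved, to a statement about traces of powers, and then to control $(T_1+T_2)^{2m}$ by a binomial expansion in which the (exponentially large) binomial coefficients are tamed by the $\frac1{2m}$-th root appearing in the quasinilpotence condition. First I would normalize so that $\|T_1\|,\|T_2\|\le1$. Since the sequence $|(T_1+T_2)^{2m}|^{1/2m}$ is norm-bounded, Lemma~\ref{lem:sotqnequivs} shows it converges to $0$ in strong operator topology as soon as $\tau\big(|(T_1+T_2)^{2m}|^{1/2m}\big)\to0$; and by Lemma~\ref{lem:sotqnsubseq}, applied with $n(k)=2k$ (so that $n(k+1)\le2n(k)$), this already implies $T_1+T_2$ is s.o.t.-quasinilpotent. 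So it suffices to prove $\tau\big(|(T_1+T_2)^{2m}|^{1/2m}\big)\to0$. Using $T_1T_2=T_2T_1$, write $(T_1+T_2)^{2m}=\sum_{k=0}^{2m}\binom{2m}{k}T_1^kT_2^{2m-k}$, and — this is the key bookkeeping step — collect the $2m+1$ summands into exactly \emph{two} groups: for $k\ge m$ factor $T_1^m$ out of the term, and for $k<m$ (so that $2m-k>m$) factor out $T_2^m$. This produces
\[
(T_1+T_2)^{2m}=T_1^mA_m+T_2^mB_m,
\]
where $A_m=\sum_{k=m}^{2m}\binom{2m}{k}T_1^{k-m}T_2^{2m-k}$ and $B_m=\sum_{k=0}^{m-1}\binom{2m}{k}T_1^kT_2^{m-k}$; since $\|T_1\|,\|T_2\|\le1$, both satisfy $\|A_m\|,\|B_m\|\le\sum_{k=0}^{2m}\binom{2m}{k}=4^m$.

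Next I would use two standard facts about the functional $X\mapsto\tau(|X|^r)$ for $0<r\le1$ on $(\Mcal,\tau)$: it is subadditive, $\tau(|X+Y|^r)\le\tau(|X|^r)+\tau(|Y|^r)$; and $\tau(|XY|^r)\le\|Y\|^r\tau(|X|^r)$. (The second follows by writing $|XY|^2=Y^*|X|^2Y=Z^*Z$ with $Z=|X|Y$, using $\tau\big((Z^*Z)^{r/2}\big)=\tau\big((ZZ^*)^{r/2}\big)$, the operator inequality $ZZ^*=|X|\,YY^*|X|\le\|Y\|^2|X|^2$, and operator monotonicity of $t\mapsto t^{r/2}$.) Taking $r=\frac1{2m}$, these give
\begin{align*}
\tau\big(|(T_1+T_2)^{2m}|^{1/2m}\big)
&\le\|A_m\|^{1/2m}\,\tau\big(|T_1^m|^{1/2m}\big)+\|B_m\|^{1/2m}\,\tau\big(|T_2^m|^{1/2m}\big)\\
&\le2\,\tau\big(|T_1^m|^{1/2m}\big)+2\,\tau\big(|T_2^m|^{1/2m}\big),
\end{align*}
where the last line uses $\|A_m\|^{1/2m},\|B_m\|^{1/2m}\le(4^m)^{1/2m}=2$. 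Finally, for $i=1,2$ the sequence $|T_i^m|^{1/m}$ is norm-bounded and converges to $0$ in strong operator topology by hypothesis, so Lemma~\ref{lem:sotqnequivs} (with $p=\frac12$) gives $\tau\big(|T_i^m|^{1/2m}\big)=\tau\big((|T_i^m|^{1/m})^{1/2}\big)\to0$. Hence the right-hand side above tends to $0$, which completes the argument.

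I expect the only real obstacle to be getting the binomial bookkeeping right: one must split into two groups, not estimate term by term. A term-by-term bound would leave a factor of $2m+1$ multiplying $\tau(|T_i^m|^{1/2m})$, and since s.o.t.-quasinilpotence supplies no rate of decay for this quantity, that factor cannot be absorbed. Collecting the terms into two groups instead replaces it by the harmless absolute constant $2=(4^m)^{1/2m}$. The two $L^r$-quasinorm inequalities and the applications of Lemmas~\ref{lem:sotqnequivs} and~\ref{lem:sotqnsubseq} are routine.
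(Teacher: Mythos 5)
Your proof is correct and follows essentially the same strategy as the paper's: normalize so $\|T_i\|\le1$, group the binomial expansion of $(T_1+T_2)^{2m}$ into two summands so that the $\frac1{2m}$-th power of $4^m$ is the harmless constant $2$, estimate with the quasinorm triangle inequality for $p\le1$, and finish with Lemmas~\ref{lem:sotqnequivs} and~\ref{lem:sotqnsubseq}. The only differences are cosmetic — you factor $T_i^m$ out on the left where the paper factors it out on the right, and you supply an elementary proof of $\tau(|XY|^r)\le\|Y\|^r\tau(|X|^r)$ where the paper simply invokes the Fack--Kosaki inequality.
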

\begin{proof}
We will prove $\lim_{n\to\infty}\tau(|(T_1+T_2)^{2n}|^{\frac1{2n}})=0$, which by Lemmas~\ref{lem:sotqnequivs} and~\ref{lem:sotqnsubseq}
will imply that $T_1+T_2$ is s.o.t.-quasinilpotent.
Without loss of generality, we assume $\|T_j\|\le1$ ($j=1,2$).

By commutativity, we have
$$(T_1+T_2)^{2n}=A_nT_1^n+B_nT_2^n,$$
where
$$A_n=\sum_{k=n+1}^{2n}\binom{2n}{k}T_1^{k-n}T_2^{2n-k},\qquad B_n=\sum_{k=0}^n\binom{2n}{k}T_1^kT_2^{n-k}.$$
Clearly,
$$\tau(|(T_1+T_2)^{2n}|^{\frac1{2n}})=\|(T_1+T_2)^{2n}\|_{\frac1{2n}}^{\frac1{2n}}=\|A_nT_1^n+B_nT_2^n\|_{\frac1{2n}}^{\frac1{2n}}.$$
Using the standard inequality
$$\|X+Y\|_p^p\leq\|X\|_p^p+\|Y\|_p^p$$
which is valid for every $p\leq 1$ and every $X,Y\in\Mcal$ (see~\cite{FK86}, Theorem 4.9),
we get
$$\tau(|(T_1+T_2)^{2n}|^{\frac1{2n}})\leq\|A_nT_1^n\|_{\frac1{2n}}^{\frac1{2n}}+\|B_nT_2^n\|_{\frac1{2n}}^{\frac1{2n}}.$$
Since $\|A_n\|\leq 2^{2n}$ and $\|B_n\|\leq 2^{2n},$ we get
$$\tau(|(T_1+T_2)^{2n}|^{\frac1{2n}})\leq 2\|T_1^n\|_{\frac1{2n}}^{\frac1{2n}}+2\|T_2^n\|_{\frac1{2n}}^{\frac1{2n}}
=2\tau(|T_1^n|^{\frac1{2n}})+2\tau(|T_2^n|^{\frac1{2n}}).$$
Since $T_1$ and $T_2$ are s.o.t.-quasi-nilpotent, using Lemma~\ref{lem:sotqnequivs} it follows that the right hand side goes to $0$ as $n\to\infty$.
Applying Lemma~\ref{lem:sotqnsubseq} completes the proof.
\end{proof}

Combining Lemmas~\ref{lem:prodqn} and~\ref{lem:sumqn}, we get the following.
\begin{propsubs}\label{prop:polyqn}
Suppose $n\in\Nats$ and $T_1,\ldots,T_n\in\Mcal$ are commuting s.o.t.-quasinilpotent operators.
Suppose $f$ is a polynomial in $n$ commuting variables so that $f(0,\ldots,0)=0$.
Then $f(T_1,\ldots,T_n)$ is s.o.t.-quasinilpotent.
\end{propsubs}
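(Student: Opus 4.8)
The plan is to reduce the claim about a general polynomial $f$ with $f(0,\ldots,0)=0$ to the two operations already handled: products (Lemma~\ref{lem:prodqn}) and sums (Lemma~\ref{lem:sumqn}) of commuting s.o.t.-quasinilpotent operators. Write $f$ as a $\Cpx$-linear combination of monomials, $f(x_1,\ldots,x_n)=\sum_{\alpha} c_\alpha x^\alpha$, where the sum runs over a finite set of multi-indices $\alpha=(\alpha_1,\ldots,\alpha_n)$, and because $f(0,\ldots,0)=0$ every monomial appearing has $|\alpha|=\alpha_1+\cdots+\alpha_n\ge 1$; that is, each monomial involves at least one of the variables.

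First I would treat a single monomial $c_\alpha T_1^{\alpha_1}\cdots T_n^{\alpha_n}$. Fix an index $j$ with $\alpha_j\ge 1$. Then $T_j$ is s.o.t.-quasinilpotent by hypothesis, and the remaining factor $A := c_\alpha T_1^{\alpha_1}\cdots T_j^{\alpha_j-1}\cdots T_n^{\alpha_n}$ lies in the commutative algebra generated by $T_1,\ldots,T_n$, hence commutes with $T_j$; by Lemma~\ref{lem:prodqn}, $A T_j$ is s.o.t.-quasinilpotent. (If one prefers, iterate Lemma~\ref{lem:prodqn} to peel off the factors one at a time; the point is only that a commuting product in which at least one factor is s.o.t.-quasinilpotent is again s.o.t.-quasinilpotent, and scalar multiples are harmless.) Thus every monomial term of $f(T_1,\ldots,T_n)$ is s.o.t.-quasinilpotent.

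Next I would assemble the monomials. All the monomial operators just constructed lie in the commutative algebra generated by $T_1,\ldots,T_n$, so they pairwise commute. Applying Lemma~\ref{lem:sumqn} inductively on the (finite) number of monomials — at each step the partial sum is s.o.t.-quasinilpotent and commutes with the next monomial, since both lie in that commutative algebra — we conclude that $f(T_1,\ldots,T_n)$, the sum of all the monomial terms, is s.o.t.-quasinilpotent. This completes the proof.

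There is essentially no obstacle here: the only things to be careful about are (i) recording that $f(0,\ldots,0)=0$ forces every monomial to contain a genuine variable, so Lemma~\ref{lem:prodqn} applies with $T_j$ as the quasinilpotent factor, and (ii) checking commutativity at each application of Lemmas~\ref{lem:prodqn} and~\ref{lem:sumqn}, which is automatic because everything in sight is a polynomial in the commuting tuple $T_1,\ldots,T_n$. So the proof is a short bookkeeping argument combining the two preceding lemmas, exactly as the sentence preceding the statement ("Combining Lemmas~\ref{lem:prodqn} and~\ref{lem:sumqn}") indicates.
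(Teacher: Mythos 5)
Your proof is correct and is essentially the same argument the paper intends: the paper simply states the result as an immediate consequence of Lemmas~\ref{lem:prodqn} and~\ref{lem:sumqn} ("Combining Lemmas~\ref{lem:prodqn} and~\ref{lem:sumqn}, we get the following"), and your write-up just makes the routine bookkeeping explicit — peeling off a genuine variable from each monomial (possible because $f(0,\ldots,0)=0$) to apply the product lemma, then summing via the commuting-sum lemma.
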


\subsection{Haagerup--Schultz projections}
\label{subsec:HS}

For $T\in\Mcal$ and for a Borel set $B\subseteq\Cpx$,
the Haagerup--Schultz projection $P(T,B)$ is the unique $T$-invariant projection $Q$ with the property that that
the Brown measure $\nu_{TQ}^{(Q)}$ is concentrated in $B$ and the Brown measure $\nu_{(1-Q)T}^{(1-Q)}$ is concentrated in $B^c$.
It is also characterized as the largest $T$-invariant projection $Q$ such that 
the Brown measure $\nu_{TQ}^{(Q)}$ is concentrated in $B$.

The following is a basic fact about Brown measure, proved in~\cite{B86}.

\begin{propsubs}\label{prop:nuTQ+}
Let $T\in\Mcal$ and let $Q\in\Mcal$ be a $T$-invariant projection. If $Q\neq 0,1,$ then
\[
\nu_T=\tau(Q)\nu_{TQ}^{(Q)}+\tau(1-Q)\nu_{(1-Q)T}^{(1-Q)},
\]
where the Brown measures $\nu_{TQ}^{(Q)}$ and $\nu_{(1-Q)T}^{(1-Q)}$ are computed in the algebras $Q\Mcal Q$ and, respectively,
$(1-Q)\Mcal(1-Q)$.
\end{propsubs}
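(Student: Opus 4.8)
The plan is to work with the logarithmic potential of the Brown measure. Recall (this is Brown's definition) that for $S\in\Mcal$ the function
\[
L_S(\lambda)\;=\;\tau\big(\log|S-\lambda|\big)\;=\;\log\Delta(S-\lambda),\qquad\lambda\in\Cpx,
\]
where $\Delta$ denotes the Fuglede--Kadison determinant of $(\Mcal,\tau)$, is subharmonic and locally integrable on $\Cpx$, and $\nu_S=\frac1{2\pi}\nabla^2 L_S$ in the sense of distributions; the same holds in the corner algebras $Q\Mcal Q$ and $(1-Q)\Mcal(1-Q)$ equipped with the normalized traces $\tau_Q=\tau(Q)^{-1}\tau|_{Q\Mcal Q}$ and $\tau_{1-Q}$, so that in particular $\int_\Cpx\log|z-\lambda|\,d\nu^{(Q)}_{TQ}(z)=\tau_Q(\log|TQ-\lambda Q|)$ and likewise for the other corner. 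Since $\nabla^2$ is linear, it suffices to prove the scalar potential identity
\[
\tau\big(\log|T-\lambda|\big)\;=\;\tau(Q)\,\tau_Q\big(\log|TQ-\lambda Q|\big)\;+\;\tau(1-Q)\,\tau_{1-Q}\big(\log|(1-Q)T-\lambda(1-Q)|\big),\qquad\lambda\in\Cpx,
\]
and then apply $\frac1{2\pi}\nabla^2$ to both sides.

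The next step is to exploit the block structure forced by $T$-invariance of $Q$. The relation $(1-Q)TQ=0$ says precisely that, with respect to the decomposition $1=Q+(1-Q)$, the operator $T-\lambda$ is upper triangular, with diagonal corners $A_\lambda:=Q(T-\lambda)Q=TQ-\lambda Q\in Q\Mcal Q$ and $D_\lambda:=(1-Q)(T-\lambda)(1-Q)=(1-Q)T-\lambda(1-Q)\in(1-Q)\Mcal(1-Q)$ (here using $QTQ=TQ$ and $(1-Q)T(1-Q)=(1-Q)T$). In these terms the potential identity is exactly the multiplicativity of the Fuglede--Kadison determinant across an invariant projection,
\[
\Delta(T-\lambda)\;=\;\Delta_{Q\Mcal Q}(A_\lambda)^{\tau(Q)}\;\Delta_{(1-Q)\Mcal(1-Q)}(D_\lambda)^{\tau(1-Q)}.
\]
For $\lambda$ with $T-\lambda$ invertible this is elementary: $A_\lambda$ and $D_\lambda$ are then invertible in their corner algebras, $(T-\lambda)^{-1}$ is again upper triangular with corners $A_\lambda^{-1}$ and $D_\lambda^{-1}$, and the formula follows from $\Delta(XY)=\Delta(X)\Delta(Y)$ together with the behaviour of $\Delta$ under compression (for instance by factoring $T-\lambda$ as $(A_\lambda\oplus D_\lambda)$ times a unipotent upper-triangular operator, the latter of determinant $1$). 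The remaining values of $\lambda$ are obtained by a standard approximation argument, replacing $|T-\lambda|$, $|A_\lambda|$, $|D_\lambda|$ by the positive invertible operators $|T-\lambda|+\eps$, $|A_\lambda|+\eps$, $|D_\lambda|+\eps$ and letting $\eps\downarrow0$, using normality of $\tau$ and monotone convergence for $\log$. Granting the identity, applying $\frac1{2\pi}\nabla^2$ and using linearity gives $\nu_T=\tau(Q)\nu^{(Q)}_{TQ}+\tau(1-Q)\nu^{(1-Q)}_{(1-Q)T}$; note the right-hand side is automatically a probability measure, since $\tau(Q)+\tau(1-Q)=1$.

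The only genuine obstacle is the passage from the invertible case to all $\lambda$ in the determinant identity, i.e.\ precisely the bookkeeping where $\log$ can take the value $-\infty$; the approximation argument indicated above is the standard route past it, and this multiplicativity of the Fuglede--Kadison determinant across an invariant projection is exactly the fact recorded in \cite{B86}.
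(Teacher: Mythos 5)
The reduction to the Fuglede--Kadison determinant identity and the treatment of the invertible case are both fine, but the limiting step is a genuine gap, not bookkeeping. You propose to pass from the invertible case to general $\lambda$ by replacing $|T-\lambda|$, $|A_\lambda|$, $|D_\lambda|$ with $|T-\lambda|+\eps$, $|A_\lambda|+\eps$, $|D_\lambda|+\eps$ and letting $\eps\downarrow0$. Monotone convergence does give
\[
\tau(\log|T-\lambda|)=\lim_{\eps\downarrow0}\tau\big(\log(|T-\lambda|+\eps)\big)
\]
and likewise for the two corners, but the $\eps$-regularized quantities do \emph{not} satisfy the additive identity: the factorization $T-\lambda=(A_\lambda\oplus D_\lambda)(1+A_\lambda^{-1}B)$ relates the three unperturbed absolute values, and perturbing each of them by $+\eps$ independently destroys that algebraic relation. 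So you have three separate limits of three separate quantities, with no equation connecting them at positive $\eps$; the approximation as stated proves nothing for $\lambda$ in the spectrum of $T$, and since the spectrum can have positive planar measure there is also no way to recover the distributional Laplacian there by knowing the potential only off the spectrum. The closing appeal to ``the fact recorded in \cite{B86}'' does not help, since that fact \emph{is} the proposition being proved.

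The gap can be closed cleanly, and in fact more simply than the invertible-case argument you gave, by choosing a factorization that never inverts a corner. With respect to $Q\oplus(1-Q)$ write $T-\lambda=\left(\begin{smallmatrix}A&B\\0&D\end{smallmatrix}\right)$, and observe
\[
\begin{pmatrix}A&B\\0&D\end{pmatrix}
=\begin{pmatrix}1&0\\0&D\end{pmatrix}\begin{pmatrix}1&B\\0&1\end{pmatrix}\begin{pmatrix}A&0\\0&1\end{pmatrix}.
\]
The middle factor is $1+N$ with $N^2=0$, so $\Delta(1+N)=1$ (because $\Delta((1+N)^n)=\Delta(1+N)^n$ while $\|(1+N)^n\|=\|1+nN\|$ grows only linearly, and the same applied to $(1+N)^{-1}=1-N$ rules out $\Delta(1+N)<1$). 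The outer factors are block diagonal, and a direct computation of $\tau(\log|\cdot|)$ gives $\Delta\!\left(\begin{smallmatrix}1&0\\0&D\end{smallmatrix}\right)=\Delta_{(1-Q)\Mcal(1-Q)}(D)^{\tau(1-Q)}$ and $\Delta\!\left(\begin{smallmatrix}A&0\\0&1\end{smallmatrix}\right)=\Delta_{Q\Mcal Q}(A)^{\tau(Q)}$, with no invertibility needed. What you then need is multiplicativity of the Fuglede--Kadison determinant for arbitrary (not necessarily invertible) elements of $\Mcal$; that is a genuine theorem but a standard one, and invoking it is honest, whereas invoking ``the multiplicativity across an invariant projection'' is circular. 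With that input, the determinant identity holds for every $\lambda$ at once and the rest of your argument (take $\frac1{2\pi}\nabla^2$) goes through.
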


Recall (Theorem 7.1 of~\cite{HS09}), that for $T\in\Mcal$,
$P(T,B)$ is the largest among all of the $T$-invariant projections $R$
in $\Mcal$ such that the Brown measure of $TR$, taken as an element of the von Neumann algebra $R\Mcal R$, with respect 
to the  renormalized trace $\tau(R)^{-1}\tau\restrict_{R\Mcal R}$, is concentrated in $B$.
In particular, we have $\tau(P(T,B))=\nu_T(B)$ for every Borel set $B$.
Clearly, $P(T,B)$ is monotone increasing in $B$.

We will use the fact, which is part of the construction (see Corollary~7.19 of~\cite{HS09}), that when $\Mcal$ acts via a normal representation
on a Hilbert space $\HEu$, for the closed disk $\overline{B_r(\lambda)}$ of radius $r>0$ around $\lambda\in\Cpx$, $P(T,\overline{B_r(\lambda)})$ is the projection onto
the closed subspace $E(T-\lambda,r)$ of $\HEu$ given by
\begin{multline}\label{eq:ET}
E(T-\lambda,r) \\
=\{\xi\in\HEu\mid \exists\,(\xi_n)_{n=1}^\infty\subseteq\HEu,\,\lim_{n\to\infty}\|\xi_n-\xi\|=0,\,
\limsup_{n\to\infty}\|(T-\lambda)^n\xi_n\|^{1/n}\le r\}.
\end{multline}

The following lemma is obvious.
For completeness we provide a quick proof.
\begin{lemmasubs}\label{lem:PTBsymdif}
Suppose $B_1$ and $B_2$ are Borel sets whose symmetric difference is $\nu_T$-null.
Then $P(T,B_1)=P(T,B_2)$.
\end{lemmasubs}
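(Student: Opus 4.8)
The plan is to reduce the problem to the single-operator characterization of Haagerup--Schultz projections that was just recalled, namely that $P(T,B)$ depends only on the Brown measure class of $B$ through its defining property as the largest $T$-invariant projection $R$ with $\nu_{TR}^{(R)}$ concentrated on $B$. So let $B_1,B_2$ be Borel sets with $\nu_T(B_1\triangle B_2)=0$, and set $B_0=B_1\cap B_2$. Since $B_0\subseteq B_1$ and $P(T,\cdot)$ is monotone, $P(T,B_0)\le P(T,B_1)$; by symmetry it suffices to prove the reverse inequality $P(T,B_1)\le P(T,B_0)$, for then $P(T,B_1)=P(T,B_0)=P(T,B_2)$.

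To get $P(T,B_1)\le P(T,B_0)$, I would use the characterization of $P(T,B_0)$ as the \emph{largest} $T$-invariant projection $R$ with $\nu_{TR}^{(R)}$ concentrated on $B_0$. Thus it is enough to check that $R:=P(T,B_1)$ itself has this property: $R$ is $T$-invariant by definition, so the only thing to verify is that the Brown measure of $TR$, computed in $R\Mcal R$ with the renormalized trace, is concentrated on $B_0$. We already know it is concentrated on $B_1$. On the other hand, $\tau(P(T,B_1))=\nu_T(B_1)=\nu_T(B_0)=\tau(P(T,B_0))$, since $\nu_T(B_1\setminus B_0)\le\nu_T(B_1\triangle B_2)=0$. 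Combined with $P(T,B_0)\le P(T,B_1)$ from the previous paragraph and faithfulness of $\tau$, this already forces $P(T,B_0)=P(T,B_1)$ directly, which is even cleaner: $P(T,B_0)\le P(T,B_1)$ and $\tau(P(T,B_0))=\tau(P(T,B_1))$ imply equality.

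So the streamlined argument is: (i) $B_1\cap B_2\subseteq B_1$ and monotonicity give $P(T,B_1\cap B_2)\le P(T,B_1)$; (ii) $\nu_T(B_1\setminus(B_1\cap B_2))\le\nu_T(B_1\triangle B_2)=0$, so $\tau(P(T,B_1\cap B_2))=\nu_T(B_1\cap B_2)=\nu_T(B_1)=\tau(P(T,B_1))$; (iii) by faithfulness of $\tau$, a subprojection with equal trace is equal, so $P(T,B_1\cap B_2)=P(T,B_1)$; (iv) by the symmetric argument $P(T,B_1\cap B_2)=P(T,B_2)$, hence $P(T,B_1)=P(T,B_2)$.

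There is no real obstacle here; the only things being used are monotonicity of $B\mapsto P(T,B)$, the identity $\tau(P(T,B))=\nu_T(B)$, and faithfulness of $\tau$ on $\Mcal$ — all recalled in this subsection. The one point to be slightly careful about is the trivial set-theoretic estimate $B_1\setminus(B_1\cap B_2)=B_1\setminus B_2\subseteq B_1\triangle B_2$, which guarantees the relevant difference is $\nu_T$-null; everything else is immediate.
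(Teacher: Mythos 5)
Your streamlined argument is correct and is essentially the paper's own proof: the paper reduces to $B_1\subseteq B_1\cup B_2$ and applies monotonicity plus $\tau(P(T,B))=\nu_T(B)$ plus faithfulness of $\tau$, whereas you reduce to $B_1\cap B_2\subseteq B_1$, a trivially equivalent variant. The initial detour through the ``largest $T$-invariant projection'' characterization was unnecessary, as you yourself observed when you discarded it in favor of the direct trace comparison.
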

\begin{proof}
It suffices to show $P(T,B_1)=P(T,B_1\cup B_2)$,
so we may without loss of generality assume $B_1\subseteq B_2$.
Then $P(T,B_1)\le P(T,B_2)$, but $\tau(P(T,B_1))=\nu_T(B_1)=\nu_T(B_2)=\tau(P(T,B_2))$.
so we have the desired equality.
\end{proof}

The following is Corollary 7.27 of~\cite{HS09}.
\begin{propsubs}\label{prop:HSPT*}
For a Borel set $B\subseteq\Cpx$,
\[
P(T^*,B)=1-P(T,\Cpx\setminus B^*),
\]
where $B^*=\{\zbar\mid z\in B\}$.
\end{propsubs}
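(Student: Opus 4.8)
The plan is to verify that the projection $R:=1-P(T,\Cpx\setminus B^*)$ satisfies the uniqueness characterization of $P(T^*,B)$ recalled in \S\ref{subsec:HS}: namely, that $R$ is $T^*$-invariant, that the Brown measure $\nu_{T^*R}^{(R)}$ computed in $R\Mcal R$ is concentrated in $B$, and that the Brown measure $\nu_{(1-R)T^*}^{(1-R)}$ computed in $(1-R)\Mcal(1-R)$ is concentrated in $\Cpx\setminus B$.

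First I would record the elementary observation that if $Q\in\Mcal$ is a $T$-invariant projection, i.e.\ $(1-Q)TQ=0$, then taking adjoints gives $QT^*(1-Q)=0$, so $1-Q$ is $T^*$-invariant; moreover, writing $T$ in the upper-triangular block form determined by the ordered decomposition $(Q,1-Q)$ and passing to the adjoint simply interchanges the two diagonal corners, i.e.\ $(1-Q)T^*(1-Q)=\big((1-Q)T(1-Q)\big)^*$ and $QT^*Q=(QTQ)^*$. Second I would invoke the standard reflection property of Brown measure: for any $S$ in a tracial von Neumann algebra one has $\nu_{S^*}=\iota_*\nu_S$, where $\iota(z)=\zbar$. (This is immediate from the characterization of Brown measure via $\lambda\mapsto\tau(\log|S-\lambda|)$ together with the equality $\tau(\log|X^*|)=\tau(\log|X|)$ of Fuglede--Kadison determinants of $X$ and $X^*$; see~\cite{B86}.) Applying this inside the corners $Q\Mcal Q$ and $(1-Q)\Mcal(1-Q)$, equipped with their renormalized traces, yields $\nu_{(1-Q)T^*(1-Q)}^{(1-Q)}=\iota_*\big(\nu_{(1-Q)T(1-Q)}^{(1-Q)}\big)$ and $\nu_{QT^*Q}^{(Q)}=\iota_*\big(\nu_{QTQ}^{(Q)}\big)$.

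Now set $Q=P(T,\Cpx\setminus B^*)$ and suppose first that $Q\neq 0,1$. By the uniqueness characterization applied to $T$ and the Borel set $\Cpx\setminus B^*$, the measure $\nu_{TQ}^{(Q)}$ is concentrated in $\Cpx\setminus B^*$ and $\nu_{(1-Q)T}^{(1-Q)}$ is concentrated in $(\Cpx\setminus B^*)^c=B^*$. Combining this with the two identities of the previous paragraph, and using $(B^*)^*=B$ and $(\Cpx\setminus B^*)^*=\Cpx\setminus B$, we conclude that $\nu_{T^*(1-Q)}^{(1-Q)}$ is concentrated in $B$ while $\nu_{QT^*}^{(Q)}$ is concentrated in $\Cpx\setminus B$. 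Since $1-Q$ is $T^*$-invariant, the uniqueness part of the characterization of Haagerup--Schultz projections forces $1-Q=P(T^*,B)$, which is the assertion.

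Finally I would dispatch the degenerate cases $Q\in\{0,1\}$ directly. If $Q=0$ then $\nu_T(\Cpx\setminus B^*)=\tau(Q)=0$, so $\nu_T$ is concentrated in $B^*$, hence $\nu_{T^*}=\iota_*\nu_T$ is concentrated in $B$, so that $\tau(P(T^*,B))=\nu_{T^*}(B)=1$; faithfulness of $\tau$ then gives $P(T^*,B)=1=1-Q$, and the case $Q=1$ is handled symmetrically. The only genuine content beyond bookkeeping is the corner-swap observation under passage to adjoints together with the reflection identity for Brown measure, so I expect the main (and only minor) obstacle to be keeping straight the interplay of $B$, $B^*$, $\Cpx\setminus B$, and $\Cpx\setminus B^*$ and which diagonal corner plays which role once one takes adjoints.
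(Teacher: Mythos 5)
The paper does not prove this proposition; it only recalls it as Corollary~7.27 of \cite{HS09}, so there is no in-paper proof to compare against. Your blind proof is correct. The plan of verifying that $R=1-P(T,\Cpx\setminus B^*)$ satisfies the uniqueness characterization of $P(T^*,B)$ from \S\ref{subsec:HS} goes through exactly as you lay it out: the corner-swap observation (taking adjoints of the block-triangular decomposition determined by a $T$-invariant $Q$ shows that $1-Q$ is $T^*$-invariant with $QT^*Q=(QTQ)^*$ and $(1-Q)T^*(1-Q)=\big((1-Q)T(1-Q)\big)^*$) is elementary; the reflection identity $\nu_{S^*}(B)=\nu_S(B^*)$ is a standard consequence of $\Delta(X^*)=\Delta(X)$ for the Fuglede--Kadison determinant; and applying these to $Q=P(T,\Cpx\setminus B^*)$ gives precisely the two concentration conditions needed to identify $1-Q$ with $P(T^*,B)$. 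Your separate treatment of the degenerate cases $Q\in\{0,1\}$, where the renormalized corners do not exist, is also correct and genuinely necessary, since the uniqueness characterization quoted in \S\ref{subsec:HS} implicitly presumes $Q\notin\{0,1\}$; faithfulness of $\tau$ settles these cases as you say.
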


The following is a Lemma~3.3 of~\cite{S06}.

\begin{thmsubs}\label{thm:PTQB}
If $Q\in\Mcal$ is a non-zero $T$-invariant projection, then for all Borel sets $B\subseteq\Cpx$, we have
\[
P(T,B)\wedge Q=P^{(Q)}(TQ,B),
\]
where $P^{(Q)}(TQ,B)$ is the Haagerup-Schultz projection taken in the compressed von Neumann algebra $Q\Mcal Q$, with respect
to the renormalized trace $\tau(Q)^{-1}\tau\restrict_{Q\Mcal Q}$.
\end{thmsubs}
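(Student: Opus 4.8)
The plan is to prove the two inequalities $P^{(Q)}(TQ,B)\le P(T,B)\wedge Q$ and $P(T,B)\wedge Q\le P^{(Q)}(TQ,B)$ separately, using only the characterisation of Haagerup--Schultz projections as \emph{largest} invariant projections with a prescribed Brown--measure concentration property (recalled above) together with Proposition~\ref{prop:nuTQ+}. Two preliminary observations make the bookkeeping routine: since $Q$ is $T$-invariant we have $TQ=QTQ\in Q\Mcal Q$, so $P^{(Q)}(TQ,B)$ is a well-defined projection of $Q\Mcal Q$, hence of $\Mcal$, dominated by $Q$; and if $R\le Q$ is a projection, then compressing $Q\Mcal Q$ further by $R$ and renormalising the trace produces exactly the algebra $R\Mcal R$ with trace $\tau(R)^{-1}\tau\restrict_{R\Mcal R}$ (the two renormalisations cancel), so the Brown measure of $(TQ)R=TR$ is unambiguous --- we denote it $\nu_{TR}^{(R)}$ --- whether we regard $TR$ as an element of $Q\Mcal Q$ or directly of $\Mcal$.

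For the inequality $P^{(Q)}(TQ,B)\le P(T,B)\wedge Q$, put $R_0=P^{(Q)}(TQ,B)$. By construction $R_0\le Q$, the projection $R_0$ is $TQ$-invariant in $Q\Mcal Q$ (equivalently $(Q-R_0)TR_0=0$), and $\nu_{TR_0}^{(R_0)}$ is concentrated on $B$. From $T$-invariance of $Q$ we get $(1-Q)TR_0=(1-Q)TQR_0=0$, whence $(1-R_0)TR_0=(1-Q)TR_0+(Q-R_0)TR_0=0$, i.e.\ $R_0$ is $T$-invariant in $\Mcal$. The maximality characterisation of $P(T,B)$ then forces $R_0\le P(T,B)$, and since also $R_0\le Q$ we conclude $R_0\le P(T,B)\wedge Q$.

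For the reverse inequality, put $P=P(T,B)$ and $R_1=P\wedge Q$. The meet of two $T$-invariant projections is again $T$-invariant (the range of $P\wedge Q$ is the intersection of the ranges of $P$ and $Q$, which $T$ maps into itself), so $R_1$ is $T$-invariant in $\Mcal$; as $R_1\le Q$, this says $(Q-R_1)TR_1=0$, i.e.\ $R_1$ is $TQ$-invariant in $Q\Mcal Q$. The one remaining point is that $\nu_{TR_1}^{(R_1)}$ is concentrated on $B$; granting it, maximality of $P^{(Q)}(TQ,B)$ inside $Q\Mcal Q$ yields $R_1\le P^{(Q)}(TQ,B)$, and combined with the first inequality this completes the proof. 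To see the concentration: if $R_1=0$ there is nothing to prove; if $R_1=P$ then $\nu_{TR_1}^{(R_1)}=\nu_{TP}^{(P)}$ is concentrated on $B$ by the defining property of $P=P(T,B)$; and if $0\ne R_1\subsetneq P$, then in the tracial von Neumann algebra $(P\Mcal P,\tau(P)^{-1}\tau\restrict_{P\Mcal P})$ the element $TP$ lies in $P\Mcal P$ (as $P$ is $T$-invariant), $R_1$ is a $(TP)$-invariant projection different from $0$ and $P$, and Proposition~\ref{prop:nuTQ+}, applied in this algebra, writes $\nu_{TP}^{(P)}$ as a convex combination with strictly positive weights of $\nu_{TR_1}^{(R_1)}$ and $\nu_{(P-R_1)TP}^{(P-R_1)}$; since $\nu_{TP}^{(P)}$ is concentrated on $B$ and both summands are probability measures, each --- in particular $\nu_{TR_1}^{(R_1)}$ --- is concentrated on $B$.

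I expect no essential difficulty here: the only thing requiring care is the bookkeeping of which von Neumann algebra each invariance statement and each Brown measure is computed in, and the separate treatment of the degenerate cases $R_1\in\{0,P(T,B)\}$, for which Proposition~\ref{prop:nuTQ+} is not available.
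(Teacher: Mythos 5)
Your proof is correct. The paper itself does not supply a proof of Theorem~\ref{thm:PTQB}: it is recalled as Lemma~3.3 of Schultz~\cite{S06}, so there is no in-paper argument to compare against. Your argument is a clean, self-contained derivation from the maximality characterisation of Haagerup--Schultz projections together with Proposition~\ref{prop:nuTQ+}, both of which are recalled in~\S\ref{subsec:HS}. The invariance computations are done correctly: $(1-R_0)TR_0=0$ follows from adding $(1-Q)TR_0=(1-Q)TQ\,R_0=0$ and $(Q-R_0)TR_0=0$; and $P\wedge Q$ is $T$-invariant because $T$ preserves the ranges of $P$ and of $Q$, hence their intersection. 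The preliminary observation that compressing $Q\Mcal Q$ by $R\le Q$ and renormalising the trace recovers exactly $\big(R\Mcal R,\,\tau(R)^{-1}\tau\restrict_{R\Mcal R}\big)$ is the key bookkeeping point that makes the Brown measures in the two algebras coincide, and you state it explicitly. Finally, you correctly isolate the degenerate cases $R_1\in\{0,P(T,B)\}$ before invoking Proposition~\ref{prop:nuTQ+}, which requires a proper non-zero invariant projection, so there is no gap there.
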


\begin{corsubs}\label{cor:P(1-Q)TB}
Let $T\in\Mcal$.
If $Q\in\Mcal$ is a $T$-invariant projection that is not equal to the identity $1$, then for all Borel sets $B\subseteq\Cpx$, we have
\[
P^{(1-Q)}((1-Q)T,B)=Q\vee P(T,B)-Q=\big(Q\vee P(T,B)\big)\wedge(1-Q),
\]
where $P^{(1-Q)}((1-Q)T,B)$ is the Haagerup-Schultz projection of $(1-Q)T(1-Q)$
taken in the compressed von Neumann algebra $(1-Q)\Mcal(1-Q)$, with respect
to the renormalized trace $(1-\tau(Q))^{-1}\tau\restrict_{(1-Q)\Mcal(1-Q)}$.
\end{corsubs}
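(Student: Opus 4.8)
The plan is to derive this from Schultz's compression formula (Theorem~\ref{thm:PTQB}), applied not to $T$ but to $T^*$, combined with the adjoint formula for Haagerup--Schultz projections (Proposition~\ref{prop:HSPT*}) used twice: once inside $\Mcal$ and once inside the compressed algebra. Throughout, write $\Nc=(1-Q)\Mcal(1-Q)$, equipped with the renormalised trace $(1-\tau(Q))^{-1}\tau\restrict_\Nc$, and $S=(1-Q)T(1-Q)\in\Nc$; the projection we must identify is $P^{(1-Q)}((1-Q)T,B)=P^{(1-Q)}(S,B)$, the Haagerup--Schultz projection of $S$ computed in $\Nc$.

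The first step is the elementary observation that $1-Q$ is $T^*$-invariant: $Q$ being $T$-invariant means $(1-Q)TQ=0$, and taking adjoints gives $QT^*(1-Q)=0$, i.e.\ $T^*(1-Q)=(1-Q)T^*(1-Q)$. In particular $S^*=(1-Q)T^*(1-Q)=T^*(1-Q)$, so $S^*$ is legitimately the compression of $T^*$ to the $T^*$-invariant projection $1-Q$, which is non-zero since $Q\ne1$. Hence Theorem~\ref{thm:PTQB}, applied to $T^*$ and $1-Q$, gives for every Borel $C\subseteq\Cpx$
\[
P^{(1-Q)}(S^*,C)=P(T^*,C)\wedge(1-Q).
\]
Next I would apply Proposition~\ref{prop:HSPT*} inside $\Nc$ (the proposition is a general statement valid in any tracial von Neumann algebra), choosing $C=\Cpx\setminus B^*$, so that $\Cpx\setminus C^*=B$; this yields
\[
P^{(1-Q)}(S,B)=(1-Q)-P^{(1-Q)}(S^*,\Cpx\setminus B^*)=(1-Q)-\big(P(T^*,\Cpx\setminus B^*)\wedge(1-Q)\big).
\]
Then Proposition~\ref{prop:HSPT*} inside $\Mcal$ gives $P(T^*,\Cpx\setminus B^*)=1-P(T,B)$, and the De Morgan identity $(1-P(T,B))\wedge(1-Q)=1-(P(T,B)\vee Q)$ turns the right-hand side into
\[
P^{(1-Q)}(S,B)=(1-Q)-\big(1-(P(T,B)\vee Q)\big)=P(T,B)\vee Q-Q.
\]
Finally, since $Q\le P(T,B)\vee Q$, the difference $P(T,B)\vee Q-Q$ is a projection dominated by $1-Q$, with the same trace as $(P(T,B)\vee Q)\wedge(1-Q)$ (using Lemmas~\ref{lem:tracevw} and~\ref{lem:tauPQcorners}); by faithfulness of $\tau$ the two coincide, which gives the second asserted equality.

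I do not expect a genuine obstacle: each step is a bookkeeping application of results already in hand. The only places demanding a little care are verifying that $1-Q$ is $T^*$-invariant and that $S^*=T^*(1-Q)$ as an element of $\Nc$, so that Theorem~\ref{thm:PTQB} genuinely applies, and keeping the complementation/complex-conjugation bookkeeping in Proposition~\ref{prop:HSPT*} straight so that the set $\Cpx\setminus C^*$ collapses back to $B$ for the choice $C=\Cpx\setminus B^*$; both are routine.
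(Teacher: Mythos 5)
Your proposal is correct and follows essentially the same route as the paper's proof: Proposition~\ref{prop:HSPT*} applied twice (once inside the compressed algebra $(1-Q)\Mcal(1-Q)$ and once inside $\Mcal$) sandwiched around Theorem~\ref{thm:PTQB} applied to $T^*$ with the $T^*$-invariant projection $1-Q$. The only cosmetic differences are the order in which you present the two ingredients and that your final step verifies $P(T,B)\vee Q-Q=(P(T,B)\vee Q)\wedge(1-Q)$ by a trace-plus-faithfulness argument, whereas this is an immediate consequence of the fact that $Q$ and $P(T,B)\vee Q$ commute since $Q\le P(T,B)\vee Q$.
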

\begin{proof}
Using Proposition~\ref{prop:HSPT*} twice and Theorem~\ref{thm:PTQB}, we have
\begin{align*}
P^{(1-Q)}((1-Q)T,B)&=(1-Q)-P^{(1-Q)}(T^*(1-Q),\Cpx\setminus B^*) \\
&=(1-Q)-P(T^*,\Cpx\setminus B^*)\wedge(1-Q) \displaybreak[1]\\
&=(1-Q)-\big(1-P(T,B))\wedge(1-Q) \displaybreak[2]\\
&=(1-Q)-\big(1-Q\vee P(T,B)\big) \\
&=Q\vee P(T,B)-Q 
=\big(Q\vee P(T,B)\big)\wedge(1-Q).
\end{align*}
where $B^*$ is the set obtained from $B$ by complex conjugation.
\end{proof}

\subsection{Hyperinvariant projections}
\label{subsec:hiproj}

Recall that for $S\in B(\HEu)$, a closed subspace $\Vc\subseteq\HEu$ is said to be $S$-hyperinvariant
if $X(\Vc)\subseteq\Vc$ for all $X\in B(\HEu)$ satisfying $XS=SX$, namely, if it is invariant under the commutant $\alg(S)'$
of the algebra of operators generated by $S$ (note: algebra, not a $*$-algebra).
Let $(S_i)_{i\in I}$ be an arbitrary family of elements of $B(\HEu)$.
We say that a closed subspace $\Vc$ is {\em $(S_i)_{i\in I}$-hyperinvariant}
if $X(\Vc)\subseteq\Vc$ for all $X\in B(\HEu)$ satisfying $XS_i=S_iX$ for all $i\in I$, namely, if it is invariant under the commutant
$\alg(\{S_i\mid i\in I\})'$ of the algebra of operators generated by the family.
If $P$ is the orthogonal projection of $\HEu$ onto $\Vc$, then this is equivalent to the condition
$XP=PXP$ for all $X\in B(\HEu)$ satisfying $XS_i=S_iX$ for all $i\in I$.
Such a projection will be called an {\em $(S_i)_{i\in I}$-hyperinvariant} projection.

The following is well known and easy to prove.
\begin{lemmasubs}\label{lem:hyperinvlattice}
Let $J$ be a set and suppose, for every $j\in J$,
$P_j$ is an $(S_i)_{i\in I}$-hyperinvariant projection in $B(\HEu)$.
Then $\bigvee_{j\in J}P_j$ and $\bigwedge_{j\in J}P_j$ are $(S_i)_{i\in I}$-hyper\-invariant projections.
\end{lemmasubs}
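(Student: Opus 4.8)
The plan is to argue at the level of subspaces rather than projections. Set $\Ac:=\alg(\{S_i\mid i\in I\})'$, the commutant of the (not necessarily self-adjoint) algebra generated by the family; this is an algebra of bounded operators on $\HEu$. By the reformulation recalled just before the lemma, a projection $P$ is $(S_i)_{i\in I}$-hyperinvariant exactly when its range $\Vc:=P(\HEu)$ is invariant under $\Ac$, i.e.\ $X(\Vc)\subseteq\Vc$ for every $X\in\Ac$. So it suffices to verify that the class of $\Ac$-invariant closed subspaces is closed under arbitrary intersections and under arbitrary closed linear spans, since the ranges of $\bigwedge_{j\in J}P_j$ and $\bigvee_{j\in J}P_j$ are $\bigcap_{j\in J}\Vc_j$ and $\clspan\bigcup_{j\in J}\Vc_j$, respectively, where $\Vc_j=P_j(\HEu)$.

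For the meet: if $X\in\Ac$, then $X\big(\bigcap_{j\in J}\Vc_j\big)\subseteq\bigcap_{j\in J}X(\Vc_j)\subseteq\bigcap_{j\in J}\Vc_j$, so $\bigcap_{j\in J}\Vc_j$ is $\Ac$-invariant. For the join: fix $X\in\Ac$; since $X$ is bounded, hence continuous, $X\big(\clspan\bigcup_{j\in J}\Vc_j\big)\subseteq\clspan\big(X\big(\bigcup_{j\in J}\Vc_j\big)\big)=\clspan\bigcup_{j\in J}X(\Vc_j)\subseteq\clspan\bigcup_{j\in J}\Vc_j$, so $\clspan\bigcup_{j\in J}\Vc_j$ is $\Ac$-invariant. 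Translating these back to projections gives that $\bigwedge_{j\in J}P_j$ and $\bigvee_{j\in J}P_j$ are $(S_i)_{i\in I}$-hyperinvariant projections.

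There is essentially no obstacle here. The one point worth flagging is that $\Ac$ need not be self-adjoint, so the orthogonal complement of an $\Ac$-invariant subspace need not itself be $\Ac$-invariant; consequently one cannot simply deduce the statement about meets from the statement about joins by De Morgan duality, as one would for spectral (reducing) projections. Since each of the two inclusions above is immediate on its own, however, this causes no real difficulty.
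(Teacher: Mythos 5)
Your proof is correct, and since the paper itself omits the proof (remarking only that the result is ``well known and easy to prove''), there is nothing to compare it against; your argument at the level of invariant subspaces is exactly the standard one that was surely intended. The remark at the end, that one cannot pass from meets to joins by taking orthogonal complements because $\Ac$ is not generally self-adjoint, is a good one and shows you have the right picture in mind.
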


The following is well known in the case that $I$ is a singleton set, and the proof in general is an equally easy application
of the double commutant theorem of Murray and von Neumann.

\begin{lemmasubs}\label{lem:hypinvvN}
If $P$ is an $(S_i)_{i\in I}$-hyperinvariant projection, then $P$ lies in the von Neumann algebra generated by $\{S_i\mid i\in I\}$.
\end{lemmasubs}

\subsection{On space-filling curves and probability measures}
\label{subsec:spfill}

In this subsection, we prove an elementary result about a curve mapping onto a space equipped with a probability measure.
We will use it in Section~\ref{sec:ut}.
But first, for completeness, we prove existence of space-filling curves onto polydisks, which is, of course, a well known result.
\begin{lemmasubs}\label{n peano lemma}
Let $n\ge2$ and consider the closed polydisk
\[
\mathbb{D}^n=\{(z_1,\ldots,z_n)\in\Cpx^n\mid\forall j\;|z_j|\le1\}.
\]
Then there exists a continuous surjection
$\rho:[0,1]\to\mathbb{D}^n.$
\end{lemmasubs}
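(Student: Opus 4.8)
The plan is to construct the space-filling curve onto $\mathbb{D}^n$ by composing a standard Peano-type surjection onto a real cube with a surjection of that cube onto the polydisk. First I would recall (or quickly reprove, via the classical Hilbert-curve / Lebesgue construction) that for any $m\ge 1$ there exists a continuous surjection $\sigma\colon[0,1]\to[0,1]^m$; the existence of such a map is the textbook Hahn--Mazurkiewicz / Peano result, so I would cite it and at most sketch the ternary-Cantor-set argument. Here the natural choice is $m=2n$, since the closed polydisk sits inside $\Cpx^n\cong\Reals^{2n}$.

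Next I would exhibit a continuous surjection $\pi\colon[0,1]^{2n}\to\mathbb{D}^n$. It suffices to produce, for a single factor, a continuous surjection $[0,1]^2\to\{z\in\Cpx:|z|\le1\}$ — for instance the map $(s,t)\mapsto s\,e^{2\pi i t}$, which is visibly continuous and hits every point of the closed unit disk — and then take the $n$-fold product $[0,1]^{2n}\cong([0,1]^2)^n\to(\mathbb{D}^1)^n=\mathbb{D}^n$. Composing, $\rho=\pi\circ\sigma\colon[0,1]\to\mathbb{D}^n$ is continuous as a composition of continuous maps and surjective as a composition of surjections, which is exactly the claim. (If one prefers to land directly in $[0,1]^{2n}$ one can instead first reparametrize, but the product route keeps everything elementary.)

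I do not expect a genuine obstacle here: the only input with any content is the existence of a Peano curve onto a cube, and that is classical. The mild point to be careful about is making sure the surjection onto the disk is onto the \emph{closed} disk (so that $\rho$ is onto all of $\mathbb{D}^n$, not just its interior), which the polar-coordinates formula $(s,t)\mapsto s e^{2\pi i t}$ handles since $s$ ranges over $[0,1]$; and to note that a finite product of continuous surjections between compact metric spaces is again a continuous surjection, which is immediate. Thus the proof is really just: invoke the classical result for the cube, map the cube onto the polydisk coordinatewise via polar coordinates, and compose.
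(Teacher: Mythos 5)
Your proof is correct and takes essentially the same approach as the paper: reduce to the existence of a continuous surjection $[0,1]\to[0,1]^{2n}$ and then pass from the cube to the polydisk. The only differences are cosmetic: you invoke the $[0,1]\to[0,1]^{2n}$ Peano curve as a classical fact, whereas the paper constructs it recursively from $\rho^{(2)}$; and you pass from the cube to the polydisk by the explicit polar-coordinate surjection $[0,1]^2\to\mathbb{D}$, $(s,t)\mapsto s\,e^{2\pi i t}$, whereas the paper simply remarks that $[0,1]^{2n}$ and $\mathbb{D}^n$ are homeomorphic. Both variants are valid and equally elementary.
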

\begin{proof}
We will prove instead the existence of a continuous surjection $\rho^{(d)}:[0,1]\to[0,1]^d$ for all integers $d\ge2$.
This suffices because the polydisk $\mathbb{D}^n$ and the cube $[0,1]^d$ are homeomorphic when $d=2n$.
As usual, given $r\in[0,1]^d$, we write $r=(r_1,\ldots,r_d)$.
Let $\rho^{(2)}:[0,1]\to[0,1]^2$ be the usual (surjective, continuous) Peano curve.
Let $\rho^{(3)}: [0,1]\to[0,1]^3$ be given by the formula
\[
\rho^{(3)}(x)=((\rho^{(2)}(x))_1,\rho^{(2)}((\rho^{(2)}(x))_2)).
\]
It is easy to see that $\rho^{(3)}$ is a continuous surjection.
Define the mapping $\rho^{(k)}:[0,1]\to[0,1]^k$ recursively by the formula
$$\rho^{(k)}(x)=((\rho^{(k-1)}(x))_1,\cdots,(\rho^{(k-1)}(x))_{k-2},\rho^{(2)}((\rho^{(k-1)}(x))_{k-1})),\quad x\in [0,1].$$
Using induction, we easily see that $\rho^{(d)}$ is continuous and surjective.
\end{proof}

Let $K$ be a compact Hausdorff space and suppose $\rho:[0,1]\to K$ is continuous and surjective.
Let $\mu$ be a Borel probability measure on $K$ and let $\sigma$ be the probability measure on $[0,1]$ defined by
\[
\sigma([0,t])=\mu(\rho([0,t])),\qquad(t\in[0,1]).
\]
Note that such a measure $\sigma$ exists;
it is just the restriction to $[0,1]$ of the Lebesgue--Stieltjes measure on $\Reals$ corresponding to the function
\[
\Reals\ni t\mapsto\begin{cases}
0,&t<0 \\
\mu(\rho([0,t])),&0\le t\le 1 \\
1,&t>0.
\end{cases}
\]
Since $\rho^{-1}(\{x\})$ is closed for every $x\in K$, we can define $g:K\to[0,1]$ by 
\[
g(x)=\min(\rho^{-1}(\{x\})).
\]
Let $W=\supp(\mu)$ be the closed support of $\mu$.
\begin{lemmasubs}\label{lem:spfill}
We have
\begin{enumerate}[(a)]
\item\label{it:rhog} $\rho\circ g=\id_K$,
\item\label{it:glsc} $g$ is lower semicontinuous on $K$, and, therefore, Borel measurable,
\item\label{it:g*mu} $g_*\mu=\sigma$,
\item\label{it:sigconc} $\sigma$ is concentrated in $g(W)$,
\item\label{it:ginv} $g$ is the inverse function of the restriction $\rho\restrict_{g(W)}$ of $\rho$ to $g(W)$,
\item\label{it:rho*sig} $\mu=\rho_*\sigma$.
\end{enumerate}
Moreover, there is an isomorphism of von Neumann algebras
\[
L^\infty(W,\mu)\to L^\infty([0,1],\sigma)
\]
sending $f\in L^\infty(W,\mu)$
to the function $h$ given by
\[
h(t)=\begin{cases} f(\rho(t)),&t\in g(W), \\ 0,&t\notin g(W).\end{cases}
\]
\end{lemmasubs}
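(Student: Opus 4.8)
The plan is to verify the six enumerated assertions in an order that lets each feed into the next, and then deduce the measure-algebra isomorphism as a formal consequence. First I would prove \eqref{it:rhog}: for $x\in K$, $g(x)$ is by definition an element of $\rho^{-1}(\{x\})$, so $\rho(g(x))=x$; this is immediate. Next, for \eqref{it:glsc}, I would show that for each $c\in[0,1]$ the set $\{x\in K: g(x)>c\}$ is open. If $g(x_0)>c$, then $\rho^{-1}(\{x_0\})\subseteq(c,1]$, and since $\rho^{-1}(\{x_0\})$ is compact (closed in $[0,1]$) and $\rho$ is continuous, a standard tube/compactness argument gives a neighbourhood $U$ of $x_0$ in $K$ with $\rho^{-1}(U)\subseteq(c,1]$, hence $g>c$ on $U$. (Concretely: $\rho^{-1}([0,c])$ is compact and disjoint from $\rho^{-1}(\{x_0\})$, so its image $\rho(\rho^{-1}([0,c]))$ is a compact set not containing $x_0$; its complement is the desired $U$.) Lower semicontinuity gives Borel measurability.

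For \eqref{it:g*mu}, it suffices to check that $g_*\mu$ and $\sigma$ agree on the intervals $[0,t]$, since these generate the Borel $\sigma$-algebra of $[0,1]$ and both measures are probability measures. Now $g^{-1}([0,t])=\{x: g(x)\le t\}$; I claim this equals $\rho([0,t])$. Indeed, if $g(x)\le t$ then $x=\rho(g(x))\in\rho([0,t])$ using \eqref{it:rhog}; conversely if $x=\rho(s)$ with $s\le t$ then $g(x)=\min\rho^{-1}(\{x\})\le s\le t$. Hence $g_*\mu([0,t])=\mu(g^{-1}([0,t]))=\mu(\rho([0,t]))=\sigma([0,t])$, as required. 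Here I am using that $\rho([0,t])$ is compact, hence Borel, so $\mu(\rho([0,t]))$ makes sense — which is exactly what the definition of $\sigma$ presupposed. Assertion \eqref{it:rho*sig} then follows from \eqref{it:g*mu} and \eqref{it:rhog}: $\rho_*\sigma=\rho_*(g_*\mu)=(\rho\circ g)_*\mu=(\id_K)_*\mu=\mu$.

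For \eqref{it:sigconc}, I would argue that $\sigma$ gives no mass to $[0,1]\setminus g(W)$. Write $V=K\setminus W$, the largest open $\mu$-null set; then $\mu(V)=0$. Since $g^{-1}(g(W))\supseteq W$, we get $g^{-1}([0,1]\setminus g(W))\subseteq g^{-1}(g(K))\setminus W \subseteq K\setminus W=V$ up to the usual care — more precisely, if $g(x)\notin g(W)$ then $x\notin W$, so $g^{-1}([0,1]\setminus g(W))\subseteq V$, whence $\sigma([0,1]\setminus g(W))=g_*\mu([0,1]\setminus g(W))\le\mu(V)=0$. For \eqref{it:ginv}, I would note $\rho\restrict_{g(W)}$ is injective: if $s,s'\in g(W)$ with $\rho(s)=\rho(s')=x$, then $s=g(y)$, $s'=g(y')$ for some $y,y'\in W$, but $g(y)=\min\rho^{-1}(\{\rho(s)\})=\min\rho^{-1}(\{x\})$ depends only on $x$, so $s=s'$; and $g(\rho(s))=s$ for $s\in g(W)$ by the same computation, while $\rho(g(x))=x$ by \eqref{it:rhog}, so $g$ and $\rho\restrict_{g(W)}$ are mutually inverse as claimed. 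Finally, for the von Neumann algebra isomorphism: the map $f\mapsto h$, where $h=f\circ(\rho\restrict_{g(W)})$ extended by $0$ off $g(W)$, is a $*$-algebra homomorphism because $\rho\restrict_{g(W)}$ is a Borel isomorphism onto its image by \eqref{it:glsc} and \eqref{it:ginv}; it is trace-preserving (equivalently, integral-preserving) by \eqref{it:rho*sig} together with \eqref{it:sigconc} — namely $\int_{[0,1]}h\,d\sigma=\int_{g(W)}f\circ\rho\,d\sigma=\int_W f\,d(\rho_*\sigma)=\int_W f\,d\mu$ — and it is onto because any $\tilde h\in L^\infty([0,1],\sigma)$ may, by \eqref{it:sigconc}, be taken to vanish off $g(W)$, and then $f:=\tilde h\circ g\restrict_W$ pulls back to it. An integral-preserving unital $*$-homomorphism between tracial von Neumann algebras that is onto is automatically an isomorphism, so we are done.

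The routine parts are the interval-generation argument in \eqref{it:g*mu} and the pushforward manipulations. The one step requiring genuine care is the lower semicontinuity in \eqref{it:glsc}: it rests on the compactness of the fibres $\rho^{-1}(\{x\})$ and a tube-lemma style argument, and it is this topological input (rather than any measure theory) that I expect to be the main obstacle to a fully rigorous write-up, since everything else is formal once \eqref{it:glsc} and the identity $g^{-1}([0,t])=\rho([0,t])$ are in hand.
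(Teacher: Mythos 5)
Your proposal is correct and follows essentially the same route as the paper: establishing $\rho\circ g=\id_K$, the key identity $g^{-1}([0,t])=\rho([0,t])$, and then deducing the pushforward, support, and isomorphism statements formally. The only cosmetic difference is in part (b), where you observe directly that $\{g>c\}=K\setminus\rho([0,c])$ is open by compactness of $\rho([0,c])$, whereas the paper runs an equivalent sequential argument showing $g^{-1}([0,s])$ is closed.
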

\begin{proof}
Part~\eqref{it:rhog} is obvious from the definition.

For~\eqref{it:glsc}, we will show that $g^{-1}([0,s])$ is closed in $K$, for every $s\in[0,1]$.
Suppose $x_n$ is a sequence in $g^{-1}([0,s])$ converging to $x\in K$.
Since $x_n\in g^{-1}([0,s])$, it follows that $g(x_n)\in[0,s]$.
That is, $t_n\stackrel{def}{=}\min(\rho^{-1}(\{x_n\}))\in[0,s]$.
Thus, for each $n$, there exists $t_n\in[0,s]$ such that $\rho(t_n)=x_n$.
Passing to a subsequence, if necessary, we may without loss of generality assume that $t_n$ converges to some $t\in[0,s]$.
By continuity of $\rho$, we have $\rho(t)=\lim_{n\to\infty}\rho(t_n)=x$.
Thus, $t\in\rho^{-1}(\{x\})$ and $g(x)\le t\le s$.
So $x\in g^{-1}([0,s])$.

For~\eqref{it:g*mu}, given $t\in[0,1]$, we have
\[
g^{-1}([0,t])=\{x\in K\mid \exists s\in[0,t],\,\rho(s)=x\}=\rho([0,t]).
\]
Thus, we have
\[
g_*\mu([0,t])=\mu(g^{-1}([0,t]))=\mu(\rho([0,t]))=\sigma([0,t]).
\]
Since $g_*\mu$ and $\sigma$ agree on all intervals of the form $[0,t]$, they agree on all Borel subsets of $[0,1]$.

Part~\eqref{it:sigconc} is immediate from~\eqref{it:g*mu}.

For~\eqref{it:ginv}, note that, by definition, $g$ is one-to-one.
Thus, it has an inverse function, mapping $g(K)$ onto $K$.
From~\eqref{it:rhog}, we see that this inverse function must be $\rho\restrict_{g(K)}$.

For~\eqref{it:rho*sig}, given a Borel subset $B$ of $K$, using~\eqref{it:g*mu} and~\eqref{it:rhog} we obtain
\[
\rho_*\sigma(B)=\sigma(\rho^{-1}(B))=\mu(g^{-1}(\rho^{-1}(B)))=\mu((\rho\circ g)^{-1}(B))=\mu(B).
\]

The final statement about the isomorphism $L^\infty(W,\mu)\to L^\infty([0,1])$ now follows directly.
\end{proof}

\section{Lattice properties of Haagerup--Schultz projections}
\label{sec:HS}

Throughout this section, $T\in\Mcal$.
Our goal in this section is to show (Theorem~\ref{thm:countable}) that the map $B\mapsto P(T,B)$ preserves lattice operations.
In the case of a normal operator $T$, $P(T,B)$ is just the spectral projection of $T$ for the set $B$.
However, for general $T\in\Mcal$,
we need not have $P(T,B)=1-P(T,B^c)$ and
$P(T,B_1)$ and $P(T,B_2)$ need not commute for Borel sets $B_1$ and $B_2$.
Of course, the idempotents from the idempotent measure constructed by Schultz~\cite{S06} do satisfy the analogous properties,
and the results of this section could be proved from Schultz's results.
However, here we present straightforward proofs that do not rely on the technology of unbounded affiliated operators.

\begin{lemma}\label{disjoint lemma} If $A_1$ and $A_2$ are disjoint subsets in $\mathbb{C},$ then
$$P(T,A_1)\wedge P(T,A_2)=0.$$
\end{lemma}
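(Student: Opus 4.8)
The statement to prove is that if $A_1, A_2 \subseteq \Cpx$ are disjoint, then $P(T,A_1)\wedge P(T,A_2)=0$. My plan is to reduce to the case of disjoint closed disks and then argue with the subspace description \eqref{eq:ET} of Haagerup--Schultz projections onto disks, combined with a trace count. First I would observe that it suffices to treat the case where $A_1$ and $A_2$ are \emph{bounded Borel} sets, in fact closed disks: indeed, by monotonicity of $B\mapsto P(T,B)$, any two disjoint Borel sets can be enlarged to disjoint Borel sets, but more to the point, since $\nu_T$ is concentrated on the spectrum of $T$, I may intersect with a large disk and lose nothing; and any bounded Borel set can be covered by finitely many small disks. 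Actually the cleanest reduction: fix $\eps>0$ small, and note it suffices to show $P(T,B_r(\lambda_1))\wedge P(T,B_r(\lambda_2))=0$ whenever $B_r(\lambda_1)$ and $B_r(\lambda_2)$ are disjoint closed disks, because a general disjoint pair $A_1,A_2$ can be approximated from outside by finite disjoint unions of small disks (using part~(d)-type outer regularity together with Lemma~\ref{lem:PwQ} and Lemma~\ref{lem:PvQ} to pass meets and joins to the limit). Let me instead aim directly at the disk case and then bootstrap.

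For two disjoint closed disks $B_{r_1}(\lambda_1)$ and $B_{r_2}(\lambda_2)$, set $P_i = P(T,B_{r_i}(\lambda_i))$, the projection onto $E(T-\lambda_i,r_i)$ as in \eqref{eq:ET}. The key step is: if $\xi \in E(T-\lambda_1,r_1)\cap E(T-\lambda_2,r_2)$, then $\xi=0$. Suppose $\xi$ lies in both subspaces; then there are sequences $\xi_n\to\xi$ with $\limsup\|(T-\lambda_1)^n\xi_n\|^{1/n}\le r_1$, and $\eta_n\to\xi$ with $\limsup\|(T-\lambda_2)^n\eta_n\|^{1/n}\le r_2$. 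The obstacle is that these are \emph{two different} approximating sequences, so I cannot directly combine the decay estimates. The standard fix is to work instead with the resolvent/holomorphic functional calculus: since $\lambda_1 \ne \lambda_2$ and $r_1 + |\lambda_1-\lambda_2| $ is controlled, choose disjoint open disks $U_i\supseteq B_{r_i}(\lambda_i)$ and a function holomorphic on $U_1\cup U_2$ equal to $1$ on $U_1$ and $0$ on $U_2$. But $T$ need not have spectrum avoiding the annulus between them, so this needs care. The approach I would actually pursue: use Theorem~\ref{thm:PTQB} to pass to the corner $Q\Mcal Q$ with $Q = P_1\wedge P_2$, where $P_1\wedge P_2$ is $T$-invariant (being a meet of $T$-invariant projections). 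In $Q\Mcal Q$, with $T' = TQ$, Theorem~\ref{thm:PTQB} gives $P^{(Q)}(T',B_{r_i}(\lambda_i)) = P_i \wedge Q = Q$, i.e. $Q$ is the full identity of the corner. Hence, by the characterization recalled in \S\ref{subsec:HS}, the Brown measure $\nu_{T'}$ (in $Q\Mcal Q$) is concentrated in $B_{r_1}(\lambda_1)$ \emph{and} in $B_{r_2}(\lambda_2)$, so it is concentrated in their intersection, which is empty. A probability measure cannot be concentrated on the empty set unless the underlying algebra is $\{0\}$; therefore $Q=0$.

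So the real engine is: $P(T,B_1)\wedge P(T,B_2)$ is $T$-invariant, and applying Theorem~\ref{thm:PTQB} in the corner $Q=P(T,B_1)\wedge P(T,B_2)$ shows $\nu_{TQ}^{(Q)}$ is concentrated in $B_1\cap B_2$. When $B_1\cap B_2=\emptyset$ this forces $Q=0$. This argument in fact works for \emph{arbitrary} disjoint Borel sets $A_1,A_2$, not just disks, provided one knows $P(T,A_1)\wedge P(T,A_2)$ is $T$-invariant — which follows since each $P(T,A_i)$ is $T$-invariant and the lattice of $T$-invariant projections is closed under meets (this is elementary, or follows from Lemma~\ref{lem:hyperinvlattice} applied to the algebra generated by $T$, noting Haagerup--Schultz projections are even hyperinvariant). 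So the write-up would be: (1) note $Q:=P(T,A_1)\wedge P(T,A_2)$ is $T$-invariant; (2) if $Q\ne 0$, form $T'=TQ\in Q\Mcal Q$ with renormalized trace, and apply Theorem~\ref{thm:PTQB} twice to get $P^{(Q)}(T',A_1)=P^{(Q)}(T',A_2)=Q$; (3) conclude from \S\ref{subsec:HS} that $\nu_{T'}$ is concentrated in both $A_1$ and $A_2$, hence in $A_1\cap A_2=\emptyset$, contradicting that $\nu_{T'}$ is a probability measure. The main obstacle is purely bookkeeping: making sure Theorem~\ref{thm:PTQB} is being applied in the right corner and that the renormalized-trace Brown measure really is a probability measure concentrated in $A_i$ — both are immediate from the quoted results, so there is no serious difficulty, only the need to state the contradiction cleanly.
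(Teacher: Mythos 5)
Your final argument (the one you say you would "actually pursue") is correct and is essentially the paper's own proof: both set $Q = P(T,A_1)\wedge P(T,A_2)$ and derive a contradiction from the fact that, if $Q\ne 0$, the Brown measure of $TQ$ computed in $Q\Mcal Q$ with the renormalised trace would be concentrated in both $A_1$ and $A_2$, hence on $A_1\cap A_2=\emptyset$. The only difference is the quoted result used to establish that concentration: you invoke Theorem~\ref{thm:PTQB} to obtain $P^{(Q)}(TQ,A_i)=Q$ directly, while the paper instead cites a monotonicity inequality for Brown measures of nested $T$-invariant corners (Theorem 2.4.1 of \cite{DSZ15}, a restatement of Proposition 2.24 of \cite{HS07}); both routes are equally short.
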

\begin{proof} Assume the contrary and denote, for brevity,
$$P=P(T,A_1)\wedge P(T,A_2).$$
Using a basic property of Brown measure (see Theorem~10 in~\cite{DSZ15},
which is, effectively, a restatement of Proposition 2.24 in \cite{HS07}), we obtain
$$\nu_{TP}\leq\frac{\tau(P(T,A_1))}{\tau(P)}\nu_{TP(T,A_1)}.$$
Thus, $\nu_{TP}$ is supported on $A_1.$ Similarly, it is supported on $A_2.$ Since $A_1\cap A_2=\emptyset$,
it follows that $\nu_{TP}$ is supported nowhere.
This contradiction proves the lemma.
\end{proof}

\begin{thm}\label{thm:2sets}
If $A_1$ and $A_2$ are Borel subsets of $\mathbb{C}$, then
\begin{align}
P(T,A_1)\vee P(T,A_2)&=P(T,A_1\cup A_2), \label{eq:2cup} \\
P(T,A_1)\wedge P(T,A_2)&=P(T,A_1\cap A_2). \label{eq:2cap}
\end{align}
\end{thm}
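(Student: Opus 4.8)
The two identities \eqref{eq:2cup} and \eqref{eq:2cap} are dual to one another, so I would prove \eqref{eq:2cup} first and then derive \eqref{eq:2cap} from it, using Lemma~\ref{lem:tracevw} to reduce the meet identity to a trace computation (both sides lie below $P(T,A_1)\wedge P(T,A_2)$ once we know \eqref{eq:2cup}, and equality of traces forces equality). To set up \eqref{eq:2cup}, first note the inequality $P(T,A_1)\vee P(T,A_2)\le P(T,A_1\cup A_2)$ is immediate from monotonicity of $B\mapsto P(T,B)$ together with Lemma~\ref{lem:hyperinvlattice} (the join of $T$-hyperinvariant, hence $T$-invariant, projections is $T$-invariant). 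So the content is the reverse inequality, or equivalently, by Lemma~\ref{lem:tracevw}, the trace identity
\[
\tau\big(P(T,A_1)\vee P(T,A_2)\big)=\nu_T(A_1\cup A_2)=\nu_T(A_1)+\nu_T(A_2)-\nu_T(A_1\cap A_2),
\]
which, again by Lemma~\ref{lem:tracevw}, amounts to showing $\tau\big(P(T,A_1)\wedge P(T,A_2)\big)=\nu_T(A_1\cap A_2)$. Thus everything reduces to the single statement
\[
\tau\big(P(T,A_1)\wedge P(T,A_2)\big)=\nu_T(A_1\cap A_2),
\]
and once that is in hand both \eqref{eq:2cup} and \eqref{eq:2cap} follow formally.

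**Proving the key trace identity.** Here I would exploit Theorem~\ref{thm:PTQB} with $Q=P(T,A_1)$: since $Q$ is $T$-invariant, $P(T,A_2)\wedge Q=P^{(Q)}(TQ,A_2)$, the Haagerup--Schultz projection of the compression $TQ$ in $Q\Mcal Q$ with renormalised trace $\tau(Q)^{-1}\tau\restrict_{Q\Mcal Q}$. Therefore
\[
\tau\big(P(T,A_1)\wedge P(T,A_2)\big)=\tau(Q)\cdot\big(\tau(Q)^{-1}\tau\restrict_{Q\Mcal Q}\big)\big(P^{(Q)}(TQ,A_2)\big)=\tau(Q)\,\nu^{(Q)}_{TQ}(A_2).
\]
Now I need $\tau(Q)\,\nu^{(Q)}_{TQ}(A_2)=\nu_T(A_1\cap A_2)$. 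By Proposition~\ref{prop:nuTQ+} (when $Q\ne 0,1$; the degenerate cases are trivial), $\tau(Q)\,\nu^{(Q)}_{TQ}$ is the ``restriction'' of $\nu_T$ carried by $Q$, and I would argue that $\nu^{(Q)}_{TQ}$ is concentrated on $A_1$: this is exactly the defining property of $Q=P(T,A_1)$ recalled in \S\ref{subsec:HS}. Hence $\tau(Q)\nu^{(Q)}_{TQ}(A_2)=\tau(Q)\nu^{(Q)}_{TQ}(A_1\cap A_2)$, and it remains to identify $\tau(Q)\nu^{(Q)}_{TQ}(A_1\cap A_2)$ with $\nu_T(A_1\cap A_2)$. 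From Proposition~\ref{prop:nuTQ+} we get $\nu_T=\tau(Q)\nu^{(Q)}_{TQ}+\tau(1-Q)\nu^{(1-Q)}_{(1-Q)T}$, and the second summand is concentrated on $A_1^c$, again by the characterisation of $P(T,A_1)$; evaluating at the subset $A_1\cap A_2\subseteq A_1$ kills the second term and yields $\nu_T(A_1\cap A_2)=\tau(Q)\nu^{(Q)}_{TQ}(A_1\cap A_2)$, as desired.

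**Anticipated obstacle and loose ends.** The main thing to get right is the careful bookkeeping of the two compressions and which sets the various Brown measures are concentrated on — in particular the symmetry between using $Q=P(T,A_1)$ versus $Q=P(T,A_2)$, and making sure the argument does not secretly assume $A_1\subseteq A_2$ or disjointness (Lemma~\ref{disjoint lemma} should drop out as the special case $A_1\cap A_2=\emptyset$, which is a good consistency check). I would also need to dispatch the trivial cases $\tau(Q)=0$ (then $Q=0$ and $\nu_T(A_1)=0$, and $A_1\cap A_2$ is $\nu_T$-null, so both sides vanish — Lemma~\ref{lem:PTBsymdif} is handy here) and $\tau(Q)=1$ (then $P(T,A_1)=1$, $A_1^c$ is $\nu_T$-null, and $P(T,A_1)\wedge P(T,A_2)=P(T,A_2)=P(T,A_1\cap A_2)$ by Lemma~\ref{lem:PTBsymdif}). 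No genuinely hard analysis is involved; the proof is a clean consequence of Theorem~\ref{thm:PTQB}, Proposition~\ref{prop:nuTQ+}, and Lemma~\ref{lem:tracevw}, and the only real risk is a sign/complementation slip in the concentration statements.
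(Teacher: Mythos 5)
Your proposal is correct, and it reaches the theorem by a genuinely different route than the paper's own proof. The paper first establishes Lemma~\ref{disjoint lemma} (that disjoint sets give orthogonal-in-the-lattice projections, i.e., meet $=0$) using a Brown-measure estimate, then proves \eqref{eq:2cup} by a disjointification trick — writing $A_1\cup A_2=B_1\sqcup B_2$ with $B_1=A_1$, $B_2=A_2\setminus A_1$, invoking the disjoint lemma on $B_1,B_2$ to get the trace count right, and bridging back to $A_1,A_2$ by monotonicity — and finally derives \eqref{eq:2cap} from \eqref{eq:2cup} by a trace computation. You instead reduce \emph{both} identities at the outset, via Lemma~\ref{lem:tracevw}, to the single trace identity $\tau\big(P(T,A_1)\wedge P(T,A_2)\big)=\nu_T(A_1\cap A_2)$, and prove that one statement directly using Theorem~\ref{thm:PTQB} (with $Q=P(T,A_1)$), Proposition~\ref{prop:nuTQ+}, and the concentration property defining $P(T,A_1)$; your handling of the degenerate cases $\tau(Q)\in\{0,1\}$ is also correct and necessary, since both Theorem~\ref{thm:PTQB} and Proposition~\ref{prop:nuTQ+} exclude them. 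The tradeoffs: your argument is structurally cleaner — one trace identity drives everything, and the disjoint lemma falls out as the special case $A_1\cap A_2=\emptyset$ rather than being a separate ingredient — but it leans on Theorem~\ref{thm:PTQB} (Schultz's Lemma~3.3), which is heavier machinery than the paper chooses to deploy here; the authors explicitly remark at the start of Section~\ref{sec:HS} that they are presenting ``straightforward proofs that do not rely on the technology of unbounded affiliated operators.'' One small inessential point: the inequality $P(T,A_1)\vee P(T,A_2)\le P(T,A_1\cup A_2)$ needs only monotonicity of $B\mapsto P(T,B)$ and the lattice order of projections — Lemma~\ref{lem:hyperinvlattice} is not required for that step.
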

\begin{proof}
Set $B_1=A_1$ and $B_2=A_2\backslash A_1.$ It is immediate that $$B_1\cup B_2=A_1\cup A_2,\quad B_1\cap B_2=\emptyset.$$

We have
$$P(T,A_1),\,P(T,A_2)\leq P(T,A_1\cup A_2).$$
Thus,
\begin{equation}\label{lattice0}
P(T,A_1)\vee P(T,A_2)\leq P(T,A_1\cup A_2).
\end{equation}
Similarly, we have
\begin{equation}\label{lattice1}
P(T,B_1)\vee P(T,B_2)\leq P(T,B_1\cup B_2).
\end{equation}

By Lemma \ref{disjoint lemma}, we have
$$P(T,B_1)\wedge P(T,B_2)=0.$$
Using the equality
\begin{equation}\label{lattice main}
\tau(p\vee q)+\tau(p\wedge q)=\tau(p)+\tau(q),
\end{equation}
we obtain
\begin{multline*}
\tau(P(T,B_1)\vee P(T,B_2))=\tau(P(T,B_1))+\tau(P(T,B_2))
=\nu_T(B_1)+\nu_T(B_2) \\
=\nu_T(B_1\cup B_2)=\tau(P(T,B_1\cup B_2)).
\end{multline*}
It follows now from \eqref{lattice1} that
$$P(T,B_1)\vee P(T,B_2)=P(T,B_1\cup B_2).$$
Thus,
\begin{equation}\label{lattice2}
P(T,A_1\cup A_2)=P(T,B_1)\vee P(T,B_2)\leq P(T,A_1)\vee P(T,A_2).
\end{equation}
A combination of \eqref{lattice0} and \eqref{lattice2} yields~\eqref{eq:2cup}.

To prove~\eqref{eq:2cap}, note first that we have
$$P(T,A_1),\,P(T,A_2)\geq P(T,A_1\cap A_2).$$
Thus,
\begin{equation}\label{lattice4}
P(T,A_1)\wedge P(T,A_2)\geq P(T,A_1\cap A_2).
\end{equation}
On the other hand, from~\eqref{lattice main}, we have
$$\tau(P(T,A_1)\wedge P(T,A_2))=\tau(P(T,A_1))+\tau(P(T,A_2))-\tau(P(T,A_1)\vee P(T,A_2)).$$
Using \eqref{eq:2cup}, we obtain
\begin{multline*}
\tau(P(T,A_1)\wedge P(T,A_2))=\tau(P(T,A_1))+\tau(P(T,A_2))-\tau(P(T,A_1\cup A_2)) \\
=\nu_T(A_1)+\nu_T(A_2)-\nu_T(A_1\cup A_2)=\nu_T(A_1\cap A_2).
\end{multline*}
Thus,
$$\tau(P(T,A_1\cap A_2))=\nu_T(A_1\cap A_2)=\tau(P(T,A_1)\wedge P(T,A_2)).$$
This, combined with \eqref{lattice4}, yields
$$P(T,A_1)\wedge P(T,A_2)=P(T,A_1\cap A_2).$$
This concludes the proof.
\end{proof}

\begin{thm}\label{thm:countable}
If $(A_n)_{n=1}^\infty$ is a sequence of Borel subsets of $\mathbb{C}$, then
\begin{align}
\bigvee_{n=1}^\infty P(T,A_n)&=P(T,\bigcup_{n=1}^\infty A_n), \label{eq:infcup} \\
\bigwedge_{n=1}^\infty P(T,A_n)&=P(T,\bigcap_{n=1}^\infty A_n). \label{eq:infcap}
\end{align}
\end{thm}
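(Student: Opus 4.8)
The plan is to reduce both countable identities to the two-set case (Theorem~\ref{thm:2sets}) via a monotonicity-plus-trace argument, using that $\tau$ is a faithful normal trace and that $\nu_T$ is countably additive. For \eqref{eq:infcup}, first I would dispose of the ``$\le$'' direction: since $A_n\subseteq\bigcup_m A_m$, monotonicity of $B\mapsto P(T,B)$ gives $P(T,A_n)\le P(T,\bigcup_m A_m)$ for every $n$, hence $\bigvee_n P(T,A_n)\le P(T,\bigcup_m A_m)$. For the reverse inequality I would compare traces. Let $Q=\bigvee_{n=1}^\infty P(T,A_n)$; by normality of $\tau$ and Theorem~\ref{thm:2sets} applied inductively, $\tau(Q)=\lim_{N\to\infty}\tau\big(\bigvee_{n=1}^N P(T,A_n)\big)=\lim_{N\to\infty}\tau\big(P(T,\bigcup_{n=1}^N A_n)\big)=\lim_{N\to\infty}\nu_T\big(\bigcup_{n=1}^N A_n\big)=\nu_T\big(\bigcup_{n=1}^\infty A_n\big)=\tau\big(P(T,\bigcup_{n=1}^\infty A_n)\big)$, where the penultimate step uses countable additivity (continuity from below) of the measure $\nu_T$. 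Since $Q\le P(T,\bigcup_n A_n)$ and the two projections have equal trace in a tracial von Neumann algebra with faithful $\tau$, they are equal, proving \eqref{eq:infcup}.

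For \eqref{eq:infcap} the same scheme works with inequalities reversed. Since $\bigcap_m A_m\subseteq A_n$, monotonicity gives $P(T,\bigcap_m A_m)\le P(T,A_n)$ for all $n$, hence $P(T,\bigcap_m A_m)\le\bigwedge_n P(T,A_n)=:P$. For the trace comparison, set $C_N=\bigcap_{n=1}^N A_n$; by Theorem~\ref{thm:2sets} (induction) $\bigwedge_{n=1}^N P(T,A_n)=P(T,C_N)$, so $\tau(P)=\lim_{N\to\infty}\tau\big(P(T,C_N)\big)=\lim_{N\to\infty}\nu_T(C_N)=\nu_T\big(\bigcap_{n=1}^\infty A_n\big)=\tau\big(P(T,\bigcap_n A_n)\big)$, using continuity from above of $\nu_T$ (legitimate here since $\nu_T$ is a finite measure) and normality of $\tau$ for the decreasing net of projections $P(T,C_N)$. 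Faithfulness of $\tau$ then upgrades the inequality $P(T,\bigcap_n A_n)\le P$ to equality.

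The only point requiring a little care — and the one I would flag as the main (minor) obstacle — is the passage $\lim_N\tau(R_N)=\tau(\bigvee_N R_N)$ for an increasing sequence of projections $R_N$ (and dually $\lim_N\tau(R_N)=\tau(\bigwedge_N R_N)$ for a decreasing sequence): this is exactly normality of $\tau$, so it is standard, but it is worth invoking explicitly, and one should note that $\bigvee_{n=1}^N P(T,A_n)$ is genuinely increasing in $N$ and $P(T,C_N)$ genuinely decreasing, which is clear from Theorem~\ref{thm:2sets} together with $\bigcup_{n=1}^N A_n\subseteq\bigcup_{n=1}^{N+1}A_n$ and $C_{N+1}\subseteq C_N$. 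Everything else is bookkeeping: the inductive extension of Theorem~\ref{thm:2sets} from two sets to $N$ sets is immediate, and the measure-theoretic continuity statements for $\nu_T$ hold because $\nu_T$ is a probability measure. No new idea beyond the two-set case is needed.
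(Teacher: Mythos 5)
Your proposal is correct and follows essentially the same route as the paper: reduce to the finite case via Theorem~\ref{thm:2sets}, obtain the one-sided inequality from monotonicity, and upgrade it to equality by comparing traces using normality of $\tau$, countable additivity of $\nu_T$, and faithfulness. The paper phrases the one-sided inequality via strong-operator limits of the partial finite joins/meets rather than directly from $B\mapsto P(T,B)$ being monotone, but this is a cosmetic difference only.
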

\begin{proof}
We have
\[
\bigvee_{n=1}^\infty P(T,A_n)=
\text{s.o.t.-}\lim_{N\to\infty}\bigvee_{n=1}^NP(T,A_n)=\text{s.o.t.-}\lim_{N\to\infty}P(T,\bigcup_{n=1}^NA_n),
\]
where we have used Theorem~\ref{thm:2sets} in the second equality.
So $\le$ holds in~\eqref{eq:infcup}.
But we have
\[
\lim_{N\to\infty}\tau(P(T,\bigcup_{n=1}^NA_n))=\lim_{N\to\infty}\nu_T(\bigcup_{n=1}^NA_n)
=\nu_T(\bigcup_{n=1}^\infty A_n)=\tau(P(T,\bigcup_{n=1}^NA_n)),
\]
from which we conclude equality in~\eqref{eq:infcup}.

The proof of~\eqref{eq:infcap} is similar.
\end{proof}

\section{Meets and joins of Haagerup--Schultz projections of commuting operators}
\label{sec:meetsjoins}

In this section, we begin our construction of joint Brown measure and joint Haagerup-Schultz projections.
We will construct some $T$-hyperinvariant projections corresponding to certain sets
(belonging to an algebra of sets generated by rectangles).
Our construction is based on finite meets and joins of Haagerup--Schultz projections.

Let $I$ be a non-empty set and suppose $T=(T_i)_{i\in I}$ is a family of pairwise commuting element of $\Mcal$.
Let $Z=\prod_{i\in I}\sigma(T_i)$ be the product space, endowed with the product topology.
It is of course, compact, by Tychonoff's theorem.
By a {\em coordinate-finite rectangle} in $Z$ we will mean a product $R=\prod_{i\in I}B_i\subseteq Z$
for non-empty Borel subsets $B_i\subseteq\sigma(T_i)$, with $B_i=\sigma(T_i)$ for all but finitely many $i\in I$.
Let $\Afr_0$ denote the algebra of subsets of $Z$ consisting of the empty set and all finite unions of coordinate-finite rectangles.
Note that every $X\in\Afr_0$ can be written as a disjoint union of finitely many coordinate-finite rectangles.
We begin with coordinate-finite rectangles.

\begin{defi}\label{def:PTR}
If $R=\prod_{i\in I}B_i\subseteq Z$ is a coordinate-finite rectangle,
then we set
\[
P(T:R)=\bigwedge_{i\in I}P(T_i,B_i).
\]
\end{defi}

The goal of this section is to prove Theorem~\ref{thm:ARS}, which shows that
the following definition makes sense.
\begin{defi}\label{def:PTX}
For $X\in\Afr_0$, writing $X=\bigcup_{j=1}^kR^{(j)}$ as a union of finitely many
pairwise disjoint coordinate-finite rectangles $R^{(1)},\ldots,R^{(k)}$, we set
\[
P(T:X)=\bigvee_{j=1}^kP(T:R^{(j)}).
\]
\end{defi}

\begin{lemma}\label{lem:PThyperinv}
Let $R$ be a coordinate-finite rectangle.
Then
$P(T:R)$ is a $T$-hyper\-in\-var\-iant projection.
\end{lemma}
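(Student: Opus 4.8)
The plan is to show that $P(T:R) = \bigwedge_{i\in I} P(T_i, B_i)$ is invariant under the commutant $\alg(\{T_i \mid i \in I\})'$ of the (non-$*$) algebra generated by the family. The key observation is that each individual Haagerup--Schultz projection $P(T_i, B_i)$ is known to be $T_i$-hyperinvariant (this is part of the Haagerup--Schultz construction recalled in \S\ref{subsec:HS}), and in fact the projections $P(T_i, B_r(\lambda))$ onto the subspaces $E(T_i - \lambda, r)$ from~\eqref{eq:ET} are manifestly hyperinvariant: if $X$ commutes with $T_i$, then $X$ maps the defining approximation sequences for $E(T_i-\lambda,r)$ to approximation sequences of the same type, so $X E(T_i - \lambda, r) \subseteq E(T_i - \lambda, r)$.

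First I would reduce to the case of a single Borel set. Since $P(T_i, B_i)$ is $T_i$-hyperinvariant, it is a fortiori invariant under the smaller commutant $\alg(\{T_j \mid j \in I\})'$ (more operators commuting means fewer $X$ to check). So each factor $P(T_i, B_i)$ in the meet is $(T_j)_{j\in I}$-hyperinvariant. Then I would invoke Lemma~\ref{lem:hyperinvlattice}, which says that arbitrary meets (and joins) of $(S_i)_{i\in I}$-hyperinvariant projections are again $(S_i)_{i\in I}$-hyperinvariant. Applying this with the family $(S_i)_{i\in I} = (T_i)_{i \in I}$ and the collection $\{P(T_i, B_i) \mid i \in I\}$ of hyperinvariant projections (only finitely many are not the identity, but that does not matter), we conclude that $P(T:R) = \bigwedge_{i\in I} P(T_i, B_i)$ is $T$-hyperinvariant.

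The only point requiring a moment's care is the claim that $P(T_i, B_i)$ is $T_i$-hyperinvariant for a \emph{general} Borel set $B_i$, not just a closed disk. This follows because Haagerup--Schultz projections for general Borel sets are built from those for closed disks via countable lattice operations (intersections of disks give closed sets, then countable unions, etc.), together with Theorem~\ref{thm:countable} on the compatibility of $B \mapsto P(T_i, B)$ with countable lattice operations and Lemma~\ref{lem:hyperinvlattice} on the stability of hyperinvariance under lattice operations; alternatively one can cite directly that $P(T_i, B_i)$ is $T_i$-hyperinvariant, which is established in~\cite{HS09}. I expect this bookkeeping — confirming hyperinvariance of $P(T_i, B_i)$ for arbitrary Borel $B_i$ — to be the only (minor) obstacle; the rest is a direct application of Lemma~\ref{lem:hyperinvlattice}.
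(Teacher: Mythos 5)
Your proof is correct and takes essentially the same approach as the paper: observe that each $P(T_i,B_i)$ is $T_i$-hyperinvariant (hence $T$-hyperinvariant, since the commutant of the whole family is contained in the commutant of each $T_i$), and then apply Lemma~\ref{lem:hyperinvlattice} to the meet. The additional discussion of why $P(T_i,B_i)$ is hyperinvariant for general Borel $B_i$ is not needed, since this is already established in~\cite{HS09} and is quoted as such in \S\ref{subsec:HS}.
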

\begin{proof}
For each $i\in I$, the Haagerup-Schultz projection $P(T_i,B_i)$ is a $T_i$-hyper\-in\-vari\-ant projection, and is, therefore,
also $T$-hyperinvariant.
Now by Lemma~\ref{lem:hyperinvlattice}, the result follows.
\end{proof}

\begin{lemma}\label{lem:PTRincr}
If $R_1$ and $R_2$ are coordinate-finite rectangles and $R_1\subseteq R_2$, then
\[
P(T:R_1)\le P(T:R_2).
\]
\end{lemma}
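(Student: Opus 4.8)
The plan is to reduce everything to the coordinatewise monotonicity of the individual Haagerup--Schultz projections, which is already recorded in \S\ref{subsec:HS}. Write $R_1=\prod_{i\in I}B_i^{(1)}$ and $R_2=\prod_{i\in I}B_i^{(2)}$, where each $B_i^{(j)}$ is a non-empty Borel subset of $\sigma(T_i)$, equal to $\sigma(T_i)$ for all but finitely many $i$.

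First I would check the elementary set-theoretic fact that $R_1\subseteq R_2$ forces $B_i^{(1)}\subseteq B_i^{(2)}$ for every $i\in I$. Indeed, fix $i_0\in I$ and let $x\in B_{i_0}^{(1)}$; since every factor $B_i^{(1)}$ is non-empty, choose $a_i\in B_i^{(1)}$ for each $i\neq i_0$ and form the point $z\in Z$ with $z_{i_0}=x$ and $z_i=a_i$ for $i\neq i_0$. Then $z\in R_1\subseteq R_2$, so $x=z_{i_0}\in B_{i_0}^{(2)}$.

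Next, for each $i\in I$, the monotonicity of the map $B\mapsto P(T_i,B)$ (noted just after Proposition~\ref{prop:nuTQ+}) gives $P(T_i,B_i^{(1)})\le P(T_i,B_i^{(2)})$. Finally, since the lattice meet is monotone in each argument — a meet of a family of larger projections is larger — Definition~\ref{def:PTR} yields
\[
P(T:R_1)=\bigwedge_{i\in I}P(T_i,B_i^{(1)})\le\bigwedge_{i\in I}P(T_i,B_i^{(2)})=P(T:R_2).
\]
(One may also remark that for all but finitely many $i$ one has $B_i^{(1)}=B_i^{(2)}=\sigma(T_i)$ and $P(T_i,\sigma(T_i))=1$, so that both meets are in fact finite meets; this is not needed for the argument.)

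There is no genuine obstacle here: the only step deserving a word of justification is the passage from $R_1\subseteq R_2$ to the coordinatewise inclusions, and that uses non-emptiness of the factors, which is built into the definition of a coordinate-finite rectangle. Everything else is immediate from the already-established monotonicity of $P(T_i,\cdot)$ and of the lattice operation $\bigwedge$.
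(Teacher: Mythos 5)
Your proof is correct and follows essentially the same route as the paper's: reduce $R_1\subseteq R_2$ to coordinatewise inclusions $B_i^{(1)}\subseteq B_i^{(2)}$, apply monotonicity of each $P(T_i,\cdot)$, and conclude via monotonicity of $\bigwedge$. The paper simply takes the coordinatewise-inclusion step for granted, whereas you spell out the (correct) justification using non-emptiness of the factors.
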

\begin{proof}
Writing $R_j=\prod_{i\in I}B_{i,j}$ for $j=1,2$, we have $B_{i,1}\subseteq B_{i,2}$ for every $i$.
Thus, $P(T_i,B_{i,1})\le P(T_i,B_{i,2})$ for every $i$, and we have
\[
P(T:R_1)=\bigwedge_{i\in I}P(T_i,B_{i,1})\le\bigwedge_{i\in I}P(T_i,B_{i,2})=P(T:R_2).
\]
\end{proof}

\begin{lemma}\label{lem:PTRsymdif}
Suppose that $R=\prod_{i\in I}B_i$ and $R'=\prod_{i\in I}B_i'$ are coordinate-finite rectangles.
Suppose that for each $i\in I$, the symmetric difference of $B_i$ and $B_i'$ is $\nu_{T_i}$-null.
Then $P(T:R)=P(T:R')$.
\end{lemma}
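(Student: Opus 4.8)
The plan is to reduce the statement to the single-operator case via Lemma~\ref{lem:PTBsymdif} applied coordinatewise. Recall that by Definition~\ref{def:PTR} we have $P(T:R)=\bigwedge_{i\in I}P(T_i,B_i)$ and $P(T:R')=\bigwedge_{i\in I}P(T_i,B_i')$, so it suffices to compare the two families of projections $\{P(T_i,B_i)\}_{i\in I}$ and $\{P(T_i,B_i')\}_{i\in I}$ term by term.

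First I would fix $i\in I$. By hypothesis, the symmetric difference of $B_i$ and $B_i'$ is $\nu_{T_i}$-null, so Lemma~\ref{lem:PTBsymdif} (with $T$ there taken to be $T_i$) yields $P(T_i,B_i)=P(T_i,B_i')$. Since this holds for every $i\in I$, the two families of Haagerup--Schultz projections coincide, and therefore so do their meets in the projection lattice of $\Mcal$:
\[
P(T:R)=\bigwedge_{i\in I}P(T_i,B_i)=\bigwedge_{i\in I}P(T_i,B_i')=P(T:R').
\]

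There is essentially no obstacle here: the only input beyond the definition is Lemma~\ref{lem:PTBsymdif}, which is already established, and the argument does not require $I$ to be finite or the rectangles to be coordinate-finite in any essential way — the coordinate-finiteness only guarantees that the relevant objects $P(T:R)$, $P(T:R')$ are defined in the first place. If one wanted to be slightly more careful, one could note in passing that $P(T:R)=P(T:R')$ implies in particular that Definition~\ref{def:PTR} is insensitive to $\nu_{T_i}$-null modifications of the defining Borel sets, which is the point of recording this lemma for use in the later consistency arguments of this section.
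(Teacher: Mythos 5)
Your proof is correct and coincides exactly with the paper's argument, which likewise derives the statement by applying Lemma~\ref{lem:PTBsymdif} coordinatewise to each $P(T_i,B_i)$ and then invoking Definition~\ref{def:PTR}. The paper simply records this as immediate; your write-up spells out the coordinatewise step.
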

\begin{proof}
The follows immediately from Lemma~\ref{lem:PTBsymdif} and Definition~\ref{def:PTR}.
\end{proof}

The following lemma shows that $P(T:R)$ behaves well under monotone limits.
\begin{lemma}\label{lem:monotoneR}
Suppose $R$ and $R_1,R_2,\ldots$ are coordinate-finite rectangles.
\begin{enumerate}[(i)]
\item\label{it:Rjincr} If $R_1\subseteq R_2\subseteq\cdots$ and $R=\bigcup_{j\ge1} R_j$, then
\[
P(T:R)=\bigvee_{j\ge1}P(T:R_j).
\]
\item\label{it:Rjdecr} If $R_1\supseteq R_2\supseteq\cdots$ and $R=\bigcap_{j\ge1} R_j$, then
\[
P(T:R)=\bigwedge_{j\ge1}P(T:R_j).
\]
\end{enumerate}
\end{lemma}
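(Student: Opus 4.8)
The plan is to reduce both identities to coordinate-wise statements about the Haagerup--Schultz projections $P(T_i,\cdot)$ of the individual operators, where Theorem~\ref{thm:countable} applies, and then to interchange the resulting countable join (resp.\ meet) with the finite meet that defines $P(T:\cdot)$. First I would fix notation: write $R=\prod_{i\in I}B_i$ and $R_j=\prod_{i\in I}B_{i,j}$, all factors being non-empty Borel sets since these are coordinate-finite rectangles. Applying the $i$-th coordinate projection $Z\to\sigma(T_i)$ — which commutes with arbitrary unions and intersections and carries a non-empty product to its $i$-th factor — the hypotheses give $B_i=\bigcup_{j\ge1}B_{i,j}$ in case~\eqref{it:Rjincr} and $B_i=\bigcap_{j\ge1}B_{i,j}$ in case~\eqref{it:Rjdecr}, while the inclusion $R_j\subseteq R_{j+1}$ (resp.\ $R_j\supseteq R_{j+1}$) forces $B_{i,j}\subseteq B_{i,j+1}$ (resp.\ $B_{i,j}\supseteq B_{i,j+1}$) for each $i$. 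Since $P(T_i,\sigma(T_i))=1$ (because $\nu_{T_i}(\sigma(T_i))=1$), each $P(T:R_j)=\bigwedge_{i\in I}P(T_i,B_{i,j})$ is really a finite meet; in case~\eqref{it:Rjincr} the monotonicity of the $B_{i,j}$ shows that the finite set $F=\{i\in I:B_{i,1}\ne\sigma(T_i)\}$ contains every coordinate that is ever active, so that $P(T:R_j)=\bigwedge_{i\in F}P(T_i,B_{i,j})$ for all $j$ and $P(T:R)=\bigwedge_{i\in F}P(T_i,B_i)$.

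Part~\eqref{it:Rjdecr} then follows from the order structure alone. Using associativity of the meet in the projection lattice of $\Mcal$ together with Theorem~\ref{thm:countable} applied to each $T_i$,
\[
\bigwedge_{j\ge1}P(T:R_j)=\bigwedge_{j\ge1}\bigwedge_{i\in I}P(T_i,B_{i,j})=\bigwedge_{i\in I}\bigwedge_{j\ge1}P(T_i,B_{i,j})=\bigwedge_{i\in I}P(T_i,B_i)=P(T:R).
\]

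For part~\eqref{it:Rjincr} the interchange of the countable join with the finite meet genuinely uses the trace. For each $i\in F$ the sets $B_{i,j}$ increase to $B_i$, so $P(T_i,B_{i,j})\le P(T_i,B_i)$ and, by Theorem~\ref{thm:countable}, $\bigvee_{j\ge1}P(T_i,B_{i,j})=P(T_i,B_i)$; since an increasing sequence of projections converges in strong operator topology, hence in $\|\cdot\|_2$, to its supremum, we get $P(T_i,B_{i,j})\to P(T_i,B_i)$ in $\|\cdot\|_2$ as $j\to\infty$. Then Lemma~\ref{lem:wedgePin}, applied with the finite index set $F$, shows that $P(T:R_j)=\bigwedge_{i\in F}P(T_i,B_{i,j})$ converges in $\|\cdot\|_2$ to $\bigwedge_{i\in F}P(T_i,B_i)=P(T:R)$. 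On the other hand $(P(T:R_j))_{j\ge1}$ is an increasing sequence of projections by Lemma~\ref{lem:PTRincr}, so it converges in $\|\cdot\|_2$ to $\bigvee_{j\ge1}P(T:R_j)$; by uniqueness of limits, $\bigvee_{j\ge1}P(T:R_j)=P(T:R)$.

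The step I expect to be the main obstacle is the bookkeeping in the first paragraph: pinning down that only finitely many coordinates are ever ``active'' along the whole sequence, so that $P(T:\cdot)$ is genuinely a finite meet and Lemma~\ref{lem:wedgePin} is applicable. Once that is settled, the order-theoretic computation for~\eqref{it:Rjdecr} and the $\|\cdot\|_2$-continuity argument for~\eqref{it:Rjincr} are both short.
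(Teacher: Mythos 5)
Your proof is correct and follows essentially the same route as the paper: isolating the finite set of active coordinates, invoking Theorem~\ref{thm:countable} for each $P(T_i,\cdot)$, using Lemma~\ref{lem:wedgePin} together with $\|\cdot\|_2$-convergence of increasing projections for the union case, and interchanging nested meets for the intersection case. The only cosmetic quibble is that a coordinate projection does not commute with arbitrary intersections of sets in general, but it does for intersections of rectangles over a fixed index set, which is all you use.
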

\begin{proof}
Let $R=\prod_{i\in I}B_i$ and $R_j=\prod_{i\in I} B_{i,j}$.

For~\eqref{it:Rjincr}, let $I_0$ be the finite set of all $i$ such that $B_{i,1}\ne\sigma(T_i)$.
The desired conclusion is equivalent to the convergence of $P(T:R_j)$
to $P(T:R)$ in $\|\cdot\|_2$.
We must have $B_i=B_{i,j}=\sigma(T_i)$ for all $j\ge1$ and all $i\in I\setminus I_0$
and for each $i\in I_0$ we have $B_{i,1}\subseteq B_{i,2}\subseteq\cdots$ with $B_i=\bigcup_{j\ge1}B_{i,j}$.
From the properties of Haagerup--Schultz projections, we have $P(T_i,B_{i,j})\le P(T_i,B_i)$ for all $j$ and
that $P(T_i,B_{i,j})$ converges in $\|\cdot\|_2$ to $P(T_i,B)$ as $j\to\infty$.
Applying Lemma~\ref{lem:wedgePin}, we have that
\[
P(T:R_j)=\bigwedge_{i\in I_0}P(T_i,B_{i,j})
\]
converges in $\|\cdot\|_2$ to $\bigwedge_{i\in I_0}P(T_i,B_i)=P(T:R)$ as $j\to\infty$.

For~\eqref{it:Rjdecr}, for each $i\in I$ we have $B_{i,1}\supseteq B_{i,2}\supseteq\cdots$ with $B_i=\bigcap_{j\ge1}B_{i,j}$.
From the properties of Haagerup--Schultz projections, we have $P(T_i,B_i)=\bigwedge_{j\ge1}P(T_i,B_{i,j})$.
Thus,
\begin{multline*}
P(T:R)=\bigwedge_{i\in I}P(T_i,B_i)=\bigwedge_{i\in I}\left(\bigwedge_{j\ge1}P(T_i,B_{i,j})\right) \\
=\bigwedge_{j\ge1}\left(\bigwedge_{i\in I}P(T_i,B_{i,j})\right)=\bigwedge_{j\ge1}P(T:R_j).
\end{multline*}
\end{proof}

The following lemma shows that $P(T:R)$ satisfies countable lattice properties for decompositions of $R$ in one coordinate.
\begin{lemma}\label{lem:ilattice}
Suppose $R=\prod_{i\in I}B_i$ is a coordinate-finite rectangle.
Fix $i_1\in I$ and consider Borel subsets $B_{i_1,j}\subseteq\sigma(T_{i_1})$ for all integers $j\ge1$.
Let
\[
R_j=\prod_{i\in I}B_{i,j},
\]
where
$B_{i,j}=B_i$ whenever $i\ne i_1$.
\begin{enumerate}[(i)]
\item\label{it:Rjcup} If $B_{i_1}=\bigcup_{j=1}^\infty B_{i_1,j}$, then
\[
P(T:R)=\bigvee_{j\ge1}P(T:R_j).
\]
\item\label{it:Rjcap} If $B_{i_1}=\bigcap_{j=1}^\infty B_{i_1,j}$, then
\[
P(T:R)=\bigwedge_{j\ge1}P(T:R_j).
\]
\end{enumerate}
\end{lemma}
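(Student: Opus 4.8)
The plan is to reduce to the two-variable lattice theory already established, by isolating the single coordinate $i_1$ in which the decomposition occurs, and then to play the same trace-counting game as in Theorems~\ref{thm:2sets} and~\ref{thm:countable}, using that $P(T:R)$ is a meet of Haagerup--Schultz projections. Throughout, recall that $P(T:R)\le P(T_{i_1},B_{i_1})$ and that for each $j$ we have $R_j\subseteq R$, so $P(T:R_j)\le P(T:R)$ by Lemma~\ref{lem:PTRincr}; this gives the ``$\ge$'' inequality in~\eqref{it:Rjcup} and the ``$\le$'' inequality in~\eqref{it:Rjcap} for free.

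For part~\eqref{it:Rjcap}, the inclusion $B_{i_1}=\bigcap_j B_{i_1,j}$ with the $B_{i_1,j}$ not necessarily decreasing should first be reduced to the decreasing case by replacing $B_{i_1,j}$ with $\bigcap_{k\le j}B_{i_1,k}$; this changes neither the intersection nor, by Theorem~\ref{thm:2sets}\eqref{eq:2cap} applied to $T_{i_1}$, the relevant meets $\bigwedge_{k\le j}P(T_{i_1},B_{i_1,k})$, and these meets are exactly $P(T:R_j')$ for the correspondingly shrunk rectangles. Once the sequence is decreasing, part~\eqref{it:Rjcap} is a direct application of Lemma~\ref{lem:monotoneR}\eqref{it:Rjdecr}. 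For part~\eqref{it:Rjcup}, similarly reduce to the case of increasing $B_{i_1,j}$ by passing to $\bigcup_{k\le j}B_{i_1,k}$ (using Theorem~\ref{thm:2sets}\eqref{eq:2cup} for $T_{i_1}$ to see that the meet defining $P(T:R_j)$ is unchanged up to the reduction), and then apply Lemma~\ref{lem:monotoneR}\eqref{it:Rjincr}.

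In fact, since both claims, after the reduction to monotone sequences, are literally instances of Lemma~\ref{lem:monotoneR}, the only real content is the reduction itself: one must check that replacing the $j$-th set in coordinate $i_1$ by the union (resp.\ intersection) of the first $j$ sets gives a rectangle $R_j'\subseteq R$ with $P(T:R_j')=\bigvee_{k\le j}P(T:R_k)$ (resp.\ $=\bigwedge_{k\le j}P(T:R_k)$). For the union case this uses that $\bigwedge_{i}P(T_i,\cdot)$ distributes over the finite join in the single coordinate $i_1$; that is exactly what Lemma~\ref{lem:wedgePin} together with Theorem~\ref{thm:2sets} delivers, since all projections in play lie below $P(T_i,B_i)$ for $i\ne i_1$ and below $P(T_{i_1},B_{i_1})$ for $i=i_1$. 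The main obstacle, and the step deserving the most care, is precisely this distributivity of the finite meet over a finite join in a single coordinate: joins and meets of Haagerup--Schultz projections do not interact like those of commuting spectral projections, so one cannot simply manipulate lattice expressions formally, and must instead route the argument through the trace identities of Lemmas~\ref{lem:tracevw} and~\ref{lem:pvqwedger} exactly as was done in Section~\ref{sec:HS}.
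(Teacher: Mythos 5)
Your reduction to monotone sequences is sound in part~\eqref{it:Rjcap}: replacing $B_{i_1,j}$ by $\bigcap_{k\le j}B_{i_1,k}$ gives $\bigwedge_{k\le j}P(T:R_k)=\bigwedge_{i\ne i_1}P(T_i,B_i)\wedge\bigwedge_{k\le j}P(T_{i_1},B_{i_1,k})=P(T:R_j'')$ by nothing more than associativity of $\wedge$ and Theorem~\ref{thm:2sets}\eqref{eq:2cap} for $T_{i_1}$, and then Lemma~\ref{lem:monotoneR}\eqref{it:Rjdecr} finishes. But for part~\eqref{it:Rjcup} your plan hinges on the identity
\[
\Bigl(\bigwedge_{i\ne i_1}P(T_i,B_i)\Bigr)\wedge\bigvee_{k\le j}P(T_{i_1},B_{i_1,k})
=\bigvee_{k\le j}\Bigl(\bigwedge_{i\ne i_1}P(T_i,B_i)\wedge P(T_{i_1},B_{i_1,k})\Bigr),
\]
and you have correctly flagged this distributivity as the crux, but the route you propose through Lemmas~\ref{lem:tracevw} and~\ref{lem:pvqwedger} does not reach it. The lattice of projections in a tracial von Neumann algebra is modular (that is the content of Lemma~\ref{lem:pvqwedger}), not distributive, and the trace bookkeeping only reduces the gap to the nonnegative quantity $\tau\bigl((Q_0\vee P_1)\wedge(Q_0\vee P_2)\bigr)-\tau\bigl(Q_0\vee(P_1\wedge P_2)\bigr)$ (writing $Q_0=\bigwedge_{i\ne i_1}P(T_i,B_i)$, $P_k=P(T_{i_1},B_{i_1,k})$), which is itself just the dual of the distributivity you are trying to prove. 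You also cannot invoke the additivity of trace over disjoint rectangles here: the fact that $P(T:A)\wedge\bigvee_\lambda P(T:A^{(\lambda)})=0$ for disjoint rectangles is Lemma~\ref{lem:rectcompl2}, which is proved downstream of this lemma (via Lemma~\ref{lem:rectcompl1}, which cites Lemma~\ref{lem:ilattice}), so any appeal to it would be circular; and at this point in the paper there is no joint measure $\nu_T$ with which to compute $\tau(P(T:R))$. Lemma~\ref{lem:wedgePin} is a $\|\cdot\|_2$-continuity statement, not a distributivity statement, and does not help either.

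The paper closes the gap with a different device altogether: it sets $Q=\bigwedge_{i\ne i_1}P(T_i,B_i)$, observes via Lemma~\ref{lem:PThyperinv} that $Q$ is $T_{i_1}$-invariant, and then applies the compression formula of Theorem~\ref{thm:PTQB}, namely $P(T_{i_1},B)\wedge Q=P^{(Q)}(T_{i_1}Q,B)$ in $Q\Mcal Q$. This rewrites $P(T:R)$ and each $P(T:R_j)$ as Haagerup--Schultz projections of the single operator $T_{i_1}Q$, after which both conclusions are literally instances of Theorem~\ref{thm:countable} applied in $Q\Mcal Q$. In effect, the compression turns the problematic ``meet of $Q$ against a join'' into a genuine Haagerup--Schultz join inside a corner algebra, where the one-operator lattice identities are already known. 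That is the ingredient your argument is missing; without it, the distributivity in the single coordinate $i_1$ has no proof.
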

\begin{proof}
If $I=\{i_1\}$, then this follows from the lattice properties of the Haagerup-Schultz projections, Theorem~\ref{thm:countable}.

Suppose $|I|>1$ and let $I_1=\{i_1\}$ and $I_2=I\setminus I_1$.
Let $T^{(2)}=(T_i)_{i\in I_2}$ and $R^{(2)}=\prod_{i\in I_2}B_i$.
Let $Q=P(T^{(2)}:R^{(2)})$.
Since $T_{i_1}$ commutes with $T_i$ for every $i\in I_2$, by Lemma~\ref{lem:PThyperinv}, the projection $Q$ is $T_{i_1}$-invariant.
Applying Theorem~\ref{thm:PTQB},
we have
\[
P(T:R)=P^{(Q)}(T_{i_1}Q,B_{i_1})
\]
and, for every $j\ge1$,
\[
P(T:R_j)=P^{(Q)}(T_{i_1}Q,B_{i_1,j}).
\]
Now using the lattice property of the Haagerup-Schultz projections for the operator $T_{i_1}Q$, if $B_{i_1}=\bigcup_{j\ge1}B_{i_1,j}$, then we have
\[
\bigvee_{j\ge1}P(T:R_j)=\bigvee_{j\ge1}P^{(Q)}(T_{i_1}Q,B_{i_1,j})=P^{(Q)}(T_{i_1}Q,B_{i_1})=P(T:R),
\]
while  if $B_{i_1}=\bigcap_{j\ge1}B_{i_1,j}$, then we have
\[
\bigwedge_{j\ge1}P(T:R_j)=\bigwedge_{j\ge1}P^{(Q)}(T_{i_1}Q,B_{i_1,j})=P^{(Q)}(T_{i_1}Q,B_{i_1})=P(T:R).
\]
\end{proof}

\begin{lemma}\label{lem:rectcompl1}
Let $R=\prod_{i\in I}B_i$ be a coordinate-finite rectangle that is a proper subset of $Z$.
Enumerate the finite, non-empty set $\{i\in I\mid B_i\ne\sigma(T_i)\}$, writing it as $\{i_1,\ldots,i_n\}$, and consider the rectangles
$S^{(j)}=\prod_{i\in I}C_i^{(j)}$,
where 
\[
C_i^{(j)}=\begin{cases}
\sigma(T_i),&i\ne i_j,\\
B_i^c,&i=i_j,
\end{cases}
\]
where $B_i^c=\sigma(T_i)\setminus B_i$.
Thus, we have $Z\setminus R=\bigcup_{j=1}^nS^{(j)}$.
Then
\[
P(T:R)\wedge\left(\bigvee_{j=1}^n P(T:S^{(j)})\right)=0.
\]
\end{lemma}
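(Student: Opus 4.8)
The plan is to reduce the general statement to the two-coordinate case and then to a single Haagerup–Schultz computation via Theorem~\ref{thm:PTQB}. Write $I_0=\{i_1,\ldots,i_n\}$ for the finite set of coordinates where $B_i\ne\sigma(T_i)$, and for $i\notin I_0$ recall $B_i=\sigma(T_i)$. The projections $P(T:R)$ and the $P(T:S^{(j)})$ all lie in the abelian-up-to-complications lattice generated by the various $P(T_{i},\cdot)$, but they need not commute, so we must argue via traces rather than set-theoretic manipulations. The key observation is that $P(T:R)=\bigwedge_{i\in I_0}P(T_i,B_i)$ while $P(T:S^{(j)})=P(T_{i_j},B_{i_j}^c)$ (a single Haagerup–Schultz projection, since all other factors are full). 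So the claim becomes
\[
\Bigl(\bigwedge_{i\in I_0}P(T_i,B_i)\Bigr)\wedge\Bigl(\bigvee_{j=1}^n P(T_{i_j},B_{i_j}^c)\Bigr)=0.
\]

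**Main steps.** First I would establish the case $n=1$: here the statement is $P(T_{i_1},B_{i_1})\wedge P(T_{i_1},B_{i_1}^c)=0$, which is exactly Lemma~\ref{disjoint lemma} applied to the single operator $T_{i_1}$ with the disjoint sets $B_{i_1}$ and $B_{i_1}^c$. For the inductive step, set $P=P(T:R)$, and note $P\le P(T_{i_n},B_{i_n})$. The plan is to intersect with the $i_n$-th term first: since the sets $B_{i_j}^c$ for $j<n$ only constrain coordinates other than $i_n$, one should be able to "factor out" the $i_n$-coordinate. Concretely, let $Q=\bigwedge_{i\in I_0,\, i\ne i_n}P(T_i,B_i)$, which by Lemma~\ref{lem:PThyperinv} is a $T_{i_n}$-hyperinvariant projection. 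Then by Theorem~\ref{thm:PTQB}, working inside $Q\Mcal Q$, we have $P=P(T:R)=P^{(Q)}(T_{i_n}Q,B_{i_n})$ and similarly $P(T:S^{(n)})\wedge Q=P^{(Q)}(T_{i_n}Q,B_{i_n}^c)$. The disjointness lemma applied to the operator $T_{i_n}Q$ gives $P\wedge P(T:S^{(n)})\wedge Q=0$, hence $P\wedge P(T:S^{(n)})\le Q^\perp$ relative to the ambient... this needs care: what we get is $P\wedge P(T:S^{(n)})\le 1-Q$ is false; rather $P\wedge P(T:S^{(n)})$, being $\le P\le Q$, satisfies $P\wedge P(T:S^{(n)})=P\wedge P(T:S^{(n)})\wedge Q=0$. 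So in fact $P\wedge P(T:S^{(n)})=0$ directly, and by symmetry $P\wedge P(T:S^{(j)})=0$ for each individual $j$. The remaining work is to pass from $P\wedge P(T:S^{(j)})=0$ for each $j$ to $P\wedge\bigl(\bigvee_j P(T:S^{(j)})\bigr)=0$.

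**The main obstacle.** That last passage is the crux: pairwise orthogonality of $P$ with each $P(T:S^{(j)})$ does not in general force orthogonality of $P$ with their join. The plan to overcome this is a trace computation in the spirit of the proof of Theorem~\ref{thm:2sets}, exploiting that $Z\setminus R=\bigcup_j S^{(j)}$ and that by the already-proved lattice and monotonicity properties (Theorems~\ref{thm:countable}, \ref{thm:2sets} and Lemmas~\ref{lem:ilattice}, \ref{lem:monotoneR}) applied coordinatewise, one can compute $\tau\bigl(\bigvee_j P(T:S^{(j)})\bigr)$ and $\tau\bigl(P\vee\bigvee_j P(T:S^{(j)})\bigr)$ explicitly. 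Specifically, I expect $\bigvee_j P(T:S^{(j)})$ to equal $P(T:Z\setminus R)=1-\bigvee\{\text{stuff}\}$-type object whose trace is $1-\nu_T(R)$ where $\nu_T(R)=\tau(P)$; combined with $P\le\bigwedge_i P(T_i,B_i)$ and the fact that each $P(T_i,B_i)$ is orthogonal to $P(T_i,B_i^c)$, an inclusion–exclusion via Lemma~\ref{lem:tracevw} should yield $\tau\bigl(P\vee\bigvee_j P(T:S^{(j)})\bigr)=\tau(P)+\tau\bigl(\bigvee_j P(T:S^{(j)})\bigr)$, which is equivalent to $\tau\bigl(P\wedge\bigvee_j P(T:S^{(j)})\bigr)=0$, and hence the meet is $0$ by faithfulness of $\tau$. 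Alternatively — and this may be cleaner — one can induct on $n$ directly: write $\bigvee_{j=1}^n P(T:S^{(j)})=P(T:S^{(n)})\vee\bigvee_{j=1}^{n-1}P(T:S^{(j)})$, use Lemma~\ref{lem:pvqwedger} with $R$-projection below a suitable $R$... but since $P$ is not below the join, Lemma~\ref{lem:pvqwedger} does not apply verbatim, so the trace argument seems the more robust route and is the step I would expect to absorb most of the effort.
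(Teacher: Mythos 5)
Your setup, the reduction to the finite coordinate set, the observation that $P(T:S^{(j)})=P(T_{i_j},B_{i_j}^c)$ is a single Haagerup--Schultz projection, and the $n=1$ base case are all correct, and you have correctly diagnosed the heart of the difficulty: pairwise disjointness $P\wedge P(T:S^{(j)})=0$ does not force $P\wedge\bigvee_j P(T:S^{(j)})=0$. But the way you propose to close that gap does not work. The trace identity of Lemma~\ref{lem:tracevw} says $\tau(P\wedge Q)=\tau(P)+\tau(Q)-\tau(P\vee Q)$, so showing the meet has trace zero is \emph{equivalent} to showing $\tau(P\vee Q)=\tau(P)+\tau(Q)$ where $Q=\bigvee_j P(T:S^{(j)})$ --- you have merely restated the claim, and you do not supply any independent way to evaluate $\tau(P\vee Q)$. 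Worse, the tools you gesture at --- that $\bigvee_j P(T:S^{(j)})$ should ``equal $P(T:Z\setminus R)$'' with trace $1-\nu_T(R)$ --- presuppose the finite additivity of $\mu_0$ (Lemma~\ref{lem:mu0fadd}), which in the paper is proved \emph{later} and depends on Lemma~\ref{lem:rectcompl2}, which in turn depends on the very lemma you are trying to prove; the argument would be circular.

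The actual route, which you considered but dismissed, does go through Lemma~\ref{lem:pvqwedger}. You rejected it because $P$ is not below the join. The trick you are missing is that you should not try to put $P$ below anything; instead, rewrite the join $\bigvee_{j=1}^n P(T:S^{(j)})$ so that it has the form $p\vee q$ with $p$ below a projection $r$ satisfying $P\le r$ (not below the join itself). Concretely, write (reducing to $I=\{1,\dots,n\}$) $S^{(n)}=(B_1\cup B_1^c)\times\cdots\times(B_{n-1}\cup B_{n-1}^c)\times B_n^c$ and expand this via repeated applications of Lemma~\ref{lem:ilattice}: $P(T:S^{(n)})$ is dominated by the join of $\bigvee_{j<n}P(T:S^{(j)})$ together with $p:=P(T:B_1\times\cdots\times B_{n-1}\times B_n^c)$. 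Hence $\bigvee_{j=1}^n P(T:S^{(j)})=q\vee p$ where $q:=\bigvee_{j<n}P(T:S^{(j)})$. Setting $r:=P(T:R')$ with $R'=B_1\times\cdots\times B_{n-1}\times\sigma(T_n)$, one has $p\le r$, the induction hypothesis gives $q\wedge r=0$, and Lemma~\ref{lem:pvqwedger} yields $r\wedge(p\vee q)=p\vee(q\wedge r)=p$. Intersecting with $P(T_n,B_n)$ and using $P(T:R)=P(T_n,B_n)\wedge r$ reduces everything to $P(T_n,B_n)\wedge p\le P(T_n,B_n)\wedge P(T_n,B_n^c)=0$. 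That decomposition of $S^{(n)}$ and the resulting $p\le r$ configuration --- which makes Lemma~\ref{lem:pvqwedger} applicable --- is the missing idea.
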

\begin{proof}
If $n=1$ then this follows because $P(T_{i_1},B_{i_1})\wedge P(T_{i_1},B_{i_1}^c)=0$, which is a property of the Haagerup--Schultz projections.

Supposing $n\ge2$, we proceed by induction on $n$.
Since coordinates in $I\setminus\{i_1,\ldots,i_n\}$ play no role,
we may without loss of generality assume $I=\{i_1,\ldots,i_n\}$.
To ease notation, we will write simply $i_j=j$.
Thus, we have
\[
R=B_1\times B_2\times\cdots\times B_n.
\]
Writing
\[
S^{(n)}=(B_1\cup B_1^c)\times(B_2\cup B_2^c)\times\cdots\times(B_{n-1}\cup B_{n-1}^c)\times B_n^c
\]
and applying Lemma~\ref{lem:ilattice} several times, we obtain
\[
P(T:S^{(n)})\le\left(\bigvee_{j=1}^{n-1}P(T:S^{(j)})\right)\vee P(T:B_1\times\cdots\times B_{n-1}\times B_n^c).
\]
Thus, we get
\[
\bigvee_{j=1}^n P(T:S^{(j)})=\left(\bigvee_{j=1}^{n-1}P(T:S^{(j)})\right)\vee P(T:B_1\times\cdots\times B_{n-1}\times B_n^c).
\]
Let
\[
R'=B_1\times B_2\times\cdots\times B_{n-1}\times\sigma(T_n).
\]
We have
\[
P(T:R)=P(T:R')\wedge P(T_n,B_n).
\]
By the induction hypothesis, we have
\begin{equation}\label{eq:PTR'wv}
P(T:R')\wedge\left(\bigvee_{j=1}^{n-1}P(T:S^{(j)})\right)=0.
\end{equation}
We also have
\[
P(T:B_1\times\cdots\times B_{n-1}\times B_n^c)=P(T:R')\wedge P(T_n,B_n^c)\le P(T:R').
\]
Therefore, applying Lemma~\ref{lem:pvqwedger} to
\[
p=P(T:B_1\times\cdots\times B_{n-1}\times B_n^c),\qquad q=\left(\bigvee_{j=1}^{n-1}P(T:S^{(j)})\right),\qquad r=P(T:R')
\]
(with the lower-case letters corresponding to upper-case letters in Lemma~\ref{lem:pvqwedger})
and using~\eqref{eq:PTR'wv} to show $q\wedge r=0$,
we get 
\begin{multline*}
P(T:R)\wedge\left(\bigvee_{j=1}^n P(T:S^{(j)})\right)
=P(T_n,B_n)\wedge\big(r\wedge(p\vee q)\big) \\
=P(T_n,B_n)\wedge p\le P(T_n,B_n)\wedge P(T_n,B_n^c)=0.
\end{multline*}
\end{proof}

\begin{lemma}\label{lem:rectcompl2}
Let $R=\prod_{i\in I}B_i$ be a coordinate-finite rectangle.
Let $\Lambda$ be any set and suppose for each $\lambda\in\Lambda$, $A^{(\lambda)}$ is a coordinate-finite rectangle
and $A^{(\lambda)}\cap R=\emptyset$.
Then
\[
P(T:R)\wedge\left(\bigvee_{\lambda\in\Lambda}P(T:A^{(\lambda)})\right)=0.
\]
\end{lemma}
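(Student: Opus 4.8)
The plan is to deduce this from Lemma~\ref{lem:rectcompl1} by a covering argument combined with monotonicity of $P(T:\cdot)$ and of the meet operation on projections. First I would dispose of the degenerate case $R=Z$: every coordinate-finite rectangle is, by definition, a product of non-empty sets, hence non-empty, so no such rectangle is disjoint from $Z$; thus $\Lambda=\emptyset$, the join $\bigvee_{\lambda\in\Lambda}P(T:A^{(\lambda)})$ is $0$, and there is nothing to prove. So I may assume $R$ is a proper subset of $Z$, write $R=\prod_{i\in I}B_i$, enumerate the finite non-empty set $\{i\in I\mid B_i\neq\sigma(T_i)\}=\{i_1,\ldots,i_n\}$, and form the rectangles $S^{(1)},\ldots,S^{(n)}$ exactly as in Lemma~\ref{lem:rectcompl1}, so that $Z\setminus R=\bigcup_{j=1}^nS^{(j)}$.

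The key step is the combinatorial observation that each $A^{(\lambda)}$ is contained in one of the $S^{(j)}$. Writing $A^{(\lambda)}=\prod_{i\in I}C_i$ with each $C_i$ a non-empty Borel subset of $\sigma(T_i)$, disjointness $A^{(\lambda)}\cap R=\emptyset$ forces some coordinate $i$ to satisfy $C_i\cap B_i=\emptyset$. That coordinate cannot be one with $B_i=\sigma(T_i)$, since then $C_i=C_i\cap B_i=\emptyset$; hence $i=i_j$ for some $j$, and $C_{i_j}\subseteq\sigma(T_{i_j})\setminus B_{i_j}$ while $C_i\subseteq\sigma(T_i)$ for every other $i$. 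Comparing with the definition of $S^{(j)}$, this says precisely $A^{(\lambda)}\subseteq S^{(j)}$.

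The rest is formal. By Lemma~\ref{lem:PTRincr}, $P(T:A^{(\lambda)})\le P(T:S^{(j)})\le\bigvee_{k=1}^nP(T:S^{(k)})$ for every $\lambda\in\Lambda$, whence $\bigvee_{\lambda\in\Lambda}P(T:A^{(\lambda)})\le\bigvee_{k=1}^nP(T:S^{(k)})$ (this is where the arbitrary, possibly uncountable, index set $\Lambda$ causes no trouble: the bound is termwise). Since $e\mapsto P(T:R)\wedge e$ is order-preserving on projections, Lemma~\ref{lem:rectcompl1} then gives
\[
P(T:R)\wedge\left(\bigvee_{\lambda\in\Lambda}P(T:A^{(\lambda)})\right)\le P(T:R)\wedge\left(\bigvee_{k=1}^nP(T:S^{(k)})\right)=0.
\]

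I expect the only step requiring any genuine thought to be the covering observation of the second paragraph — that disjointness from $R$ pins an arbitrary rectangle inside a single $S^{(j)}$ — while the reduction to Lemma~\ref{lem:rectcompl1} and the monotonicity manipulations are routine.
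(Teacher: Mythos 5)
Your proof is correct and follows essentially the same route as the paper's: handle the degenerate case $R=Z$, then invoke Lemma~\ref{lem:rectcompl1} after observing that disjointness from $R$ forces each $A^{(\lambda)}$ into a single $S^{(j)}$, and finish with monotonicity. The only difference is that you spell out the covering observation (and the non-emptiness convention for rectangles) in more detail than the paper does, which is a welcome clarification rather than a deviation.
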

\begin{proof}
If $R=Z$, then $A^{(\lambda)}=\emptyset$ and $P(T:A^{(\lambda)})=0$ for every $\lambda$.
So we may assume $R\ne Z$ and Lemma~\ref{lem:rectcompl1} applies,
and we adopt the notation used there.
For each $\lambda$, since $A^{(\lambda)}\cap R=\emptyset$, there is $j\in\{1,\ldots,n\}$ such that $A^{(\lambda)}\subseteq S^{(j)}$ and
(by Lemma~\ref{lem:PTRincr}), $P(T:A^{(\lambda)})\le P(T:S^{(j)})$.
Now the conclusion follows from Lemma~\ref{lem:rectcompl1}.
\end{proof}

Every element $X$ of the algebra of sets $\Afr_0$ can be written as a union,
\[
X=\bigcup_{j=1}^kR^{(j)}
\]
of pairwise disjoint coordinate-finite rectangles $R^{(j)}$.

\begin{thm}\label{thm:ARS}
Let $X\in\Afr_0$.
If 
\[
X=\bigcup_{j=1}^kR^{(j)}\quad\text{and}\quad X=\bigcup_{j=1}^\ell S^{(j)}
\]
are ways of writing $X$ as unions of finitely many pairwise disjoint coordinate-finite rectangles,  then
\begin{equation}\label{eq:PTRPTS}
\bigvee_{j=1}^kP(T:R^{(j)})=\bigvee_{j=1}^\ell P(T:S^{(j)}).
\end{equation}
\end{thm}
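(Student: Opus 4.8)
The plan is to reduce to a common refinement of the two partitions. Both $\{R^{(j)}\}_{j=1}^k$ and $\{S^{(j)}\}_{j=1}^\ell$ partition $X$ into coordinate-finite rectangles; the sets $R^{(j)}\cap S^{(m)}$ are again coordinate-finite rectangles (an intersection of two such rectangles is one, since the exceptional coordinates of the product are the union of the two exceptional sets, still finite), and the nonempty ones among them form a common refinement of both partitions. So it suffices to prove the following: if a coordinate-finite rectangle $R$ is a disjoint union $R=\bigcup_{m=1}^p R_m$ of coordinate-finite rectangles, then $P(T:R)=\bigvee_{m=1}^p P(T:R_m)$. Granting this, one applies it to each $R^{(j)}=\bigcup_m (R^{(j)}\cap S^{(m)})$ and to each $S^{(m)}=\bigcup_j (R^{(j)}\cap S^{(m)})$, and then both sides of~\eqref{eq:PTRPTS} equal $\bigvee_{j,m}P(T:R^{(j)}\cap S^{(m)})$ (discarding empty pieces, whose $P(T:\cdot)$ is $0$), which gives the claim.

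The inequality $\bigvee_m P(T:R_m)\le P(T:R)$ is immediate from Lemma~\ref{lem:PTRincr}, since each $R_m\subseteq R$. For the reverse inequality I would argue by a trace computation, in the spirit of the single-operator proof of Theorem~\ref{thm:2sets}: set $P=P(T:R)$, let $Q=\bigvee_{m=1}^p P(T:R_m)\le P$, and show $\tau(Q)=\tau(P)$, whence $Q=P$ by faithfulness of $\tau$. To compute $\tau(P)$ and $\tau(Q)$ I would pass to the compression by the hyperinvariant projection $\bigwedge_{i\notin I_0}P(T_i,\sigma(T_i))$ — but that is the identity — so really I work inside $\Mcal$ directly, using only the finitely many coordinates $I_0$ in which $R$ (and hence all $R_m$, after intersecting) is nontrivial. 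The point is that $P(T:R)=\bigwedge_{i\in I_0}P(T_i,B_i)$ and, via Theorem~\ref{thm:PTQB}, each $P(T:R_m)$ can be rewritten as a Haagerup--Schultz projection of a compressed operator; inductively on $|I_0|$ one can try to reduce the disjointness of the $R_m$ inside $R$ to a one-coordinate statement. A cleaner route: prove $\tau(P(T:X))=\nu_T(X)$ first — i.e., establish that the formula $X\mapsto\tau(P(T:X))$, defined via any partition, is additive on $\Afr_0$ — by the inclusion-exclusion/trace identities of Lemmas~\ref{lem:tracevw}--\ref{lem:pvqwedger}, and separately show (again by induction on the number of nontrivial coordinates, peeling off one coordinate and using Lemma~\ref{lem:ilattice} and Theorem~\ref{thm:PTQB}) that $\tau\big(P(T:R)\big)=\prod_{i\in I_0}$(something) — in fact the right invariant here is that $\tau(P(T:R_m))$ depends only on $R_m$ and these values add up correctly when the $R_m$ tile $R$.

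The main obstacle is precisely this last additivity: unlike the single-variable case where $\tau(P(T,B))=\nu_T(B)$ is already known, here there is no pre-existing measure $\nu_T$ to appeal to — the whole point of the section is to construct it — so one must prove $\sum_{m}\tau(P(T:R_m))=\tau(P(T:R))$ from scratch. I expect the right tool is Lemma~\ref{lem:rectcompl2}, which gives $P(T:R_m)\wedge\bigvee_{m'\ne m}P(T:R_{m'})$ in a form controllable by $\tau(\,\cdot\wedge(1-\cdot)\,)$-type identities (Lemma~\ref{lem:tauPQcorners}), combined with induction on $p$: write $Q_{p-1}=\bigvee_{m=1}^{p-1}P(T:R_m)$, note $R_p\cap(\bigcup_{m<p}R_m)=\emptyset$ forces $P(T:R_p)\wedge Q_{p-1}=0$ by Lemma~\ref{lem:rectcompl2} (applied inside the compression by $P(T:R)$, after observing all relevant projections lie under $P(T:R)$), so $\tau(Q_p)=\tau(Q_{p-1})+\tau(P(T:R_p))$ by Lemma~\ref{lem:tracevw}, and then identify $\tau(P(T:R)\ominus Q_{p-1}\ominus\cdots)$ with the $P(T:\cdot)$ of the leftover rectangle $R\setminus\bigcup_{m<p}R_m$ using Corollary~\ref{cor:P(1-Q)TB} and Lemma~\ref{lem:ilattice} to handle the set-difference coordinate by coordinate. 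Getting the bookkeeping of this induction right — in particular that the complement of a union of rectangles inside a rectangle can be re-tiled compatibly with the projections — is the delicate part; everything else is the trace arithmetic already packaged in Section~\ref{subsec:proj}.
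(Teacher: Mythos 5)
Your reduction to a common refinement is a reasonable first move, but the route you then take to close the gap has a genuine circularity, and the cleaner fix --- the one the paper actually uses --- is already in your hands but you set it aside.

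The problematic step: you want to prove the intermediate claim ``if a rectangle $R$ is a disjoint union of rectangles $R_m$ then $P(T:R)=\bigvee_m P(T:R_m)$'' by induction on the number of pieces, using Lemma~\ref{lem:rectcompl2} to get $P(T:R_p)\wedge\bigvee_{m<p}P(T:R_m)=0$ and then accounting for the trace of the ``leftover.'' But the leftover set $R\setminus\bigcup_{m<p}R_m$ is not a coordinate-finite rectangle, and $P(T:\cdot)$ on non-rectangle elements of $\Afr_0$ is precisely what Theorem~\ref{thm:ARS} is trying to make well-defined --- so appealing to it is circular. Likewise your ``cleaner route'' of proving $\tau(P(T:X))=\nu_T(X)$ first cannot work, because $\nu_T$ is built from $\mu_0$, and $\mu_0$ is only well-defined once the theorem is proved. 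Moreover, the pairwise intersections $R^{(j)}\cap S^{(m)}$ need not tile each $R^{(j)}$ as a product partition (think of a pinwheel-like tiling of a square by rectangles), so Lemma~\ref{lem:ilattice}, which handles one coordinate at a time, cannot be applied directly to that refinement either.

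The paper avoids all trace arithmetic. Instead of intersecting the two families pairwise, it takes a \emph{coordinate-wise} common refinement: for each coordinate $i\in\{1,\ldots,n\}$ it finds disjoint Borel sets $A_{i,1},\ldots,A_{i,r(i)}$ so that every $B_i^{(j)}$ and every $C_i^{(j)}$ is a union of some of the $A_{i,q}$. Then every $R^{(j)}$ and every $S^{(j)}$ decomposes as a \emph{product} of partitions, and repeated application of Lemma~\ref{lem:ilattice} (one coordinate at a time) gives
\[
P(T:R^{(j)})=\bigvee_{\substack{(q_1,\ldots,q_n)\\A_{1,q_1}\times\cdots\times A_{n,q_n}\subseteq R^{(j)}}}P(T:A_{1,q_1}\times\cdots\times A_{n,q_n}),
\]
and similarly for the $S^{(j)}$. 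Since the $R^{(j)}$ and the $S^{(j)}$ each cover exactly the same collection of grid cells contained in $X$, both sides of~\eqref{eq:PTRPTS} equal the join over that collection, and you are done --- without Lemma~\ref{lem:rectcompl2}, without disjointness of the wedges, and without any trace computation. The lesson is that the right refinement here is a grid, not a pairwise intersection; the former reduces everything to a one-coordinate statement that Lemma~\ref{lem:ilattice} was built to handle, while the latter does not.
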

\begin{proof}
Since only finitely many coordinates are involved in the rectangles $R^{(j)}$ and $S^{(j)}$,
we may without loss of generality assume $I=\{1,\ldots,n\}$.
Given a rectangle
$R=B_1\times\cdots\times B_n$ and, for each $i$, a Borel partition of $B_i$ into subsets $A_{i,1},\ldots,A_{i,p(i)}$, by repeated
application of Lemma~\ref{lem:ilattice}, we have
\begin{equation}\label{eq:PTRuPTA}
P(T:R)=\bigvee_{1\le q(1)\le p(1),\cdots,1\le q(n)\le p(n)}P(T:A_{1,q(1)}\times\cdots\times A_{n,q(n)}).
\end{equation}
Now, to prove~\eqref{eq:PTRPTS}, we consider a coordinate-wise common refinement.
In particular, writing
\begin{align*}
R^{(j)}&=B_1^{(j)}\times\cdots\times B_n^{(j)}, \\
S^{(j)}&=C_1^{(j)}\times\cdots\times C_n^{(j)},
\end{align*}
for each $1\le i\le n$ there are disjoint Borel sets $A_{i,1},\ldots,A_{i,r(i)}$ such that for each $j$ $B_i^{(j)}$ and $C_i^{(j)}$
are unions of some subcollection of $A_{i,1},\ldots,A_{i,r(i)}$.
Thus, we have
\[
X=\bigcup_{(q_1,\ldots,q_n)\in Q}A_{1,q_1}\times\cdots\times A_{n,q_n}
\]
for a unique subset $Q$ of $\{1,\ldots,r(1)\}\times\cdots\times\{1,\ldots,r(n)\}$.
By repeated application of the formula~\eqref{eq:PTRuPTA} proved for rectangles,
we get
\[
\bigvee_{j=1}^kP(T:R^{(j)})=\bigvee_{(q_1,\ldots,q_n)\in Q}P(T:A_{1,q_1}\times\cdots\times A_{n,q_n})=\bigvee_{j=1}^\ell P(T:S^{(j)}).
\]
\end{proof}

\section{Joint Brown measures}
\label{sec:specdistr}

Let $(T_i)_{i\in I}$ be a commuting family of operators in $\Mcal$, as in Section~\ref{sec:meetsjoins}.
In this section we construct a probability measure $\nu_T$ on $\Cpx^I$ whose marginals are the Brown measures $\nu_{T_i}$
and such that for all $X\in\Afr_0$, $\nu_T(X)=\tau(P(T:X))$, where $P(T:X)$ is as in Definition~\ref{def:PTX}.
This will, of course, be the joint Brown measure of $T$.

\begin{lemma}\label{lem:mu0fadd}
For $X\in\Afr_0$, let $\mu_0(X)=\tau(P(T:X))$.
Then $\mu_0$ is a finitely additive measure on the algebra of sets $\Afr_0$.
\end{lemma}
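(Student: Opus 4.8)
To show $\mu_0$ is a finitely additive measure on $\Afr_0$, the only substantive point is finite additivity (that $\mu_0(\emptyset)=0$ is immediate since $P(T:\emptyset)=0$, and $\mu_0\ge 0$ is clear since $\tau$ is a state). It suffices to check that if $X,Y\in\Afr_0$ are disjoint, then $\mu_0(X\cup Y)=\mu_0(X)+\mu_0(Y)$; the general case of finitely many pairwise disjoint sets follows by induction. So first I would write $X=\bigcup_{j=1}^k R^{(j)}$ and $Y=\bigcup_{j=1}^\ell S^{(j)}$ as finite unions of pairwise disjoint coordinate-finite rectangles, so that $X\cup Y=\bigcup_j R^{(j)}\cup\bigcup_j S^{(j)}$ is again a finite union of pairwise disjoint coordinate-finite rectangles (here I use disjointness of $X$ and $Y$). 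By Theorem~\ref{thm:ARS}, $P(T:X\cup Y)$ is the join of all these projections, $P(T:X)$ is the join of the $P(T:R^{(j)})$, and $P(T:Y)$ the join of the $P(T:S^{(j)})$.

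The key step is then the trace identity $\tau(P(T:X)\vee P(T:Y))=\tau(P(T:X))+\tau(P(T:Y))$, for which, by Lemma~\ref{lem:tracevw}, it is enough to prove
\[
P(T:X)\wedge P(T:Y)=0.
\]
Here is where I expect the main work to be. Since $X\cap Y=\emptyset$, every rectangle $R^{(j)}$ making up $X$ is disjoint from $Y$, hence disjoint from each $S^{(m)}$; in particular $R^{(j)}\cap S^{(m)}=\emptyset$ for all $j,m$. Applying Lemma~\ref{lem:rectcompl2} with $R=R^{(j)}$ and the family $\{S^{(m)}\}_{m=1}^\ell$, we get
\[
P(T:R^{(j)})\wedge\Big(\bigvee_{m=1}^\ell P(T:S^{(m)})\Big)=P(T:R^{(j)})\wedge P(T:Y)=0
\]
for each $j$. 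Now I would take the join over $j$: since each $P(T:R^{(j)})$ lies under $1-P(T:Y)$, so does their join $P(T:X)=\bigvee_j P(T:R^{(j)})$, whence $P(T:X)\wedge P(T:Y)=0$. (Alternatively one invokes Lemma~\ref{lem:rectcompl2} once more, now with $R$ running over the $S^{(m)}$ and the family being the $R^{(j)}$ — but the remark that a join of projections all below a fixed projection stays below it is cleaner.)

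Finally, combining with Theorem~\ref{thm:ARS} and Lemma~\ref{lem:tracevw}:
\[
\mu_0(X\cup Y)=\tau\big(P(T:X)\vee P(T:Y)\big)=\tau(P(T:X))+\tau(P(T:Y))-\tau\big(P(T:X)\wedge P(T:Y)\big)=\mu_0(X)+\mu_0(Y),
\]
using $P(T:X)\wedge P(T:Y)=0$. An induction on the number of pieces then gives additivity for any finite pairwise-disjoint family, completing the proof. The only genuine obstacle is the disjointness-of-meets argument, and that is handled entirely by Lemma~\ref{lem:rectcompl2}; everything else is bookkeeping with the lattice identity.
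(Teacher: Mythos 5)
There is a genuine gap in the step that takes the join over $j$. From Lemma~\ref{lem:rectcompl2} you correctly obtain $P(T:R^{(j)})\wedge P(T:Y)=0$ for each $j$, but then you assert ``since each $P(T:R^{(j)})$ lies under $1-P(T:Y)$, so does their join.'' This inference is false: for projections in a von Neumann algebra, $P\wedge Q=0$ does \emph{not} imply $P\le 1-Q$ (that would mean $P$ and $Q$ are actually orthogonal, which is strictly stronger). Concretely, in $M_2(\Cpx)$, if $P_1,P_2$ project onto the coordinate axes and $Q$ projects onto the diagonal line, then $P_1\wedge Q=P_2\wedge Q=0$ but $(P_1\vee P_2)\wedge Q=Q\ne 0$. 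So the desired conclusion $P(T:X)\wedge P(T:Y)=0$ does not follow from the pieces having zero meet with $P(T:Y)$, and your ``alternative'' parenthetical route runs into the same obstruction (it yields $P(T:S^{(m)})\wedge P(T:X)=0$ for each $m$, which again does not pass to the join over $m$).

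The paper sidesteps this precisely by structuring the induction so that Lemma~\ref{lem:rectcompl2} is only ever invoked with a \emph{single} rectangle against a join of rectangles, which is exactly the form of that lemma. That is, writing $X=\bigcup_{j=1}^k R^{(j)}$, one shows $\mu_0(X)=\sum_j\mu_0(R^{(j)})$ by induction on $k$: setting $Y=\bigcup_{j<k}R^{(j)}$, Lemma~\ref{lem:rectcompl2} directly gives $P(T:R^{(k)})\wedge P(T:Y)=0$, and the trace identity of Lemma~\ref{lem:tracevw} then gives $\mu_0(X)=\mu_0(Y)+\mu_0(R^{(k)})$. Once you have this claim for a single $X$, additivity for disjoint $X_1,X_2\in\Afr_0$ is immediate by combining their rectangle decompositions. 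The fact you wanted --- $P(T:X)\wedge P(T:Y)=0$ for arbitrary disjoint $X,Y\in\Afr_0$ --- is true (it is essentially Lemma~\ref{lem:Platticefinite}\eqref{eq:PEcapF}), but its proof in the paper \emph{relies on} the finite additivity of $\mu_0$ that you are trying to establish, so it cannot be used here. Your argument needs to be restructured along the paper's inductive lines to avoid circularity and the false lattice inference.
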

\begin{proof}
Let $X\in\Afr_0$ and write $X=\bigcup_{j=1}^kR^{(j)}$ for disjoint coordinate-finite rectangles $R^{(1)},\ldots,R^{(j)}$.
We claim that then
\[
\mu_0(X)=\sum_{j=1}^k\mu_0(R^{(j)}).
\]
We use induction on $k$.
For $k=1$ this is a tautology.
Suppose $k\ge2$.
Let $Y=\bigcup_{j=1}^{k-1}R^{(j)}$.
We have
\[
P(T:Y)=\bigvee_{j=1}^{k-1}P(T:R^{(j)})
\]
and,
by the induction hypothesis, $\mu_0(Y)=\sum_{j=1}^{k-1}\mu_0(R^{(j)})$.
By Lemma~\ref{lem:rectcompl2}, we have $P(T:Y)\wedge P(T:R^{(k)})=0$.
Since $P(T:X)=P(T:Y)\vee P(T:R^{(k)})$, we have
\begin{multline*}
\mu_0(X)=\tau(P(T:Y))+\tau(P(T:R^{(k)}))-\tau\big(P(T:Y)\wedge P(T:R^{(k)})\big) \\
=\mu_0(Y)+\mu_0(R^{(k)})=\sum_{j=1}^k\mu_0(R^{(j)})
\end{multline*}
and the claim is proved.

Now, given disjoint $X_1,X_2\in\Afr_0$, writing each of them as a finite union of disjoint coordinate-finite rectangles and using the
claim that we proved above, we conclude $\mu_0(X_1\cup X_2)=\mu_0(X_1)+\mu_0(X_2)$.
\end{proof}

\begin{lemma}\label{lem:Platticefinite}
Let $E,F,G\in\Afr_0$.
Then 
\begin{gather}
E\subseteq G\implies P(T:E)\le P(T:G), \label{eq:Pincr} \\
P(T:E\cup F)=P(T:E)\vee P(T,F), \label{eq:PEcupF} \\
P(T:E\cap F)=P(T:E)\wedge P(T,F). \label{eq:PEcapF}
\end{gather}
\end{lemma}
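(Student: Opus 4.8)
The plan is to prove the three assertions of Lemma~\ref{lem:Platticefinite} in order, reducing everything to the two-rectangle case and then invoking the finite additivity of $\mu_0$ together with the trace identity $\tau(p\vee q)+\tau(p\wedge q)=\tau(p)+\tau(q)$.

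First I would prove \eqref{eq:Pincr}. Write $E=\bigcup_{j=1}^k R^{(j)}$ and $G=\bigcup_{m=1}^\ell S^{(m)}$ as disjoint unions of coordinate-finite rectangles. Since $E\subseteq G$, each $R^{(j)}$ is contained in $G$, and by intersecting with the $S^{(m)}$ and taking a coordinate-wise common refinement (as in the proof of Theorem~\ref{thm:ARS}), one may refine the decomposition of $G$ so that every $R^{(j)}$ is a union of some of the rectangles appearing in the (refined) decomposition of $G$. Using Theorem~\ref{thm:ARS} to compute $P(T:G)$ from the refined decomposition, and Lemma~\ref{lem:PTRincr} for each piece, gives $P(T:R^{(j)})\le P(T:G)$ for each $j$, hence $P(T:E)=\bigvee_j P(T:R^{(j)})\le P(T:G)$.

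Next, for \eqref{eq:PEcupF}: note $E\cup F\in\Afr_0$, so $P(T:E\cup F)$ is defined, and by \eqref{eq:Pincr} we get $P(T:E)\vee P(T:F)\le P(T:E\cup F)$ for free. For the reverse inequality I would again pass to a common refinement: choose disjoint coordinate-finite rectangles so that each of $E$, $F$ (and hence $E\cup F$) is a disjoint union of a subcollection of them. Then by Theorem~\ref{thm:ARS}, $P(T:E\cup F)$ is the join over the rectangles that make up $E\cup F$, each of which lies in $E$ or in $F$, so by \eqref{eq:Pincr} it is $\le P(T:E)\vee P(T:F)$. For \eqref{eq:PEcapF}: one inequality, $P(T:E\cap F)\le P(T:E)\wedge P(T:F)$, is immediate from \eqref{eq:Pincr} since $E\cap F\subseteq E$ and $E\cap F\subseteq F$. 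For the reverse, I would compute traces. Using Lemma~\ref{lem:tracevw} and Lemma~\ref{lem:mu0fadd} (finite additivity of $\mu_0=\tau\circ P(T:\cdot)$, plus $\mu_0(E\cup F)=\mu_0(E)+\mu_0(F)-\mu_0(E\cap F)$, which follows from finite additivity applied to the partition $E\cup F=(E\cap F)\sqcup(E\setminus F)\sqcup(F\setminus E)$), together with \eqref{eq:PEcupF}, I get
\begin{align*}
\tau\big(P(T:E)\wedge P(T:F)\big)&=\tau(P(T:E))+\tau(P(T:F))-\tau\big(P(T:E)\vee P(T:F)\big)\\
&=\mu_0(E)+\mu_0(F)-\mu_0(E\cup F)=\mu_0(E\cap F)=\tau(P(T:E\cap F)).
\end{align*}
Combined with the inequality $P(T:E\cap F)\le P(T:E)\wedge P(T:F)$ and faithfulness of $\tau$, this yields equality.

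The step I expect to be the main obstacle is the reverse inequality in \eqref{eq:PEcupF} (and the analogous refinement argument in \eqref{eq:Pincr}): one has to set up the coordinate-wise common refinement carefully and justify, via Theorem~\ref{thm:ARS}, that $P(T:E\cup F)$ computed from the refined rectangles agrees with the value from any other decomposition — this is exactly the content of Theorem~\ref{thm:ARS}, so the real work is just the bookkeeping of partitions. Everything else is a routine application of the trace identity, faithfulness of $\tau$, and Lemma~\ref{lem:mu0fadd}. One small point to handle is the degenerate case where $E$, $F$, or an intersection is empty, where the relevant projections are $0$ and all identities hold trivially.
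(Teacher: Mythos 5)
Your proposal is correct and follows essentially the same route as the paper's proof: both rely on taking a common refinement of the disjoint-rectangle decompositions (justified by Theorem~\ref{thm:ARS}) and on the trace identity $\tau(p\vee q)+\tau(p\wedge q)=\tau(p)+\tau(q)$ together with finite additivity of $\mu_0$. The only difference is one of bookkeeping: the paper proves \eqref{eq:PEcupF} directly from a single common refinement of $E$ and $F$ (partitioning into the pieces of $E\cap F$, $E\setminus F$, $F\setminus E$) and then reads off \eqref{eq:Pincr} as a corollary, whereas you establish \eqref{eq:Pincr} first by a separate refinement argument and then deduce both inequalities of \eqref{eq:PEcupF} from it — a slight reshuffling, but mathematically equivalent.
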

\begin{proof}
To prove the identity~\eqref{eq:PEcupF}
we may write $E$ and $F$ each as a disjoint union of finite families of coordinate-finite rectangles in such a way
that $E\cap F$ is the union of a common subfamily of each of them.
Namely, there exists a finite collection $(R^{(j)})_{j\in J}$ of pairwise disjoint coordinate-finite rectangles
and there exists a partition $J=J_1\cup J_2\cup J_3$ of $J$ such that
\[
E\cap F=\bigcup_{j\in J_1}R^{(j)},\qquad E\setminus F=\bigcup_{j\in J_2}R^{(j)},\qquad F\setminus E=\bigcup_{j\in J_3}R^{(j)}.
\]
Invoking Definition~\ref{def:PTX}, we have
\begin{multline*}
P(T:E\cup F)=\bigvee_{j\in J_1\cup J_2\cup J_3}P(T:R^{(j)}) \\
=\bigg(\bigvee_{j\in J_1\cup J_2}P(T:R^{(j)})\bigg)\vee\bigg(\bigvee_{j\in J_1\cup J_3}P(T:R^{(j)})\bigg)
=P(T:E)\vee P(T:F)
\end{multline*}
and the identity~\eqref{eq:PEcupF} is proved.

The property~\eqref{eq:Pincr} follows from the identity~\eqref{eq:PEcupF}.

The inequality $\le$ in~\eqref{eq:PEcapF} follows from the property~\eqref{eq:Pincr}.
But from finite additivity of $\mu_0$, the identity~\eqref{eq:PEcupF} and Lemma~\ref{lem:tracevw}, we have
\begin{multline*}
\tau(P(T:E\cap F))=\mu_0(E\cap F)=\mu_0(E)+\mu_0(F)-\mu_0(E\cup F) \\
=\tau(P(T:E))+\tau(P(T:F))-\tau(P(T:E)\vee P(T:F)). \\
=\tau(P(T:E)\wedge P(T:F)),
\end{multline*}
The identity~\eqref{eq:PEcapF} follows from this.
\end{proof}

\begin{lemma}\label{lem:FRU}
Suppose $R$ is a coordinate-finite rectangle and let $\eps>0$.
There there exist coordinate-finite rectangles $F$ and $U$ such that $F$ is compact and $U$ is open, and
\begin{gather*}
F\subseteq R\subseteq U, \\
\mu_0(U)-\eps\le\mu_0(R)\le\mu_0(F)+\eps.
\end{gather*}
\end{lemma}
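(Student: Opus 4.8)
The plan is to reduce this to ordinary regularity of the finitely many nontrivial marginal Brown measures. Write $R=\prod_{i\in I}B_i$ and let $i_1,\dots,i_n$ be the (finitely many) indices with $B_{i_j}\ne\sigma(T_{i_j})$. Each $\sigma(T_{i_j})$ is a compact metric space and $\nu_{T_{i_j}}$ is a Borel probability measure on it, hence regular (Radon). So for every $j$ I may choose a relatively open set $C_{i_j}\subseteq\sigma(T_{i_j})$ and a nonempty compact set $D_{i_j}$ with
\[
D_{i_j}\subseteq B_{i_j}\subseteq C_{i_j},\qquad \nu_{T_{i_j}}(C_{i_j}\setminus B_{i_j})<\tfrac{\eps}{n},\qquad \nu_{T_{i_j}}(B_{i_j}\setminus D_{i_j})<\tfrac{\eps}{n};
\]
if $\nu_{T_{i_j}}(B_{i_j})=0$ one simply takes $D_{i_j}$ to be a single point of $B_{i_j}$. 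Putting $C_i=D_i=\sigma(T_i)$ for $i\notin\{i_1,\dots,i_n\}$, set $U=\prod_{i\in I}C_i$ and $F=\prod_{i\in I}D_i$. Then $U$ is a coordinate-finite rectangle which is open in $Z$ (a finite intersection of preimages of relatively open sets under the coordinate projections), $F$ is a coordinate-finite rectangle which is compact (a closed subset of the compact space $Z$), and clearly $F\subseteq R\subseteq U$.

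It remains to bound $\mu_0(U)-\mu_0(R)$ and $\mu_0(R)-\mu_0(F)$. Since $\Afr_0$ is an algebra of sets, $U\setminus R$ and $R\setminus F$ lie in $\Afr_0$, and finite additivity of $\mu_0$ (Lemma~\ref{lem:mu0fadd}) gives $\mu_0(U)=\mu_0(R)+\mu_0(U\setminus R)$ and $\mu_0(R)=\mu_0(F)+\mu_0(R\setminus F)$. If $z\in U\setminus R$ then $z_{i_j}\in C_{i_j}\setminus B_{i_j}$ for some $j$, so $U\setminus R\subseteq\bigcup_{j=1}^n W_j$, where $W_j$ is the coordinate-finite rectangle having $C_{i_j}\setminus B_{i_j}$ in coordinate $i_j$ and $\sigma(T_i)$ in every other coordinate. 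Since $P(T_i,\sigma(T_i))=1$, Definition~\ref{def:PTR} gives $P(T:W_j)=P(T_{i_j},C_{i_j}\setminus B_{i_j})$, whence $\mu_0(W_j)=\nu_{T_{i_j}}(C_{i_j}\setminus B_{i_j})<\eps/n$. Using monotonicity and finite subadditivity of $\mu_0$ --- which follow from Lemma~\ref{lem:Platticefinite} together with the inequality $\tau(p\vee q)\le\tau(p)+\tau(q)$ coming from Lemma~\ref{lem:tracevw} --- we get $\mu_0(U\setminus R)\le\sum_{j=1}^n\mu_0(W_j)<\eps$, so $\mu_0(U)-\eps<\mu_0(R)$. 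The estimate $\mu_0(R)-\mu_0(F)<\eps$ is identical, using $R\setminus F\subseteq\bigcup_{j=1}^n W_j'$ with $B_{i_j}\setminus D_{i_j}$ in coordinate $i_j$.

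There is no real obstacle here; the point worth isolating is that $\mu_0$ of a coordinate-finite rectangle that is trivial in all but one coordinate equals the marginal Brown measure in that coordinate, so the multivariate approximation is governed entirely by the one-variable regularity of the measures $\nu_{T_{i_j}}$ rather than by any joint regularity statement (which is exactly what the construction is building). The only care needed is the bookkeeping ensuring that $F$ and $U$ are genuine coordinate-finite rectangles --- nonempty in each coordinate, and respectively compact and open --- and that, by using $\eps/n$ rather than $\eps$ in each of the $n$ active coordinates, the collar sets $W_j$, $W_j'$ have total $\mu_0$-mass below $\eps$.
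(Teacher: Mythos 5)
Your proof is correct, and it reaches the conclusion by a genuinely different route than the paper's. The paper builds, in each active coordinate $i$, increasing compacts $F_{i,j}\uparrow B_i$ and decreasing opens $U_{i,j}\downarrow B_i$ (up to $\nu_{T_i}$-null sets), forms the rectangles $F_j$, $U_j$, and invokes Lemma~\ref{lem:PTRsymdif} together with the monotone-limit Lemma~\ref{lem:monotoneR} to conclude $\mu_0(F_j)\uparrow\mu_0(R)$ and $\mu_0(U_j)\downarrow\mu_0(R)$, then selects a suitably large $j$; this rests on the convergence machinery for meets of projections (Lemma~\ref{lem:monotoneR}, which itself uses Lemma~\ref{lem:wedgePin}). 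You instead take a single inner/outer approximation in each of the $n$ active coordinates with marginal error below $\eps/n$, note that $\mu_0$ evaluated on a rectangle trivial in all but one coordinate reduces to the marginal Brown measure in that coordinate, and bound $\mu_0(U\setminus R)$ and $\mu_0(R\setminus F)$ by covering those differences with the one-coordinate collar rectangles and appealing only to finite additivity (Lemma~\ref{lem:mu0fadd}) and to finite subadditivity, which drops out of Lemmas~\ref{lem:Platticefinite} and~\ref{lem:tracevw}. Your version avoids Lemma~\ref{lem:monotoneR} entirely, produces an explicit error budget, and is arguably more transparent as a consequence of one-variable regularity; the paper's version reuses a convergence lemma it already has for other purposes. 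Two minor points you handle correctly but which are worth keeping visible: $D_{i_j}$ must be nonempty for $F$ to be a coordinate-finite rectangle (your single-point fallback does the job), and the collar factor $C_{i_j}\setminus B_{i_j}$ may be empty, in which case $W_j=\emptyset\in\Afr_0$ contributes nothing to the sum.
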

\begin{proof}
Write $R=\prod_{i\in I}B_i$ for Borel sets $B_i\subseteq\sigma(T_i)$.
Since each of the probability measures $\nu_{T_i}$ is regular (see, for example, Theorem 7.8 of~\cite{F99}),
there exist sequences
\[
F_{i,1}\subseteq F_{i,2}\subseteq\cdots\subseteq B_i\subseteq\cdots\subseteq U_{i,2}\subseteq U_{i,1}
\]
of compact sets $F_{i,j}$ and open sets $U_{i,j}$ such that the sets
\[
B_i\setminus\left(\bigcup_{j\ge1}F_{i,j}\right)\quad\text{and}\quad\left(\bigcap_{j\ge1}U_{i,j}\right)\setminus B_i
\]
are $\nu_{T_i}$-null,
where in the case $B_i=\sigma(T_i)$ we choose $F_{i,j}=\sigma(T_i)=U_{i,j}$ for all $j$.
Consider the compact, respectively, open coordinate-finite rectangles $F_j=\prod_{i\in I}F_{i,j}$ and $U_j=\prod_{i\in I}U_{i,j}$.
Using Lemmas~\ref{lem:PTRsymdif} and~\ref{lem:monotoneR}, we have that $\mu_0(F_j)$ converges to $\mu_0(R)$ from below
and $\mu_0(U_j)$ converges to $\mu_0(R)$ from above.
Selecting $U=U_j$ and $F=F_j$ for suitably large $j$ finishes the proof.
\end{proof}

\begin{lemma}\label{lem:subadd}
Suppose $X\in\Afr_0$ and that $(Y_j)_{j=1}^\infty$
is a sequence of pairwise disjoint elements  of $\Afr_0$ such that
$X=\bigcup_{j=1}^\infty Y_j$.
Then
\[
\mu_0(X)=\sum_{j=1}^\infty\mu_0(Y_j).
\]
\end{lemma}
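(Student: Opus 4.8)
Since $\mu_0$ is already known to be finitely additive (Lemma~\ref{lem:mu0fadd}), countable additivity is equivalent to the following continuity property: if $Z_1\supseteq Z_2\supseteq\cdots$ is a decreasing sequence in $\Afr_0$ with $\bigcap_{m\ge1}Z_m=\emptyset$, then $\mu_0(Z_m)\to0$. Indeed, given $X=\bigcup_{j\ge1}Y_j$ with the $Y_j$ pairwise disjoint, set $Z_m=X\setminus\bigcup_{j=1}^{m-1}Y_j=\bigcup_{j\ge m}Y_j$; this is in $\Afr_0$ (being $X$ minus a finite union of elements of the algebra), the sequence is decreasing, and $\bigcap_m Z_m=\emptyset$. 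By finite additivity $\mu_0(X)=\sum_{j=1}^{m-1}\mu_0(Y_j)+\mu_0(Z_m)$, so the continuity statement gives $\mu_0(X)=\sum_{j\ge1}\mu_0(Y_j)$. (Also $\mu_0\ge0$ since it is a trace of a projection, and $\mu_0(Z)\le1$, so all sums converge.)

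\textbf{Proving the continuity statement.} This is where Lemma~\ref{lem:FRU} does the work, via a standard compactness argument on the compact space $Z=\prod_{i\in I}\sigma(T_i)$. Fix $\eps>0$. Each $Z_m$ is a finite disjoint union of coordinate-finite rectangles; applying Lemma~\ref{lem:FRU} to each rectangle (and using finite additivity together with Lemma~\ref{lem:tracevw} to control the error on the union) I would produce a \emph{compact} set $F_m\in\Afr_0$ with $F_m\subseteq Z_m$ and $\mu_0(Z_m)\le\mu_0(F_m)+\eps 2^{-m}$. Now consider the compact sets $G_m=\bigcap_{k=1}^m F_k$. We have $\bigcap_{m\ge1}G_m\subseteq\bigcap_{m\ge1}Z_m=\emptyset$, so by compactness of $Z$ there is an $M$ with $G_M=\emptyset$, i.e.\ $\bigcap_{k=1}^M F_k=\emptyset$. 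Then, using monotonicity of $\mu_0$ on $\Afr_0$ (\eqref{eq:Pincr}) and finite subadditivity (a consequence of Lemma~\ref{lem:Platticefinite} and Lemma~\ref{lem:tracevw}), for this $M$ one gets
\[
\mu_0(Z_M)\le\mu_0\Big(Z_M\setminus\bigcap_{k=1}^M F_k\Big)=\mu_0\Big(\bigcup_{k=1}^M(Z_M\setminus F_k)\Big)\le\sum_{k=1}^M\mu_0(Z_M\setminus F_k)\le\sum_{k=1}^M\mu_0(Z_k\setminus F_k)<\eps,
\]
where I used $Z_M\subseteq Z_k$ for $k\le M$ and $\mu_0(Z_k\setminus F_k)=\mu_0(Z_k)-\mu_0(F_k)\le\eps2^{-k}$. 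Since $\mu_0(Z_m)$ is decreasing in $m$, this shows $\mu_0(Z_m)<\eps$ for all $m\ge M$, hence $\mu_0(Z_m)\to0$.

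\textbf{Anticipated obstacle.} The one genuinely fiddly point is the passage in Lemma~\ref{lem:FRU} from a \emph{single} coordinate-finite rectangle to a general $X\in\Afr_0$: I need inner regularity of $\mu_0$ by compact coordinate-finite rectangles on all of $\Afr_0$, not just on rectangles. Writing $X$ as a disjoint union $\bigcup_{j=1}^k R^{(j)}$, choosing $F^{(j)}\subseteq R^{(j)}$ compact with small defect, and setting $F=\bigcup_j F^{(j)}$, the defect $\mu_0(X)-\mu_0(F)$ must be estimated; here the $F^{(j)}$ are automatically pairwise disjoint (being subsets of disjoint sets), so finite additivity of $\mu_0$ applies cleanly and $\mu_0(X)-\mu_0(F)=\sum_j(\mu_0(R^{(j)})-\mu_0(F^{(j)}))$. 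So this reduces to the rectangle case already handled, and the ``obstacle'' is really just bookkeeping. The conceptual core — the finite-intersection / compactness reduction — is entirely standard once inner regularity is in hand.
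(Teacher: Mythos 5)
Your proof is correct, and it takes a genuinely different route from the paper's. The paper reduces (via finite additivity) to the case where $X$ and all $Y_j$ are coordinate-finite rectangles, then applies Lemma~\ref{lem:FRU} to sandwich $X$ by a compact $F$ from within and each $Y_j$ by an open $U_j$ from without, and finishes with a finite-subcover argument on $F\subseteq\bigcup_j U_j$. You instead reduce countable additivity to continuity at the empty set for the decreasing tails $Z_m=X\setminus\bigcup_{j<m}Y_j\in\Afr_0$, apply Lemma~\ref{lem:FRU} only to produce compact approximants $F_m\subseteq Z_m$ from within, and invoke the finite intersection property to find $M$ with $\bigcap_{k=1}^M F_k=\emptyset$. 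What your version buys: it uses only the inner-regularity half of Lemma~\ref{lem:FRU}, the compactness argument is isolated very cleanly (finite intersection property rather than a finite subcover), and you do not need to reduce $X$ or the $Y_j$ to single rectangles---instead you extend the regularity statement to arbitrary $\Afr_0$-sets, a routine step you correctly justify via finite additivity applied to disjoint pieces. What the paper's version buys: the reduction to rectangles is perhaps slightly shorter to write out, at the price of needing both the compact-from-within and open-from-without clauses of Lemma~\ref{lem:FRU}. Both are classical patterns for passing from a finitely additive regular set function to a pre-measure, and your writeup correctly assembles the needed pieces (finite additivity, monotonicity, finite subadditivity, and the fact that a finite union of compact coordinate-finite rectangles is again a compact element of $\Afr_0$).
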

\begin{proof}
Since $\mu_0$ is finitely additive and since $X$ is a disjoint union of finitely many co\-ordinate-finite
rectangles $R_1,\ldots,R_n$,
it will suffice to show that for all $k\in\{1,\ldots,n\}$,
\[
\mu_0(R_k)=\sum_{j=1}^\infty\mu_0(R_k\cap Y_j).
\]
Thus, we may without loss of generality assume $X$ itself is a coordinate-finite rectangle.
Furthermore, since each $Y_j$ is itself the union of finitely many coordinate-finite rectangles,
we may without loss of generality assume that also each $Y_j$ is such a rectangle.
Let $\eps>0$.
By Lemma~\ref{lem:FRU}, there exists a compact coordinate-finite rectangle $F$ such that $F\subseteq X$ and $\mu_0(X)\le\mu_0(F)+\eps$.
Moreover, for each $j\ge1$, there exists an open coordinate-finite
rectangle $U_j$ such that $Y_j\subseteq U_j$ and $\mu_0(U_j)\le\mu_0(Y_j)+\eps/2^j$.
Since
$(U_j)_{j=1}^\infty$ is an open cover of $F$, there exists $N\in\Nats$ such that
\[
F\subseteq\bigcup_{j=1}^NU_j.
\]
Thus, we have
\[
\mu_0(X)\le\eps+\mu_0(F)\le\eps+\sum_{j=1}^N\mu_0(U_j)\le\eps+\sum_{j=1}^N\left(\mu_0(Y_j)+\eps/2^j\right)
<2\eps+\sum_{j=1}^\infty\mu_0(Y_j).
\]
Letting $\eps\to0$ proves
\[
\mu_0(X)\le\sum_{j=1}^\infty\mu_0(Y_j).
\]
For the reverse inequality, we note that for every $N\in\Nats$, the set $\bigcup_{j=1}^NY_j$ belongs to $\Afr_0$ and
is a subset of $X$, so by finite additivity of $\mu_0$ on $\Afr_0$, we have
\[
\sum_{j=1}^N\mu_0(Y_j)=\mu_0\left(\bigcup_{j=1}^NY_j\right)\le\mu_0(X).
\]
Letting $N\to\infty$ proves
\[
\sum_{j=1}^\infty\mu_0(Y_j)\le\mu_0(X).
\]
\end{proof}

The above lemma shows that $\mu_0$ is a so-called pre-measure on the algebra $\Afr_0$.
Now an application of Carath\'eodory's Extension Theorem (see Theorem~1.11 and Proposition~1.13 of~\cite{F99})
yields the following:
\begin{prop}\label{prop:nuT}
There is a unique Borel probability measure $\nu_T$ on $Z$ extending $\mu_0$, defined by, for every Borel
subset $E\subseteq Z$, 
\[
\nu_T(E)=\inf\left\{\sum_{j=1}^\infty\mu_0(A_j)\;\bigg|\;A_j\in\Afr_0,\,E\subseteq\bigcup_{j=1}^\infty A_j\right\}.
\]
\end{prop}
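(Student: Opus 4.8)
The plan is to recognise $\mu_0$ as a probability premeasure on the algebra $\Afr_0$ and then to apply Carath\'eodory's Extension Theorem directly. Lemma~\ref{lem:mu0fadd} gives that $\mu_0$ is finitely additive on $\Afr_0$, and Lemma~\ref{lem:subadd} gives countable additivity along disjoint sequences in $\Afr_0$ whose union again lies in $\Afr_0$; together these are precisely the statement that $\mu_0$ is a premeasure. It is a \emph{probability} premeasure because $Z$ is itself a coordinate-finite rectangle (take $B_i=\sigma(T_i)$ for all $i$) and, since the Brown measure $\nu_{T_i}$ is concentrated on $\sigma(T_i)$, we have $P(T_i,\sigma(T_i))=1$ for each $i$ and hence $P(T:Z)=\bigwedge_{i\in I}P(T_i,\sigma(T_i))=1$, so that $\mu_0(Z)=\tau(1)=1$.

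Next I would introduce the outer measure $\mu^*$ on all subsets of $Z$ by the infimum formula appearing in the statement. By the standard construction underlying Carath\'eodory's theorem (Proposition~1.13 of~\cite{F99}), $\mu^*$ is an outer measure with $\mu^*\restrict_{\Afr_0}=\mu_0$, and by Carath\'eodory's theorem (Theorem~1.11 of~\cite{F99}) the class of $\mu^*$-measurable sets is a $\sigma$-algebra, containing $\Afr_0$, on which $\mu^*$ restricts to a measure. Restricting this measure to the $\sigma$-algebra generated by $\Afr_0$ yields a probability measure $\nu_T$ with $\nu_T(X)=\mu_0(X)=\tau(P(T:X))$ for every $X\in\Afr_0$, as asserted; note that, since $\Afr_0$ contains every coordinate-finite rectangle, this $\sigma$-algebra is the Borel $\sigma$-algebra of $Z$ when $I$ is finite or countable, and it is the $\sigma$-algebra we understand on $Z$ in general.

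Uniqueness then follows from the uniqueness clause of Carath\'eodory's Extension Theorem: since $\mu_0(Z)=1<\infty$, $\mu_0$ is a finite, hence $\sigma$-finite, premeasure, so any Borel probability measure on $Z$ agreeing with $\mu_0$ on $\Afr_0$ must coincide with $\nu_T$ on the generated $\sigma$-algebra (equivalently, $\Afr_0$ is a $\pi$-system of finite total mass generating that $\sigma$-algebra, so two probability measures agreeing on it agree everywhere). There is no real obstacle in this proposition: all the substantive work has already been carried out in Lemmas~\ref{lem:mu0fadd} and~\ref{lem:subadd}, the second of which is where compactness of $Z$ and the regularity of the $\nu_{T_i}$ enter, and the only point requiring a word of care is the (purely conventional) identification of the $\sigma$-algebra generated by $\Afr_0$ with a ``Borel'' $\sigma$-algebra on $Z$ when the index set $I$ is infinite.
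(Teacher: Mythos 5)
Your proposal is correct and follows the paper's intended argument exactly: the paper also treats Lemmas~\ref{lem:mu0fadd} and~\ref{lem:subadd} as establishing that $\mu_0$ is a premeasure on $\Afr_0$ and then invokes Carath\'eodory's Extension Theorem (citing the same Theorem~1.11 and Proposition~1.13 of~\cite{F99}), with uniqueness from finiteness of $\mu_0$. You have merely spelled out the details (verifying $\mu_0(Z)=1$, and noting the identification of the $\sigma$-algebra generated by $\Afr_0$ with the Borel $\sigma$-algebra) that the paper leaves implicit in its brief pre-proposition remark.
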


\begin{defi}\label{def:nuT}
The measure $\nu_T$ from Proposition~\ref{prop:nuT} is the {\em joint Brown measure} of the tuple
$T=(T_i)_{i\in I}$.
We consider it to be a Borel probability measure on $\Cpx^I$ by defining $\nu_T(X)=\nu_T(X\cap Z)$ for Borel sets $X\subseteq\Cpx^I$.
It is (in the case of $I$ finite) the same as the measure constructed by Schultz~\cite{S06}, as is apparent from
Theorem~\ref{thm:Schultz} of this paper, which is from~\cite{S06}.
\end{defi}

\section{Decomposing projections of commuting operators}
\label{sec:decproj}

In this section we use the joint Brown measure $\nu_T$ to extend Definition~\ref{def:PTX} of
$P(T:X)$ to allow arbitrary Borel sets $X$ of $Z$ and
so that we have $\tau(P(T:X))=\nu_T(X)$.
We then prove Theorems~\ref{thm:main.nu.P} and~\ref{thm:PtTQ} and some further properties of these projections.

First, we prove an analogue of the pre-measure result (Lemma~\ref{lem:subadd}) for the projections $P(T:\cdot)$.
\begin{lemma}\label{lem:PTAcup}
If $A\in\Afr_0$ and $A=\bigcup_{j=1}^\infty A_j$ for sets $A_j\in\Afr_0$,
\[
P(T:A)=\bigvee_{j=1}^\infty P(T:A_j).
\]
\end{lemma}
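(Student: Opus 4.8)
The plan is to reduce to the finitely-additive pre-measure statement, Lemma~\ref{lem:subadd}, via the trace, exactly as the proof of Lemma~\ref{lem:PwQ} reduces a $\|\cdot\|_2$-convergence statement about meets to a statement about traces. First I would observe that it suffices to treat the case where the $A_j$ are pairwise disjoint: indeed, replacing $A_j$ by $A_j\setminus(A_1\cup\cdots\cup A_{j-1})\in\Afr_0$ changes neither the union $A$ nor, by the lattice identities~\eqref{eq:PEcupF} and~\eqref{eq:Pincr} of Lemma~\ref{lem:Platticefinite}, the join $\bigvee_{j=1}^\infty P(T:A_j)$ (since each new set is contained in the old one, and the old join is recovered). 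So assume the $A_j$ are pairwise disjoint with $A=\bigcup_j A_j$, all in $\Afr_0$.

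Next I would establish the inequality ``$\ge$''. For each $N$, the finite union $\bigcup_{j=1}^N A_j$ lies in $\Afr_0$ and is a subset of $A$, so by~\eqref{eq:PEcupF} and~\eqref{eq:Pincr},
\[
\bigvee_{j=1}^N P(T:A_j)=P\Big(T:\bigcup_{j=1}^N A_j\Big)\le P(T:A).
\]
Letting $N\to\infty$ gives $\bigvee_{j=1}^\infty P(T:A_j)\le P(T:A)$.

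For the reverse inequality, set $Q_N=\bigvee_{j=1}^N P(T:A_j)=P(T:\bigcup_{j=1}^N A_j)$, an increasing sequence of projections with $\bigvee_N Q_N=\bigvee_{j=1}^\infty P(T:A_j)=:Q$, and therefore $Q_N\to Q$ in strong operator topology, hence $\tau(Q_N)\to\tau(Q)$. But by Lemma~\ref{lem:mu0fadd} and the defining property $\tau(P(T:X))=\mu_0(X)$, we have $\tau(Q_N)=\mu_0\big(\bigcup_{j=1}^N A_j\big)=\sum_{j=1}^N\mu_0(A_j)$, which by Lemma~\ref{lem:subadd} converges to $\mu_0(A)=\tau(P(T:A))$. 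Hence $\tau(Q)=\tau(P(T:A))$. Combined with $Q\le P(T:A)$ from the first inequality and faithfulness of $\tau$, this forces $Q=P(T:A)$, which is the claim. The only point requiring a little care — the main (minor) obstacle — is the disjointification step: one must check that passing to $A_j'=A_j\setminus\bigcup_{k<j}A_k$ genuinely leaves the infinite join unchanged, which follows because $A_j'\le A_j$ gives ``$\le$'' while the finite joins agree, $\bigvee_{j\le N}P(T:A_j')=P(T:\bigcup_{j\le N}A_j)=\bigvee_{j\le N}P(T:A_j)$, giving ``$\ge$''.
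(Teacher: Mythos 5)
Your proposal is correct and follows essentially the same route as the paper: show $\bigvee_{j=1}^\infty P(T:A_j)\le P(T:A)$ via monotonicity of finite joins, then compare traces and conclude by faithfulness. The only minor difference is that the paper, rather than disjointifying and invoking Lemma~\ref{lem:subadd} directly, passes from $\mu_0$ to the already-constructed measure $\nu_T$ and uses continuity of $\nu_T$ from below, which makes the disjointification step unnecessary; your version is equally valid. (Note also you wrote ``the inequality $\ge$'' where the displayed inequality you prove is $\le$ --- just a labelling slip, the mathematics is right.)
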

\begin{proof}
The inequality $\ge$ follows from the fact that for every $N\ge1$, by Lemma~\ref{lem:Platticefinite}, we have
\[
\bigvee_{j=1}^NP(T:A_j)=P\bigg(T:\bigcup_{j=1}^NA_j\bigg)\le P(T:A).
\]
We have
\[
\tau\bigg(P\bigg(T,\bigcup_{j=1}^NA_j\bigg)\bigg)=\mu_0\bigg(\bigcup_{j=1}^N A_j\bigg)=\nu_T\bigg(\bigcup_{j=1}^N A_j\bigg).
\]
Since this quantity tends to $\nu_T(A)=\tau(P(T:A))$ as $N\to\infty$,
we have the desired equality.
\end{proof}

The construction contained in the
next proposition can be viewed as doing for our projection-valued set function
$P(T:\cdot)$
something like the construction of a measure from an outer measure
using Caratheodory's Theorem.
Of course, because we already have $\nu_T$ in hand, the proof goes quite easily.
\begin{prop}\label{prop:PtTE}
Let $X\subseteq Z$ be a Borel set.
Define
\[
\Pt(T:X)=\bigwedge\left\{\bigvee_{j=1}^\infty P(T:A_j)\;\bigg|\;A_j\in\Afr_0,\,X\subseteq\bigcup_{j=1}^\infty A_j\right\}.
\]
Then
$\tau(\Pt(T:X))=\nu_T(X)$.
Moreover, if $X\in\Afr_0$, then $\Pt(T:X)=P(T:X)$.
\end{prop}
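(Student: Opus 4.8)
The plan is to lean on the fact that the measure $\nu_T$ has already been constructed in Proposition~\ref{prop:nuT}, so that the substance of the proof is the identity $\tau(\Pt(T:X))=\nu_T(X)$; the \emph{moreover} clause will then drop out of faithfulness of $\tau$. For a countable covering $\mathcal A=(A_j)_{j=1}^\infty$ of $X$ by members of $\Afr_0$, write $Q_{\mathcal A}=\bigvee_{j=1}^\infty P(T:A_j)$, so that $\Pt(T:X)=\bigwedge_{\mathcal A}Q_{\mathcal A}$, the meet running over all such coverings.

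The first step is to establish $\tau(Q_{\mathcal A})=\nu_T\big(\bigcup_{j=1}^\infty A_j\big)$ for each covering $\mathcal A$. By \eqref{eq:PEcupF} of Lemma~\ref{lem:Platticefinite} and induction, $\bigvee_{j=1}^N P(T:A_j)=P\big(T:\bigcup_{j=1}^N A_j\big)$ for every $N$; since $\bigcup_{j=1}^N A_j\in\Afr_0$, its trace is $\mu_0\big(\bigcup_{j=1}^N A_j\big)=\nu_T\big(\bigcup_{j=1}^N A_j\big)$ (Lemma~\ref{lem:mu0fadd}, together with the fact that $\nu_T$ extends $\mu_0$). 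Letting $N\to\infty$, the left side increases to $\tau(Q_{\mathcal A})$ by normality of $\tau$ and the right side increases to $\nu_T\big(\bigcup_{j=1}^\infty A_j\big)$ by continuity of the measure $\nu_T$ from below, giving the identity. (At the finite stages one may instead invoke Lemma~\ref{lem:PTAcup}.)

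Next I would show that the projections $\{Q_{\mathcal A}\}$ form a downward-directed family. Given coverings $\mathcal A=(A_j)_j$ and $\mathcal B=(B_k)_k$ of $X$, the doubly-indexed family $(A_j\cap B_k)_{j,k}$, re-enumerated over $\Nats$, consists of members of $\Afr_0$ and again covers $X$ (its union is $\big(\bigcup_j A_j\big)\cap\big(\bigcup_k B_k\big)\supseteq X$); and by \eqref{eq:PEcapF} of Lemma~\ref{lem:Platticefinite} each $P(T:A_j\cap B_k)=P(T:A_j)\wedge P(T:B_k)\le P(T:A_j)$, so the corresponding $Q_{\mathcal C}$ satisfies $Q_{\mathcal C}\le Q_{\mathcal A}$ and, by symmetry, $Q_{\mathcal C}\le Q_{\mathcal B}$. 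Applying Lemma~\ref{lem:meetDirected} in the tracial von Neumann algebra $\Mcal$ then gives $\Pt(T:X)$ as a $\|\cdot\|_2$-limit of a decreasing sequence drawn from $\{Q_{\mathcal A}\}$, so that $\tau(\Pt(T:X))=\inf_{\mathcal A}\tau(Q_{\mathcal A})=\inf_{\mathcal A}\nu_T\big(\bigcup_j A_j\big)$. Finally this infimum equals $\nu_T(X)$: it is $\ge\nu_T(X)$ by monotonicity, and $\le\nu_T(X)$ since, by the Carath\'eodory formula of Proposition~\ref{prop:nuT}, for every $\eps>0$ there is a covering $\mathcal A$ with $\sum_j\mu_0(A_j)<\nu_T(X)+\eps$, whence $\nu_T\big(\bigcup_j A_j\big)\le\sum_j\nu_T(A_j)=\sum_j\mu_0(A_j)<\nu_T(X)+\eps$.

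For the \emph{moreover} clause, suppose $X\in\Afr_0$. The covering $A_1=X$, $A_j=\emptyset$ $(j\ge2)$ shows $\Pt(T:X)\le P(T:X)$; combined with $\tau(\Pt(T:X))=\nu_T(X)=\mu_0(X)=\tau(P(T:X))$ and faithfulness of $\tau$, this forces $\Pt(T:X)=P(T:X)$. (Alternatively, $P(T:X)\le\Pt(T:X)$ comes directly from Lemma~\ref{lem:PTAcup} applied to $X=\bigcup_j(X\cap A_j)$ for a covering $\mathcal A$.) The one genuinely delicate point is the interchange of $\tau$ with the a priori uncountable meet defining $\Pt(T:X)$; this is precisely what downward-directedness of $\{Q_{\mathcal A}\}$ and Lemma~\ref{lem:meetDirected} deliver, and it is the step I would be most careful about.
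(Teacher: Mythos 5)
Your proof is correct and follows essentially the same route as the paper's: establish $\tau(Q_{\mathcal A})=\nu_T(\bigcup_j A_j)$ by taking a limit over finite sub-unions, exhibit directedness of the coverings via the pairwise-intersection trick, pass to a $\|\cdot\|_2$-convergent decreasing sequence using Lemma~\ref{lem:meetDirected}, and identify the infimum with $\nu_T(X)$ via the Carath\'eodory formula. The only (harmless) deviation is in the \emph{moreover} clause, where you conclude $\Pt(T:X)=P(T:X)$ from $\Pt(T:X)\le P(T:X)$ plus equality of traces and faithfulness of $\tau$, whereas the paper instead proves the reverse inequality directly from Lemma~\ref{lem:PTAcup} applied to $\bigvee_j P(T:A_j)\ge\bigvee_j P(T:A_j\cap X)=P(T:X)$; both are valid, and you note the alternative yourself.
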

\begin{proof}
If $A_j\in\Afr_0$, then
\begin{multline}\label{eq:tauveeP}
\tau\bigg(\bigvee_{j=1}^\infty P(T:A_j)\bigg)=\lim_{N\to\infty}\tau\bigg(\bigvee_{j=1}^N P(T:A_j)\bigg)
=\lim_{N\to\infty}\tau\bigg(P\bigg(T:\bigcup_{j=1}^NA_j\bigg)\bigg) \\
=\lim_{N\to\infty}\mu_0\bigg(\bigcup_{j=1}^NA_j\bigg)
=\lim_{N\to\infty}\nu_T\bigg(\bigcup_{j=1}^NA_j\bigg)
=\nu_T\bigg(\bigcup_{j=1}^\infty A_j\bigg).
\end{multline}

Consider the set $\Omega$ consisting of all sequences $(A_j)_{j=1}^\infty$ of elements $A_j\in\Afr_0$
such that $X\subseteq\bigcup_{j=1}^\infty A_j$.
Consider the partial ordering $\ge$ on $\Omega$ defined by $(A_j)_{j=1}^\infty\ge(B_k)_{k=1}^\infty$ if and only if
for every $j\ge1$ there is $k\ge1$ such that $A_j\subseteq B_k$ (i.e., \lq\lq larger\rq\rq{} means \lq\lq finer\rq\rq).
The set $\Omega$ is directed  by $\ge$, for given $(C_j)_{j=1}^\infty$ and $(D_k)_{k=1}^\infty$ in $\Omega$,
a common upper bound is
$(C_j\cap D_k)_{j,k\ge1}$.
Clearly, $(A_j)_{j=1}^\infty\ge(B_k)_{k=1}^\infty$ implies
\[
\bigvee_{j=1}^\infty P(T:A_j)\le\bigvee_{k=1}^\infty P(T:B_k).
\]
Applying Lemma~\ref{lem:meetDirected} and~\eqref{eq:tauveeP}, we have
\begin{multline*}
\tau(\Pt(T:X))=\inf\bigg\{\tau\bigg(\bigvee_{j=1}^\infty P(T:A_j)\bigg)\;\bigg|\;(A_j)_{j=1}^\infty\in\Omega\bigg\} \\
=\inf\bigg\{\nu_T\bigg(\bigcup_{j=1}^\infty A_j\bigg)\;\bigg|\;(A_j)_{j=1}^\infty\in\Omega\bigg\}=\nu_T(X).
\end{multline*}

Now suppose $X\in\Afr_0$.
By considering the sequence $(X,\emptyset,\emptyset,\ldots)$, which belongs to $\Omega$, we see $\Pt(T,X)\le P(T,X)$.
But now for any $(A_j)_{j=1}^\infty\in\Omega$, we have
\[
\bigvee_{j=1}^\infty P(T:A_j)\ge\bigvee_{j=1}^\infty P(T:A_j\cap X)=P(T:X),
\]
where the last equality is by Lemma~\ref{lem:PTAcup}.
\end{proof}

The following \lq\lq up-down\rq\rq consequence of the construction and Lemma~\ref{lem:meetDirected} will be useful.
\begin{lemma}\label{lem:updown}
Let $\Afr_0^\infty$ be the set of all countable unions $\bigcup_{j=1}^\infty A_j$ of elements $A_j\in\Afr_0$.
Let $X\subseteq Z$ be a Borel subset.
Then there is a decreasing sequence $E_1\supseteq E_2\supseteq\cdots$ of sets $E_n\in\Afr_0^\infty$ such that $X\subseteq E_n$
for all $n$ and such that 
\[
\Pt(T:X)=\bigwedge_{n=1}^\infty\Pt(T:E_n).
\]
\end{lemma}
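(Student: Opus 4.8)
The plan is to extract, from the directed family of projections that defines $\Pt(T:X)$, a single cofinal totally ordered sequence, using the last assertion of Lemma~\ref{lem:meetDirected}; the sets $E_n$ will then simply be the unions of the corresponding $\Afr_0$-covers of $X$. First I would reinstate the notation from the proof of Proposition~\ref{prop:PtTE}: let $\Omega$ be the set of all sequences $(A_j)_{j=1}^\infty$ of elements of $\Afr_0$ with $X\subseteq\bigcup_{j=1}^\infty A_j$, directed by declaring $(A_j)_j\ge(B_k)_k$ when each $A_j$ is contained in some $B_k$. As recorded there, $(A_j)_j\ge(B_k)_k$ implies $\bigvee_j P(T:A_j)\le\bigvee_k P(T:B_k)$, so that
\[
P_{(A_j)}:=\bigvee_{j=1}^\infty P(T:A_j),\qquad (A_j)_j\in\Omega,
\]
is a decreasing net of projections in $\Mcal$ whose meet, by definition, is $\Pt(T:X)$.

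Next I would apply the final part of Lemma~\ref{lem:meetDirected} to this net: since $\Mcal$ is a tracial von Neumann algebra, there is a totally ordered sequence $i(1)\le i(2)\le\cdots$ in $\Omega$, say $i(n)=(A^{(n)}_j)_{j=1}^\infty$, with $P_{i(n)}\to\Pt(T:X)$ in $\|\cdot\|_2$ as $n\to\infty$. Since $(P_{i(n)})_n$ is decreasing, this limit is also its strong-operator limit, so $\bigwedge_{n=1}^\infty P_{i(n)}=\Pt(T:X)$. I would then set $E_n:=\bigcup_{j=1}^\infty A^{(n)}_j$, which lies in $\Afr_0^\infty$. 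From $i(n)\in\Omega$ one gets $X\subseteq E_n$, and since $i(n)\le i(n+1)$ means $(A^{(n+1)}_j)_j$ refines $(A^{(n)}_j)_j$, every $A^{(n+1)}_j$ is contained in some $A^{(n)}_k$, whence $E_{n+1}\subseteq E_n$.

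It then remains to identify $\Pt(T:E_n)$ with $P_{i(n)}$, which completes the proof, since then $\bigwedge_{n=1}^\infty\Pt(T:E_n)=\bigwedge_{n=1}^\infty P_{i(n)}=\Pt(T:X)$. The inequality $\Pt(T:E_n)\le P_{i(n)}$ is immediate from the definition of $\Pt$, as $(A^{(n)}_j)_j$ is itself a cover of $E_n$ by elements of $\Afr_0$. For the reverse I would compare traces: $\tau(\Pt(T:E_n))=\nu_T(E_n)$ by Proposition~\ref{prop:PtTE}, while $\tau(P_{i(n)})=\nu_T\big(\bigcup_j A^{(n)}_j\big)=\nu_T(E_n)$ by the trace computation~\eqref{eq:tauveeP}; faithfulness of $\tau$ upgrades the inequality to equality.

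I do not expect a genuine obstacle: the substantive point, namely that the meet of a decreasing net of projections in a finite von Neumann algebra is realized along a countable totally ordered subnet, is exactly the last clause of Lemma~\ref{lem:meetDirected}, and everything else is routine. The one spot that needs care is the \lq\lq refinement\rq\rq\ bookkeeping---passing to a finer $\Afr_0$-cover shrinks the union and lowers the join of the associated Haagerup--Schultz projections, and one must confirm that the union $E_n$ of such a refining sequence of covers of $X$ is a set in $\Afr_0^\infty$ on which $\Pt(T:\cdot)$ is still computed by the expected join, which is precisely the trace argument above.
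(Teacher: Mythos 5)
Your proof is correct and follows precisely what the paper intends: it presents Lemma~\ref{lem:updown} as a consequence of the construction in Proposition~\ref{prop:PtTE} together with the final clause of Lemma~\ref{lem:meetDirected}, and you have filled in those details faithfully, including the trace comparison needed to identify $\Pt(T:E_n)$ with the join $\bigvee_j P(T:A^{(n)}_j)$.
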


Here is a multivariate analogue of Lemma~\ref{disjoint lemma}.
\begin{lemma}\label{lem:Ptdisj}
Suppose $E,F\subseteq Z$ are disjoint Borel sets.
Then 
\[
\Pt(T:E)\wedge\Pt(T:F)=0.
\]
\end{lemma}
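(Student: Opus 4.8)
The plan is to reduce the general disjoint case to the situation already handled by the covering construction, exploiting the "up-down" approximation of Lemma~\ref{lem:updown} together with the finite lattice identities from Lemma~\ref{lem:Platticefinite} and the trace formula $\tau(\Pt(T:X))=\nu_T(X)$ from Proposition~\ref{prop:PtTE}. First I would invoke Lemma~\ref{lem:updown} to fix decreasing sequences $(E_n)$ and $(F_n)$ of sets in $\Afr_0^\infty$ with $E\subseteq E_n$, $F\subseteq F_n$, $\Pt(T:E)=\bigwedge_n\Pt(T:E_n)$ and $\Pt(T:F)=\bigwedge_n\Pt(T:F_n)$. Since $\Pt(T:E_n)$ decreases to $\Pt(T:E)$ in $\|\cdot\|_2$ (Lemma~\ref{lem:meetDirected}) and likewise for the $F_n$, and since each is dominated by the corresponding limit only after we arrange monotonicity correctly, I would apply Lemma~\ref{lem:PwQ} (or rather Lemma~\ref{lem:wedgePin}) to conclude that $\Pt(T:E_n)\wedge\Pt(T:F_n)$ converges in $\|\cdot\|_2$ to $\Pt(T:E)\wedge\Pt(T:F)$. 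Hence it suffices to show $\tau\big(\Pt(T:E_n)\wedge\Pt(T:F_n)\big)\to 0$, i.e. to treat the case where $E,F\in\Afr_0^\infty$.

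Next I would handle the case $E=\bigcup_j A_j$, $F=\bigcup_k B_k$ with $A_j,B_k\in\Afr_0$ and $E\cap F=\emptyset$. Writing $\Pt(T:E)=\bigvee_j P(T:A_j)$ and $\Pt(T:F)=\bigvee_k P(T:B_k)$ (these equalities hold by Lemma~\ref{lem:PTAcup} applied to $E$ and $F$ as unions of $\Afr_0$-sets, once one checks $E,F\in\Afr_0^\infty$ are themselves representable this way — they are, by definition), I want $\big(\bigvee_j P(T:A_j)\big)\wedge\big(\bigvee_k P(T:B_k)\big)=0$. Approximating, it is enough to show $\big(\bigvee_{j\le N}P(T:A_j)\big)\wedge\big(\bigvee_{k\le M}P(T:B_k)\big)=0$ for all $N,M$, and by Lemma~\ref{lem:Platticefinite} the left factor is $P(T:A)$ with $A=\bigcup_{j\le N}A_j\in\Afr_0$ and the right is $P(T:B)$ with $B=\bigcup_{k\le M}B_k\in\Afr_0$; moreover $A\cap B=\emptyset$. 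So the whole problem comes down to: for disjoint $A,B\in\Afr_0$, $P(T:A)\wedge P(T:B)=0$. But $P(T:A\cap B)=P(T:A)\wedge P(T:B)$ by \eqref{eq:PEcapF} of Lemma~\ref{lem:Platticefinite}, and $A\cap B=\emptyset$ gives $P(T:\emptyset)=0$ (from Definition~\ref{def:PTX}, the empty union). That settles the $\Afr_0^\infty$ case.

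For the two approximation passages I should be careful that the lattice-continuity lemmas apply: in each we have a decreasing (resp. increasing) net of projections each dominated by its limit, so Lemma~\ref{lem:wedgePin} and Lemma~\ref{lem:meetDirected} are directly applicable, and the trace identity $\tau(\Pt(T:E_n))=\nu_T(E_n)\downarrow\nu_T(E)$ from Proposition~\ref{prop:PtTE} pins down the limit. I expect the main obstacle to be the bookkeeping in the first reduction: I need the wedge $\Pt(T:E_n)\wedge\Pt(T:F_n)$ to converge to $\Pt(T:E)\wedge\Pt(T:F)$, and Lemma~\ref{lem:wedgePin} requires the approximants to sit \emph{below} the limits, which is the wrong direction here (the $E_n$ decrease \emph{to} $E$, so $\Pt(T:E_n)\ge\Pt(T:E)$). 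The fix is to run the argument the other way: since $\Pt(T:E)\wedge\Pt(T:F)\le\Pt(T:E_n)\wedge\Pt(T:F_n)$ for every $n$, and the traces $\tau\big(\Pt(T:E_n)\wedge\Pt(T:F_n)\big)$ decrease, it suffices to bound this trace; and $\tau\big(\Pt(T:E_n)\wedge\Pt(T:F_n)\big)\le\tau\big(\Pt(T:E_n\cap F_n)\big)=\nu_T(E_n\cap F_n)$ does \emph{not} obviously vanish unless $E_n\cap F_n$ shrinks to a null set. To force that, I would, before applying Lemma~\ref{lem:updown}, first separate $E$ and $F$ by genuinely disjoint Borel sets (replace $F$ by $Z\setminus E\supseteq F$ is not in $\Afr_0^\infty$, so instead intersect the $F_n$ with complements of suitable $\Afr_0^\infty$-supersets of $E$, using regularity of $\nu_T$ so that $\nu_T(E_n\cap F_n)\to\nu_T(E\cap F)=0$). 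With that arrangement in place the reduction goes through, and the $\Afr_0$-level computation above finishes the proof.
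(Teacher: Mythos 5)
Your argument and the paper's share the same core mechanics — approximate $E$ and $F$ from above by countable unions of $\Afr_0$-sets, use Lemma~\ref{lem:Platticefinite} to identify the wedge of the finite sub-joins with $P(T:\,\cdot\,)$ of the intersection, and then pass to the limit with Lemma~\ref{lem:PwQ}. But there is a real gap in your first reduction, which you yourself flag: after invoking Lemma~\ref{lem:updown} you are left with sets $E_n, F_n \in \Afr_0^\infty$ that need not be disjoint, and the whole argument hinges on $\nu_T(E_n \cap F_n) \to 0$. Your proposed fix — intersecting the $F_n$ with complements of $\Afr_0^\infty$-supersets of $E$ — is the wrong move: the complement of an $\Afr_0^\infty$-set is generally not in $\Afr_0^\infty$ (nor in $\Afr_0$), so this does not keep you inside the class of sets for which the identity $\Pt(T:X) = \bigvee_j P(T:A_j)$ is established, and the construction does not close up.

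The repair is simpler and is exactly what the paper does: bypass Lemma~\ref{lem:updown} entirely. Since $\nu_T$ is defined via Carath\'eodory's extension as an infimum over covers, given $\eps>0$ one may choose $(A_j)_{j\ge1}$ and $(B_k)_{k\ge1}$ in $\Afr_0$ with $E \subseteq \bigcup_j A_j$, $F \subseteq \bigcup_k B_k$, and $\nu_T\big((\bigcup_j A_j)\setminus E\big) < \eps$, $\nu_T\big((\bigcup_k B_k)\setminus F\big) < \eps$. Because $E \cap F = \emptyset$, this forces $\nu_T\big((\bigcup_j A_j)\cap(\bigcup_k B_k)\big) < 2\eps$ — this is the quantitative disjointness you need and never quite secure. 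From the definition of $\Pt$ as a meet over covers, $\Pt(T:E) \le \bigvee_j P(T:A_j)$ and $\Pt(T:F) \le \bigvee_k P(T:B_k)$, and then the finite-level computation via Lemma~\ref{lem:Platticefinite} together with Lemma~\ref{lem:PwQ} gives $\tau(\Pt(T:E)\wedge\Pt(T:F)) \le 2\eps$. Letting $\eps\to0$ finishes. Your tail-end argument (the reduction to finite unions and the $\Afr_0$-level computation $P(T:A)\wedge P(T:B) = P(T:A\cap B) = 0$) is correct and essentially identical to the paper's; it is only the entry into the $\Afr_0$-world that needs to be done quantitatively, with $\eps$ in hand, rather than qualitatively via Lemma~\ref{lem:updown}.
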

\begin{proof}
Let $\eps>0$.
Because $\nu_T$ is defined as in Proposition~\ref{prop:nuT},
we may choose sequences $(A_j)_{j=1}^\infty$ and $(B_j)_{j=1}^\infty$ in $\Afr_0$ such that
\begin{alignat*}{2}
E&\subseteq\bigcup_{j=1}^\infty A_j,\qquad&\nu_T\bigg(\bigg(\bigcup_{j=1}^\infty A_j\bigg)\setminus E\bigg)<\eps \\
F&\subseteq\bigcup_{k=1}^\infty B_k,\qquad&\nu_T\bigg(\bigg(\bigcup_{k=1}^\infty B_k\bigg)\setminus F\bigg)<\eps.
\end{alignat*}
Then for all $N\ge1$, making use of Lemma~\ref{lem:Platticefinite}, we have
\begin{multline*}
\tau\bigg(\bigg(\bigvee_{j=1}^NP(T:A_j)\bigg)\wedge\bigg(\bigvee_{k=1}^NP(T:B_k)\bigg)\bigg) \\
=\nu_T\bigg(\bigg(\bigcup_{j=1}^NA_j\bigg)\cap\bigg(\bigcup_{k=1}^NB_k\bigg)\bigg)
<2\eps+\nu_T(E\cap F)=2\eps.
\end{multline*}
Now letting $N\to\infty$ and using Lemma~\ref{lem:PwQ}, we obtain
\[
\tau(\Pt(T:E)\wedge\Pt(T:F))\le
\tau\bigg(\bigg(\bigvee_{j=1}^\infty P(T:A_j)\bigg)\wedge\bigg(\bigvee_{k=1}^\infty P(T:B_k)\bigg)\bigg)\le2\eps.
\]
Letting $\eps\to0$ completes the proof.
\end{proof}

Now that the construction and proofs of basic properties of the projections $\Pt(T:X)$ are complete,
we will simplify the notation:
\begin{defi}\label{def:PTBorel}
For every Borel set $X\subseteq Z$, we set
the {\em joint Haagerup--Schultz projection} $P(T:X)$ to be the projection $\Pt(T:X)$ constructed in Proposition~\ref{prop:PtTE},
thus extending Definition~\ref{def:PTX}.
Furthermore, we let $P(T:X)$ be defined for arbitrary Borel subsets $X$ of $\Cpx^I$, by setting
$P(T:X)=P(T:X\cap Z)$.
\end{defi}

We summarize the results of our constructions so far.
\begin{thm}\label{thm:main.nu.P}
For each family $T=(T_i)_{io\in I}$ of commuting operators in $\Mcal$, there exists a Borel probability measure $\nu_T$ on $\Cpx^I$, called the joint Brown measure,
and a family of $T$-hyperinvariant projections $P(T:X)$ defined for Borel subsets $X\subseteq\Cpx^I$, called the joint Haagerup--Schultz projections of $T$,
 satisfying the following:
\begin{enumerate}[(a)]
\item\label{it:nuTi}
Given $i\in I$, the marginal distribution of $\nu_T$ for the projection on the $i$-th coordinate $\Cpx$ of $\Cpx^I$
is the Brown measure $\nu_{T_i}$ of $T_i$.
\item\label{it:tauPTX}
$\tau(P(T:X))=\nu_T(X)$ for all Borel sets $X\subseteq\Cpx^I$.
\item\label{it:PTlattice} For any sequence $(X_n)_{n=1}^\infty$ of Borel sets in $\Cpx^I$, we have
\begin{align*}
\bigvee_{n=1}^\infty P(T:X_n)&=P\bigg(T:\bigcup_{n=1}^\infty X_n\bigg) \\
\bigwedge_{n=1}^\infty P(T:X_n)&=P\bigg(T:\bigcap_{n=1}^\infty X_n\bigg).
\end{align*}
\end{enumerate}
\end{thm}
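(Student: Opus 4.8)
The plan is to assemble Theorem~\ref{thm:main.nu.P} by collecting the pieces already established in Sections~\ref{sec:meetsjoins}--\ref{sec:decproj}, with the only real work being item~\eqref{it:PTlattice}. For the preliminary assertions: the projections $P(T:X)$ are $T$-hyperinvariant because $\Pt(T:X)$ is defined in Proposition~\ref{prop:PtTE} as a meet of joins of the $P(T:A_j)$ with $A_j\in\Afr_0$, each of which is a finite join of the hyperinvariant projections $P(T:R^{(j)})$ from Lemma~\ref{lem:PThyperinv}, so Lemma~\ref{lem:hyperinvlattice} gives hyperinvariance at every stage. Item~\eqref{it:tauPTX} is exactly the trace identity $\tau(\Pt(T:X))=\nu_T(X)$ proved in Proposition~\ref{prop:PtTE}, once we recall $\nu_T(X)=\nu_T(X\cap Z)$ and $P(T:X)=\Pt(T:X\cap Z)$ by Definitions~\ref{def:nuT} and~\ref{def:PTBorel}. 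Item~\eqref{it:nuTi} follows from Definition~\ref{def:PTR} together with Lemma~\ref{lem:mu0fadd}/Proposition~\ref{prop:nuT}: for a Borel set $B\subseteq\sigma(T_{i_0})$ the set $X=B\times\prod_{i\ne i_0}\sigma(T_i)$ is a coordinate-finite rectangle with $P(T:X)=P(T_{i_0},B)$ (all other factors being $1$), so $\nu_T(X)=\tau(P(T_{i_0},B))=\nu_{T_{i_0}}(B)$, which says precisely that the $i_0$-th marginal of $\nu_T$ is $\nu_{T_{i_0}}$.

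For item~\eqref{it:PTlattice}, I would treat the join and meet assertions in turn. For the join: monotonicity of $P(T:\cdot)$ (immediate from the defining meet-of-joins formula in Proposition~\ref{prop:PtTE}, since enlarging $X$ shrinks the set of admissible covers) gives $\bigvee_n P(T:X_n)\le P(T:\bigcup_n X_n)$. For the reverse inequality, compute traces: by item~\eqref{it:tauPTX} and countable additivity of $\nu_T$, together with the fact that each $P(T:X_n)\le P(T:\bigcup_{m\le n}X_m)$ and these are increasing, one gets $\tau\big(\bigvee_n P(T:X_n)\big)=\lim_N \tau\big(\bigvee_{n=1}^N P(T:X_n)\big)$; but $\bigvee_{n=1}^N P(T:X_n)\le P(T:\bigcup_{n=1}^N X_n)\le P(T:\bigcup_n X_n)$, and one must show $\tau\big(\bigvee_{n=1}^N P(T:X_n)\big)\to\nu_T(\bigcup_n X_n)$. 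This last point is where care is needed: unlike the situation for a single operator (Theorem~\ref{thm:countable}), we do not yet have $P(T:X_1\cup X_2)=P(T:X_1)\vee P(T:X_2)$ for general Borel sets, so I would instead bound $\tau\big(\bigvee_{n=1}^N P(T:X_n)\big)$ below by $\tau(P(T:X_n))=\nu_T(X_n)$ for each $n$ and — to get the union — approximate from outside using Lemma~\ref{lem:updown} or the cover definition, reducing to finite unions of elements of $\Afr_0$ where Lemma~\ref{lem:Platticefinite} and finite additivity of $\mu_0$ apply, then pass to the limit. Alternatively, and more cleanly, I would first prove the finite case $P(T:X\cup Y)=P(T:X)\vee P(T:Y)$ for arbitrary disjoint Borel $X,Y$ by combining Lemma~\ref{lem:Ptdisj} (which gives $P(T:X)\wedge P(T:Y)=0$) with the trace identity $\tau(P(T:X\cup Y))=\nu_T(X\cup Y)=\nu_T(X)+\nu_T(Y)=\tau(P(T:X))+\tau(P(T:Y))=\tau(P(T:X)\vee P(T:Y))$ via Lemma~\ref{lem:tracevw}, and hence (splitting off $Y\setminus X$) the identity for general, not-necessarily-disjoint pairs; then the countable join follows by the s.o.t.-limit argument exactly as in the proof of Theorem~\ref{thm:countable}.

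For the meet assertion, $\bigwedge_n P(T:X_n)\ge P(T:\bigcap_n X_n)$ is again immediate from monotonicity. For the reverse inequality I would use that $\bigwedge_{n=1}^N P(T:X_n)$ is decreasing in $N$ with s.o.t.-limit (equivalently $\|\cdot\|_2$-limit) equal to $\bigwedge_n P(T:X_n)$, and that by the finite-intersection identity $P(T:X\cap Y)=P(T:X)\wedge P(T:Y)$ — which I obtain from the finite-union identity just established together with Lemma~\ref{lem:tracevw}, mirroring the passage from \eqref{eq:2cup} to \eqref{eq:2cap} in Theorem~\ref{thm:2sets} — we have $\bigwedge_{n=1}^N P(T:X_n)=P(T:\bigcap_{n=1}^N X_n)$, whose trace is $\nu_T(\bigcap_{n=1}^N X_n)\to\nu_T(\bigcap_n X_n)=\tau(P(T:\bigcap_n X_n))$ by continuity of the measure. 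Comparing traces of the two sides (one dominates the other and their traces agree in the limit) forces equality. The main obstacle, and the step I would spend the most effort on, is establishing the finite lattice identities $P(T:X\cup Y)=P(T:X)\vee P(T:Y)$ and $P(T:X\cap Y)=P(T:X)\wedge P(T:Y)$ for \emph{arbitrary} Borel sets $X,Y\subseteq Z$ (not just elements of $\Afr_0$): Lemma~\ref{lem:Platticefinite} handles $\Afr_0$ and Lemma~\ref{lem:Ptdisj} handles disjointness, but bridging to general Borel sets requires the outer-regularity-type approximation built into the definition of $\Pt(T:\cdot)$ together with a careful trace computation, and once this is in hand the rest of item~\eqref{it:PTlattice} is the same bounded-sequence s.o.t.-limit argument used for a single operator in Theorem~\ref{thm:countable}.
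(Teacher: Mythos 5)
Your proposal is correct and follows the paper's own proof: items (a), (b), and the hyperinvariance claim are read off from the construction (Propositions~\ref{prop:nuT} and~\ref{prop:PtTE} together with Lemmas~\ref{lem:PThyperinv} and~\ref{lem:hyperinvlattice}), and item~(c) is obtained from the disjointness Lemma~\ref{lem:Ptdisj} exactly as Theorems~\ref{thm:2sets} and~\ref{thm:countable} were obtained from Lemma~\ref{disjoint lemma}. Your concern in the final paragraph about ``bridging to general Borel sets'' is actually already resolved by your own argument: Lemma~\ref{lem:Ptdisj} applies to arbitrary disjoint Borel sets, so the finite lattice identities follow directly from the trace computation with Lemma~\ref{lem:tracevw} and no further outer-regularity approximation is needed.
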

\begin{proof}%[Proof of Theorem~\ref{thm:main.nu.P} parts \eqref{it:nuTi} and \eqref{it:tauPTX}]
The measure $\nu_T$ and projections $P(T:X)$ satisfying Property~\eqref{it:tauPTX}
are constructed in the results of this section and the previous section, culminating in
Propositions~\ref{prop:nuT} and~\ref{prop:PtTE}.
If $R=\prod_{i\in I}B_i\subseteq\Cpx^I$ is 
what we may call a one-coordinate rectangle,
namely, if $B_i=\Cpx$ for all but at most one value of $i_0\in I$,
then by construction (see Definition~\ref{def:PTR}), $P(T:R)=P(T_{i_0},B_{i_0})$ is a Haagerup--Schultz projection.
Thus, we have
\[
\nu_T(R)=\tau\big(P(T_{i_1},B_{i_0})\big)=\nu_{T_{i_0}}(B_{i_0}),
\]
which implies that property~\eqref{it:nuTi} holds.

The Haagerup--Schultz projection $P(T_{i_0},B_{i_0})$
is $T_{i_0}$-hyperinvariant,
so it is also $T$-hyperinvariant.
Since all of the joint Haagerup--Schultz projections $P(T:X)$ are constructed from these $P(T:R)$ for one-coordinate rectangles $R$ using countable
meets and joins, using Lemma~\ref{lem:hyperinvlattice},
we have that all $P(T:X)$ are $T$-hyperinvariant.

The proof of part~\eqref{it:PTlattice} follows from Lemma~\ref{lem:Ptdisj} just as the proofs of 
Theorems~\ref{thm:2sets} and~\ref{thm:countable} did from Lemma~\ref{disjoint lemma}.
\end{proof}

Our next main theorem is
an extension of Proposition~\ref{prop:nuTQ+}, Theorem~\ref{thm:PTQB} and Corollary~\ref{cor:P(1-Q)TB}
to the setting of several commuting operators.
\begin{thm}\label{thm:PtTQ}
Let $T=(T_i)_{i\in I}$ be a family of commuting elements of $\Mcal$ and suppose a projection $Q\in\Mcal$, $Q\notin\{0,1\}$,
is $T$-invariant.
Then the joint Brown measures satisfy 
\begin{equation}\label{eq:nuTQ+}
\nu_T=\tau(Q)\nu_{TQ}^{(Q)}+\tau(1-Q)\nu_{(1-Q)T}^{(1-Q)}
\end{equation}
and the joint Haagerup--Schultz projections satisfy,
for every Borel subset $X\subseteq\Cpx^I$,
\begin{align}
P^{(Q)}(TQ:X)&=P(T:X)\wedge Q \label{eq:PTQ} \\
P^{(1-Q)}((1-Q)T:X)&=\big(P(T:X)\vee Q\big)\wedge(1-Q), \label{eq:P(1-Q)T}
\end{align}
where $\nu_{TQ}^{(Q)}$ denotes the joint Brown measure of the commuting family $TQ=(T_iQ)_{i\in I}$ of operators
computed in the algebra $Q\Mcal Q$ with respect to the normalized trace $\tau(Q)^{-1}\tau\restrict_{Q\Mcal Q}$,
and similarly for 
$\nu_{(1-Q)T}^{(1-Q)}$
in the algebra $(1-Q)\Mcal(1-Q)$,
while $P^{(Q)}(TQ:X)$ and $P^{(1-Q)}((1-Q)T:X)$ denote the joint Haagerup--Schultz projections computed in the respective algebras
$Q\Mcal Q$ and $(1-Q)\Mcal(1-Q)$.
\end{thm}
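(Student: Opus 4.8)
The plan is to prove the two projection identities \eqref{eq:PTQ} and \eqref{eq:P(1-Q)T} and the measure identity \eqref{eq:nuTQ+} together, bootstrapping from coordinate-finite rectangles to $\Afr_0$, then to countable unions of sets in $\Afr_0$, and finally to arbitrary Borel sets; throughout one works with rectangles $\prod_iB_i$ over $\Cpx^I$, so that the cover families used to construct $P(\,\cdot\,:\,\cdot\,)$ in $\Mcal$, $Q\Mcal Q$ and $(1-Q)\Mcal(1-Q)$ coincide. Two elementary inputs are used throughout. First, since $Q$ is $T$-invariant it is $T_i$-invariant for every $i$, and dually $1-Q$ is $T_i^*$-invariant for every $i$ (as $QT_i(1-Q)=0$ is equivalent to $(1-Q)T_i^*Q=0$), so the single-operator results of \S\ref{subsec:HS}, in particular Theorem~\ref{thm:PTQB} and Corollary~\ref{cor:P(1-Q)TB}, apply coordinate by coordinate in all three algebras. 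Second, the identity
\[
(g\vee Q)\wedge(1-Q)=(1-Q)-(1-Q)\wedge(1-g)
\]
holds for arbitrary projections $g,Q$ in a tracial von Neumann algebra — a short argument: the left side is dominated by the right by a range computation, and they have equal trace by Lemma~\ref{lem:tauPQcorners}. It says that inside the projection lattice of $1-Q$ the projections $(g\vee Q)\wedge(1-Q)$ and $(1-Q)\wedge(1-g)$ are relative orthocomplements, so that De Morgan's laws there convert joins into meets.

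On a coordinate-finite rectangle $R=\prod_iB_i$, identity \eqref{eq:PTQ} is immediate: only finitely many $B_i$ differ from $\Cpx$, so $P(T:R)\wedge Q=\bigl(\bigwedge_iP(T_i,B_i)\bigr)\wedge Q=\bigwedge_i\bigl(P(T_i,B_i)\wedge Q\bigr)=\bigwedge_iP^{(Q)}(T_iQ,B_i)=P^{(Q)}(TQ:R)$ by Theorem~\ref{thm:PTQB}. For \eqref{eq:P(1-Q)T} on $R$, Corollary~\ref{cor:P(1-Q)TB} gives $P^{(1-Q)}((1-Q)T_i,B_i)=(P(T_i,B_i)\vee Q)\wedge(1-Q)$ and, since $P(T:R)\le P(T_i,B_i)$, one half of \eqref{eq:P(1-Q)T}: $(P(T:R)\vee Q)\wedge(1-Q)\le\bigwedge_iP^{(1-Q)}((1-Q)T_i,B_i)=P^{(1-Q)}((1-Q)T:R)$. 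At this stage \eqref{eq:nuTQ+} is proved directly and globally: the candidate right-hand side $\mu(X):=\tau(Q)\,\nu^{(Q)}_{TQ}(X)+\tau(1-Q)\,\nu^{(1-Q)}_{(1-Q)T}(X)$ is a Borel probability measure, and on a rectangle the rectangle case of \eqref{eq:PTQ}, the inequality just noted, and Lemma~\ref{lem:tauPQcorners} give $\mu(R)=\tau(P(T:R)\wedge Q)+\tau(P^{(1-Q)}((1-Q)T:R))\ge\tau(P(T:R)\wedge Q)+\tau\bigl((P(T:R)\vee Q)\wedge(1-Q)\bigr)=\tau(P(T:R))=\nu_T(R)$; by finite additivity $\mu\ge\nu_T$ on $\Afr_0$, and since $\Afr_0$ is closed under complements and $\mu,\nu_T$ are probability measures, $\mu(A)+\mu(A^c)=1=\nu_T(A)+\nu_T(A^c)$ forces $\mu=\nu_T$ on $\Afr_0$, hence $\mu=\nu_T$ everywhere by uniqueness of the Carath\'eodory extension (Proposition~\ref{prop:nuT}). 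Feeding \eqref{eq:nuTQ+} back into the above trace count upgrades the inequality to the equality \eqref{eq:P(1-Q)T} on rectangles.

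The extension to $\Afr_0$ and to sets in $\Afr_0^\infty$ is where $T$-invariance of $Q$ is genuinely used, since the naive distributivity $\bigvee_j(a_j\wedge Q)=(\bigvee_ja_j)\wedge Q$ is false for a general projection $Q$. For \eqref{eq:P(1-Q)T} this is handled by a lattice computation: writing $X=\bigcup_jR_j$ as a (countable) union of rectangles, $P^{(1-Q)}((1-Q)T:X)=\bigvee_jP^{(1-Q)}((1-Q)T:R_j)=\bigvee_j\bigl((P(T:R_j)\vee Q)\wedge(1-Q)\bigr)$, and since a countable join of relative orthocomplements in the lattice of subprojections of $1-Q$ is the relative orthocomplement of the meet, the displayed identity (applied with $g=P(T:R_j)$ for each $j$, and with $g=\bigvee_jP(T:R_j)$) together with De Morgan in $\Mcal$ rewrites this as $\bigl((\bigvee_jP(T:R_j))\vee Q\bigr)\wedge(1-Q)=(P(T:X)\vee Q)\wedge(1-Q)$, where Lemma~\ref{lem:PTAcup} and Definition~\ref{def:PTX} identify the joins with the corresponding projections of $X$. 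Once \eqref{eq:P(1-Q)T} holds for $\Afr_0^\infty$-sets, \eqref{eq:PTQ} follows for them from \eqref{eq:nuTQ+} and Lemma~\ref{lem:tauPQcorners}: one always has $P^{(Q)}(TQ:X)=\bigvee_j(P(T:R_j)\wedge Q)\le P(T:X)\wedge Q$, while $\tau(P(T:X)\wedge Q)=\tau(P(T:X))-\tau\bigl((P(T:X)\vee Q)\wedge(1-Q)\bigr)=\nu_T(X)-\tau(1-Q)\nu^{(1-Q)}_{(1-Q)T}(X)=\tau(Q)\nu^{(Q)}_{TQ}(X)=\tau(P^{(Q)}(TQ:X))$.

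Finally, for an arbitrary Borel $X$ one passes to the construction of Proposition~\ref{prop:PtTE}. For \eqref{eq:PTQ}: $P^{(Q)}(TQ:X)=\bigwedge_{(A_j)}\bigvee_jP^{(Q)}(TQ:A_j)=\bigwedge_{(A_j)}\bigl(P(T:\bigcup_jA_j)\wedge Q\bigr)=\bigl(\bigwedge_{(A_j)}P(T:\bigcup_jA_j)\bigr)\wedge Q=P(T:X)\wedge Q$, the middle equality being the $\Afr_0^\infty$-case and the third the valid fact that an arbitrary meet commutes with $\wedge Q$. For \eqref{eq:P(1-Q)T}: using Lemma~\ref{lem:updown} in both $\Mcal$ and $(1-Q)\Mcal(1-Q)$ and intersecting the two resulting sequences, one obtains a decreasing sequence $E_n\in\Afr_0^\infty$ containing $X$ with $P(T:X)=\bigwedge_nP(T:E_n)$ and $P^{(1-Q)}((1-Q)T:X)=\bigwedge_nP^{(1-Q)}((1-Q)T:E_n)$; by the $\Afr_0^\infty$-case and the displayed identity, $P^{(1-Q)}((1-Q)T:E_n)=(1-Q)-(1-Q)\wedge(1-P(T:E_n))$, and since $1-P(T:E_n)$ increases to $1-P(T:X)$ in $\|\cdot\|_2$, Lemma~\ref{lem:PwQ} (with constant second sequence $1-Q$) gives $(1-Q)\wedge(1-P(T:E_n))\to(1-Q)\wedge(1-P(T:X))$, so letting $n\to\infty$ (Lemma~\ref{lem:meetDirected} on the left) yields $P^{(1-Q)}((1-Q)T:X)=(1-Q)-(1-Q)\wedge(1-P(T:X))=(P(T:X)\vee Q)\wedge(1-Q)$. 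The main obstacle is the $\Afr_0$-level step: this is where lattice distributivity fails and the invariance of $Q$ must be exploited, and the two devices that make it go through are the orthocomplement identity above and the fact that \eqref{eq:nuTQ+} can be established early and independently by the complementation argument, after which \eqref{eq:PTQ} is forced by a trace count.
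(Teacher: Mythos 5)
Your proof is correct, and it follows the same overall architecture as the paper's: establish the identities on coordinate-finite rectangles using the single-operator results (Theorem~\ref{thm:PTQB} and Corollary~\ref{cor:P(1-Q)TB}), prove \eqref{eq:nuTQ+}, and bootstrap through $\Afr_0$, $\Afr_0^\infty$, and finally arbitrary Borel sets via Lemma~\ref{lem:updown}. The differences are in how a few of the steps are executed, and they are worth noting.

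Where the paper's Lemma~\ref{lem:PtTQ-rects} forces equality in \eqref{eq:P(1-Q)T} for a rectangle by writing $\Cpx^I$ as a disjoint union $R^{(0)}\cup\cdots\cup R^{(n)}$ and summing traces to get $1\ge1$, you instead derive \eqref{eq:nuTQ+} directly from the one-sided inequality and closure of $\Afr_0$ under complements (two additive quantities, each $\le$ the corresponding piece and summing to $1$, must agree); this is the same underlying mechanism. Where the paper passes to countable unions by proving both $\le$ inequalities and then closing the gap with a single trace count via \eqref{eq:nuTQ+} and Lemma~\ref{lem:tauPQcorners}, you prove \eqref{eq:P(1-Q)T} exactly by a lattice argument — the relative orthocomplement identity $(g\vee Q)\wedge(1-Q)=(1-Q)-(1-Q)\wedge(1-g)$, which is in fact a consequence of orthomodularity, plus De Morgan in the lattice of subprojections of $1-Q$ — and then reserve the trace count for \eqref{eq:PTQ} alone. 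This is conceptually a bit cleaner, since it replaces one trace comparison with a purely lattice-theoretic equality. Your final step for \eqref{eq:PTQ} via the Carath\'eodory-style meet (using that an arbitrary meet commutes with $\wedge Q$) is also more direct than the paper's SOT-limit argument, though equivalent. The one place you gloss over is the matching of cover families: the Carath\'eodory constructions in $\Mcal$, $Q\Mcal Q$ and $(1-Q)\Mcal(1-Q)$ are nominally indexed by different product spaces $Z$, $Z_Q$, $Z_{1-Q}$. This is resolved (as the paper also implicitly does) by the convention $P(\,\cdot\,:X)=P(\,\cdot\,:X\cap Z)$ of Definition~\ref{def:PTBorel} and by Lemma~\ref{lem:PTRsymdif}, so that one can uniformly work with rectangles in $\Cpx^I$; it would be worth a sentence making this explicit, but it is not a gap.
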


We first prove a lemma.
\begin{lemma}\label{lem:PtTQ-rects}
Let $T$ and $Q$ be as in the statement of Theorem~\ref{thm:PtTQ}.
Suppose $X$ is a coordinate-finite rectangle in $\Cpx^I$.
Then the equalities
\begin{equation}\label{eq:nuTQ+atX}
\nu_T(X)=\tau(Q)\nu_{TQ}^{(Q)}(X)+\tau(1-Q)\nu_{(1-Q)T}^{(1-Q)}(X)
\end{equation}
and \eqref{eq:PTQ} and~\eqref{eq:P(1-Q)T} hold.
\end{lemma}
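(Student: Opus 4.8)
The plan is to reduce the rectangle case to the single-variable results already established, by peeling off one coordinate at a time. Write $X=\prod_{i\in I}B_i$ with $B_i=\sigma(T_i)$ for all $i$ outside a finite set $\{i_1,\ldots,i_n\}$. The key structural observation is that, since $Q$ is $T$-invariant, $Q$ is $T_i$-invariant for every $i$, so the compressions $T_iQ$ and $(1-Q)T$ make sense in $Q\Mcal Q$ and $(1-Q)\Mcal(1-Q)$, and moreover $Q$ being $T$-invariant means the whole family $TQ$ lies in $Q\Mcal Q$ and is commuting there. The identity \eqref{eq:PTQ} for a rectangle should follow by combining Definition~\ref{def:PTR} (which expresses $P(T:X)$ as $\bigwedge_{i}P(T_i,B_i)$) with Theorem~\ref{thm:PTQB} applied coordinate-by-coordinate; the point is that $P(T_i,B_i)\wedge Q = P^{(Q)}(T_iQ,B_i)$ and one needs that meeting all these with $Q$ interacts correctly, which is immediate since $\bigl(\bigwedge_i P(T_i,B_i)\bigr)\wedge Q=\bigwedge_i\bigl(P(T_i,B_i)\wedge Q\bigr)$ once we absorb the $Q$ into each term (using $Q\le Q$ repeatedly), giving $\bigwedge_i P^{(Q)}(T_iQ,B_i)=P^{(Q)}(TQ:X)$.

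For \eqref{eq:P(1-Q)T}, the analogous approach uses Corollary~\ref{cor:P(1-Q)TB} in place of Theorem~\ref{thm:PTQB}, which gives $P^{(1-Q)}((1-Q)T_i,B_i)=\bigl(Q\vee P(T_i,B_i)\bigr)\wedge(1-Q)$ for each coordinate. Here the combinatorics of meets and joins is more delicate: one has $P^{(1-Q)}((1-Q)T:X)=\bigwedge_i P^{(1-Q)}((1-Q)T_i,B_i)=\bigwedge_i\bigl((Q\vee P(T_i,B_i))\wedge(1-Q)\bigr)$, and one must show this equals $\bigl(Q\vee\bigwedge_i P(T_i,B_i)\bigr)\wedge(1-Q)=\bigl(Q\vee P(T:X)\bigr)\wedge(1-Q)$. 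This is a purely lattice-theoretic identity in the projection lattice of a finite von Neumann algebra; the natural tool is Lemma~\ref{lem:pvqwedger} together with the trace identities of Lemma~\ref{lem:tracevw}, proved by induction on $n$, exactly in the spirit of the proof of Lemma~\ref{lem:rectcompl1}. I expect this to be the main obstacle: the identity $\bigwedge_i(Q\vee R_i)=Q\vee\bigwedge_i R_i$ is false in general, so one must genuinely use that $Q$ is $T$-invariant (hence commutes appropriately with the $P(T_i,B_i)$, or at least that the relevant compressions behave well) and exploit the corner trace computations. An alternative, possibly cleaner, route is to deduce \eqref{eq:P(1-Q)T} from \eqref{eq:PTQ} and \eqref{eq:nuTQ+atX} by a trace-counting argument: one inclusion is easy and the traces match.

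Finally, the measure identity \eqref{eq:nuTQ+atX} follows from \eqref{eq:PTQ} and \eqref{eq:P(1-Q)T} by applying $\tau$: since $\mu_0(X)=\tau(P(T:X))$ and $\nu_{TQ}^{(Q)}(X)=\tau(Q)^{-1}\tau(P^{(Q)}(TQ:X))=\tau(Q)^{-1}\tau(P(T:X)\wedge Q)$, and likewise $\nu_{(1-Q)T}^{(1-Q)}(X)=(1-\tau(Q))^{-1}\tau\bigl((P(T:X)\vee Q)\wedge(1-Q)\bigr)$, the claimed equality \eqref{eq:nuTQ+atX} becomes precisely $\tau(P(T:X))=\tau(P(T:X)\wedge Q)+\tau\bigl((P(T:X)\vee Q)\wedge(1-Q)\bigr)$, which is Lemma~\ref{lem:tauPQcorners} with $P=P(T:X)$. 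So once the two projection identities are in hand, the measure identity is immediate. My plan would therefore be: first establish \eqref{eq:PTQ} by the straightforward coordinate-wise application of Theorem~\ref{thm:PTQB}; then establish \eqref{eq:P(1-Q)T} either by the inductive lattice argument using Lemma~\ref{lem:pvqwedger} or by bootstrapping from \eqref{eq:PTQ} via a trace argument; and finally read off \eqref{eq:nuTQ+atX} from Lemma~\ref{lem:tauPQcorners}.
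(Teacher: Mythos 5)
Your plan for \eqref{eq:PTQ} is correct and matches the paper: for each coordinate apply Theorem~\ref{thm:PTQB}, and then use the elementary identity $\bigwedge_i(P_i\wedge Q)=(\bigwedge_i P_i)\wedge Q$ to combine. Your reading of \eqref{eq:nuTQ+atX} as a consequence of the two projection identities via Lemma~\ref{lem:tauPQcorners} is also correct.

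The gap is in \eqref{eq:P(1-Q)T}. You correctly diagnose the difficulty --- that $\bigwedge_i(Q\vee P_i)=Q\vee\bigwedge_i P_i$ is false in general --- but neither of your two proposed routes closes it. The inductive lattice argument based on Lemma~\ref{lem:pvqwedger} does not go through: modularity gives $(P\vee Q)\wedge R=P\vee(Q\wedge R)$ when $P\le R$, which does not yield the needed distributive-type identity (a three-dimensional counterexample with $P_1,P_2$ rank-one projections onto $\Cpx e_1,\Cpx e_2$ and $Q$ onto $\Cpx(e_1+e_2)$ already shows $\bigwedge_i(Q\vee P_i)\ne Q\vee\bigwedge_i P_i$). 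Your second route --- deducing \eqref{eq:P(1-Q)T} from \eqref{eq:PTQ} and \eqref{eq:nuTQ+atX} by a trace-matching argument --- is sound as a logical step, but your overall plan proposes to obtain \eqref{eq:nuTQ+atX} as the \emph{last} step from \eqref{eq:P(1-Q)T}, which is circular; you have not given an independent route to \eqref{eq:nuTQ+atX}.

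The missing idea is a global counting argument across a partition. The paper writes $\Cpx^I\setminus X$ as a disjoint union of finitely many coordinate-finite rectangles $R^{(1)},\ldots,R^{(n)}$, sets $R^{(0)}=X$, and for each $k$ obtains only the one-sided inequality
\[
P^{(1-Q)}((1-Q)T:R^{(k)})\ge\big(P(T:R^{(k)})\vee Q\big)\wedge(1-Q),
\]
hence (via Lemma~\ref{lem:tauPQcorners}) the trace inequality
\[
\tau(Q)\,\nu^{(Q)}_{TQ}(R^{(k)})+\tau(1-Q)\,\nu^{(1-Q)}_{(1-Q)T}(R^{(k)})\ge\nu_T(R^{(k)}).
\]
Summing over $k=0,\ldots,n$ yields $1\ge1$, forcing equality for every $k$ --- in particular for $k=0$, which gives \eqref{eq:nuTQ+atX} and, undoing the trace argument, \eqref{eq:P(1-Q)T}. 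This squeeze over a finite partition is exactly what replaces the false lattice distributivity, and it is the step your proposal lacks.
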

\begin{proof}
We observe that if $X$ is what we may call a one-coordinate rectangle,
namely, if $X=\prod_{i\in I}B_i$ with $B_i=\Cpx$ for all but at most one value of $i\in I$,
then by Theorem~\ref{thm:PTQB} and Corollary~\ref{cor:P(1-Q)TB}, 
the equalities~\eqref{eq:nuTQ+atX}, \eqref{eq:PTQ} and~\eqref{eq:P(1-Q)T} hold.

Now suppose $X=\prod_{i\in I}B_i\subseteq\Cpx^I$ is a coordinate-finite rectangle.
We can find $n\in\Nats$ and pairwise disjoint
coordinate finite rectangles $R^{(1)},\ldots,R^{(n)}$ such that $\bigcup_{k=1}^nR^{(k)}=\Cpx^I\setminus X$.
For example, if $X=B_1\times B_2$, then 
$(\Cpx\times\Cpx)\setminus X$ is the disjoint union 
\[
(B_1^c\times B_2)\;\cup\;(B_1\times B_2^c)\;\cup\;(B_1^c\times B_2^c).
\]
Let $R^{(0)}=X$.
Since every coordinate-finite rectangle $R^{(k)}$ is the intersection of finitely many one-coordinate rectangles $R^{(k,1)},\ldots,R^{(k,p)},$
we have, for each $k\in\{0,1,\ldots,n\}$,
\begin{align*}
P^{(Q)}(TQ:R^{(k)})&=\bigwedge_{j=1}^pP^{(Q)}(TQ:R^{(k,j)})
=\bigwedge_{j=1}^p\big(P(T:R^{(k,j)})\wedge Q\big) \\
&=\bigg(\bigwedge_{j=1}^pP(T:R^{(k,j)})\bigg)\wedge Q
=P(T:R^{(k)})\wedge Q
\end{align*}
and 
\begin{align}
P^{(1-Q)}((1-Q)T:R^{(k)})&=\bigwedge_{j=1}^pP^{(1-Q)}((1-Q)T:R^{(k,j)}) \notag \\
&=\bigwedge_{j=1}^p\Big(\big(P(T:R^{(k,j)})\vee Q\big)\wedge(1-Q)\Big) \notag \\
&=\bigg(\bigwedge_{j=1}^p\big(P(T:R^{(k,j)})\vee Q\big)\bigg)\wedge(1-Q) \notag \\
&\ge\bigg(\bigg(\bigwedge_{j=1}^pP(T:R^{(k,j)})\bigg)\vee Q\bigg)\wedge(1-Q) \label{eq:P(1-Q)wedge} \\
&=\big(P(T:R^{(k)})\vee Q)\wedge(1-Q). \notag
\end{align}
Combining these with Lemma~\ref{lem:tauPQcorners}, we have, for each $k$,
\begin{align}
\tau(Q)\nu^{(Q)}_{TQ}(R^{(k)}&)+\tau(1-Q)\nu^{(1-Q)}_{(1-Q)T}(R^{(k)}) \notag \\
&=\tau\big(P^{(Q)}(TQ:R^{(k)})\big)+\tau\big(P^{(1-Q)}((1-Q)T:R^{(k)})\big) \notag \\
&\ge\tau(P(T:R^{(k)})\wedge Q)+\tau\big((P(T:R^{(k)})\vee Q)\wedge(1-Q)\big) \label{eq:nuRk} \\
&=\tau\big(P(T:R^{(k)})\big)=\nu_T(R^{(k)}). \notag 
\end{align}
Thus, we have
\begin{multline*}
1=\tau(1-Q)+\tau(Q)=\sum_{k=0}^n\tau(1-Q)\nu^{(1-Q)}_{(1-Q)T}(R^{(k)})+\sum_{k=0}^n\tau(Q)\nu^{(Q)}_{TQ}(R^{(k)}) \\
\ge\sum_{k=0}^n\nu_T(R^{(k)})=1.
\end{multline*}
and, for each $k$, the inequality in~\eqref{eq:nuRk} must be an equality.
Consequently, also each inequality~\eqref{eq:P(1-Q)wedge} must be an equality.
\end{proof}

\begin{proof}[Proof of Theorem~\ref{thm:PtTQ}]
Since, by Lemma~\ref{lem:PtTQ-rects},
the two regular measures on the left and right sides of~\eqref{eq:nuTQ+} agree when evaluated at coordinate-finite rectangles,
they must also agree on all Borel subsets of $\Cpx^I$.
This proves the identity~\eqref{eq:nuTQ+}.

Let $\Bfr$ denote the set of all Borel subsets $X\subseteq\Cpx^I$ such that~\eqref{eq:PTQ} and~\eqref{eq:P(1-Q)T} hold.
We will now show that $\Bfr$ is closed under taking countable unions.
Suppose $X_1,X_2,\ldots\in\Bfr$ and let $X=\bigcup_{j=1}^\infty X_j$.
Then
\begin{equation}\label{eq:PQTQX}
\begin{aligned}
P^{(Q)}(TQ:X)&=\bigvee_{j=1}^\infty P^{(Q)}(TQ:X_j)=\bigvee_{j=1}^\infty\big(P(T:X_j)\wedge Q\big) \\
&\le\bigg(\bigvee_{j=1}^\infty P(T:X_j)\bigg)\wedge Q=P(T:X)\wedge Q,
\end{aligned}
\end{equation}
while
\begin{equation}\label{eq:P1-QT1-QX}
\begin{aligned}
P^{(1-Q)}(&(1-Q)T:X)=\bigvee_{j=1}^\infty P^{(1-Q)}((1-Q)T:X_j) \\
&=\bigvee_{j=1}^\infty\big(\big(P(T:X_j)\vee Q\big)\wedge(1-Q)\big) \\
&\le\bigg(Q\vee\bigvee_{j=1}^\infty P(T:X_j)\bigg)\wedge(1-Q)=\big(P(T:X)\vee Q)\wedge(1-Q).
\end{aligned}
\end{equation}
But then, taking traces, adding, using the identity~\eqref{eq:nuTQ+} and invoking Lemma~\ref{lem:tauPQcorners}, we have
\[
\begin{aligned}
\nu_T(X)&=\tau(Q)\nu_{TQ}^{(Q)}(X)+\tau(1-Q)\nu_{(1-Q)T}^{(1-Q)}(X) \\
&=\tau\big(P^{(Q)}(TQ:X)\big)+\tau\big(P^{(1-Q)}((1-Q)T:X)\big)\\
&\le\tau\big(P(T:X)\wedge Q\big)+\tau\big(\big(P(T:X)\vee Q)\wedge(1-Q)\big) \\
&=\tau(P(T:X))=\nu_T(X).
\end{aligned}
\]
We conclude that the inequalities in~\eqref{eq:PQTQX} and~\eqref{eq:P1-QT1-QX} must be equalities.
Thus, $X\in\Bfr$.

Since we have already shown that all coordinate-finite rectangles belong to $\Bfr$, the above result proves
$\Afr_0^\infty\subseteq\Bfr$, where $\Afr_0^\infty$ denotes the set of all countable unions of coordinate-finite rectangles.
Now, given an arbitrary Borel subset $X\subseteq\Cpx^I$, using Lemma~\ref{lem:updown}, 
and working in the directed set $\Omega$ considered in the proof of Proposition~\ref{prop:PtTE}, we can find a sequence $X_n\in\Afr_0^\infty$
such that $X_1\supseteq X_2\supseteq\cdots\supseteq X$ and so that each of $P^{(Q)}(TQ:X_n)$, $P^{(1-Q)}((1-Q)T:X_n)$ and, respectively, $P(T:X_n)$,
converges in strong operator topology to $P^{(Q)}(TQ:X)$, $P^{(1-Q)}((1-Q)T:X)$ and, respectively $P(T:X)$.
Since these are monotone decreasing sequences of projections, by taking limits, we conclude that~\eqref{eq:PTQ} and~\eqref{eq:P(1-Q)T} hold.
\end{proof}

\begin{thm}\label{thm:PTlargest}
Let $T=(T_i)_{i\in I}$ be a family of commuting elements of $\Mcal$.
\begin{enumerate}[(a)]
\item\label{it:conc} If $Q=P(T:X)$ and $Q\notin\{0,1\}$,
then $\nu^{(Q)}_{TQ}$ is concentrated in $X$ and $\nu^{(1-Q)}_{(1-Q)T}$ is concentrated in the complement of $X$.
More precisely, for every Borel set $Y\subseteq\Cpx^I$, we have
\begin{align}
\nu_{TQ}^{(Q)}(Y)&=\nu_T(X)^{-1}\nu_T(X\cap Y) \label{eq:nuTQXY} \\
\nu_{(1-Q)T}^{(1-Q)}(Y)&=(1-\nu_T(X))^{-1}\nu_T(X^c\cap Y), \label{eq:nu1-QTXY}
\end{align}
where $X^c=\Cpx^I\setminus X$.
\item\label{it:Plargest}
If $\nu_T(X)>0$, then
$P(T:X)$ is the largest $T$-invariant projection $Q\in\Mcal$ satisfying that the joint Brown
measure $\nu^{(Q)}_{TQ}$ is concentrated in $X$.
\end{enumerate}
\end{thm}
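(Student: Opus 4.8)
The plan is to derive everything from Theorem~\ref{thm:PtTQ}, which already records how joint Brown measures and joint Haagerup--Schultz projections decompose under a $T$-invariant compression; both parts will follow with only bookkeeping once that machinery is invoked.

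For part~\eqref{it:conc}, I would apply Theorem~\ref{thm:PtTQ} with $Q=P(T:X)$, which is legitimate since $Q\notin\{0,1\}$ by hypothesis. Feeding the identity~\eqref{eq:PTQ} into the lattice property of Theorem~\ref{thm:main.nu.P}\eqref{it:PTlattice} gives, for every Borel set $Y\subseteq\Cpx^I$,
\[
P^{(Q)}(TQ:Y)=P(T:Y)\wedge Q=P(T:Y)\wedge P(T:X)=P(T:X\cap Y),
\]
and then passing to the renormalized trace on $Q\Mcal Q$, using Theorem~\ref{thm:main.nu.P}\eqref{it:tauPTX} together with $\tau(Q)=\tau(P(T:X))=\nu_T(X)$, yields~\eqref{eq:nuTQXY}. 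For~\eqref{eq:nu1-QTXY} I would instead combine~\eqref{eq:P(1-Q)T} with Lemma~\ref{lem:tauPQcorners}, applied with $P(T:Y)$ in the role of $P$ and $Q$ in the role of $Q$, to obtain
\[
\tau\big((P(T:Y)\vee Q)\wedge(1-Q)\big)=\nu_T(Y)-\nu_T(X\cap Y)=\nu_T(X^c\cap Y),
\]
and divide by $\tau(1-Q)=1-\nu_T(X)$. Finally, setting $Y=X$ in~\eqref{eq:nuTQXY} and $Y=X^c$ in~\eqref{eq:nu1-QTXY} gives $\nu_{TQ}^{(Q)}(X)=1$ and $\nu_{(1-Q)T}^{(1-Q)}(X^c)=1$, which are exactly the asserted concentration statements.

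For part~\eqref{it:Plargest}, I would first note that $\nu_T(X)>0$ forces $P(T:X)\neq0$ by faithfulness of $\tau$. If $P(T:X)=1$ then $\nu_T(X)=\tau(1)=1$, so $\nu_T$ itself (which is $\nu_{TQ}^{(Q)}$ for $Q=1$) is concentrated in $X$ and $P(T:X)=1$ is trivially the largest $T$-invariant projection; otherwise $P(T:X)\notin\{0,1\}$ and part~\eqref{it:conc} shows that $P(T:X)$ is among the $T$-invariant projections $Q$ for which $\nu_{TQ}^{(Q)}$ is concentrated in $X$. To prove maximality, I would take an arbitrary $T$-invariant projection $Q$ with $\nu_{TQ}^{(Q)}$ concentrated in $X$; the cases $Q=0$ and $Q=1$ are immediate (for $Q=1$ one gets $\nu_T(X)=1$, whence $P(T:X)=1=Q$), so assume $Q\notin\{0,1\}$ and apply Theorem~\ref{thm:PtTQ}. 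Then~\eqref{eq:PTQ} reads $P^{(Q)}(TQ:X)=P(T:X)\wedge Q$. On the other hand, $\nu_{TQ}^{(Q)}$ is a probability measure concentrated in $X$, so $\nu_{TQ}^{(Q)}(X)=1$, and hence by Theorem~\ref{thm:main.nu.P}\eqref{it:tauPTX} the projection $P^{(Q)}(TQ:X)$ has renormalized trace $1$ in $Q\Mcal Q$; faithfulness of the trace forces $P^{(Q)}(TQ:X)=Q$. Combining the two identities gives $P(T:X)\wedge Q=Q$, that is, $Q\le P(T:X)$, which is the required maximality.

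Given Theorem~\ref{thm:PtTQ}, I do not expect a serious obstacle. The only points requiring care are the degenerate cases in which one of the relevant projections equals $0$ or $1$, where Theorem~\ref{thm:PtTQ} does not literally apply and one argues directly, and the step translating ``$\nu_{TQ}^{(Q)}$ is concentrated in $X$'' into ``$P^{(Q)}(TQ:X)$ equals the identity of $Q\Mcal Q$'', which rests on faithfulness of the renormalized trace on the compression together with the fact that joint Brown measures are probability measures.
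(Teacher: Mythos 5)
Your proposal is correct and follows essentially the same route as the paper: derive~\eqref{eq:nuTQXY} from~\eqref{eq:PTQ} and the lattice properties, derive~\eqref{eq:nu1-QTXY} from~\eqref{eq:P(1-Q)T} by trace bookkeeping (you invoke Lemma~\ref{lem:tauPQcorners} directly where the paper re-derives the same trace identity via Lemma~\ref{lem:tracevw}, which amounts to the same computation), and obtain maximality in part~\eqref{it:Plargest} by applying~\eqref{eq:PTQ} to the competing projection $Q'$ and using that $\nu^{(Q')}_{TQ'}(X)=1$ forces $P^{(Q')}(TQ':X)=Q'$. The attention you give to the degenerate cases $Q\in\{0,1\}$, which the paper handles more tersely, is appropriate and does not change the substance.
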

\begin{proof}
For part~\eqref{it:conc},
let $Q=P(T:X)$ and assume $Q\notin\{0,1\}$.
For every Borel subset $Y$ of $\Cpx^I$, using the lattice property of Theorem~\ref{thm:main.nu.P}\eqref{it:PTlattice}
and using~\eqref{eq:PTQ} but with $Y$ replacing $X$ there, we have
\begin{align*}
\nu_{TQ}^{(Q)}(Y)&=\tau(Q)^{-1}\tau(P^{(Q)}(TQ:Y))
=\nu_T(X)^{-1}\tau\big(P(T:Y)\wedge P(T:X)\big) \\
&=\nu_T(X)^{-1}\tau\big(P(T:X\cap Y)\big)=\nu_T(X)^{-1}\nu_T(X\cap Y),
\end{align*}
which proves~\eqref{eq:nuTQXY}.
Similarly, using~\eqref{eq:P(1-Q)T}, we have
\begin{align*}
\nu_{(1-Q)T}^{(1-Q)}(Y)&=\tau(1-Q)^{-1}\tau\big(P^{(1-Q)}((1-Q)T:Y)\big) \\
&=\big(1-\nu_T(X)\big)^{-1}\tau\big(\big(Q\vee P(T:Y)\big)\wedge(1-Q)\big) \\
&=\big(1-\nu_T(X)\big)^{-1}\tau\big(\big(Q\vee P(T:Y)\big)-Q\big) \\
&=\big(1-\nu_T(X)\big)^{-1}\big(\tau\big(P(T:X)\vee P(T:Y)\big)-\tau\big(P(T:X)\big)\big) \\
&=\big(1-\nu_T(X)\big)^{-1}\big(\nu_T(X\cup Y)-\nu_T(X)\big) \\
&=\big(1-\nu_T(X)\big)^{-1}\nu_T(Y\cap X^c),
\end{align*}
which proves~\eqref{eq:nu1-QTXY}.

For part~\eqref{it:Plargest},
Let $Q=P(T:X)$ and suppose $\nu_T(X)>0$.
If $Q=1$, then the assertion is obvious.
So we may assume $Q\ne1$.
From Theorem~\ref{thm:PTlargest}\eqref{it:conc}, we know that $\nu^{(Q)}_{TQ}$ is concentrated in $X$.
Suppose $Q'$ is any $T$-invariant projection so that $\nu^{(Q')}_{TQ'}$ is concentrated in $X$.
Then, from~\eqref{eq:PTQ}, we have
$$Q'=P^{(Q')}(TQ':X)=P(T:X)\wedge Q'=Q\wedge Q'\leq Q.$$
\end{proof}

Here is a corollary of Theorem~\ref{thm:PTlargest}\eqref{it:conc} and Theorem~\ref{thm:PtTQ}.
\begin{cor}\label{cor:middle}
Let $X_1\subseteq X_2\subseteq\Cpx^I$ be Borel subsets and suppose $\nu_T(X_1)<\nu_T(X_2)$.
Let $p=P(T:X_2)-P(T:X_1)$ and let $pTp=(pT_ip)_{i\in I}$.
Then the joint Brown measure of $pTp$, computed in the algebra $p\Mcal p$ with respect to the renormalized trace
$\tau(p)^{-1}\tau\restrict_{p\Mcal p}$, is the renormalized restriction of $\nu_T$ to $X_2\setminus X_1$, namely,
\[
\nu_{pTp}(Y)=\big(\nu_T(X_2)-\nu_T(X_1)\big)^{-1}\nu_T\big(Y\cap(X_2\setminus X_1)\big).
\]
\end{cor}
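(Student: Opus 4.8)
The plan is to use the two compressions provided by Theorem~\ref{thm:PtTQ} in succession: first compress $T$ to the corner $Q_2 = P(T:X_2)$, then, inside the algebra $Q_2\Mcal Q_2$, compress the resulting tuple $TQ_2$ to the corner complementary to $P^{(Q_2)}(TQ_2:X_1)$. First I would set $Q_2 = P(T:X_2)$ and $Q_1 = P(T:X_1)$; by Theorem~\ref{thm:main.nu.P}\eqref{it:PTlattice} and $X_1\subseteq X_2$ we have $Q_1 \le Q_2$, so $p = Q_2 - Q_1$ is a genuine projection, and by Theorem~\ref{thm:PtTQ}, formula~\eqref{eq:PTQ}, applied in $Q_2\Mcal Q_2$ (which makes sense since $Q_1$ is $T$-invariant, hence $TQ_2$-invariant), $Q_1 = P^{(Q_2)}(TQ_2 : X_1)$, so $p$ is exactly the projection complementary to $P^{(Q_2)}(TQ_2:X_1)$ inside $Q_2\Mcal Q_2$.

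Next I would invoke Theorem~\ref{thm:PTlargest}\eqref{it:conc} twice. Applied to the tuple $T$ and the set $X_2$, it gives that the joint Brown measure of $TQ_2$ in $Q_2\Mcal Q_2$ is the renormalised restriction of $\nu_T$ to $X_2$:
\[
\nu^{(Q_2)}_{TQ_2}(Y) = \nu_T(X_2)^{-1}\,\nu_T(X_2 \cap Y)
\]
for all Borel $Y \subseteq \Cpx^I$. Now apply Theorem~\ref{thm:PTlargest}\eqref{it:conc} a second time, this time inside the tracial von Neumann algebra $(Q_2\Mcal Q_2,\, \tau(Q_2)^{-1}\tau)$, to the commuting tuple $TQ_2$ and the Borel set $X_1$; note that since $X_1 \subseteq X_2$ and $\nu^{(Q_2)}_{TQ_2}$ is concentrated in $X_2$, we have $\nu^{(Q_2)}_{TQ_2}(X_1) = \nu_T(X_2)^{-1}\nu_T(X_1) > 0$ and $\nu^{(Q_2)}_{TQ_2}(X_1) < 1$ by the hypothesis $\nu_T(X_1) < \nu_T(X_2)$, so the case $Q \in \{0,1\}$ is excluded and the theorem applies. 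It yields that the joint Brown measure of $(TQ_2)(1-Q_1) = pTp$, computed in $p\Mcal p = p(Q_2\Mcal Q_2)p$ with the renormalised trace, is the renormalised restriction of $\nu^{(Q_2)}_{TQ_2}$ to $X_1^c$:
\[
\nu_{pTp}(Y) = \big(1 - \nu^{(Q_2)}_{TQ_2}(X_1)\big)^{-1}\, \nu^{(Q_2)}_{TQ_2}(X_1^c \cap Y).
\]

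The final step is bookkeeping: substitute $\nu^{(Q_2)}_{TQ_2}(Y) = \nu_T(X_2)^{-1}\nu_T(X_2\cap Y)$ into the last display, use $1 - \nu^{(Q_2)}_{TQ_2}(X_1) = \nu_T(X_2)^{-1}(\nu_T(X_2) - \nu_T(X_1))$, and note $\nu_T\big(X_2 \cap X_1^c \cap Y\big) = \nu_T\big(Y \cap (X_2\setminus X_1)\big)$; the factors of $\nu_T(X_2)^{-1}$ cancel, producing exactly
\[
\nu_{pTp}(Y) = \big(\nu_T(X_2) - \nu_T(X_1)\big)^{-1}\, \nu_T\big(Y \cap (X_2\setminus X_1)\big),
\]
which is the claim. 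The one point requiring a little care — and the main obstacle — is making sure the iterated compression is legitimate: that compressing $TQ_2$ (already an element of $Q_2\Mcal Q_2$) by the projection $1 - Q_1$, which lies in $Q_2\Mcal Q_2$ and is $TQ_2$-invariant, produces the same operators $pT_ip$ and the same trace one obtains by compressing $T$ directly by $p$ in $\Mcal$. This is a straightforward consistency check for corners of corners in a tracial von Neumann algebra, but it should be stated explicitly so that the two applications of Theorem~\ref{thm:PTlargest} chain together correctly.
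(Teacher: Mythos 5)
Your proof is correct and follows essentially the same route as the paper's: both apply Theorem~\ref{thm:PTlargest}\eqref{it:conc} twice (first to $T$ with $X_2$, then inside the corner $Q_2\Mcal Q_2$ with $X_1$), use Theorem~\ref{thm:PtTQ} together with the lattice properties to identify $P^{(Q_2)}(TQ_2:X_1)$ with $P(T:X_1)$, and conclude via the same bookkeeping, noting $\big(P(T:X_2)-P(T:X_1)\big)TP(T:X_2)=pTp$.
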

\begin{proof}
By Theorem~\ref{thm:PTlargest}\eqref{it:conc}, 
the joint Brown $\nu_{TP(T:X_2)}^{(P(T:X_2))}$
of $TP(T:X_2)$
is the renormalized restriction of $\nu_T$ to $X_2$.
By Theorem~\ref{thm:PtTQ} and the lattice properties,
\[
P\big(TP(T:X_2):X_1)=P(T:X_2)\wedge P(T:X_1)=P(T:X_1).
\]
Thus, by Theorem~\ref{thm:PTlargest}\eqref{it:conc},
the joint Brown measure of
\[
(P(T:X_2)-P(T:X_1))TP(T:X_2)
\]
is the renormalized restriction of $\nu_{TP(T:X_2)}^{(P(T:X_2))}$ to $X_1^c$,
which equals the renormalized restriction of $\nu_T$ to $X_2\setminus X_1$.
But
\begin{multline*}
\big(P(T:X_2)-P(T:X_1)\big)TP(T:X_2) \\
=\big(P(T:X_2)-P(T:X_1)\big)T\big(P(T:X_2)-P(T:X_1)\big)=pTp.
\end{multline*}
\end{proof}

Now we prove an analogue of Proposition~\ref{prop:HSPT*} for joint Haagerup--Schultz projections.
\begin{prop}\label{prop:PT*}
Let $E\subseteq\Cpx^I$ be a Borel set.
Then
\[
P(T^*:E)=1-P(T:\Cpx^I\setminus E^*),
\]
where $E^*=\{(\overline{z_i})_{i\in I}\mid (z_i)_{i\in I}\in E\}$.
Furthermore,
the Brown measure $\nu_{T^*}$ satisfies
\begin{equation}\label{eq:nuT*E}
\nu_{T^*}(E)=\nu_T(E^*).
\end{equation}
\end{prop}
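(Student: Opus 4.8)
The plan is to reduce the identity to the single-operator statement, Proposition~\ref{prop:HSPT*}, and then bootstrap it through the hierarchy: coordinate-finite rectangles, then the algebra $\Afr_0$, then all Borel sets. The workhorse throughout will be the countable lattice identities of Theorem~\ref{thm:main.nu.P}\eqref{it:PTlattice}, together with the de Morgan laws $1-\bigvee_j P_j=\bigwedge_j(1-P_j)$ and $1-\bigwedge_j P_j=\bigvee_j(1-P_j)$ for projections in a von Neumann algebra. Once the projection identity is established, the measure identity~\eqref{eq:nuT*E} drops out by taking traces and invoking Theorem~\ref{thm:main.nu.P}\eqref{it:tauPTX}: $\nu_{T^*}(E)=\tau(P(T^*:E))=\tau\big(1-P(T:\Cpx^I\setminus E^*)\big)=1-\nu_T(\Cpx^I\setminus E^*)=\nu_T(E^*)$.

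First I would treat a coordinate-finite rectangle $R=\prod_{i\in I}B_i$ with $B_i=\Cpx$ off a finite set $I_0$. By Definition~\ref{def:PTR} (and Lemma~\ref{lem:PTBsymdif} to absorb the harmless intersection with $\prod_i\sigma(T_i^*)$) we have $P(T^*:R)=\bigwedge_{i\in I_0}P(T_i^*,B_i)$; applying Proposition~\ref{prop:HSPT*} coordinatewise and then de Morgan gives $P(T^*:R)=\bigwedge_{i\in I_0}\big(1-P(T_i,\Cpx\setminus B_i^*)\big)=1-\bigvee_{i\in I_0}P(T_i,\Cpx\setminus B_i^*)$. On the other hand $\Cpx^I\setminus R^*=\bigcup_{i\in I_0}S^{(i)}$, where $S^{(i)}$ is the one-coordinate rectangle having $\Cpx\setminus B_i^*$ in slot $i$ and $\Cpx$ elsewhere; since $P(T:S^{(i)})=P(T_i,\Cpx\setminus B_i^*)$ by Definition~\ref{def:PTR}, the lattice property gives $P(T:\Cpx^I\setminus R^*)=\bigvee_{i\in I_0}P(T_i,\Cpx\setminus B_i^*)$. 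Comparing the two expressions yields $P(T^*:R)=1-P(T:\Cpx^I\setminus R^*)$.

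Next I would run a monotone-class argument. Let $\Bfr$ be the family of Borel sets $E\subseteq\Cpx^I$ for which the identity holds. For a disjoint union $X=\bigcup_{k=1}^n R^{(k)}$ of coordinate-finite rectangles, $P(T^*:X)=\bigvee_k P(T^*:R^{(k)})=1-\bigwedge_k P(T:\Cpx^I\setminus(R^{(k)})^*)=1-P(T:\Cpx^I\setminus X^*)$, using the rectangle case, de Morgan, the lattice property, and $X^*=\bigcup_k(R^{(k)})^*$; hence $\Afr_0\subseteq\Bfr$. If $E_n\uparrow E$ with each $E_n\in\Bfr$, then $P(T^*:E)=\bigvee_n P(T^*:E_n)=1-\bigwedge_n P(T:\Cpx^I\setminus E_n^*)=1-P(T:\Cpx^I\setminus E^*)$, using $E_n^*\uparrow E^*$; the case $E_n\downarrow E$ is identical with $\bigvee$ and $\bigwedge$ interchanged. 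Thus $\Bfr$ is a monotone class containing the algebra $\Afr_0$, so it contains $\sigma(\Afr_0)$, i.e.\ all Borel subsets of $\Cpx^I$. This establishes the projection identity, and~\eqref{eq:nuT*E} follows as above.

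The one point requiring care is that, unlike the single-operator case, $\Bfr$ is \emph{not} visibly a $\lambda$-system: for non-normal $T$ one does not have $P(T^*:E^c)=1-P(T^*:E)$, so no $\pi$-$\lambda$ argument is available and one genuinely needs closure under \emph{both} monotone increasing and monotone decreasing limits --- which, pleasantly, both come for free from Theorem~\ref{thm:main.nu.P}\eqref{it:PTlattice} and de Morgan, with no trace estimate needed. The only other fussy point is the indexing bookkeeping around the coordinates with $B_i=\Cpx$ and the passage between $\Cpx^I$ and $\prod_i\sigma(T_i)$ (resp.\ $\prod_i\sigma(T_i^*)$), which is routine.
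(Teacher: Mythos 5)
Your proof follows the same staging strategy as the paper — coordinate-finite rectangles via Proposition~\ref{prop:HSPT*}, then $\Afr_0$ via finite disjoint unions, then a bootstrap to general Borel sets — and your rectangle and $\Afr_0$ computations are correct, as is your astute observation that $\Bfr$ is not visibly a $\lambda$-system (so a $\pi$-$\lambda$ argument is unavailable and closure under both monotone limits is really needed). However, your final bootstrap via the Monotone Class Theorem only establishes the identity for $E\in\sigma(\Afr_0)$, the $\sigma$-algebra generated by the coordinate-finite rectangles. When the index set $I$ is uncountable (which Proposition~\ref{prop:PT*} permits), $\sigma(\Afr_0)$ is strictly smaller than the Borel $\sigma$-algebra of the product topology on $\Cpx^I$: every member of $\sigma(\Afr_0)$ depends only on countably many coordinates, yet, e.g., singletons are closed and hence Borel. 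Your identification ``$\sigma(\Afr_0)$, i.e.\ all Borel subsets of $\Cpx^I$'' is therefore false in general, and the monotone class argument genuinely cannot reach those extra Borel sets.

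The paper's proof gets around this precisely because it does not rely on set-theoretic approximation by members of $\Afr_0$. Instead it uses Lemma~\ref{lem:updown}, which for an arbitrary Borel $X$ supplies a decreasing sequence $E_n\in\Afr_0^\infty$ with $X\subseteq E_n$ and $P(T^*:X)=\bigwedge_n P(T^*:E_n)$ — an approximation in the sense of the projection-valued set function, not in the sense that $\bigcap_n E_n=X$. This produces $P(T^*:E)=1-P\big(T:Z\setminus\bigcap_n E_n^*\big)$ with $\bigcap_n E_n^*$ possibly strictly larger than $E^*$, and the remaining gap is closed using the \emph{measure} identity~\eqref{eq:nuT*E}: since $\nu_{T^*}(E)=\nu_{T^*}(\bigcap_n E_n)$, the set $\big(\bigcap_n E_n^*\big)\setminus E^*$ is $\nu_T$-null, hence the corresponding joint Haagerup--Schultz projections coincide. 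Note also that the paper establishes~\eqref{eq:nuT*E} directly after the rectangle case for \emph{all} Borel $E$, essentially because $\nu_T$ and $\nu_{T^*}\circ(\cdot)^*$ are both defined by the same outer-measure formula in Proposition~\ref{prop:nuT}, and that formula depends only on the (equal) pre-measures on $\Afr_0$. To repair your argument you would need to insert exactly this two-step final extension (Lemma~\ref{lem:updown} plus the measure-level bridge) in place of the monotone class step, or restrict attention to countable $I$.
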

\begin{proof}
Because of the convention $P(T:X)=P(T:X\cap Z)$
mentioned in Definition~\ref{def:PTBorel},
which translates into $P(T^*:X)=P(T^*:X\cap Z^*)$ for Borel subsets $X\subseteq Z^*=\prod_{i\in I}\sigma(T_i^*)$,
we may without loss of generality assume $E\subset Z^*$ and
then it will suffice to show
\begin{equation}\label{eq:PT*E}
P(T^*:E)=1-P(T:Z\setminus E^*).
\end{equation}
First, consider the case of a coordinate-finite rectangle
$R=\prod_{i\in I}B_i$ for Borel subsets $B_i$ of $\sigma(T_i^*)$.
Then the set
\[
I_0=\{i\in I\mid B_i\ne\sigma(T_i^*)\}
\]
is finite.
Using Proposition~\ref{prop:HSPT*} and Definition~\ref{def:PTR}, we have
\[
P(T^*:R)=\bigwedge_{i\in I_0}P(T_i^*,B_i)
=\bigwedge_{i\in I_0}\big(1-P(T_i,\Cpx\setminus B_i^*)\big)
=1-\bigvee_{i\in I_0}P(T_i,\Cpx\setminus B_i^*).
\]
For each $i\in I_0,$ let $F_i=\prod_{j\in I}D_j,$ where
$$
D_j=\begin{cases}
\sigma(T_j)\setminus B_i^*,&j=i\\
\sigma(T_j),&j\neq i.
\end{cases}
$$
Then $P(T_i,\Cpx\setminus B_i^*)=P(T:F_i)$ and we have
\[
P(T^*:R)=1-\bigvee_{i\in I_0}P(T:F_i)
=1-P\bigg(T:\bigcup_{i\in I_0}F_i\bigg)
=1-P(T:Z\setminus R^*),
\]
since $\bigcup_{i\in I_0}F_i$ is the complement of $R^*$.
Thus, we have
\[
\nu_{T^*}(R)=\tau(P(T^*:R))=1-\tau(P(T:Z\setminus R^*))=1-\nu_T(Z\setminus R^*)=\nu_T(R^*).
\]
Consequently, the equality~\eqref{eq:nuT*E} holds for all Borel subsets $E\subseteq Z^*$.

Let $\Afr_0^*$ denote the algebra of subsets of $Z^*$ generated by the set of all coordinate-finite rectangles in $Z^*$.
Suppose $E=\bigcup_{j=1}^nR_j\in\Afr_0^*$, for disjoint coordinate-finite rectangles $R_1,\ldots,R_n\subseteq Z^*$.
Then, using Definition~\ref{def:PTX}, we have
\begin{align*}
P(T^*:E)&=\bigvee_{j=1}^nP(T^*:R_j)
=\bigvee_{j=1}^n(1-P(T:Z\setminus R_j^*))
=1-\bigwedge_{j=1}^nP(T:Z\setminus R_j^*) \displaybreak[2]\\
&=1-P\bigg(T:\bigcap_{j=1}^n(Z\setminus R_j^*)\bigg)
=1-P\bigg(T:Z\setminus\bigg(\bigcup_{j=1}^nR_j^*\bigg)\bigg) \\
&=1-P(T:Z\setminus E^*).
\end{align*}
This shows that~\eqref{eq:PT*E} holds for all $E\in\Afr_0^*$.

Now suppose $E\in(\Afr_0^\infty)^*$, namely, that
$E=\bigcup_{n=1}^\infty A_n$ where $A_1\subseteq A_2\subseteq\cdots$ and $A_n\in\Afr_0^*$ for all $n$.
Then by the lattice properties (Theorem~\ref{thm:main.nu.P}\eqref{it:PTlattice}) and the case just proved, we have 
\begin{multline*}
P(T^*:E)=\bigvee_{n=1}^\infty P(T^*:A_n)
=\bigvee_{n=1}^\infty\big(1-P(T:Z\setminus A_n^*)\big)
=1-\bigwedge_{n=1}^\infty P(T:Z\setminus A_n^*) \\
=1-P\bigg(T:\bigcap_{n=1}^\infty Z\setminus A_n^*\bigg)
=1-P(T:Z\setminus E).
\end{multline*}

Finally, let $E\subseteq Z$ be any Borel set.
Using Lemma~\ref{lem:updown}, there is a decreasing sequence $E_n$ in $(\Afr_0^\infty)^*$ such that $E\subseteq E_n$ for every $n$ and
\[
P(T^*:E)=\bigwedge_{n=1}^\infty P(T^*:E_n).
\]
This of course implies
\begin{equation}\label{eq:nucapE}
\nu_{T^*}(E)=\nu_{T^*}(\bigcap_{n=1}^\infty E_n).
\end{equation}
By the case just proved and the lattice properties, we have
\[
P(T^*:E)=\bigwedge_{n=1}^\infty\big(1-P(T:Z\setminus E_n^*)\big)
=1-P\bigg(T:Z\setminus\bigcap_{n=1}^\infty E_n^*\bigg).
\]
Since $E\subseteq\bigcap_{n=1}^\infty E_n$, using~\eqref{eq:nucapE} and~\eqref{eq:nuT*E}, we have
\[
P(T:Z\setminus E^*)=P\bigg(T:Z\setminus\bigcap_{n=1}^\infty E_n^*\bigg).
\]
Altogether, we have proved~\eqref{eq:PT*E}, as desired.
\end{proof}

\section{Direct integrals of joint Haagerup--Schultz projections}
\label{sec:Pdi}

In this section, we prove some natural results about joint Brown measures, joint Haagerup--Schultz projections and direct integrals.
See~\cite{D81} for background on direct integrals.

Suppose that $\Mcal$ is a von Neumann algebra acting on a Hilbert space $\HEu$ and that
$\Dc\subseteq\Mcal$ is an abelian von Neumann subalgebra with separable predual.
Identifying $\Dc$ with $L^\infty(Z,\omega)$ for a Borel probability measure $\omega$ on a Polish space $Z$,
we may write $\HEu$ and the relative commutant $\Mcal\cap\Dc'$ of $\Dc$ in $\Mcal$ as direct integrals:
\begin{align*}
\HEu&=\int_Z^\oplus \HEu(\zeta)\,d\omega(\zeta) \\
\Mcal\cap\Dc'&=\int_Z^\oplus \Nc(\zeta)\,d\omega(\zeta),
\end{align*}
where $\Nc(\zeta)\subseteq B(\HEu(\zeta))$ is a finite von Neumann algebra equipped with a normal, faithful, tracial state $\tau_\zeta$.
Thus, $\tau$ is the direct integral
\[
\tau=\int_Z^\oplus\tau_\zeta\,d\omega(\zeta).
\]
Moreover, elements $S$ of $\Mcal\cap\Dc'$ are direct integrals
\begin{equation}\label{eq:Sdi}
S=\int_Z^\oplus S(\zeta)\,d\omega(\zeta)
\end{equation}
with $S(\zeta)\in\Nc(\zeta)$.
In particular, we have
\[
\tau(S)=\int_Z\tau_\zeta(S(\zeta))\,d\omega(\zeta).
\]
Here is a special case of Theorem~5.6 of~\cite{DNSZ16}, which we will use repeatedly.

\begin{thm}\label{thm:5.6}
Let $S\in\Mcal\cap\Dc'$ and write its direct integral representation as in~\eqref{eq:Sdi}.
Then for all Borel sets $B\subseteq\Cpx$, we have
\[
\nu_S(B)=\int_Z\nu_{S(\zeta)}(B)\,d\omega(\zeta).
\]
\end{thm}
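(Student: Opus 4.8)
The plan is to identify both sides through the logarithmic potential. Recall that, by Brown's construction, for $R\in\Mcal$ the function $L_R\colon\lambda\mapsto\tau(\log|R-\lambda|)$ is subharmonic on $\Cpx$ and the Brown measure $\nu_R$ is its Riesz measure, $\nu_R=\tfrac1{2\pi}\Delta L_R$ in the distributional sense; equivalently, $\nu_R$ is the unique Borel probability measure supported in $\{z:|z|\le\|R\|\}$ with $\int_\Cpx\log|z-\lambda|\,d\nu_R(z)=L_R(\lambda)$ for every $\lambda\in\Cpx$, both sides interpreted in $[-\infty,\infty)$, and two compactly supported probability measures with the same logarithmic potential coincide. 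I would apply this both to $S$ in $(\Mcal,\tau)$ and to each fibre $S(\zeta)$ in $(\Nc(\zeta),\tau_\zeta)$, writing $L_\zeta=L_{S(\zeta)}$.

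First I would carry out the measurability bookkeeping needed even to make sense of the right-hand side. Since $\zeta\mapsto S(\zeta)$ is a measurable field, so is $\zeta\mapsto\log\max(|S(\zeta)-\lambda|,\eps)$, hence $\zeta\mapsto\tau_\zeta(\log\max(|S(\zeta)-\lambda|,\eps))$ is $\omega$-measurable for fixed $\lambda,\eps>0$; letting $\eps\downarrow0$ and using monotone convergence inside each $\tau_\zeta$ shows $\zeta\mapsto L_\zeta(\lambda)$ is $\omega$-measurable with values in $[-\infty,\infty)$, and continuity in $\lambda$ upgrades this to joint measurability in $(\zeta,\lambda)$. Because $\int_{|\lambda|\le r}\bigl|\log|z-\lambda|\bigr|\,d\lambda$ is bounded uniformly over $|z|\le\|S\|$, one gets a uniform bound $\int_{|\lambda|\le r}|L_\zeta(\lambda)|\,d\lambda\le C_r<\infty$; combined with $\nu_{S(\zeta)}=\tfrac1{2\pi}\Delta L_\zeta$ and Fubini, this makes $\zeta\mapsto\int\phi\,d\nu_{S(\zeta)}=\tfrac1{2\pi}\int L_\zeta\,\Delta\phi$ $\omega$-measurable for every $\phi\in C_c^\infty(\Cpx)$, whence $\zeta\mapsto\nu_{S(\zeta)}$ is a measurable field of probability measures on the disk $\{|z|\le\|S\|\}$ and $\zeta\mapsto\nu_{S(\zeta)}(B)$ is $\omega$-measurable for every Borel $B\subseteq\Cpx$. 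Then $\mu(B):=\int_Z\nu_{S(\zeta)}(B)\,d\omega(\zeta)$ defines a Borel probability measure on $\Cpx$ supported in $\{|z|\le\|S\|\}$ (countable additivity being monotone convergence).

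It remains to show $\mu=\nu_S$, for which, by the uniqueness recalled above, it suffices to check that $L_S=\int_Z L_\zeta\,d\omega$ pointwise and that this equals $\int_\Cpx\log|z-\lambda|\,d\mu(z)$. For the first equality I would apply $\tau=\int_Z^\oplus\tau_\zeta\,d\omega$ to the bounded self-adjoint element $\log\max(|S-\lambda|,\eps)$ of $\Mcal\cap\Dc'$, whose fibres are $\log\max(|S(\zeta)-\lambda|,\eps)$, and let $\eps\downarrow0$ using monotone convergence on both sides (the $\omega$-integrands being decreasing in $\eps$ and uniformly bounded above). For the second equality I would apply Tonelli separately to $\log^+|z-\lambda|$ (bounded on the disk) and to $\log^-|z-\lambda|\ge0$, obtaining
\[
\int_\Cpx\log|z-\lambda|\,d\nu_S(z)=L_S(\lambda)=\int_Z\int_\Cpx\log|z-\lambda|\,d\nu_{S(\zeta)}(z)\,d\omega(\zeta)=\int_\Cpx\log|z-\lambda|\,d\mu(z)
\]
for all $\lambda\in\Cpx$. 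Hence $\nu_S=\mu$, i.e.\ $\nu_S(B)=\int_Z\nu_{S(\zeta)}(B)\,d\omega(\zeta)$. I expect the only genuine work to lie in establishing the measurability of the field $\zeta\mapsto\nu_{S(\zeta)}$ and in justifying the two monotone-convergence passages and the Tonelli split cleanly in the presence of the one-sided $-\infty$'s that occur at atoms of the Brown measures; the potential-theoretic core is short.
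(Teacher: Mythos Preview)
The paper does not actually supply a proof of this statement: it is quoted verbatim as ``a special case of Theorem~5.6 of~\cite{DNSZ16}'' and used as a black box. So there is no in-paper argument to compare against. Your approach --- identifying the two sides through the Fuglede--Kadison logarithmic potential $L_R(\lambda)=\tau(\log|R-\lambda|)$, using that $\tau=\int_Z^\oplus\tau_\zeta\,d\omega$ threads through the bounded Borel functional calculus applied to $|S-\lambda|$, and then invoking uniqueness of the Riesz measure --- is correct and is indeed the natural route (it is essentially how the result is established in the cited reference).

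One small slip: you write that ``continuity in $\lambda$'' upgrades measurability of $\zeta\mapsto L_\zeta(\lambda)$ to joint measurability. The function $\lambda\mapsto L_\zeta(\lambda)$ is only upper semicontinuous (being subharmonic), not continuous. This does not damage the argument --- upper semicontinuity in $\lambda$ together with measurability in $\zeta$ still gives joint Borel measurability --- but you should state it accurately. Everything else (the monotone-convergence passages via $\log\max(\cdot,\eps)$, the Tonelli split into $\log^\pm$, and the uniqueness of a compactly supported probability measure from its logarithmic potential) is handled correctly.
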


Naturally enough, we will write
\[
\nu_S=\int_Z\nu_{S(\zeta)}\,d\omega(\zeta)
\]
to indicate that the above description holds.

The next result is that a Haagerup--Schultz projection is the direct integral of the corresponding Haagerup--Schultz projections.

\begin{thm}\label{thm:HSdi}
Suppose $T\in\Mcal\cap\Dc'$ and $B$ is a Borel subset of $\Cpx$.
Then
\begin{equation}\label{eq:PTBdi}
P(T,B)=\int_Z^\oplus P(T(\zeta),B)\,d\omega(\zeta),
\end{equation}
where $P(T(\zeta),B)$ denotes the Haagerup--Schultz projection of $T(\zeta)$ in $(\Nc(\zeta),\tau(\zeta))$.
\end{thm}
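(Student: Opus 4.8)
The plan is to exploit the characterization of $P(T,B)$ recalled in \S\ref{subsec:HS}: $P(T,B)$ is the largest $T$-invariant projection $Q$ such that the Brown measure $\nu_{TQ}^{(Q)}$ is concentrated in $B$. First I would check that the right-hand side of~\eqref{eq:PTBdi} makes sense, i.e.\ that $\zeta\mapsto P(T(\zeta),B)$ is a measurable field of projections; this follows from the explicit description~\eqref{eq:ET} of $P(T-\lambda,B_r(\lambda))$ as a spectral-subspace projection together with the fact (Theorem~\ref{thm:HSsot}) that $|T(\zeta)^n|^{1/n}$ converges in s.o.t., applied fibrewise, plus the standard measurability bookkeeping for direct integrals in~\cite{D81}; alternatively one reduces to disks and uses that $P(T,B)$ for general Borel $B$ is built from disks via the countable lattice operations of Theorem~\ref{thm:countable}, and direct integration commutes with countable meets and joins of measurable fields of projections. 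Write $Q=\int_Z^\oplus P(T(\zeta),B)\,d\omega(\zeta)$.

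Next I would verify the two defining properties of $P(T,B)$ for $Q$. Invariance: since $T=\int_Z^\oplus T(\zeta)\,d\omega(\zeta)$ and each $P(T(\zeta),B)$ is $T(\zeta)$-invariant, the relation $(1-P(T(\zeta),B))T(\zeta)P(T(\zeta),B)=0$ holds $\omega$-a.e., hence $(1-Q)TQ=0$, so $Q$ is $T$-invariant. Concentration of the Brown measure: the compression $TQ$, viewed in $Q\Mcal Q$, has direct integral representation $\int_Z^\oplus T(\zeta)P(T(\zeta),B)\,d\omega(\zeta)$ over the fibres $P(T(\zeta),B)\Nc(\zeta)P(T(\zeta),B)$ (after the usual renormalization of the fibre traces), so Theorem~\ref{thm:5.6} gives
\[
\nu_{TQ}^{(Q)}=\frac{1}{\tau(Q)}\int_Z \tau_\zeta\big(P(T(\zeta),B)\big)\,\nu_{T(\zeta)P(T(\zeta),B)}^{(P(T(\zeta),B))}\,d\omega(\zeta),
\]
and since each $\nu_{T(\zeta)P(T(\zeta),B)}^{(P(T(\zeta),B))}$ is concentrated in $B$, so is $\nu_{TQ}^{(Q)}$. (One must treat the $\omega$-null set of $\zeta$ where $P(T(\zeta),B)\in\{0,1\}$ separately, but Proposition~\ref{prop:nuTQ+} and the convention for the trivial cases handle this.) By the maximality characterization this already yields $Q\le P(T,B)$.

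For the reverse inequality I would similarly compress by $R:=P(T,B)$ and compare traces. Applying the above to $T^c:=(1-R)T$ in $(1-R)\Mcal(1-R)$, whose fibres are $(1-P(T(\zeta),B))\Nc(\zeta)(1-P(T(\zeta),B))$ up to the null set, one finds that the fibrewise Haagerup--Schultz projection of $T(\zeta)$ for $B$ inside the fibre of $1-R$ compresses to $P^{(1-R)}((1-R)T,B)$; by Corollary~\ref{cor:P(1-Q)TB} this projection is $(Q\vee R)\wedge(1-R)=0$ precisely when $Q\le R$, which we want, so instead I argue directly with traces: $\tau(Q)=\int_Z\tau_\zeta(P(T(\zeta),B))\,d\omega(\zeta)=\int_Z\nu_{T(\zeta)}(B)\,d\omega(\zeta)=\nu_T(B)=\tau(P(T,B))$, using $\tau(P(S,B))=\nu_S(B)$ fibrewise and Theorem~\ref{thm:5.6}. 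Combined with $Q\le P(T,B)$, equality of traces forces $Q=P(T,B)$, which is~\eqref{eq:PTBdi}.

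The main obstacle I anticipate is purely the measurable-field bookkeeping: showing that $\zeta\mapsto P(T(\zeta),B)$ is a genuine measurable field and that the various direct-integral decompositions (of the compressions $TQ$, $(1-R)T$, and their fibre traces) are the ones claimed. The Brown-measure input is handed to us cleanly by Theorem~\ref{thm:5.6}; the analytic content beyond that is the s.o.t.-convergence of $|T(\zeta)^n|^{1/n}$ from Theorem~\ref{thm:HSsot} being compatible with direct integration, which is where I would spend the most care, reducing to closed disks via~\eqref{eq:ET} and then extending to arbitrary Borel $B$ by the lattice identities of Theorem~\ref{thm:countable}.
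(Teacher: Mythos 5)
Your proposal is correct and takes essentially the same route as the paper: define $Q$ as the direct integral of the fibrewise Haagerup--Schultz projections, use Theorem~\ref{thm:5.6} (after reweighting to the measure $\omega_Q$) to show $\nu_{TQ}^{(Q)}$ is concentrated in $B$ and hence $Q\le P(T,B)$ by the maximality characterization, then apply Theorem~\ref{thm:5.6} once more to equate $\tau(Q)=\nu_T(B)=\tau(P(T,B))$. The paper omits the explicit measurable-field discussion and goes directly to the trace comparison without the detour through Corollary~\ref{cor:P(1-Q)TB} (which you correctly recognize as circular and abandon), but the substance is identical.
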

\begin{proof}
Let $Q$ be the operator on the right hand side of~\eqref{eq:PTBdi}.
It is clearly a projection
and we have
$$\tau(Q)=\int_Z\tau_\zeta(P(T(\zeta),B)\,d\omega(\zeta).$$
Let $Q_\zeta=P(T(\zeta),B)$.
Assuming $Q\ne0$, let $\omega_Q$ be the probability measure on $Z$ whose Radon--Nikodym
derivative with respect to $\omega$ is 
\[
\frac{d\omega_Q}{d\omega}(\zeta)=\frac{\tau_\zeta(Q_\zeta)}{\tau(Q)}.
\]
Then the von Neumann algebra $Q(\Mcal\cap\Dc') Q$,
the renormalized trace $\tau(Q)^{-1}\tau\restrict_{Q(\Mcal\cap\Dc')Q}$ and the operator $TQ=QTQ$ can be written as the direct integrals
\begin{align*}
Q(\Mcal\cap\Dc')Q&=\int_Z^\oplus Q_\zeta\Nc(\zeta)Q_\zeta\,d\omega_Q(\zeta) \\
\tau(Q)^{-1}\tau\restrict_{Q(\Mcal\cap\Dc')Q}
 &=\int_Z^\oplus \tau_\zeta(Q_\zeta)^{-1}\tau_\zeta\restrict_{Q_\zeta\Nc(\zeta)Q_\zeta}\,d\omega_Q(\zeta) \\
TQ&=\int_Z^\oplus T(\zeta)Q_\zeta\,d\omega_Q(\zeta).
\end{align*}
Thus, by Theorem~\ref{thm:5.6}, if $\nu_{T(\zeta)Q_\zeta}^{(Q_\zeta)}$ denotes the Brown measure of $T(\zeta)Q_\zeta=Q_\zeta T(\zeta)Q_\zeta$
with respect to the normalized trace $\tau_\zeta(Q_\zeta)^{-1}\tau_\zeta\restrict_{Q_\zeta\Nc(\zeta)Q_\zeta}$, then
the Brown measure $\nu_{TQ}^{(Q)}$ of $TQ$ with respect to the trace $\tau(Q)^{-1}\tau\restrict_{Q(\Mcal\cap\Dc')Q}$
is given by, for every Borel subset $X$ of $\Cpx$,
\[
\nu_{TQ}^{(Q)}(X)=\int_Z\nu_{T(\zeta)Q_\zeta}^{(Q_\zeta)}(X)\,d\omega_Q(\zeta).
\]
This implies, in particular, that $\nu_{TQ}^{(Q)}$ is concentrated in $B$, since each $\nu_{T(\zeta)Q_\zeta}^{(Q_\zeta)}(X)$ is concentrated in $B.$ 

Using the characterisation of Haagerup--Schultz projections described at the start of Section~\ref{subsec:HS},
we obtain $Q\leq P(T,B)$.
On the other hand, we have
$$\tau(Q)=\int_Z\tau_\zeta(P(T(\zeta),B)\,d\omega(\zeta)=\int_Z\nu_{T(\zeta)}(B)\,d\omega(\zeta)=\nu_T(B)=\tau(P(T,B)).$$
Hence, $Q=P(T,B).$
\end{proof}

Now we state and, for completeness, prove a couple of basic lemmas about direct integrals of projections.
\begin{lemma}\label{lem:P<=Qdi}
Suppose $P,Q\in B(\HEu)\cap\Dc'$ are projections and write
$$P=\int_Z^\oplus P(\zeta)\,d\omega(\zeta),\quad Q=\int_Z^\oplus Q(\zeta)\,d\omega(\zeta)$$
for projections $P(\zeta),Q(\zeta)\in B(\HEu(\zeta)).$ Then $P\le Q$ if and only if for $\omega$-almost every $\zeta\in Z$, we have $P(\zeta)\le Q(\zeta).$
\end{lemma}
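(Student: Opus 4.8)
The plan is to prove Lemma~\ref{lem:P<=Qdi} by reducing the statement $P\le Q$ to a pointwise statement via the direct integral decomposition of a suitable element, and then invoking the fact that an operator represented by a measurable field is positive if and only if almost every fibre is positive.

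First I would recall the standard fact about direct integrals: if $A=\int_Z^\oplus A(\zeta)\,d\omega(\zeta)$ is a decomposable self-adjoint operator, then $A\ge 0$ if and only if $A(\zeta)\ge 0$ for $\omega$-almost every $\zeta\in Z$ (see~\cite{D81}). I would then observe that the projections $P$ and $Q$ are decomposable since they lie in $\Dc'$, and that $P\le Q$ is equivalent to $Q-P\ge 0$, with $Q-P=\int_Z^\oplus\big(Q(\zeta)-P(\zeta)\big)\,d\omega(\zeta)$. Applying the positivity criterion, $P\le Q$ holds if and only if $Q(\zeta)-P(\zeta)\ge 0$ for $\omega$-almost every $\zeta$, which is precisely the condition $P(\zeta)\le Q(\zeta)$ a.e. One mild point to address is measurability: the field $\zeta\mapsto Q(\zeta)-P(\zeta)$ is a measurable field of self-adjoint operators by hypothesis, so the criterion applies directly, and no separate argument is needed.

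Alternatively, and perhaps more self-containedly, I would argue with traces when $\Mcal$ is finite: if $P\le Q$ then $P\wedge(1-Q)=0$, and one can test this using $\tau$ together with the fact that $\tau=\int_Z^\oplus\tau_\zeta\,d\omega(\zeta)$; but since the ambient algebra here is only $B(\HEu)$ and not assumed finite, the positivity-of-fibres approach is cleaner and is what I would use. For the converse direction, if $P(\zeta)\le Q(\zeta)$ a.e., then $Q(\zeta)-P(\zeta)\ge 0$ a.e., hence the direct integral $Q-P$ is a positive operator, giving $P\le Q$.

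The main obstacle, such as it is, will be citing the correct form of the pointwise positivity criterion for decomposable operators and ensuring the measurable-field hypotheses are in place; this is entirely standard direct-integral theory, so I expect the proof to be short. No genuinely hard step is anticipated.
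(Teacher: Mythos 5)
Your proof is correct. The paper takes a slightly different tack at the reduction step: it characterizes $P\le Q$ by the equation $PQ=P$ rather than by positivity of $Q-P$, writes $P-PQ=\int_Z^\oplus\big(P(\zeta)-P(\zeta)Q(\zeta)\big)\,d\omega(\zeta)$, and then uses only the elementary fact that a decomposable operator vanishes if and only if almost every fibre vanishes. This avoids appealing to the (also standard, but slightly deeper) criterion that a self-adjoint decomposable operator is positive if and only if almost every fibre is positive, which is what your argument requires. Both routes are short and valid; the paper's is marginally more self-contained since the ``zero iff zero a.e.'' fact is immediate from the definition of the direct integral norm, whereas the pointwise positivity criterion is a separate (if well-known) lemma in direct integral theory. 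Your instinct to avoid the trace-based alternative is also correct, since the lemma is stated for $B(\HEu)\cap\Dc'$ without any finiteness hypothesis.
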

\begin{proof}
We have $P\le Q$ if and only if $PQ=P.$ But
$$P-PQ=\int_Z^\oplus \big(P(\zeta)-P(\zeta)Q(\zeta)\big)\,d\omega(\zeta)$$
and this is zero if and only if $P(\zeta)-P(\zeta)Q(\zeta)=0$ for $\omega$-almost every $\zeta\in Z.$
\end{proof}

\begin{lemma}\label{lem:wedgeveedi}
Let $J$ be a countable set and suppose for each $j\in J$, $P_j\in\Dc'$ is a projection.
Then writing
\[
P_j=\int_Z^\oplus P_j(\zeta)\,d\omega(\zeta),
\]
we have
\begin{align}
\bigwedge_{j\in J}P_j&=\int_Z^\oplus\left(\bigwedge_{j\in J}P_j(\zeta)\right)\,d\omega(\zeta) \label{eq:wedgePidi} \\
\bigvee_{j\in J}P_j&=\int_Z^\oplus\left(\bigvee_{j\in J}P_j(\zeta)\right)\,d\omega(\zeta). \label{eq:veePidi}
\end{align}
\end{lemma}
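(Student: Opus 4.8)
The plan is to prove \eqref{eq:wedgePidi} and then deduce \eqref{eq:veePidi} by complementation, since $1=\int_Z^\oplus 1\,d\omega(\zeta)$, the map $P\mapsto 1-P$ commutes with direct integration, and $\bigvee_{j\in J}P_j=1-\bigwedge_{j\in J}(1-P_j)$ holds both globally in $B(\HEu)$ and fiberwise; so \eqref{eq:veePidi} follows from \eqref{eq:wedgePidi} applied to the projections $1-P_j$.

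For \eqref{eq:wedgePidi} I would first dispose of binary meets. Using the classical identity $P\wedge Q=\text{s.o.t.-}\lim_{n\to\infty}(PQP)^n$ for projections $P,Q$ on a Hilbert space, applied fiberwise, the field $\zeta\mapsto P(\zeta)\wedge Q(\zeta)$ is a pointwise strong-operator limit of the measurable, uniformly bounded fields $\zeta\mapsto(P(\zeta)Q(\zeta)P(\zeta))^n$, hence is a measurable field of projections; set $M=\int_Z^\oplus(P(\zeta)\wedge Q(\zeta))\,d\omega(\zeta)$. Then Lemma~\ref{lem:P<=Qdi} gives $M\le P$ and $M\le Q$, so $M\le P\wedge Q$. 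Conversely $P\wedge Q$ (the meet in $B(\HEu)$) again lies in the von Neumann algebra $\Dc'$ and is therefore decomposable, say $P\wedge Q=\int_Z^\oplus K(\zeta)\,d\omega(\zeta)$; applying Lemma~\ref{lem:P<=Qdi} to $P\wedge Q\le P$ and to $P\wedge Q\le Q$ yields $K(\zeta)\le P(\zeta)\wedge Q(\zeta)$ for $\omega$-a.e.\ $\zeta$, so $P\wedge Q\le M$. Thus $P\wedge Q=\int_Z^\oplus(P(\zeta)\wedge Q(\zeta))\,d\omega(\zeta)$, and an immediate induction extends this to every finite meet.

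Then I would pass to countable $J=\{j_1,j_2,\ldots\}$: put $N_n=\bigwedge_{k=1}^nP_{j_k}$, so by the finite case $N_n=\int_Z^\oplus N_n(\zeta)\,d\omega(\zeta)$ with $N_n(\zeta)=\bigwedge_{k=1}^nP_{j_k}(\zeta)$ a measurable field. The $N_n$ decrease, with $\bigwedge_{j\in J}P_j=\bigwedge_n N_n=\text{s.o.t.-}\lim_n N_n$ by Lemma~\ref{lem:meetDirected}, and fiberwise the $N_n(\zeta)$ decrease to $N(\zeta):=\bigwedge_{j\in J}P_j(\zeta)$ in the strong operator topology; hence $\zeta\mapsto N(\zeta)$, as a pointwise strong limit of measurable fields, is a measurable field of projections and $N:=\int_Z^\oplus N(\zeta)\,d\omega(\zeta)$ is a projection in $\Dc'$. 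Since $N(\zeta)\le P_j(\zeta)$ for every $\zeta$ and every $j$, Lemma~\ref{lem:P<=Qdi} gives $N\le P_j$ for all $j$, so $N\le\bigwedge_{j\in J}P_j$. For the reverse inequality, write $\bigwedge_{j\in J}P_j=\int_Z^\oplus R(\zeta)\,d\omega(\zeta)$; for each fixed $j$, Lemma~\ref{lem:P<=Qdi} gives $R(\zeta)\le P_j(\zeta)$ for $\omega$-a.e.\ $\zeta$, and since $J$ is countable this holds simultaneously for all $j$ off a single $\omega$-null set, so $R(\zeta)\le N(\zeta)$ for $\omega$-a.e.\ $\zeta$ and hence $\bigwedge_{j\in J}P_j\le N$. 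This proves \eqref{eq:wedgePidi}, and \eqref{eq:veePidi} follows as in the first paragraph.

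The only step that is not purely formal is the measurability of the fiberwise meet field $\zeta\mapsto\bigwedge_{j\in J}P_j(\zeta)$, needed even to make sense of the right-hand sides; that is exactly where the representation $P\wedge Q=\text{s.o.t.-}\lim_n(PQP)^n$ and the reduction to a decreasing sequence of finite meets do their work. Everything else is a routine combination of Lemma~\ref{lem:P<=Qdi} with the countability of $J$.
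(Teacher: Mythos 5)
Your proof is correct and follows essentially the same route as the paper: both establish~\eqref{eq:wedgePidi} by a two-sided application of Lemma~\ref{lem:P<=Qdi}, using the countability of $J$ to combine the resulting $\omega$-null sets into one, and then deduce~\eqref{eq:veePidi} by complementation. Your additional preliminary work --- establishing measurability of the fiberwise meet field via $P\wedge Q=\text{s.o.t.-}\lim_n(PQP)^n$ and then passing from finite to countable meets with Lemma~\ref{lem:meetDirected} --- addresses a well-definedness point that the paper leaves implicit, but it does not change the essential argument.
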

\begin{proof}
We will prove~\eqref{eq:wedgePidi} and then~\eqref{eq:veePidi} will follow by using
\[
\bigvee_{j\in J}P_j=1-\bigwedge_{j\in J}(1-P_j).
\]

Let $Q$ be the right hand side of~\eqref{eq:wedgePidi}.
Using Lemma~\ref{lem:P<=Qdi}, we clearly have $P_j\ge Q$ for all $j$, so we get $\ge$ in~\eqref{eq:wedgePidi}.
On the other hand, since $\bigwedge_{j\in J}P_j$ commutes with $\Dc$, it has a direct integral decomposition
\[
\bigwedge_{j\in J}P_j=\int_Z^\oplus F(\zeta)\,d\omega(\zeta),
\]
for some projections $F(\zeta)\in B(\HEu(\zeta))$.
Thus, for every $j_0\in J$, we have
\[
\int_Z^\oplus P_{j_0}(\zeta)\,d\omega(\zeta)=P_{j_0}\ge
\bigwedge_{j\in J}P_j=\int_Z^\oplus F(\zeta)\,d\omega(\zeta)
\ge\int_Z^\oplus\left(\bigwedge_{j\in J}P_j(\zeta)\right)\,d\omega(\zeta).
\]
Thus, using Lemma~\ref{lem:P<=Qdi}, we find an $\omega$-null set $N_{j_0}$ such that for all $\zeta\in Z\setminus N_{j_0}$, we have
\[
P_{j_0}(\zeta)\ge F(\zeta)\ge\bigwedge_{j\in J}P_j(\zeta).
\]
Now letting $N=\bigcup_{j\in J}N_j$, we get $F(\zeta)=\bigwedge_{j\in J}P_j(\zeta)$ for all $\zeta\in Z\setminus N$.
Since $J$ is countable, also $N$ is a null set.
\end{proof}

\begin{thm}\label{thm:nuTdi}
Suppose $T=(T_i)_{i\in I}$ is a family of commuting elements $T_i\in\Mcal\cap\Dc'$,
and suppose that we have direct integral decompositions
\[
T_i=\int_Z^\oplus T_i(\zeta)\,d\omega(\zeta)
\]
so that $T(\zeta):=(T_i(\zeta))_{i\in I}$ is a family of commuting elements of $\Nc(\zeta)$ for $\omega$-almost every $\zeta$.
(Note that if $I$ is countable, then the existence of such decompositions is guaranteed.)
Then for every Borel set $X\subseteq\Cpx^I$, we have
\begin{align}
\nu_T(X)&=\int_Z\nu_{T(\zeta)}(X)\,d\omega(\zeta), \label{eq:nuTdi} \\
P(T:X)&=\int_Z^\oplus P(T(\zeta),X)\,d\omega(\zeta). \label{eq:PTdi}
\end{align}
\end{thm}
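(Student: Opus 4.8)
The plan is to prove both identities in stages, starting from coordinate-finite rectangles and enlarging the class of sets using the compatibility of direct integration with countable lattice operations (Lemma~\ref{lem:wedgeveedi}) together with the trace decomposition $\tau=\int_Z^\oplus\tau_\zeta\,d\omega(\zeta)$.

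First I would treat a coordinate-finite rectangle $R=\prod_{i\in I}B_i$, where $B_i=\sigma(T_i)$ for all $i$ outside a finite set $I_0$. Since each $T_i$ lies in $\Mcal\cap\Dc'$, the Haagerup--Schultz projection $P(T_i,B_i)$ lies in $W^*(T_i)\subseteq\Dc'$, so by Theorem~\ref{thm:HSdi} it decomposes as $\int_Z^\oplus P(T_i(\zeta),B_i)\,d\omega(\zeta)$. Invoking Definition~\ref{def:PTR} and Lemma~\ref{lem:wedgeveedi} with the finite index set $I_0$,
\[
P(T:R)=\bigwedge_{i\in I_0}P(T_i,B_i)=\int_Z^\oplus\Big(\bigwedge_{i\in I_0}P(T_i(\zeta),B_i)\Big)\,d\omega(\zeta)=\int_Z^\oplus P(T(\zeta),R)\,d\omega(\zeta),
\]
the last equality being Definition~\ref{def:PTR} applied in $\Nc(\zeta)$, valid for $\omega$-a.e.\ $\zeta$ since $T(\zeta)$ is commuting a.e. Taking traces gives $\nu_T(R)=\int_Z\nu_{T(\zeta)}(R)\,d\omega(\zeta)$, and in particular $\zeta\mapsto\nu_{T(\zeta)}(R)$ is measurable.

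Next I would run the same argument up the hierarchy of sets. For $X\in\Afr_0$, writing $X$ as a finite disjoint union of coordinate-finite rectangles and combining Definition~\ref{def:PTX} with Lemma~\ref{lem:wedgeveedi} yields both~\eqref{eq:PTdi} and~\eqref{eq:nuTdi} for $X$; for $X\in\Afr_0^\infty$, writing $X=\bigcup_{j\ge1}A_j$ with $A_j\in\Afr_0$ and using Lemma~\ref{lem:PTAcup} (together with Theorem~\ref{thm:main.nu.P}\eqref{it:PTlattice} applied fiberwise in $\Nc(\zeta)$) and Lemma~\ref{lem:wedgeveedi} for the countable join, both identities hold there as well. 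At this point $\zeta\mapsto\nu_{T(\zeta)}(X)$ is measurable for every Borel $X$ by a monotone-class argument from the rectangle case, so $X\mapsto\int_Z\nu_{T(\zeta)}(X)\,d\omega(\zeta)$ is a well-defined Borel probability measure on $\Cpx^I$ agreeing with $\nu_T$ on the algebra $\Afr_0$; by the uniqueness clause of Carath\'eodory's extension theorem, it agrees with $\nu_T$ on all Borel sets, which is~\eqref{eq:nuTdi}.

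Finally, for~\eqref{eq:PTdi} with a general Borel set $X$, I would use the up-down lemma: by Lemma~\ref{lem:updown} there is a decreasing sequence $E_1\supseteq E_2\supseteq\cdots$ in $\Afr_0^\infty$ with $X\subseteq E_n$ for all $n$ and $P(T:X)=\bigwedge_{n}P(T:E_n)$. Since~\eqref{eq:PTdi} is already known on $\Afr_0^\infty$, Lemma~\ref{lem:wedgeveedi} gives $P(T:X)=\int_Z^\oplus F(\zeta)\,d\omega(\zeta)$ with $F(\zeta)=\bigwedge_{n}P(T(\zeta),E_n)$, a measurable field of projections. Fiberwise, $X\subseteq E_n$ forces $P(T(\zeta),X)\le P(T(\zeta),E_n)$ for every $n$ (monotonicity, from Theorem~\ref{thm:main.nu.P}\eqref{it:PTlattice} applied in $\Nc(\zeta)$), so $P(T(\zeta),X)\le F(\zeta)$ for a.e.\ $\zeta$. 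On the other hand, using Theorem~\ref{thm:main.nu.P}\eqref{it:tauPTX} fiberwise and then~\eqref{eq:nuTdi},
\[
\int_Z\tau_\zeta\big(P(T(\zeta),X)\big)\,d\omega(\zeta)=\int_Z\nu_{T(\zeta)}(X)\,d\omega(\zeta)=\nu_T(X)=\tau(P(T:X))=\int_Z\tau_\zeta\big(F(\zeta)\big)\,d\omega(\zeta),
\]
so $P(T(\zeta),X)\le F(\zeta)$ together with equality of the integrals of the traces and faithfulness of $\tau_\zeta$ forces $P(T(\zeta),X)=F(\zeta)$ for $\omega$-a.e.\ $\zeta$; hence $P(T:X)=\int_Z^\oplus P(T(\zeta),X)\,d\omega(\zeta)$. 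The main obstacle, as at several earlier points of the paper, is precisely this final measurable-selection step: one cannot simply assume that $\zeta\mapsto P(T(\zeta),X)$ is a measurable field, but must recover it from the squeeze $P(T(\zeta),X)=F(\zeta)$ a.e., so the order in which the pieces are assembled (the measure identity first, the projection identity afterwards) is essential.
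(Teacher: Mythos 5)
Your proof follows essentially the same route as the paper's: establish~\eqref{eq:PTdi} on coordinate-finite rectangles using Theorem~\ref{thm:HSdi} and Lemma~\ref{lem:wedgeveedi}, take traces to get~\eqref{eq:nuTdi} there, extend~\eqref{eq:nuTdi} to all Borel sets, extend~\eqref{eq:PTdi} to $\Afr_0^\infty$ via countable joins, and then handle a general Borel $X$ by squeezing $P(T(\zeta):X)$ between the fiberwise meet $F(\zeta)$ from Lemma~\ref{lem:updown} and the trace integral identity, recovering equality a.e.\ from faithfulness. You correctly identify that the order (measure identity first, projection identity second) is forced by the measurable-selection issue, which is exactly how the paper organizes the argument. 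One small difference: for the passage from agreement on $\Afr_0$ to agreement on all Borel sets in~\eqref{eq:nuTdi}, you appeal to the uniqueness clause of Carath\'eodory together with a monotone-class argument, which settles the $\sigma$-algebra generated by $\Afr_0$; when $I$ is uncountable (which the theorem allows) this is a priori smaller than the Borel $\sigma$-algebra of the compact, non-metrizable product, and the paper closes this gap by invoking regularity of both measures. It is worth stating that extra step explicitly rather than leaving it to Carath\'eodory alone.
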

\begin{proof}
Let us first prove~\eqref{eq:PTdi} when $X=R$ is a coordinate-finite rectangle.
Then $R=\prod_{i\in I}B_i$ for Borel sets $B_i\subseteq\Cpx$ with $B_i=\Cpx$ for all $i\in I\setminus I_0$
for some finite set $I_0\subseteq I$.
Now, using Theorem~\ref{thm:HSdi} and Lemma~\ref{lem:wedgeveedi}, we have
\begin{multline*}
P(T:R)=\bigwedge_{i\in I_o}P(T_i,B_i)
=\bigwedge_{i\in I_0}\int_Z^\oplus P(T_i(\zeta),B_i)\,d\omega(\zeta) \\
=\int_Z^\oplus\left(\bigwedge_{i\in I_0}P(T_i(\zeta),B_i)\right)\,d\omega(\zeta)
=\int_Z^\oplus P(T(\zeta):R)\,d\omega(\zeta).
\end{multline*}
This proves~\eqref{eq:PTdi} when $X=R$ is a coordinate-finite rectangle.

Taking $\tau$ of both sides, we get
\[
\nu_T(R)=\tau(P(T:R))=\int_Z\tau_\zeta\big(P(T(\zeta),R)\big)\,d\omega(\zeta)
=\int_Z\nu_{T(\zeta)}(R)\,d\omega(\zeta),
\]
which proves~\eqref{eq:nuTdi} when $X=R$ is a coordinate-finite rectangle.
Now let $\mu$ denote the measure such that $\mu(X)$ is given by the right hand side of~\eqref{eq:nuTdi}.
We have just shown that $\mu$ and $\nu_T$ agree when evaluated on coordinate-finite rectangles.
Since these generate the Borel $\sigma$-algebra of $\Cpx^I$ and since both $\mu$ and $\nu_T$ are regular,
we get $\nu_T=\mu$, namely, the equality~\eqref{eq:nuTdi} holds for all Borel sets $X$.

It remains to show that the equality~\eqref{eq:PTdi} holds for all Borel sets $X$.
Recall that $\Afr_0^\infty$ denotes that set of all countable unions of coordinate finite rectangles in $\Cpx^I$.
Suppose $E\in\Afr_0^\infty$ and let $R_1,R_2,\ldots$ be coordinate-finite rectangles such that $E=\bigcup_{j=1}^\infty R_j$.
Using Lemma~\ref{lem:wedgeveedi} and the lattice properties (Theorem~\ref{thm:main.nu.P}\eqref{it:PTlattice}), we get
\begin{multline*}
P(T:E)=\bigvee_{j=1}^\infty P(T:R_j)
=\bigvee_{j=1}^\infty\int_Z^\oplus P(T(\zeta):R_j)\,d\omega(\zeta) \\
=\int_Z^\oplus\left(\bigvee_{j=1}^\infty P(T(\zeta):R_j)\right)\,d\omega(\zeta)
=\int_Z^\oplus P(T(\zeta):E)\,d\omega(\zeta).
\end{multline*}
Thus, the equality~\eqref{eq:PTdi} holds when $X\in\Afr_0^\infty$.

Now let $X$ be an arbitrary Borel subset of $\Cpx^I$.
Using Lemma~\ref{lem:updown}, we find a sequence $(E_n)_{n=1}^\infty$ in $\Afr_0^\infty$ such that
\[
E_1\supseteq E_2\supseteq\cdots\supseteq X
\]
and
$$P(T:X)=\bigwedge_{n=1}^\infty P(T:E_n).$$
Thus, using Lemma~\ref{lem:wedgeveedi} again, we have
\begin{equation}\label{eq:PTXEn}
P(T:X)=\int_Z^\oplus\left(\bigwedge_{n=1}^\infty P(T(\zeta):E_n)\right)\,d\omega(\zeta).
\end{equation}
Clearly, for all $\zeta\in Z$ we have
$$\bigwedge_{n=1}^\infty P(T(\zeta):E_n)\ge P(T(\zeta):X).$$
Let
\[
d(\zeta)=\tau_\zeta\left(\left(\bigwedge_{n=1}^\infty P(T(\zeta):E_n)\right)-P(T(\zeta):X)\right).
\]
Then $d(\zeta)\ge0$ for all $\zeta$.
But
\begin{multline*}
\int_Zd(\zeta)\,d\omega(\zeta)
=\tau\left(\int_Z^\oplus\left(\bigwedge_{n=1}^\infty P(T(\zeta):E_n)\right)\right)\,d\omega(\zeta)-\int_Z\tau_\zeta(P(T(\zeta):X))\,d\omega(\zeta) \\
=\tau(P(T:X))-\int_Z\nu_{T(\zeta)}(X)\,d\omega(\zeta)
=\nu_T(X)-\nu_T(X)=0,
\end{multline*}
where we have used~\eqref{eq:nuTdi} in the penultimate equality.
We conclude that $d(\zeta)=0$ for $\omega$-almost every $\zeta\in Z.$ Thus, we have
\[
\bigwedge_{n=1}^\infty P(T(\zeta):E_n)=P(T(\zeta):X)
\]
for almost every $\zeta$, and from~\eqref{eq:PTXEn} we get the desired equality~\eqref{eq:PTdi}.
\end{proof}

\section{Joint spectra of commuting operators}
\label{sec:jointspec}

In this section, we we show that the joint Brown measure $\nu_T$
of a finite commuting family $T=(T_1,\ldots,T_n)$ of elements of $\Mcal$ is a joint spectral distribution measure.
Namely, we show that for a finite commuting tuple $T$,
the support of $\nu_T$ is contained in several versions of the joint spectrum of $T$.

The classical definition of the joint spectrum
for a finite tuple of elements $a=(a_1,\ldots,a_n)$ in a commutative, unital Banach algebra $\Afr$
is the set $\sigma_\Afr(a)$ of all $\lambda=(\lambda_1,\ldots,\lambda_n)\in\Cpx^n$ such that the (algebraic) ideal generated by
the set $\{a_1-\lambda_1,\ldots,a_n-\lambda_n\}$ is not all of $\Afr$.
This coincides with the set of all values $(\phi(a_1),\ldots,\phi(a_n))\in\Cpx^n$ where $\phi$ ranges over the set of non-zero characters of $\Afr$.
This notion of spectrum may, of course, depend on the algebra $\Afr$;
the spectrum $\sigma_\Afr(a)$ decreases when $\Afr$ increases.

Joseph Taylor~\cite{T70} defined a joint spectrum $\Sp(T)$ for commuting bounded operators $T=(T_1,\ldots,T_n)$
on a Banach space $X$ and, in~\cite{T70b}, proved that a holomorphic functional calculus is valid for this notion of spectrum.
Taylor's joint spectrum is a subset of the joint spectrum $\sigma_\Afr(T)$ considered in any commutative unital Banach subalgebra $\Afr$
of $B(X)$ containing $T_1,\ldots, T_n$, so his functional calculus is richer than the
functional calculus due to Arens~\cite{Ar61}.
(See the exposition found after the proof of Lemma~\ref{lem:holofX} and further references mentioned there.)

Robin Harte~\cite{Ha72} defined the following notion of joint spectrum
for an $n$-tuple of (not necessarily commuting) elements $A=(A_1,\ldots,A_n)$ in a unital Banach algebra $\Afr$.
\renewcommand{\labelitemi}{$\bullet$}
\begin{itemize}
\item The left joint spectrum $\spec_\Afr^\ell(A)$ is the set of all $(\lambda_1,\ldots,\lambda_n)\in\Cpx^n$ such that the (algebraic)
left ideal of $\Afr$ generated by 
the set $\{A_1-\lambda_1,\ldots,A_n-\lambda_n\}$ is not all of $\Afr$.
\item The right joint spectrum $\spec_\Afr^r(A)$ is the set of all $(\lambda_1,\ldots,\lambda_n)\in\Cpx^n$ such that the (algebraic)
right ideal of $\Afr$ generated by 
$\{A_1-\lambda_1,\ldots,A_n-\lambda_n\}$ is not all of $\Afr$.
\item The joint spectrum $\spec_\Afr(A)$ is $\spec_\Afr^\ell(A)\cup\spec_\Afr^r(A)$.
\end{itemize}
The Harte joint spectrum $\spec_\Afr(A)$ may be empty, but it is always a compact subset of the product of the usual spectra of the $A_i$.

The following observations are standard, but for convenience we indicate some proofs.
\begin{prop}\label{prop:char}
Let $n\ge1$ and let $A=(A_1,\ldots,A_n)$ be any $n$-tuple in a unital Banach algebra $\Afr$.
\begin{enumerate}[(i)]
\item\label{it:a*} If $\Afr$ is a Banach $*$-algebra, then letting $A^*=(A_1^*,\ldots,A_n^*)$, we have
\[
\spec_\Afr^\ell(A^*)=\big(\spec^r_\Afr(A)\big)^*,
% \quad\spec_\Afr^r(A^*)=\big(\spec^\ell_\Afr(A)\big)^*,
\qquad\spec_\Afr(A^*)=\big(\spec_\Afr(A)\big)^*,
\]
where $(\spec^r_\Afr(A))^*$, is the set obtained from $\spec^r_\Afr(A)$ by taking the complex conjugate in every coordinate, etc.
\item\label{it:a*a}
If $\Afr$ is a C$^*$-algebra, then $(\lambda_1,\ldots,\lambda_n)\in\spec_\Afr^\ell(A)$ if and only if the positive element
\begin{equation}\label{eq:Al*Al}
(A_1-\lambda_1)^*(A_1-\lambda_1)+\cdots+(A_n-\lambda_n)^*(A_n-\lambda_n)
\end{equation}
is not invertible in $\Afr$.
\item\label{it:aa*}
If $\Afr$ is a C$^*$-algebra, then $(\lambda_1,\ldots,\lambda_n)\in\spec_\Afr^r(A)$ if and only if the positive element
\[
(A_1-\lambda_1)(A_1-\lambda_1)^*+\cdots+(A_n-\lambda_n)(A_n-\lambda_n)^*
\]
is not invertible in $\Afr$.
\end{enumerate}
\end{prop}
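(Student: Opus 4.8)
The plan is to reduce all three statements to the elementary fact that a one-sided ideal of $\Afr$ coincides with $\Afr$ exactly when it contains the unit, and then to translate this: for \eqref{it:a*} via the involution applied to such an identity, and for \eqref{it:a*a}--\eqref{it:aa*} into invertibility of a positive element by a one-step matrix amplification.

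For part \eqref{it:a*} I would argue purely algebraically. The left ideal generated by $\{A_1-\lambda_1,\dots,A_n-\lambda_n\}$ equals $\Afr$ if and only if $1=\sum_j B_j(A_j-\lambda_j)$ for some $B_j\in\Afr$, and applying the involution this holds if and only if $1=\sum_j(A_j^*-\overline{\lambda_j})B_j^*$, i.e.\ if and only if the right ideal generated by $\{A_1^*-\overline{\lambda_1},\dots,A_n^*-\overline{\lambda_n}\}$ equals $\Afr$. Thus $\lambda\in\spec_\Afr^\ell(A)$ if and only if $\overline\lambda\in\spec_\Afr^r(A^*)$; substituting $A\mapsto A^*$ gives $\spec_\Afr^\ell(A^*)=(\spec_\Afr^r(A))^*$, taking coordinate-wise conjugates of the equivalence gives the mirror identity $\spec_\Afr^r(A^*)=(\spec_\Afr^\ell(A))^*$, and then $\spec_\Afr(A^*)=(\spec_\Afr(A))^*$ is immediate from $\spec_\Afr=\spec_\Afr^\ell\cup\spec_\Afr^r$.

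For part \eqref{it:a*a}, set $C_j=A_j-\lambda_j$ and $D=\sum_j C_j^*C_j\ge0$. One implication is a single line: if $D$ is invertible then $\sum_j(D^{-1}C_j^*)C_j=1$, so the left ideal generated by the $C_j$ is all of $\Afr$ and $\lambda\notin\spec_\Afr^\ell(A)$. For the converse I would assume $\sum_j B_jC_j=1$, regard $\Afr=M_1(\Afr)$ inside the $C^*$-algebra $M_n(\Afr)$, and form the row $b=(B_1,\dots,B_n)\in M_{1,n}(\Afr)$ and the column $c\in M_{n,1}(\Afr)$ with entries $C_1,\dots,C_n$, so that $bc=1$ in $\Afr$ while $c^*c=\sum_j C_j^*C_j=D$. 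Using $b^*b\le\|b^*b\|I_n=\|b\|^2I_n$ in $M_n(\Afr)$ together with positivity of the map $x\mapsto c^*xc$, one obtains
\[
1=(bc)^*(bc)=c^*(b^*b)c\le\|b\|^2\,c^*c=\|b\|^2D,
\]
so $D\ge\|b\|^{-2}1$ is invertible, i.e.\ $\lambda\notin\spec_\Afr^\ell(A)$. The one real point here is the amplification into $M_n(\Afr)$, where the $C^*$-identity and order structure become available; everything else is bookkeeping, and I do not expect any genuine obstacle.

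Part \eqref{it:aa*} I would deduce from \eqref{it:a*} and \eqref{it:a*a}: by \eqref{it:a*}, $\lambda\in\spec_\Afr^r(A)$ if and only if $\overline\lambda\in\spec_\Afr^\ell(A^*)$, which by \eqref{it:a*a} applied to the tuple $A^*=(A_1^*,\dots,A_n^*)$ holds if and only if $\sum_j(A_j^*-\overline{\lambda_j})^*(A_j^*-\overline{\lambda_j})$ fails to be invertible; since $(A_j^*-\overline{\lambda_j})^*=A_j-\lambda_j$, this sum equals $\sum_j(A_j-\lambda_j)(A_j-\lambda_j)^*$. Alternatively one could simply rerun the amplification argument of \eqref{it:a*a} with rows and columns interchanged.
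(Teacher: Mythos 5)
Your proof is correct and follows essentially the same route as the paper: part~(i) by applying the involution to a partition-of-unity identity, part~(ii) by the matrix-amplification trick yielding $D\ge\text{const}\cdot 1$, and part~(iii) by combining~(i) and~(ii). The only cosmetic difference is that you work with the rectangular row $b\in M_{1,n}(\Afr)$ and column $c\in M_{n,1}(\Afr)$, whereas the paper pads them out to square matrices $Y,X\in M_n(\Afr)$ and computes $X^*X=\diag(D,0,\ldots,0)$; the key inequality $c^*(b^*b)c\le\|b\|^2c^*c$ is identical in both versions.
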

\begin{proof}
Part~\eqref{it:a*} is elementary.
For~\eqref{it:a*a}, assume without loss of generality $\lambda_j=0$ for all $j$.
Let $D$ be the element in~\eqref{eq:Al*Al}.
We observe that if the element $D$ is invertible in $\Afr$, then taking $B_j=D^{-1}A_j^*$ yields $B_1A_1+\cdots+B_nA_n=1$,
which shows $(0,\ldots,0)\notin\spec^\ell_\Afr(A)$.
For the opposite implication, consider
\[
X=\left(\begin{smallmatrix}
A_1&0&\cdots&0\\
A_2&0&\cdots&0\\
\vdots&\vdots&\vdots&\vdots \\
A_n&0&\cdots&0
\end{smallmatrix}\right)\in M_n(\Afr).
\]
Note that $X^*X=\diag(D,0,\ldots,0)$.
If $(0,\ldots,0)\notin\spec^\ell_\Afr(A)$,
then there exist $B_1\ldots,B_n\in\Afr$ such that $B_1A_1+\cdots+B_nA_n=1$.
Letting
\[
Y=\left(\begin{smallmatrix}
B_1&B_2&\cdots&B_n\\
0&0&\cdots&0\\
\vdots&\vdots&\vdots&\vdots \\
0&0&\cdots&0
\end{smallmatrix}\right)\in M_n(\Afr),
\]
we have $YX=\diag(1,0,\ldots,0)$.
Thus, we have
\[
\diag(D,0,\ldots,0)=X^*X\ge\|Y\|^{-2}X^*Y^*YX=\|Y\|^{-2}\diag(1,0,\ldots,0),
\]
so that $D\ge\|Y\|^{-2}1$ is invertible in $\Afr$.
This completes the proof of~\eqref{it:a*a}.

The proof of~\eqref{it:aa*} follows in a similar fashion, or by combining~\eqref{it:a*} and~\eqref{it:a*a}.
\end{proof}

As a corollary, we have that
the left and right Harte spectra in C$^*$-algebras $\Afr$ enjoy
spectral permanence, namely, do not depend on the C$^*$-algebra $\Afr$.
\begin{cor}\label{cor:specperm}
If $A$ is an $n$-tuple of elements of a unital C$^*$-algebra $\Afr$ and if $\Afr\subset\Bfr$ is a unital inclusion of C$^*$-algebras,
then $\spec^\ell_\Afr(A)=\spec^\ell_\Bfr(A)$, $\spec^r_\Afr(A)=\spec^r_\Bfr(A)$, and  $\spec_\Afr(A)=\spec_\Bfr(A)$.
\end{cor}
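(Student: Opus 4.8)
The plan is to reduce the statement to the inverse-closedness of unital C$^*$-subalgebras, using the characterisations of the left and right Harte spectra given in Proposition~\ref{prop:char}.

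First I would fix $\lambda=(\lambda_1,\ldots,\lambda_n)\in\Cpx^n$ and form the positive element
\[
D_\lambda=\sum_{j=1}^n(A_j-\lambda_j)^*(A_j-\lambda_j)\in\Afr.
\]
By Proposition~\ref{prop:char}\eqref{it:a*a}, applied first in $\Afr$ and then in $\Bfr$ (the criterion there is valid in any C$^*$-algebra, and the same element $D_\lambda$ serves in both), we have $\lambda\notin\spec^\ell_\Afr(A)$ if and only if $D_\lambda$ is invertible in $\Afr$, and $\lambda\notin\spec^\ell_\Bfr(A)$ if and only if $D_\lambda$ is invertible in $\Bfr$. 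Thus it suffices to observe that a positive element of $\Afr$ is invertible in $\Afr$ precisely when it is invertible in $\Bfr$. One implication is trivial; for the other, since $D_\lambda=D_\lambda^*\ge 0$ is invertible in $\Bfr$, its spectrum in $\Bfr$ is a compact subset of $(0,\infty)$, so $t\mapsto 1/t$ is a uniform limit of polynomials on it; hence $D_\lambda^{-1}$ is a norm limit of polynomials in $D_\lambda$ and therefore lies in $\Afr$. This yields $\spec^\ell_\Afr(A)=\spec^\ell_\Bfr(A)$.

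For the right spectrum I would either repeat the argument verbatim, replacing Proposition~\ref{prop:char}\eqref{it:a*a} by \eqref{it:aa*} and $D_\lambda$ by $\sum_j(A_j-\lambda_j)(A_j-\lambda_j)^*$, or simply invoke Proposition~\ref{prop:char}\eqref{it:a*}: since $\spec^r_\Afr(A)=(\spec^\ell_\Afr(A^*))^*$ and likewise for $\Bfr$, the equality just proved (applied to the tuple $A^*$) gives $\spec^r_\Afr(A)=\spec^r_\Bfr(A)$. Finally, $\spec_\Afr(A)=\spec^\ell_\Afr(A)\cup\spec^r_\Afr(A)$ by definition, so the third equality is immediate from the first two.

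There is essentially no obstacle here: the only nontrivial ingredient is the inverse-closedness of unital C$^*$-subalgebras, which is classical and could alternatively just be cited. The single point deserving a sentence of care is that Proposition~\ref{prop:char}\eqref{it:a*a} must be used twice, once for each of $\Afr$ and $\Bfr$, which is legitimate because the stated criterion is internal to whichever C$^*$-algebra one works in.
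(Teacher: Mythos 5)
Your proof is correct and follows essentially the same route as the paper: reduce via Proposition~\ref{prop:char}\eqref{it:a*a} and~\eqref{it:aa*} to the invertibility of a single positive element, then invoke spectral permanence for individual elements of a C$^*$-subalgebra. The only differences are cosmetic — you spell out the functional-calculus argument for inverse-closedness, and you offer the alternative of deducing the right-spectrum case from \eqref{it:a*}, both of which the paper leaves implicit.
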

\begin{proof}
This follows immediately from the characterisations~\eqref{it:a*a} and~\eqref{it:aa*} of Proposition~\ref{prop:char} and the spectral
permanence property of individual elements in C$^*$-algebras, namely, that an element $D\in\Afr$ is invertible in $\Afr$ if and only if 
it is invertible in $\Bfr$.
\end{proof}

% \begin{examples}\label{ex:specM2}
% In the C$^*$-algebra $M_2(\Cpx)$ (with matrix units $(e_{ij})_{1\le i,j\le 2}$,
% \begin{enumerate}[(i)]
% \item the $2$-tuple $(e_{11},e_{21})$ is right invertible but not left invertible
% \item the $2$-tuple $(e_{12},e_{21})$ has empty spectrum.
% \end{enumerate}
% \end{examples}

Next we show Harte's joint spectrum $\spec(T)$ is contained in
Taylor's joint spectrum $\Sp(T)$, in the case of commuting elements 
acting on Hilbert space.
Taylor's spectrum of a commuting $n$-tuple $T=(T_1,\ldots,T_n)$ is defined in terms a the Koszul complex, which is a finite-length chain complex of
exterior powers and maps.
The last of these maps is
\[
\delta_0:\HEu^{\oplus n}\to\HEu,
\]
given by
\[
\delta_0(v_1\oplus\cdots\oplus v_n)=\sum_{j=1}^nT_jv_j.
\]
The first of these maps is called $\delta_n$.
For convenience, we make  suitable renaming and changes of sign to identify $\delta_n$ with
\[
\delta_n':\HEu\to\HEu^{\oplus n}
\]
given by
\begin{equation}\label{eq:deltan'}
\delta_n'(x)=T_1x\oplus T_2x\oplus\cdots\oplus T_nx.
\end{equation}
(To be more precise, in Taylor's notation from Section~1 of~\cite{T70}, the domain $\HEu\otimes E^n_n$ for $\delta_n$ is identified
with $\HEu$ in the obvious way, choosing the basis element $e_1\wedge\cdots\wedge e_n$ for $E^n_n$,
while the range $\HEu\otimes E^n_{n-1}$ for Taylor's $\delta_n$ is identified with $\HEu^{\oplus n}$ by choosing the basis
$\{(-1)^{i-1}e_1\wedge\cdots\wedge\widehat{e_i}\wedge\cdots\wedge e_n\}_{i=1}^n$ for $E^n_{n-1}$;
with these identifications, $\delta_n$ is transformed into $\delta_n'$.)

\begin{lemma}\label{lem:delta0}
Let $n\in\Nats$ and let $(T_1,\ldots,T_n)$ be an $n$-tuple of commuting elements of $B(\HEu)$.
Then $\delta_0$ is surjective if and only if there exist $B_1,\ldots,B_n\in B(\HEu)$ such that
\[
T_1B_1+\cdots+T_nB_n=1.
\]
\end{lemma}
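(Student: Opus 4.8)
The plan is to prove the two implications separately. The \lq\lq if\rq\rq\ direction is a one-line computation, and the \lq\lq only if\rq\rq\ direction rests on the standard fact that a surjective bounded operator between Hilbert spaces admits a bounded linear right inverse.

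For the easy direction, I would assume $B_1,\ldots,B_n\in B(\HEu)$ satisfy $T_1B_1+\cdots+T_nB_n=1$ and observe that for every $\xi\in\HEu$,
\[
\xi=\sum_{j=1}^nT_j(B_j\xi)=\delta_0(B_1\xi\oplus\cdots\oplus B_n\xi),
\]
so $\xi$ lies in the range of $\delta_0$, whence $\delta_0$ is surjective.

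For the converse, suppose $\delta_0\colon\HEu^{\oplus n}\to\HEu$ is surjective. Since $\ker\delta_0$ is a closed subspace, I would split $\HEu^{\oplus n}=\ker\delta_0\oplus(\ker\delta_0)^\perp$; the restriction of $\delta_0$ to $(\ker\delta_0)^\perp$ is a continuous linear bijection onto $\HEu$, hence has a bounded inverse $R\colon\HEu\to(\ker\delta_0)^\perp\subseteq\HEu^{\oplus n}$ by the bounded inverse theorem, so that $\delta_0R=1$. Composing $R$ with the $j$-th coordinate projection $\HEu^{\oplus n}\to\HEu$ produces operators $B_j\in B(\HEu)$ with $R\xi=B_1\xi\oplus\cdots\oplus B_n\xi$ for all $\xi$, and applying $\delta_0$ gives
\[
\xi=\delta_0R\xi=\sum_{j=1}^nT_jB_j\xi\qquad(\xi\in\HEu),
\]
that is, $T_1B_1+\cdots+T_nB_n=1$.

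There is no genuine obstacle here: the only ingredient beyond elementary algebra is the open mapping theorem, used to produce the bounded right inverse, and commutativity of the $T_j$ plays no role in this particular lemma.
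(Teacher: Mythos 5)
Your argument is correct and is essentially the same as the paper's: both directions match, and in the nontrivial direction you restrict $\delta_0$ to $(\ker\delta_0)^\perp$, invoke the open mapping/bounded inverse theorem to get a bounded right inverse $R$, and read off the $B_j$ from the coordinates of $R$. The only cosmetic difference is that you explicitly mention composing with the coordinate projections, which the paper leaves implicit.
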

\begin{proof}
Sufficiency is clear.
For necessity, suppose $\delta_0$ is surjective.
Let $\Vc=(\ker\delta_0)^{\perp}$.
Then the restriction of $\delta_0$ to $\Vc$ is injective and surjective, so by the Open Mapping Theorem, it is an isomorphism.
The inverse of $\delta_0$, when viewed as a mapping from $\HEu$ into $\HEu^{\oplus n}$, is of the form
$v\mapsto(B_1v,B_2v,\ldots,B_nv)$, for some $B_1,\ldots,B_n\in B(\HEu)$.
Thus, we have
$T_1B_1v+\cdots+T_nB_nv=v$ for all $v\in\HEu$, so $T_1B_1+\cdots+T_nB_n=1$.
\end{proof}

\begin{lemma}\label{lem:deltan}
Let $n\in\Nats$ and let $(T_1,\ldots,T_n)$ be an $n$-tuple of commuting elements of $B(\HEu)$.
Then $\delta_n$ is injective and has closed range if and only if there exist $S_1,\ldots,S_n\in B(\HEu)$ such that
\begin{equation}\label{eq:St1}
S_1T_1+\cdots+S_nT_n=1.
\end{equation}
\end{lemma}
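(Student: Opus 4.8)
The plan is to work with the map $\delta_n'$ of~\eqref{eq:deltan'} in place of $\delta_n$, since the two are identified up to renaming and changes of sign, and to use the standard fact that an operator between Hilbert spaces is injective with closed range if and only if it is bounded below. So the claim reduces to: $\delta_n'$ is bounded below if and only if there exist $S_1,\ldots,S_n\in B(\HEu)$ with $\sum_{j=1}^n S_jT_j=1$. First I would observe that
\[
\|\delta_n'(x)\|^2=\sum_{j=1}^n\|T_jx\|^2=\langle Dx,x\rangle,\qquad D:=\sum_{j=1}^nT_j^*T_j\ge0,
\]
so that $\delta_n'$ is bounded below precisely when $D\ge c$ for some $c>0$, i.e.\ precisely when $D$ is invertible in $B(\HEu)$.

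For the direction ``injective with closed range $\Rightarrow$ relation~\eqref{eq:St1}'', I would assume $D$ invertible and set $S_j=D^{-1}T_j^*$; then $\sum_{j=1}^n S_jT_j=D^{-1}\sum_{j=1}^n T_j^*T_j=D^{-1}D=1$. (Commutativity of the $T_i$ is not even needed for this implication.) For the converse, assume $\sum_{j=1}^n S_jT_j=1$. Then for every $x\in\HEu$, by the triangle and Cauchy--Schwarz inequalities,
\[
\|x\|=\Big\|\sum_{j=1}^n S_jT_jx\Big\|\le\sum_{j=1}^n\|S_j\|\,\|T_jx\|\le\Big(\sum_{j=1}^n\|S_j\|^2\Big)^{1/2}\Big(\sum_{j=1}^n\|T_jx\|^2\Big)^{1/2}=\Big(\sum_{j=1}^n\|S_j\|^2\Big)^{1/2}\|\delta_n'(x)\|,
\]
so $\delta_n'$ is bounded below, hence injective with closed range. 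Alternatively, one may observe that $\sum_j S_jT_j=1$ says exactly that $0\notin\spec^\ell_{B(\HEu)}(T)$, which by Proposition~\ref{prop:char}\eqref{it:a*a} is equivalent to invertibility of $D$, closing the circle.

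I do not expect a genuine obstacle here: the only points requiring attention are the standard equivalence between ``injective with closed range'' and ``bounded below'' for Hilbert space operators, and the identification of $\delta_n$ with $\delta_n'$, which the preceding discussion has already arranged. If one prefers to mirror the proof of Lemma~\ref{lem:delta0} directly rather than pass through $D$, one restricts $\delta_n'$ to an isomorphism onto its closed range $\mathcal{R}$, composes the inverse of that isomorphism with the orthogonal projection of $\HEu^{\oplus n}$ onto $\mathcal{R}$ to get a bounded map $\HEu^{\oplus n}\to\HEu$, notes that any such map has the row form $(v_1,\ldots,v_n)\mapsto\sum_{j=1}^n S_jv_j$ for suitable $S_j\in B(\HEu)$, and evaluates at $\delta_n'(x)$ to obtain $\sum_{j=1}^n S_jT_jx=x$ for all $x$, i.e.~\eqref{eq:St1}.
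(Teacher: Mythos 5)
Your proof is correct, and for the forward implication it takes a genuinely different (and cleaner) route than the paper. The paper mirrors its proof of Lemma~\ref{lem:delta0}: it applies the Open Mapping Theorem to the isomorphism $\delta_n'\colon\HEu\to\Wc:=\operatorname{ran}\delta_n'$, composes the inverse with $P_\Wc$, and reads off the $S_j$ as the row entries of the resulting bounded map $\HEu^{\oplus n}\to\HEu$; this is exactly your ``alternatively'' paragraph. Your primary argument instead observes $\|\delta_n'(x)\|^2=\langle Dx,x\rangle$ with $D=\sum_j T_j^*T_j\ge0$, so bounded below is equivalent to invertibility of $D$, and then writes $S_j=D^{-1}T_j^*$ explicitly. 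This buys you an explicit formula for the $S_j$, avoids the Open Mapping Theorem entirely, makes transparent that commutativity of the $T_i$ plays no role, and connects the lemma directly to the characterisation of the left Harte spectrum in Proposition~\ref{prop:char}\eqref{it:a*a}. The converse direction (Cauchy--Schwarz giving a lower bound for $\delta_n'$) is the same as the paper's. Both arguments are complete.
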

\begin{proof}
The map $\delta_n$ is injective and has closed range if and only if the map $\delta_n'$
given in~\eqref{eq:deltan'} has these properties.

Suppose $\delta_n'$ is injective and has closed range.
Let $\Wc$ be the image of $\delta_n'$ and let $P_\Wc:\HEu^{\oplus n}\to\Wc$ be the orthogonal projection onto $\Wc$.
By the Open Mapping Theorem, there exists a bounded operator $S:\Wc\to\HEu$ such that $S\circ\delta_n'$ is the identity map on $\HEu$.
Let $\gamma_j:\HEu\to\HEu^{\oplus n}$ be the canonical isometry $v\mapsto 0\oplus\cdots\oplus 0\oplus v\oplus0\oplus\cdots\oplus0$ onto
the $j$-th summand.
Let $S_j=S\circ P_\Wc\circ\gamma_j\in B(\HEu)$.
If $v=v_1\oplus\cdots\oplus v_n\in\Wc$, then
\[
Sv=S\circ P_\Wc v=\sum_{j=1}^nS\circ P_\Wc\circ\gamma_jv_j=\sum_{j=1}^nS_j v_j.
\]
Thus, for all $\xi\in\HEu$, we have
\[
\xi=S\circ\delta_n'\xi=S(T_1\xi\oplus\cdots\oplus T_n\xi)
=\sum_{j=1}^nS_jT_j\xi.
\]
Thus, we have $\sum_{j=1}^nS_jT_j=1$.

Now suppose there exist $S_1,\ldots,S_n\in B(\HEu)$ such that~\eqref{eq:St1} holds.
We will show that $\delta_n'$ is bounded below, which will imply that $\delta_n'$ is injective and has closed range.
Given $\xi\in\HEu$, we have
\[
\|\xi\|\le\sum_{i=1}^n\|S_i\|\|T_i\xi\|\le\bigg(\sum_{i=1}^n\|S_i\|^2\bigg)^{1/2}\bigg(\sum_{i=1}^n\|T_i\xi\|^2\bigg)^{1/2}
=\bigg(\sum_{i=1}^n\|S_i\|^2\bigg)^{1/2}\|\delta_n'\xi\|.
\]
This completes the proof.
\end{proof}

\begin{prop}\label{prop:HarteTaylor}
Let $\HEu$ be a Hilbert space,
let $n\in\Nats$ and let $T=(T_1,\ldots,T_n)$ be an $n$-tuple of commuting operators in $B(\HEu)$.
Then
\[
\spec_{B(\HEu)}(T)\subseteq\Sp(T).
\]
\end{prop}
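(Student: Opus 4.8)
The plan is to prove the contrapositive: if $\lambda=(\lambda_1,\ldots,\lambda_n)\notin\Sp(T)$, then $\lambda\notin\spec_{B(\HEu)}(T)$. Recall from Taylor~\cite{T70} that $\lambda\notin\Sp(T)$ means precisely that the Koszul complex of the commuting tuple $T-\lambda=(T_1-\lambda_1,\ldots,T_n-\lambda_n)$ acting on $\HEu$ is exact, i.e.\ $\ker=\operatorname{im}$ at every spot. First I would record the elementary observation that an exact bounded complex of Hilbert (indeed Banach) spaces automatically has differentials with closed range: if $d\colon E\to F$ and $d'\colon F\to G$ are bounded with $\operatorname{im} d=\ker d'$, then $\operatorname{im} d$ is closed, being the kernel of the bounded operator $d'$. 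Applying this to the Koszul complex of $T-\lambda$, exactness gives in particular that $\delta_0$ (formed with $T-\lambda$ in place of $T$) is surjective, and that $\delta_n$ (again formed with $T-\lambda$) is injective with closed range: injectivity is exactness at the top spot, and closedness of the range is exactness at the next spot, where $\operatorname{im}\delta_n=\ker\delta_{n-1}$.

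Next I would feed these two facts into the lemmas already proved. Applying Lemma~\ref{lem:delta0} to the tuple $T-\lambda$ yields $B_1,\ldots,B_n\in B(\HEu)$ with $(T_1-\lambda_1)B_1+\cdots+(T_n-\lambda_n)B_n=1$; this exhibits $1$ in the (algebraic) right ideal generated by $\{T_i-\lambda_i\}_{i=1}^n$, so $\lambda\notin\spec^r_{B(\HEu)}(T)$. Symmetrically, applying Lemma~\ref{lem:deltan} to $T-\lambda$ yields $S_1,\ldots,S_n\in B(\HEu)$ with $S_1(T_1-\lambda_1)+\cdots+S_n(T_n-\lambda_n)=1$, so $1$ lies in the left ideal generated by $\{T_i-\lambda_i\}_{i=1}^n$ and $\lambda\notin\spec^\ell_{B(\HEu)}(T)$. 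Since $\spec_{B(\HEu)}(T)=\spec^\ell_{B(\HEu)}(T)\cup\spec^r_{B(\HEu)}(T)$, this gives $\lambda\notin\spec_{B(\HEu)}(T)$, completing the contrapositive and hence the proof.

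The argument is short because the real work has been localized into Lemmas~\ref{lem:delta0} and~\ref{lem:deltan}; the only point requiring care is the passage from ``exact'' to ``closed range,'' which is exactly what licenses the use of Lemma~\ref{lem:deltan}, whose hypothesis demands that $\delta_n$ be injective \emph{with closed range} rather than merely injective. One should also check that the maps denoted $\delta_0$ and $\delta_n$ in the Koszul complex of $T-\lambda$ are literally the maps of those lemmas with each $T_i$ replaced by $T_i-\lambda_i$, recalling that the first Koszul differential agrees, up to the harmless renamings and sign changes indicated before~\eqref{eq:deltan'}, with $\delta_n'$. No spectral-permanence subtleties arise, since everything takes place inside the single algebra $B(\HEu)$.
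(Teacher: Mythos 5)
Your proof is correct and follows essentially the same route as the paper: exactness of the Koszul complex for $T-\lambda$ gives surjectivity of $\delta_0$ and injectivity-with-closed-range of $\delta_n$, and then Lemmas~\ref{lem:delta0} and~\ref{lem:deltan} finish the job. Your explicit remark that exactness forces closed range (since $\operatorname{im}\delta_n=\ker\delta_{n-1}$ is a kernel) fills in a step the paper leaves implicit and is a welcome clarification.
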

\begin{proof}
It will suffice to show that if $T$ is non-singular in the sense of Taylor, namely, if $(0,\ldots,0)\notin\Sp(T)$,
then $T$ generates all of $B(\HEu)$ as a right ideal and as a left ideal.
Nonsingularity in the sense of Taylor means that the entire Koszul complex is an exact sequence,
which entails that
the boundary map $\delta_0$ is surjective and the boundary map $\delta_n$ is injective and has closed range.
Now Lemmas~\ref{lem:delta0} and~\ref{lem:deltan} give the desired result.
\end{proof}

This section's main result follows. It is that for a tuple $T=(T_1,\ldots,T_n)$ of commuting operators in a tracial von Neumann algebra $\Mcal$, the support of the probability measure $\nu_T$ lies in the left Harte joint spectrum $\spec^\ell_\Mcal(T)$ and in Taylor's joint spectrum $\Sp(T)$.

\begin{prop}\label{prop:suppspec}
Suppose $\Mcal$ is a von Neumann algebra with normal, faithful, tracial state $\tau$, let $n\in\Nats$ and
suppose that $T=(T_1,\ldots,T_n)$ is an $n$-tuple of commuting elements of $\Mcal$.
Then
\begin{equation}\label{eq:nuspec}
\supp(\nu_T)\subseteq\spec^\ell_\Mcal(T).
\end{equation}
Choosing any normal, faithful representation of $\Mcal$ on a Hilbert space $\HEu$ and thereby realising $T$ 
as a tuple of bounded operators on $\HEu$,
we also have
\[
\supp(\nu_T)\subseteq\Sp(T).
\]
\end{prop}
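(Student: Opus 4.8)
The plan is first to prove $\supp(\nu_T)\subseteq\spec^\ell_\Mcal(T)$ and then to deduce the containment in $\Sp(T)$ formally. Translating $T$ by a point $\lambda$ (which translates $\nu_T$, translates $\spec^\ell_\Mcal(T)$, and translates the relevant Haagerup--Schultz subspaces), it suffices to show that if $(0,\ldots,0)\notin\spec^\ell_\Mcal(T)$ then $(0,\ldots,0)\notin\supp(\nu_T)$. By Proposition~\ref{prop:char} (the characterisation of the left Harte spectrum in a C$^*$-algebra via invertibility of $\sum_j(T_j-\lambda_j)^*(T_j-\lambda_j)$), the hypothesis says that $D:=\sum_{j=1}^nT_j^*T_j$ is invertible in $\Mcal$; fix $c>0$ with $D\ge c\cdot 1$. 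Since $\nu_T\big(\prod_jB_\epsilon(0)\big)=\tau\big(P(T:\prod_jB_\epsilon(0))\big)$ by Theorem~\ref{thm:main.nu.P} and $P(T:\prod_jB_\epsilon(0))=\bigwedge_jP(T_j,B_\epsilon(0))$ by Definition~\ref{def:PTR} and Lemma~\ref{lem:PTBsymdif}, it is enough to produce one $\epsilon>0$ with $Q_\epsilon:=\bigwedge_jP(T_j,B_\epsilon(0))=0$.

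Suppose no such $\epsilon$ exists; since the $Q_\epsilon$ are decreasing in $\epsilon$, we then have $Q_\epsilon\neq0$ for every $\epsilon>0$. Fix $\epsilon>0$, put $Q=Q_\epsilon$, $\Nc=Q\Mcal Q$, and assume for the moment $Q\neq1$. Being a joint Haagerup--Schultz projection, $Q$ is $T$-hyperinvariant, hence $T_j$-invariant for every $j$; consequently $QT_jQ=T_jQ$, the operators $S_j:=T_jQ$ form a commuting tuple $S$ in $\Nc$, and $\sum_jS_j^*S_j=\sum_j(QT_j^*Q)(QT_jQ)=\sum_jQT_j^*T_jQ=QDQ\ge cQ=c\cdot 1_\Nc$. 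Thus $\sum_jS_j^*S_j$ is invertible in $\Nc$, i.e.\ $(0,\ldots,0)\notin\spec^\ell_\Nc(S)$. On the other hand $Q=P(T:\prod_jB_\epsilon(0))$, so by Theorems~\ref{thm:PtTQ} and~\ref{thm:PTlargest} the joint Brown measure of $S$ computed in $\Nc$ is concentrated in the polydisk $\prod_jB_\epsilon(0)$, and by Theorem~\ref{thm:PTQB} applied in each coordinate each marginal $\nu_{S_j}$ is concentrated in $B_\epsilon(0)$. (If instead $Q_\epsilon=1$ for all small $\epsilon$, then $\nu_T$ itself is concentrated in $\prod_jB_\epsilon(0)$ for all small $\epsilon$, so $\nu_T=\delta_0$; one then argues directly in $\Mcal$ as in the next paragraph.)

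The contradiction — the heart of the argument — must come from the incompatibility, once $\epsilon$ is small, of these two features: a commuting tuple in a \emph{nonzero} finite von Neumann algebra cannot stay bounded below by a fixed constant $\sqrt c$ while its joint Brown measure is squeezed into an arbitrarily small polydisk about the origin. The cleanest instance is when $0$ is an atom of $\nu_T$: then $Q_0:=P(T:\{0\})=\bigwedge_{\epsilon>0}Q_\epsilon\neq0$, and in $\Nc_0:=Q_0\Mcal Q_0$ the compressed tuple $S^{(0)}=TQ_0$ consists of \emph{commuting s.o.t.-quasinilpotent} operators (its joint Brown measure is $\delta_0$, so by Proposition~\ref{prop:polyqn} every $f(S^{(0)})$ with $f(0)=0$ is s.o.t.-quasinilpotent) while still $\sum_j(S^{(0)}_j)^*S^{(0)}_j\ge c\cdot 1_{\Nc_0}$. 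So the crux is the lemma that \emph{a commuting tuple of s.o.t.-quasinilpotent operators in a nonzero finite von Neumann algebra has $0$ in its left Harte joint spectrum} — equivalently, it is not bounded below; equivalently, it admits a common approximate nullvector — the exact analogue of the elementary fact that commuting nilpotent matrices have a common nullvector. I would prove this from Proposition~\ref{prop:polyqn}: every $\sum_j\alpha_jS^{(0)}_j$, and more generally every $f(S^{(0)})$ with $f(0)=0$, is s.o.t.-quasinilpotent, hence has vanishing Fuglede--Kadison determinant and in particular is not invertible, and then extract a common approximate nullvector from this family of relations. For the general case (where $0$ need not be an atom) one needs the corresponding \emph{quantitative} statement: a commuting tuple $S$ with $\|S_j\|\le K$ whose joint Brown measure lies in $\prod_jB_\delta(0)$ must satisfy $\inf\spec\big(\sum_jS_j^*S_j\big)\le\psi(n,K,\delta)$ with $\psi\to0$ as $\delta\to0$ (for instance $\le n\delta^2$ in the matrix case, seen by testing $\sum_jS_j^*S_j$ on the bottom vector of a common triangularising flag, whose $S_j$-images have norm $\le\delta$). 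Taking $\epsilon$ so small that $\psi(n,K,\epsilon)<c$ then contradicts the lower bound $QDQ\ge cQ$ from the previous paragraph, forcing some $Q_\epsilon=0$. Making this quantitative lemma precise and proving it — equivalently, showing that a commuting tuple whose joint Brown measure is concentrated near a point cannot be uniformly bounded below near that point — is the main obstacle; I expect to approach it by compressing onto the joint Haagerup--Schultz projection of $S$ over a point of positive $\nu_S$-mass in $\prod_jB_\delta(0)$ and reducing, in the limit, to the qualitative nilpotent-type lemma above.

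Finally, the containment $\supp(\nu_T)\subseteq\Sp(T)$ is then formal: by the spectral permanence of the left Harte spectrum (Corollary~\ref{cor:specperm}) we have $\spec^\ell_\Mcal(T)=\spec^\ell_{B(\HEu)}(T)$, which is contained in $\spec_{B(\HEu)}(T)$ and hence, by Proposition~\ref{prop:HarteTaylor}, in $\Sp(T)$; combining with the first containment gives $\supp(\nu_T)\subseteq\spec^\ell_\Mcal(T)\subseteq\Sp(T)$.
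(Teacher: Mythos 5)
Your reduction and compression step are correct and match the paper's structure: you correctly reduce to showing $\bigwedge_j P(T_j,B_\eps(0))=0$ for some $\eps>0$, and your computation $\sum_j S_j^*S_j=QDQ\ge c\cdot 1_{Q\Mcal Q}$ with $S_j=T_jQ$ is valid (since $T_j$-invariance of $Q$ gives $S_j^*=QT_j^*$). But the third paragraph is precisely where your proposal stops being a proof: you correctly isolate the key lemma --- a commuting tuple in a nonzero finite von Neumann algebra cannot satisfy $(0,\ldots,0)\notin\spec^\ell(S)$ while its joint Brown measure is concentrated in an arbitrarily small polydisk around $0$ --- but then explicitly say that making this precise and proving it is the main obstacle. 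That lemma is the actual content of the proposition, and the paper proves it via an argument you haven't found. Even your fallback qualitative case (commuting s.o.t.-quasinilpotent $S_j^{(0)}$ with $0\notin\spec^\ell(S^{(0)})$) is not settled by observing that every $f(S^{(0)})$ with $f(0)=0$ has vanishing Fuglede--Kadison determinant: non-invertibility of each such polynomial combination does not directly produce a common approximate nullvector, and the ``bottom vector of a triangularising flag'' heuristic has no obvious analogue in a II$_1$ factor.

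The paper fills the gap by keeping the algebraic relation $\sum_j X_jT_j=1$ rather than converting to the positive element $D$, then compressing to $\sum_j Y_jS_j = 1_{Q\Mcal Q}$ with $Y_j=QX_jQ$. Iterating this identity $m=nk$ times and using commutativity of the $S_j$ to reorder (in each resulting term at least one index occurs $\ge k$ times), one obtains $1=\sum_{j=1}^n L_{j,k}S_j^k$ with $\|L_{j,k}\|\le (nM)^{2nk}$. Applying the quasi-triangle inequality $\|A+B\|_q^q\le\|A\|_q^q+\|B\|_q^q$ for $q=1/k$ then gives
\[
1=\|1\|_{1/k}^{1/k}\le (nM)^{2n}\sum_{j=1}^n\big\||S_j^k|^{1/k}\big\|_1.
\]
By Theorem~\ref{thm:HSsot} and Lemma~\ref{lem:sotqnequivs}, each $|S_j^k|^{1/k}$ converges in $L^1$ to an operator of norm at most $\eps$, so for large $k$ the right-hand side is $\le 2n(nM)^{2n}\eps$; choosing $\eps$ with $2n(nM)^{2n}\eps<1$ in advance yields the contradiction. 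This iteration-plus-quasinorm argument is exactly the quantitative statement you were missing, and it simultaneously handles the atom and non-atom cases, making your case split unnecessary. Your final deduction that $\supp(\nu_T)\subseteq\Sp(T)$ via Corollary~\ref{cor:specperm} and Proposition~\ref{prop:HarteTaylor} is correct and identical to the paper's.
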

\begin{proof} Suppose $(0,\ldots,0)\notin\spec^\ell_\Mcal(T).$ By definition of $\spec^\ell_\Mcal(T),$ there exist elements $X_1,\ldots,X_n\in\Mcal$ such that $X_1T_1+\cdots+X_nT_n=1.$ Let
$$M=\max(\|X_1\|,\ldots,\|X_n\|,\|T_1\|,\ldots,\|T_n\|,1)$$
and let $\eps>0$ be so small that $2n(nM)^{2n}\eps<1.$ We claim that
\begin{equation}\label{epsilon ball eq}
\bigwedge_{i=1}^n P(T_i,\overline{B_\eps(0)})=0.
\end{equation}
This will imply the inclusion~\eqref{eq:nuspec}.

Assume the contrary and let $Q$ be the projection on the left hand side in \eqref{epsilon ball eq}. Since $T_iQ=QT_iQ,$ it follows that
$$\sum_{i=1}^nQX_iQ\cdot T_iQ=Q=1_{Q\mathcal{M}Q}.$$
Denote $Y_i=QX_iQ$ and $S_i=T_iQ$.
By the basic property of Haagerup--Schultz projections, we have that $\nu_{S_i}^{(Q)}$ is supported in the ball $\overline{B_\eps(0)}$.

We now work in the algebra $Q\mathcal{M}Q$.
Writing
$$1=Y_1S_1+\cdots+Y_nS_n=Y_1\big(Y_1S_1+\cdots+Y_nS_n\big)S_1+\cdots+Y_n\big(Y_1S_1+\cdots+Y_nS_n\big)S_n$$
and so on, we obtain, for every $m\in\Nats$,
$$1=\sum_{i_1,\ldots,i_m\in\{1,\ldots,n\}}Y_{i_1}Y_{i_2}\cdots Y_{i_m}S_{i_m}\cdots S_{i_2}S_{i_1}.$$
Since the $S_i$ commute with each other
and since in each term of the above summation at least one of the $S_j$ must be repeated at least $m/n$ times,
by reordering we obtain, for $m=nk$
\[
1=\sum_{j=1}^nL_{j,k}S_j^k,
\]
where for each $j,$ $L_{j,k}$ is an operator of norm no greater than $(nM)^{2nk}.$ 

Recall the standard inequality
$$\|A+B\|_q^q\leq\|A\|_q^q+\|B\|_q^q,\quad A,B\in L_q(\mathcal{M},\tau),\quad 0<q<1.$$
Using this inequality for $q=\frac1k,$ we obtain
$$\|1\|_{\frac1k}^{\frac1k}\leq\sum_{j=1}^n\|L_{j,k}S_j^k\|_{\frac1k}^{\frac1k}\leq (nM)^{2n}\sum_{j=1}^n\|S_j^k\|_{\frac1k}^{\frac1k}=(nM)^{2n}\sum_{j=1}^n\||S_j^k|^{\frac1k}\|_1.$$

By Theorem~\ref{thm:HSsot}, $|S_j^k|^{\frac1k}$ converges strongly as $k\to\infty$ to some $N_j\ge0$ and, since the Brown measure of $S_j$ is supported in
$\overline{B_\eps(0)}$, we have
$\|N_j\|_{\infty}\leq\epsilon$.
By Lemma \ref{lem:sotqnequivs}, $|S_j^k|^{\frac1k}\to N_j$ in $L_1(\mathcal{M},\tau)$.
In particular, we have $\||S_j^k|^{\frac1k}\|_1\to\|N_j\|_1$ as $k\to\infty$.
Since $\|N_j\|_1\leq\epsilon$, it follows that $\||S_j^k|^{\frac1k}\|_1\leq2\epsilon$ for every sufficiently large $k$.
Hence, for large $k$, we have
$$1\leq 2n(nM)^{2n}\epsilon.$$
This contradicts our choice of $\epsilon.$ Hence, our assumption that $Q\neq0$ is false. This proves the inclusion~\eqref{eq:nuspec}.

Combining~\eqref{eq:nuspec} with  Proposition~\ref{prop:HarteTaylor} and the permanence property (Corollary \ref{cor:specperm}), we have
\[
\supp(\nu_T)\subseteq\spec^\ell_\Mcal(T)=\spec^\ell_{B(\HEu)}(T)\subseteq\Sp(T).
\]
\end{proof}

\begin{ques}
Consider a tuple $T=(T_1,\ldots,T_n)$ of commuting operators on some Hilbert space.
\begin{enumerate}[(a)]
\item\label{it:nonempty} Must the Harte spectrum $\spec_{B(\HEu)}(T)$ be non-empty?
\item\label{it:propincl} Do we ever have proper inclusion $\spec_{B(\HEu)}(T)\subsetneq\Sp(T)$ (compare Proposition~\ref{prop:HarteTaylor}).
\item If \lq\lq yes\rq\rq{}  to part~\eqref{it:propincl}, is there a holomorphic functional calculus satisfying nice properties for functions holomorphic in a neighborhood of $\spec_{B(\HEu)}(T)$?
\end{enumerate}
\end{ques}

\begin{ques}
Consider a tuple $T=(T_1,\ldots,T_n)$ of commuting operators in a finite von Neumann algebra $\Mcal$.
Must the left and right Harte spectra $\spec^\ell_\Mcal(T)$ and $\spec^r_\Mcal(T)$ agree?
\end{ques}

\section{Simultaneous upper triangularization}
\label{sec:ut}

Suppose $(q_t)_{0\le t\le 1}$ is an increasing net of projections in $\Mcal$, with $q_0=0$ and $q_1=1$
and let $\Dc=W^*(\{q_t\mid 0\le t\le 1\})$.
\begin{defi}\label{def:UT}
We say that $T\in\Mcal$ is {\em upper triangular} with respect to $(q_t)_{0\le t\le1}$ if each $q_t$ is invariant under $T$,
i.e., if $Tq_t=q_tTq_t$ for every $t\in[0,1]$.
\end{defi}

Let
$$\Uc=\{S\in\Mcal\mid S\text{ upper triangular with respect to }(q_t)_{0\le t\le 1}\}.$$

\begin{lemma}\label{lem:U}
The set $\Uc$ is a subalgebra of $\Mcal$ that is closed in the weak operator topology.
Moreover, the restriction of $\Ec_{\Mcal\cap\Dc'}$ to $\Uc$ is an algebra homomorphism from $\Uc$ into $\Mcal\cap\Dc'$.
\end{lemma}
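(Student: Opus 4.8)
The first assertion I would dispatch quickly. Note that $\Uc=\bigcap_{t\in[0,1]}\{S\in\Mcal\mid(1-q_t)Sq_t=0\}$, and for each fixed $t$ the map $S\mapsto(1-q_t)Sq_t$ is weak-operator-to-weak-operator continuous, so each set in this intersection is weak-operator closed and hence so is $\Uc$. It obviously contains $1$ and is a linear subspace; and if $(1-q_t)Sq_t=0=(1-q_t)Tq_t$ for all $t$, then $(1-q_t)STq_t=(1-q_t)S(q_tTq_t)=\big((1-q_t)Sq_t\big)Tq_t=0$, so $\Uc$ is closed under products.

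For the second assertion the only substantive point is multiplicativity of $\Ec_{\Mcal\cap\Dc'}\restrict_\Uc$; linearity, unitality, and the fact that the range lies in $\Mcal\cap\Dc'$ are immediate. The plan is to compare $\Ec_{\Mcal\cap\Dc'}$ with the finite-resolution \emph{diagonals}. For a finite partition $\mathcal P\colon 0=t_0<t_1<\cdots<t_N=1$, set $e_k^{\mathcal P}=q_{t_k}-q_{t_{k-1}}$; these are pairwise orthogonal projections summing to $1$, generating a finite-dimensional abelian $\Dc_{\mathcal P}\subseteq\Dc$, and $\Phi_{\mathcal P}(x)=\sum_{k}e_k^{\mathcal P} x e_k^{\mathcal P}$ is the $\tau$-preserving conditional expectation onto $\Mcal\cap\Dc_{\mathcal P}'=\bigoplus_k e_k^{\mathcal P}\Mcal e_k^{\mathcal P}$ (in particular $\|\Phi_{\mathcal P}(x)\|\le\|x\|$). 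The key observation is that membership in $\Uc$ is exactly lower-triangular vanishing at every resolution: if $S\in\Uc$ and $j<k$, then $e_k^{\mathcal P}\le 1-q_{t_{k-1}}\le 1-q_{t_j}$ and $e_j^{\mathcal P}\le q_{t_j}$, whence $e_k^{\mathcal P} S e_j^{\mathcal P}=e_k^{\mathcal P}(1-q_{t_j})Sq_{t_j}e_j^{\mathcal P}=0$ because $(1-q_{t_j})Sq_{t_j}=0$. Consequently, for $S,T\in\Uc$, inserting $\sum_j e_j^{\mathcal P}=1$,
\[
\Phi_{\mathcal P}(ST)=\sum_{k}e_k^{\mathcal P} ST e_k^{\mathcal P}=\sum_{j,k}e_k^{\mathcal P} S e_j^{\mathcal P} T e_k^{\mathcal P}=\sum_k e_k^{\mathcal P} S e_k^{\mathcal P} T e_k^{\mathcal P}=\Phi_{\mathcal P}(S)\Phi_{\mathcal P}(T),
\]
since the only surviving summands are those with $j\ge k$ (forced by $e_k^{\mathcal P} S e_j^{\mathcal P}\neq0$) and $j\le k$ (forced by $e_j^{\mathcal P} T e_k^{\mathcal P}\neq0$), i.e.\ $j=k$.

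Finally I would pass to the limit over the directed set of finite partitions ordered by refinement. Since $q_0=0$, every $q_t$ lies in $\Dc_{\mathcal P}$ for any $\mathcal P\ni t$, so $\bigcup_{\mathcal P}\Dc_{\mathcal P}$ generates $\Dc$ and therefore $\bigcap_{\mathcal P}(\Mcal\cap\Dc_{\mathcal P}')=\Mcal\cap\Dc'$. As $\mathcal P$ refines, $\Mcal\cap\Dc_{\mathcal P}'$ decreases and $\Phi_{\mathcal Q}=\Phi_{\mathcal Q}\circ\Phi_{\mathcal P}$ for $\mathcal P\le\mathcal Q$, so $\|\Phi_{\mathcal P}(x)\|_2$ is a non-increasing (hence Cauchy) net of reals, and the Pythagorean identity $\|\Phi_{\mathcal P}(x)-\Phi_{\mathcal Q}(x)\|_2^2=\|\Phi_{\mathcal P}(x)\|_2^2-\|\Phi_{\mathcal Q}(x)\|_2^2$ then shows $(\Phi_{\mathcal P}(x))_{\mathcal P}$ is $\|\cdot\|_2$-Cauchy with limit $\Ec_{\Mcal\cap\Dc'}(x)$ (the limit lies in $L^2(\Mcal\cap\Dc_{\mathcal P_0}')$ for every $\mathcal P_0$ and is orthogonal to $x$ modulo $L^2(\Mcal\cap\Dc')$, by $\tau$-invariance of the $\Phi_{\mathcal P}$). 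Applying this with $x$ equal to $ST$, $S$, and $T$, and using $\|\Phi_{\mathcal P}(S)\|\le\|S\|$, $\|\Phi_{\mathcal P}(T)\|\le\|T\|$ together with $\|ab\|_2\le\|a\|\,\|b\|_2$ and $\|ab\|_2\le\|a\|_2\,\|b\|$ to pass to the limit in the product, we obtain $\Ec_{\Mcal\cap\Dc'}(ST)=\Ec_{\Mcal\cap\Dc'}(S)\,\Ec_{\Mcal\cap\Dc'}(T)$, completing the proof. The step I expect to carry the real content is the triangular cancellation in the displayed identity — at every finite resolution the diagonal of a product of upper-triangular operators is the product of their diagonals — together with the minor care needed to run the reverse-martingale limit over a net rather than a sequence (which is why the monotonicity of $\|\Phi_{\mathcal P}(x)\|_2$ is recorded explicitly); the remainder is bookkeeping.
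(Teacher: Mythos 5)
Your proof is correct and takes essentially the same route as the paper's: both establish multiplicativity at every finite resolution by showing that the diagonal of a product of upper-triangular operators equals the product of diagonals (the cancellation $e_k S e_j T e_k = 0$ unless $j=k$), and then pass to the limit along conditional expectations onto the decreasing relative commutants. The only cosmetic difference is that the paper fixes an increasing sequence $\Dc_1\subseteq\Dc_2\subseteq\cdots$ (using that $\Dc$ has separable predual) and invokes strong-operator convergence of $\Ec_{\Mcal\cap\Dc_n'}$ together with joint s.o.t.-continuity of multiplication on bounded sets, whereas you run the reverse-martingale limit over the net of all finite partitions and make the $\|\cdot\|_2$-Cauchy argument explicit via the Pythagorean identity; for bounded families these are equivalent, and both arguments are complete.
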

\begin{proof}
Since $\Uc$ is the set of all $S\in\Mcal$ such that $(1-q_t)Sq_t=0$ for all $t\in[0,1]$, it is clearly a subspace that is closed in the weak operator topology. If $S_1,S_2\in\Uc$ then 
$$S_1S_2q_t=S_1q_tS_2q_t=q_tS_1q_tS_2q_t=q_tS_1S_2q_t,\quad t\in[0,1],$$
so $S_1S_2\in\Uc$.
Thus, $\Uc$ is also a subalgebra of $\Mcal$.

Clearly $\Ec_{\Mcal\cap\Dc'}$ is linear. We need only show that it is multiplicative. There is an increasing family $\Dc_1\subseteq\Dc_2\subseteq\cdots$ of finite dimensional, unital $*-$subalgebras of $\Dc$ whose union is dense (in strong operator topology) in $\Dc$
and such that
each $\Dc_n$ is the linear span of  a
set $\{q_{t(n,1)},\ldots,q_{t(n,k(n))}\}$ with
$0<t(n,1)<\cdots<t(n,k(n))=1$.
Thus, the relative commutants $\Mcal\cap\Dc_n'$ are decreasing in $n$ and their intersection is $\Mcal\cap\Dc'$.
Set $t(n,0)=0$.
Then we have
$$\Ec_{\Mcal\cap\Dc_n'}(A)=\sum_{j=1}^{k(n)}(q_{t(n,j)}-q_{t(n,j-1)})A(q_{t(n,j)}-q_{t(n,j-1)}),\quad (A\in\mathcal{M}).$$

Suppose $X,Y\in\Uc.$
For every $1\leq j\leq k(n),$ we have
\begin{multline*}
(q_{t(n,j)}-q_{t(n,j-1)})XY(q_{t(n,j)}-q_{t(n,j-1)}) \\
\begin{aligned}
&=q_{t(n,j)}(1-q_{t(n,j-1)})XYq_{t(n,j)}(1-q_{t(n,j-1)}) \\
&=q_{t(n,j)}(1-q_{t(n,j-1)})X(1-q_{t(n,j-1)})q_{t(n,j)}Yq_{t(n,j)}(1-q_{t(n,j-1)}) \\
&=(q_{t(n,j)}-q_{t(n,j-1)})X(q_{t(n,j)}-q_{t(n,j-1)})Y(q_{t(n,j)}-q_{t(n,j-1)}).
\end{aligned}
\end{multline*}
Summing over $1\leq j\leq k(n)$,
we obtain
$$\Ec_{\Mcal\cap\Dc_n'}(XY)=\Ec_{\Mcal\cap\Dc_n'}(X)\Ec_{\Mcal\cap\Dc_n'}(Y),\quad (X,Y\in\Uc).$$
Since $\bigcap_{n\ge1}(\Mcal\cap\Dc_n')=\Mcal\cap\Dc'$, we have
$$\Ec_{\Mcal\cap\Dc_n'}(A)\to \Ec_{\Mcal\cap\Dc'}(A),\quad n\to\infty,\quad (A\in\mathcal{M}),$$
in strong operator topology.
Since multiplication on bounded sets is continuous in strong operator topology, the assertion follows.
\end{proof}

The following is an easy consequence of Lemma~22 of~\cite{DSZ15}.
\begin{lemma}\label{lem:22}
If $S\in\Mcal$ and if $S$ is upper triangular with respect to the net $(q_t)_{0\le t\le 1}$, then
$$\nu_S=\nu_{\Ec_{\Mcal\cap\Dc'}(S)}.$$
\end{lemma}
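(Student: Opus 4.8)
The plan is to deduce this from Lemma~22 of~\cite{DSZ15} by reducing to finite chains of projections. Recall from the proof of Lemma~\ref{lem:U} the increasing sequence of finite-dimensional subalgebras $\Dc_n\subseteq\Dc$, where $\Dc_n$ is the linear span of $\{q_{t(n,1)},\dots,q_{t(n,k(n))}\}$ with $0<t(n,1)<\dots<t(n,k(n))=1$; put $t(n,0)=0$ and $e^{(n)}_j=q_{t(n,j)}-q_{t(n,j-1)}$, so that the $e^{(n)}_j$ are orthogonal projections with $\sum_{j=1}^{k(n)}e^{(n)}_j=1$. Since $S\in\Uc$, every $q_{t(n,j)}$ is invariant under $S$, and therefore $e^{(n)}_jSe^{(n)}_{j'}=0$ whenever $j>j'$; that is, $S$ is block upper triangular with respect to the decomposition $1=\sum_je^{(n)}_j$, and its block-diagonal part is precisely $\Ec_{\Mcal\cap\Dc_n'}(S)=\sum_je^{(n)}_jSe^{(n)}_j$ (which, a short check shows, again belongs to $\Uc$).

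For such a finite block-upper-triangular decomposition, the Brown measure of $S$ coincides with that of its block-diagonal part: this is the content of Lemma~22 of~\cite{DSZ15}, and amounts to the multiplicativity
\[
\Delta(S-\lambda)=\prod_{j=1}^{k(n)}\Delta_{e^{(n)}_j}\!\big(e^{(n)}_j(S-\lambda)e^{(n)}_j\big)^{\tau(e^{(n)}_j)}
\]
of the Fuglede--Kadison determinant over block-upper-triangular decompositions, together with the fact that the Brown measure of an operator $T$ is determined by the function $\lambda\mapsto\log\Delta(T-\lambda)$. Hence
\[
\nu_S=\nu_{\Ec_{\Mcal\cap\Dc_n'}(S)}\qquad(n=1,2,\dots).
\]
By the proof of Lemma~\ref{lem:U}, $\Ec_{\Mcal\cap\Dc_n'}(S)\to\Ec_{\Mcal\cap\Dc'}(S)$ in strong operator topology, hence in $\|\cdot\|_2$, along a norm-bounded sequence; so it suffices to know that $\nu_{\Ec_{\Mcal\cap\Dc_n'}(S)}$ converges weakly to $\nu_{\Ec_{\Mcal\cap\Dc'}(S)}$, which combined with the last display yields $\nu_S=\nu_{\Ec_{\Mcal\cap\Dc'}(S)}$, as desired.

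This last convergence is the only real obstacle, since Brown measure is not weakly continuous along general norm-bounded $\|\cdot\|_2$-convergent sequences. What saves the argument is that $\big(\Ec_{\Mcal\cap\Dc_n'}(S)\big)_n$ is a norm-bounded reverse martingale: the algebras $\Mcal\cap\Dc_n'$ decrease, with intersection $\Mcal\cap\Dc'$. This martingale structure provides the uniform control of $\lambda\mapsto\tau(\log|{\cdot}-\lambda|)$ needed to rule out any escape of Brown mass to a quasinilpotent summand in the limit, and the resulting convergence — indeed the identity $\nu_S=\nu_{\Ec_{\Mcal\cap\Dc'}(S)}$ itself — is exactly what is obtained in the course of the proof of Lemma~22 of~\cite{DSZ15}. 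Thus the role of the present proof is to exhibit the block-upper-triangular structure at each finite stage, after which the finite identity and the limiting step are quoted from~\cite{DSZ15}.
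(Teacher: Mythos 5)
Your argument reduces to a finite-stage identity and then attempts a limit, but the limiting step is exactly where the difficulty lies, and the appeal to ``martingale structure'' does not close it. The finite-stage identity $\nu_S=\nu_{\Ec_{\Mcal\cap\Dc_n'}(S)}$ is correct, and upper semicontinuity of the Fuglede--Kadison determinant $\Delta$ along bounded $\|\cdot\|_2$-convergent sequences gives you one inequality, namely $\Delta(S-\lambda)=\limsup_n\Delta(\Ec_{\Mcal\cap\Dc_n'}(S)-\lambda)\le\Delta(\Ec_{\Mcal\cap\Dc'}(S)-\lambda)$. But $\Delta$ is \emph{not} lower semicontinuous in this topology, so the reverse inequality does not follow; and an inequality $\Delta(S-\lambda)\le\Delta(A-\lambda)$ for all $\lambda$ does not by itself force $\nu_S=\nu_A$ (take $S=0$ and $A$ a Haar unitary: $|\lambda|\le\max(1,|\lambda|)$ for all $\lambda$, yet $\delta_0$ is not the uniform measure on the circle). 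The reverse-martingale observation is true but you do not indicate how it would supply the missing lower bound on the determinant of the limit, and no elementary semicontinuity argument does.

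Your write-up is rescued only by the terminal deferral to Lemma~22 of~\cite{DSZ15} ``for the limiting step.'' But, as the paper's proof shows, Lemma~22 of~\cite{DSZ15} already gives $\Delta(S-\lambda)=\Delta(\Ec_{\Mcal\cap\Dc'}(S)-\lambda)$ directly for the continuous chain, and that single line \emph{is} the paper's entire proof. Your detour through the finite algebras $\Dc_n$ is therefore not a reduction that Lemma~22 requires; you end by quoting the same external lemma the paper quotes, after a discussion that does not independently supply the hard direction and that mischaracterizes that lemma as a finite-block statement.
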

\begin{proof} By Lemma~22 of~\cite{DSZ15}, we have
$$\Delta(S-\lambda)=\Delta(\Ec_{\Mcal\cap\Dc'}(S)-\lambda),\quad \lambda\in\Cpx,$$
where $\Delta$ is the Fuglede--Kadison determinant. Thus, the Brown measures agree.
\end{proof}

We now turn to the setting of Theorem~\ref{thm:simultut}.
Let $\rho$ be a continuous spectral ordering as in~\eqref{eq:rho} and let $\Dc$ be the associated
abelian von Neumann algebra
as in~\eqref{eq:D}.
By adjusting $\rho$, if necessary, we may, without affecting the algebra $\Dc$, assume $\nu_T(\{\rho(0)\})=0$ and for convenience we do so.
As described in the preamble to Lemma~\ref{lem:spfill}, from the measure $\nu_T$ we obtain
a probability measure $\sigma$ on $[0,1]$ satisfying, for all $t\in[0,1]$,
\[
\sigma([0,t])=\nu_T(\rho([0,t]))=\tau(P(T:\rho([0,t]))).
\]
Now $\Dc$ can be identified with $L^\infty([0,1],\sigma)$ for the Borel probability measure $\sigma$ on $[0,1]$, so that the restriction
of $\tau$ to $\Dc$ corresponds to integration with respect to $\sigma$
and so that each $q_t$ is identified with the indicator function of the interval $[0,t]$.
Indeed, $\Dc$ is the von Neumann algebra generated by the set of projections $\{q_t\mid 0\le t\le1\}$, which form an increasing chain, and
$L^\infty([0,1],\sigma)$ is the von Neumann algebra generated by the set of projections $\{1_{[0,t]}\mid0\le t\le 1\}$.
The map $q_t\mapsto 1_{[0,t]}$
is trace preserving and order preserving and extends to a trace preserving $*$-isomorphism of von Neumann algebras.

By Lemma~\ref{lem:spfill}, we have
\begin{equation}\label{eq:nuTrhosig}
\nu_T=\rho_*\sigma.
\end{equation}

We write $\Mcal\cap\Dc'$ and the restriction of $\tau$ to this algebra as direct integrals with respect to $\Dc$:
\[
\Mcal\cap\Dc'=\int_{[0,1]}^\oplus\Nc(t)\,d\sigma(t),\qquad\tau=\int_{[0,1]}^\oplus\tau_t\,d\sigma(t),
\]
for normal, faithful, tracial states $\tau_t$ on von Neumann algebras $\Nc(t)$.
Then an element $A$ of $\Mcal\cap\Dc'$ is written
\begin{equation}\label{eq:XintXt}
A=\int_{[0,1]}^\oplus A(t)\,d\sigma(t),
\end{equation}
with $A(t)\in\Nc(t)$, and we have $A\in\Dc$ if and only if $A(t)\in\Cpx1$ for $\sigma$-almost every $t\in[0,1]$.
Furthermore, for $A$ as in~\eqref{eq:XintXt}, we have
\begin{equation}\label{eq:EDX}
\Ec_{\Dc}(A)=\int_{[0,1]}^\oplus\tau_t(A(t))\,d\sigma(t).
\end{equation}

The following result is similar to the proof of Theorem~1.2 of~\cite{DSZ}, adapted to the setting here.
\begin{lemma}\label{lem:di}
Writing
$$X_j:=\Ec_{\Mcal\cap\Dc'}(T_j)=\int_{[0,1]}^\oplus X_j(t)\,d\sigma(t),$$
we have $\nu_{X_j(s)}=\delta_{\rho_j(s)}$ for $\sigma$-almost every $s\in[0,1].$
\end{lemma}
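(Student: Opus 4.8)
The plan is to pin down $\nu_{X_j(s)}$ by testing against the increasing net of projections $q_t:=P(T:\rho([0,t]))$ that generates $\Dc$, and identifying the resulting integrals with restrictions of $\nu_{T_j}$.

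First I would record the set-up. Each $q_t$ is $T$-hyperinvariant by Theorem~\ref{thm:main.nu.P}, hence $T_j$-invariant, so $T_j$ is upper triangular with respect to $(q_t)_{0\le t\le1}$; in particular $\nu_{X_j}=\nu_{T_j}$ by Lemma~\ref{lem:22}. Moreover $X_j\in\Mcal\cap\Dc'$ commutes with each $q_t$, and under the identification of $\Dc$ with $L^\infty([0,1],\sigma)$ sending $q_t$ to $\mathbf 1_{[0,t]}$ one has
\[
q_tX_jq_t=\int_{[0,t]}^\oplus X_j(r)\,d\sigma(r)\in q_t(\Mcal\cap\Dc')q_t .
\]

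The key step is to fix $t$ with $q_t\neq0$ and pass to the reduced algebra $q_t\Mcal q_t$, with the renormalised trace $\tau(q_t)^{-1}\tau\restrict$, the abelian subalgebra $\Dc q_t=W^*(\{q_rq_t\mid 0\le r\le1\})$ (generated by the increasing net $(q_rq_t)_r$ running from $0$ to $q_t$), and its relative commutant $q_t(\Mcal\cap\Dc')q_t$. One checks that $q_tT_jq_t$ is upper triangular with respect to $(q_rq_t)_r$ and that its conditional expectation onto $q_t(\Mcal\cap\Dc')q_t$ is $q_tX_jq_t$ (bimodule property of $\Ec_{\Mcal\cap\Dc'}$, since $q_t\in\Dc$); hence Lemma~\ref{lem:22}, applied in the corner, gives $\nu^{(q_t)}_{q_tT_jq_t}=\nu^{(q_t)}_{q_tX_jq_t}$. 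On the one hand, Theorem~\ref{thm:5.6} applied in the corner yields, for every Borel $B\subseteq\Cpx$,
\[
\tau(q_t)\,\nu^{(q_t)}_{q_tX_jq_t}(B)=\int_{[0,t]}\nu_{X_j(r)}(B)\,d\sigma(r).
\]
On the other hand $q_t=P(T:\rho([0,t]))$, so Theorem~\ref{thm:PTlargest}\eqref{it:conc} identifies the joint Brown measure of $q_tTq_t$ with the renormalised restriction of $\nu_T$ to $\rho([0,t])$; taking the $j$-th marginal (Theorem~\ref{thm:main.nu.P}\eqref{it:nuTi}), using $\nu_T=\rho_*\sigma$ from~\eqref{eq:nuTrhosig}, and using that $[0,t]$ and $\rho^{-1}(\rho([0,t]))$ differ by a $\sigma$-null set (because $\sigma(\rho^{-1}(\rho([0,t])))=\nu_T(\rho([0,t]))=\sigma([0,t])$), one obtains $\tau(q_t)\,\nu^{(q_t)}_{q_tT_jq_t}(B)=\int_{[0,t]}\mathbf 1_B(\rho_j(r))\,d\sigma(r)$. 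Comparing these (the statement being trivial when $q_t=0$) gives, for every Borel $B\subseteq\Cpx$ and every $t\in[0,1]$,
\[
\int_{[0,t]}\nu_{X_j(r)}(B)\,d\sigma(r)=\int_{[0,t]}\mathbf 1_B(\rho_j(r))\,d\sigma(r).
\]

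To conclude I would invoke routine measure theory: the intervals $[0,t]$ form a $\pi$-system generating the Borel $\sigma$-algebra of $[0,1]$, so for each fixed $B$ the $\sigma$-integrable functions $r\mapsto\nu_{X_j(r)}(B)$ and $r\mapsto\mathbf 1_B(\rho_j(r))$ agree $\sigma$-almost everywhere; running $B$ over a countable $\pi$-system generating the Borel sets of $\Cpx$ (say the rational boxes) and discarding the countable union of the exceptional null sets, one finds a $\sigma$-null set $N$ such that for $r\notin N$ the probability measures $\nu_{X_j(r)}$ and $\delta_{\rho_j(r)}$ agree on that $\pi$-system, hence coincide. The main obstacle I anticipate is the bookkeeping in the key step: keeping straight, inside $q_t\Mcal q_t$, the correct abelian subalgebra, trace normalisation and base measure, verifying that $q_tT_jq_t$ remains upper triangular for the reduced net and that the corner conditional expectation is the restriction of $\Ec_{\Mcal\cap\Dc'}$, and matching the $\rho$-pushforward description of $\nu_T$ with its direct-integral description over $\Dc$; once these identifications are in place, the displayed identities and the final almost-everywhere statement are mechanical.
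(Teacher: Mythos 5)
Your proposal is correct and shares its main ingredients with the paper's proof (the corner decomposition, Lemma~\ref{lem:22} applied in a corner, Theorem~\ref{thm:5.6}, and the identification of the corner Brown measure of $T$), but the way you finish is genuinely different. The paper works with intermediate corners $(q_{t(2)}-q_{t(1)})\Mcal(q_{t(2)}-q_{t(1)})$, uses Corollary~\ref{cor:middle} to see that the joint corner Brown measure is concentrated in $\rho([0,t(2)])\setminus\rho([0,t(1)])$, passes to the $j$-th marginal to conclude $\nu_{X_j(r)}$ is concentrated in $\overline{\operatorname{conv}}\bigl(\rho_j((t(1),t(2)])\bigr)$ for a.e.\ $r$, and then lets a countable family of rational intervals shrink and invokes continuity of $\rho_j$ to pin each $\nu_{X_j(r)}$ down to a point. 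You instead compute an exact identity
\[
\int_{[0,t]}\nu_{X_j(r)}(B)\,d\sigma(r)=\int_{[0,t]}\mathbf 1_B(\rho_j(r))\,d\sigma(r),
\]
valid for every $t$ and every Borel $B$, by matching the direct-integral description of $\nu^{(q_t)}_{q_tX_jq_t}$ against the marginal of the renormalised restriction of $\nu_T$ to $\rho([0,t])$ (which requires the observation that $\rho^{-1}(\rho([0,t]))$ and $[0,t]$ differ by a $\sigma$-null set), and then finish with a standard $\pi$--$\lambda$ argument over a countable generating family of Borel sets. Your computation buys you a cleaner and more precise statement (you exactly identify the disintegration of $\nu_{X_j}$ rather than squeezing it into shrinking sets), at the price of a bit more bookkeeping to verify that $q_tT_jq_t$ is upper triangular for the reduced net and that $\Ec_{q_t(\Mcal\cap\Dc')q_t}(q_tT_jq_t)=q_tX_jq_t$; these follow from the $\Dc$-bimodule property of $\Ec_{\Mcal\cap\Dc'}$ and from $q_r\le q_t$ being $T_j$-invariant, just as you anticipate. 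Both approaches are valid; the paper's avoids the exact push-forward identity and the null-set bookkeeping in exchange for the topological argument via continuity of $\rho_j$.
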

\begin{proof}
By Theorem~\ref{thm:5.6}, the Brown measure of $X_j$ equals the integral of Brown measures of the $X_j(t)$, namely, for every Borel set
$B\subseteq\Cpx$, we have
\[
\nu_{X_j}(B)=\int_{[0,1]}\nu_{X_j(t)}(B)\,d\sigma(t).
\]
Given $0\le t(1)<t(2)\le1$, the projection $q_{t(2)}-q_{t(1)}$ is identified with the characteristic function
$1_{(t_1,t_2]}$ and we have
\begin{equation}\label{eq:qt2t1X}
(q_{t(2)}-q_{t(1)})X_j(q_{(t(2)}-q_{t(1)})=\int_{(t(1),t(2)]}^\oplus X_j(t)\,d\sigma(t).
\end{equation}
If $\tau(q_{t(1)})<\tau(q_{t(2)})$, then
by Theorem~\ref{thm:5.6},
the Brown measure of the element~\eqref{eq:qt2t1X} 
computed with respect to the renormalisation of the restriction of $\tau$, is given (in the notation mentioned after that theorem) by
\[
\nu_{(q_{t(2)}-q_{t(1)})X_j(q_{t(2)}-q_{t(1)})}^{(q_{t(2)}-q_{t(1)})}=\frac1{\sigma((t(1),t(2)])}\int_{(t(1),t(2)]}\nu_{X_j(t)}\,d\sigma(t).
\]
The operator $(q_{t(2)}-q_{t(1)})X_j(q_{t(2)}-q_{t(1)})$ is the value of the conditional expectation $\Ec_{((q_{t(2)}-q_{t(1)})\Dc)'}$,
from $(q_{t(2)}-q_{t(1)})\Mcal(q_{t(2)}-q_{t(1)})$ onto $(q_{t(2)}-q_{t(1)})(\Dc'\cap\Mcal)$, applied to the operator $(q_{t(2)}-q_{t(1)})T_j(q_{t(2)}-q_{t(1)})$.
Thus, by applying Lemma~\ref{lem:22} in this setting, we have
\begin{equation}\label{eq:nuqXjq}
\nu_{(q_{t(2)}-q_{t(1)})X_j(q_{t(2)}-q_{t(1)})}^{(q_{t(2)}-q_{t(1)})}=\nu_{(q_{t(2)}-q_{t(1)})T_j(q_{t(2)}-q_{t(1)})}^{(q_{t(2)}-q_{t(1)})}.
\end{equation}
By Corollary~\ref{cor:middle}, the joint Brown measure
\begin{equation}\label{eq:nuqTq}
\nu_{(q_{t(2)}-q_{t(1)})T(q_{t(2)}-q_{t(1)})}^{(q_{t(2)}-q_{t(1)})}
\end{equation}
is concentrated in $\rho([0,t(2)])\setminus\rho([0,t(1)])$, which is contained in $\rho((t(1),t(2)])$.
Since the Brown measure~\eqref{eq:nuqXjq} is the $j$-th marginal distribution
of the measure~\eqref{eq:nuqTq},
the former must be concentrated in the closed convex hull, $\overline{\operatorname{conv}}(\rho_j((t(1),t(2)]))$ of $\rho_j((t(1),t(2)])$.
Therefore, for $\sigma$-almost every $t\in(t(1),t(2)]$,
the Brown measure of $X_j(t)$ is concentrated in $\overline{\operatorname{conv}}(\rho_j((t(1),t(2)]))$.
The same statement is tautologically true when $\tau(q_{t(1)})=\tau(q_{t(2)})$, for then $\sigma((t(1),t(2)])=0$.

Thus, we find a $\sigma$-null set $N\subseteq[0,1]$
so that for all $t\in N^c$ and all rational $t(1)$ and $t(2)$ with $0\le t(1)<t\le t(2)\le1$, the
Brown measure of $X_j(t)$ is concentrated in $\overline{\operatorname{conv}}(\rho_j((t(1),t(2)]))$.
By continuity of $\rho_j$, for all $t\in N^c$ we have $\nu_{X_j(t)}=\delta_{\rho_j(t)}$.
\end{proof}

\begin{lemma}\label{lem:fX}
Let $X_j$ be as in Lemma~\ref{lem:di}.
Let $f$ be a polynomial in $n$ commuting variables and let $Y=f(X_1,\ldots,X_n)$.
Writing $Y$ as a direct integral over $\Dc$, we have
\begin{equation}\label{eq:Ydi}
Y=\int_{[0,1]}^\oplus Y(t)\,d\sigma(t),
\end{equation}
where $Y(t)=f(X_1(t),\ldots,X_n(t))$.
Then for $\sigma$-almost every $t\in[0,1]$, the Brown measure $\nu_{Y(t)}$ of $Y(t)$ is the Dirac mass $\delta_{f(\rho_1(t),\ldots,\rho_n(t))}$.
\end{lemma}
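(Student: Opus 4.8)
The plan is to reduce the statement to a pointwise assertion in the direct integral over $\Dc$ and then invoke Proposition~\ref{prop:polyqn}.

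First I would check that the fibres $X_1(t),\ldots,X_n(t)$ pairwise commute for $\sigma$-almost every $t$. Each $q_t=P(T:\rho([0,t]))$ is a $T$-hyperinvariant projection by Theorem~\ref{thm:main.nu.P}; since each $T_j$ commutes with the whole family $T$, it lies in the commutant $\alg(\{T_1,\ldots,T_n\})'$, so $q_t$ is $T_j$-invariant and hence $T_j\in\Uc$ for every $j$. By Lemma~\ref{lem:U} the map $\Ec_{\Mcal\cap\Dc'}$ is multiplicative on $\Uc$, so $X_jX_k=\Ec_{\Mcal\cap\Dc'}(T_jT_k)=\Ec_{\Mcal\cap\Dc'}(T_kT_j)=X_kX_j$ for all $j,k$. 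Decomposing each vanishing commutator $[X_j,X_k]$ as a direct integral over $\Dc$ and discarding the finitely many associated $\sigma$-null sets then produces a $\sigma$-null set $Z$ such that $X_1(t),\ldots,X_n(t)$ pairwise commute in $\Nc(t)$ for $t\notin Z$.

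Next I would use Lemma~\ref{lem:di}: after intersecting the $n$ null sets it provides and enlarging $Z$, we may assume $\nu_{X_j(t)}=\delta_{\rho_j(t)}$ for all $j$ and all $t\notin Z$, which by the Haagerup--Schultz criterion recalled in \S\ref{subsec:sotqn} means that $N_j(t):=X_j(t)-\rho_j(t)1$ is s.o.t.-quasinilpotent in $(\Nc(t),\tau_t)$. Fixing $t\notin Z$ and expanding $f$ about the point $(\rho_1(t),\ldots,\rho_n(t))$, one writes $f(z_1,\ldots,z_n)=c_t+g_t(z_1-\rho_1(t),\ldots,z_n-\rho_n(t))$ with $c_t=f(\rho_1(t),\ldots,\rho_n(t))$ and $g_t$ a polynomial in $n$ commuting variables satisfying $g_t(0,\ldots,0)=0$; hence $Y(t)=f(X_1(t),\ldots,X_n(t))=c_t1+g_t(N_1(t),\ldots,N_n(t))$. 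Since $N_1(t),\ldots,N_n(t)$ are commuting s.o.t.-quasinilpotent operators, Proposition~\ref{prop:polyqn} gives that $g_t(N_1(t),\ldots,N_n(t))$ is s.o.t.-quasinilpotent, so its Brown measure is $\delta_0$, and adding the scalar $c_t1$ (which translates the Brown measure by $c_t$) yields $\nu_{Y(t)}=\delta_{c_t}=\delta_{f(\rho_1(t),\ldots,\rho_n(t))}$ for every $t\notin Z$.

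The step I expect to require the most care is not the computation but the bookkeeping surrounding the direct integral: justifying that the fibrewise decomposition of $Y$ is indeed $t\mapsto f(X_1(t),\ldots,X_n(t))$---i.e.\ that direct integral decompositions are compatible with polynomial operations in the $X_j$, which is the assertion already made in the statement of the lemma---and that the exact commutation relations among $X_1,\ldots,X_n$ pass to almost every fibre. Once these are in hand, Proposition~\ref{prop:polyqn} together with the characterisation of Dirac Brown measure by s.o.t.-quasinilpotence does the rest.
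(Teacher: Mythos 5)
Your proposal is correct and follows essentially the same route as the paper: invoke Lemma~\ref{lem:di} to write $X_j(t)=\rho_j(t)+X_j^{\mathrm{o}}(t)$ with $X_j^{\mathrm{o}}(t)$ s.o.t.-quasinilpotent for a.e.\ $t$, then apply Proposition~\ref{prop:polyqn} (after translation) to conclude that $Y(t)-f(\rho(t))$ is s.o.t.-quasinilpotent. The extra care you take in verifying that commutativity of the $X_j$ passes to almost every fibre is a detail the paper leaves implicit, and it is a worthwhile check since Proposition~\ref{prop:polyqn} requires it.
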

\begin{proof}
Recall that the Brown measure of an element $A\in\Mcal$ is the Dirac mass at $z\in\Cpx$ if and only if $A-z$ is s.o.t.-quasinilpotent.
By Lemma~\ref{lem:di}, there is a $\sigma$-null set $N\subseteq[0,1]$ such that
for all $t\in N^c$ and every $j\in\{1,\ldots,n\}$, $X_j(t)=\rho_j(t)+X_j\oup(t)$, where $X_j\oup(t)$ is an
s.o.t.-quasinilpotent operator.
By applying Proposition~\ref{prop:polyqn} with translation, it follows that $Y(t)=f(\rho_1(t),\ldots,\rho_n(t))+Y\oup(t)$,
where $Y\oup(t)$ is s.o.t.-quasinilpotent.
Thus, for every $t\in N^c$, $\nu_{Y(t)}=\delta_{f(\rho_1(t),\ldots,\rho_n(t))}$, as required.
\end{proof}

\begin{proof}[Proof of Theorem \ref{thm:simultut}]
We have $S=f(T_1,\ldots,T_n)$ for a polynomial $f$ in $n$ commuting variables.
By Lemma~\ref{lem:U}, $S$ is upper triangular with respect to $(q_t)_{0\le t\le 1}$.
Let $Y=\Ec_{\Mcal\cap\Dc'}(S)$ and $N=\Ec_\Dc(S)$.
By Lemma~\ref{lem:22}, $\nu_S=\nu_Y$ and $\nu_{S-N}=\nu_{Y-N}$.

Letting $X_j$ be as in Lemma~\ref{lem:di}, by Lemma~\ref{lem:U}, we have $Y=f(X_1,\ldots,X_n)$.
We write $Y$ as a direct integral as in~\eqref{eq:Ydi} in Lemma~\ref{lem:fX}.
By Theorem~\ref{thm:5.6} and Lemma~\ref{lem:fX}, we have
\[
\nu_Y=\int_{[0,1]}\delta_{f(\rho(t))}\,d\sigma(t).
\]
Thus,
\[
\nu_Y=(f\circ\rho)_*\sigma=f_*(\rho_*\sigma)=f_*\nu_T,
\]
where for the last equality we used~\eqref{eq:nuTrhosig}.

From the direct integral decomposition~\eqref{eq:Ydi} of $Y$, applying $\Ec_\Dc$ and~\eqref{eq:EDX}
we get the direct integral decomposition
\begin{equation}\label{eq:Ndi}
N=\int^\oplus_{[0,1]}\tau_t(Y(t))\,d\sigma(t).
\end{equation}
Brown's version of Lidskii's theorem~\cite{B86}
states that for any operator $T$ in a finite von Neumann algebra with trace $\tau$, we have $\tau(T)=\int_\Cpx z\,d\nu_T(z)$,
where the Brown measure $\nu_T$ is taken with respect to $\tau$.
This, combined with
Lemma~\ref{lem:fX} implies
\begin{equation}\label{eq:tauYt}
\tau_t(Y(t))=f(\rho(t))
\end{equation}
for $\sigma$-almost all $t\in[0,1]$.
From~\eqref{eq:Ndi} and Theorem~\ref{thm:5.6},
since the Brown measure of a scalar operator is the Dirac mass at that scalar, we have
\[
\nu_N=\int\delta_{f(\rho(t))}\,d\sigma(t)=\nu_Y.
\]
This implies $\nu_N=\nu_S$.

Similarly, 
\[
Y-N=\int^\oplus_{[0,1]}\big(Y(t)-\tau_t(Y(t))\big)\,d\sigma(t).
\]
By Lemma~\ref{lem:fX}, $\nu_{Y(t)-\tau_t(Y(t))}=\delta_0$ for $\sigma$-almost all $t\in[0,1]$.
Thus, using again Theorem~\ref{thm:5.6}, we have
\[
\nu_{Y-N}=\int_{[0,1]}\delta_0\,d\sigma(t)=\delta_0.
\]
This implies $\nu_{S-N}=\delta_0$, which means that $S-N$ is s.o.t.-quasinilpotent.
\end{proof}

The following is essentially a corollary of the above proof of Theorem \ref{thm:simultut}.
\begin{prop}\label{prop:EDhomom}
For every polynomial $f$ in $n$ commuting variables, we have
\begin{equation}\label{eq:EDT}
\Ec_\Dc\big(f(T_1,\ldots,T_n)\big)=\int^\oplus_{[0,1]}f(\rho(t))\,d\sigma(t).
\end{equation}
Thus,
the restriction of $\Ec_\Dc$ to the unital Banach algebra, $\Afr$, that is generated by $\{T_1,\ldots,T_n\}$
is an algebra homomorphism.
\end{prop}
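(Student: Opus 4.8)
The plan is to read formula~\eqref{eq:EDT} straight out of the machinery already assembled for Theorem~\ref{thm:simultut}, and then to promote multiplicativity from polynomials to the whole Banach algebra $\Afr$ by a routine density argument. First I would note that, by construction, each $q_t=P(T:\rho([0,t]))$ is $T$-hyperinvariant (Theorem~\ref{thm:main.nu.P}), hence $T_j$-invariant for every $j$, so each $T_j$ lies in the upper-triangular algebra $\Uc$ of Section~\ref{sec:ut}, and therefore so does $S:=f(T_1,\ldots,T_n)$ for any polynomial $f$. By Lemma~\ref{lem:U}, $\Ec_{\Mcal\cap\Dc'}$ restricts to an algebra homomorphism on $\Uc$, so $Y:=\Ec_{\Mcal\cap\Dc'}(S)=f(X_1,\ldots,X_n)$ with $X_j=\Ec_{\Mcal\cap\Dc'}(T_j)$, and its direct-integral fibres are $Y(t)=f(X_1(t),\ldots,X_n(t))$ exactly as in Lemma~\ref{lem:fX}.

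Next, since $\Dc$ is abelian it is contained in $\Mcal\cap\Dc'$, so the tower property of the $\tau$-preserving conditional expectations gives $\Ec_\Dc(S)=\Ec_\Dc(Y)$; combining this with~\eqref{eq:EDX} yields
\[
\Ec_\Dc(S)=\int_{[0,1]}^\oplus \tau_t(Y(t))\,d\omega(t).
\]
Finally, Lemma~\ref{lem:fX} gives $\nu_{Y(t)}=\delta_{f(\rho(t))}$ for almost every $t$, so by Brown's version of Lidskii's theorem $\tau_t(Y(t))=f(\rho(t))$ almost everywhere --- this is precisely~\eqref{eq:tauYt} --- and~\eqref{eq:EDT} follows.

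For the homomorphism assertion I would argue as follows. If $f$ and $g$ are polynomials, then $f(T_1,\ldots,T_n)\,g(T_1,\ldots,T_n)=(fg)(T_1,\ldots,T_n)$ by commutativity, so applying~\eqref{eq:EDT} to the polynomial $fg$ gives
\[
\Ec_\Dc\big(f(T)g(T)\big)=\int_{[0,1]}^\oplus f(\rho(t))\,g(\rho(t))\,d\omega(t);
\]
since $\Dc$ is abelian this is simply the product in $\Dc\cong L^\infty([0,1],\omega)$ of $\int^\oplus f(\rho(t))\,d\omega(t)$ and $\int^\oplus g(\rho(t))\,d\omega(t)$, hence equals $\Ec_\Dc(f(T))\,\Ec_\Dc(g(T))$. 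Thus $\Ec_\Dc$ is multiplicative on the dense subalgebra of polynomials in $T_1,\ldots,T_n$; since $\Ec_\Dc$ is norm-contractive and multiplication is jointly continuous on norm-bounded sets, approximating arbitrary elements of $\Afr$ in norm by bounded sequences of such polynomials extends multiplicativity to all of $\Afr$.

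I do not expect a genuine obstacle: the substantive work --- identifying the fibrewise Brown measures $\nu_{Y(t)}$ with point masses, which rests ultimately on Proposition~\ref{prop:polyqn} --- was already carried out for Theorem~\ref{thm:simultut}, and the only new ingredients are the tower property $\Ec_\Dc\circ\Ec_{\Mcal\cap\Dc'}=\Ec_\Dc$ and the density/continuity extension. If anything needs care it is the bookkeeping: checking that the direct-integral element built from scalar fibres $c(t)\in\Cpx 1$ is the element of $\Dc$ with symbol $c$, and that products of such correspond to pointwise products --- but this is immediate from the identification of $\Dc$ with $L^\infty([0,1],\omega)$ used throughout Section~\ref{sec:ut}.
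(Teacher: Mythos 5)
Your proposal is correct and takes essentially the same route as the paper's (very terse) proof: the paper just cites~\eqref{eq:Ndi} and~\eqref{eq:tauYt}, which it established inside the proof of Theorem~\ref{thm:simultut}, and you rederive them by the same means (tower property for $\Ec_\Dc=\Ec_\Dc\circ\Ec_{\Mcal\cap\Dc'}$, homomorphism property of $\Ec_{\Mcal\cap\Dc'}\restrict_\Uc$ from Lemma~\ref{lem:U}, Lemma~\ref{lem:fX} plus Brown--Lidskii to identify $\tau_t(Y(t))=f(\rho(t))$). Your spelled-out polynomial-multiplicativity computation and the norm-continuity extension to $\Afr$ are exactly what the paper's ``it follows immediately'' and ``by boundedness of $\Ec_\Dc$'' abbreviate.
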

\begin{proof}
The equality~\eqref{eq:EDT} follows from~\eqref{eq:Ndi}
and~\eqref{eq:tauYt}.
Now it follows immediately that the restriction of $\Ec_\Dc$ to the algebra generated by $\{T_1,\ldots,T_n\}$
is an algebra homomorphism.
By boundedness of $\Ec_\Dc$, it follows that its restriction to $\Afr$ is also an algebra homomorphism.
\end{proof}

\section{Taylor's holomorphic functional calculus and direct integrals}
\label{sec:holofc}

In this section we show
that the Taylor joint spectrum~\cite{T70} and the Taylor holomorphic function calculus~\cite{T70b}
of commuting operators on a Hilbert space
thread through direct integrals in the natural way.
For this, we will use F.-H.\ Vasilescu's formula~\cite{V78} for the Taylor holomorphic functional calculus,
that is an analogue of Martinelli's formula.
This result was also described in~\cite{V79} and we will use the notation employed there.
See also the related work of V.\ M\"uller~\cite{M02}.

Consider an $n$-tuple $T=(T_1,\ldots,T_n)$ of commuting operators on a Hilbert space $\HEu$.
Taylor's joint spectrum of $T$ is denoted by $\Sp(T)$,
and the Taylor functional calculus assigns, to every function $f$ that is holomorphic
on a neighborhood of $\Sp(T)$, an operator $f(T)$,
which belongs to the double commutant $\Afr''$ of the unital Banach algebra $\Afr$ generated by $\{T_1,\ldots,T_n\}$.
This map $f\mapsto f(T)$ is an algebra homomorphism
from the set of germs of holomorphic functions on neighborhoods of $\Sp(T)$ into $\Afr''$,
sending the $j$-th coordinate function to $T_j$, for every $j$.
(Taylor's results are more general, namely for commuting operators on a Banach space.)

We now describe Vasilescu's formula for the Taylor functional calculus.
We write $s=(s_1,\ldots,s_n)$ for indeterminates $s_1,\ldots,s_n$ and we let
$\Lambda[s]$ be the algebra of all exterior forms
in $s_1,\ldots,s_n$, equipped with the wedge product.
We give it the inner product so that 
\[
\{\Omega\}\cup\{s_{i_1}\wedge\cdots\wedge s_{i_p}\mid 1\le i_1<i_2<\cdots<i_p\le n\}
\]
is an orthonormal basis, where $\Omega$ is the unit $0$-form such that $s_j\wedge\Omega=s_j$.
Let $L(s_j)$ denote the linear map on $\Lambda[s]$ given by $\xi\mapsto s_j\wedge\xi$.
This is a partial isometry.

Let $\delta_T$ be the operator on the Hilbert space $\Lambda[s]\otimes\HEu$ given by 
\[
\delta_T=\sum_{j=1}^n L(s_j)\otimes T_j.
\]
Note that $\delta_T^2=0$.
Set 
$$\alpha_T=\delta_T+(\delta_T)^*.$$
In~\cite{V77}, Vasilescu showed that $\Sp(T)$ consists of all $w=(w_1,\ldots,w_n)$ such that $\alpha_{T-w}$
is not invertible in $B(\Lambda[s]\otimes\HEu)$, where $T-w=(T_1-w_1,\ldots,T_n-w_n).$

For an open subset $U\subseteq\Cpx^n$ and a Hilbert space $\HEu$, let $C^\infty(U,\HEu)$ denote the set of all $\HEu$-valued,
infinitely differentiable functions (in the variables $z_1,\overline{z_1},\ldots,z_n,\overline{z_n}$, say).

If $U\cap\Sp(T)=\emptyset,$ then the function
\[
z\mapsto(\alpha_{T-z})^{-1},\quad (z\in U),
\]
is infinitely differentiable and is, moreover,
analytic in the variables $z_1,\overline{z_1},\ldots,\allowbreak z_n,\overline{z_n}$,
in the sense of being given locally by power series in these $2n$ variables with positive radii of convergence.
(Note that we do not mean that the map is holomorphic in $z_1,\ldots,z_n$.)
Indeed, for $z,w\in\Cpx^n$, we have 
$$\alpha_{z-w}=\Big(\sum_{j=1}^n(z_j-w_j)L(s_j)+(\overline{z_j}-\overline{w_j})(L(s_j))^*\Big)\otimes I_\HEu.$$
Since each $L(s_j)$ is a partial isometry whose square is zero,
the inequality $\|\alpha_{z-w}\|\le \|z-w\|_1$ holds.
By linearity, when $w\in\Cpx^n\setminus\Sp(T)$, we have
$$\alpha_{T-z}=\alpha_{T-w}+\alpha_{w-z}=(1+\alpha_{w-z}(\alpha_{T-w})^{-1})\cdot \alpha_{T-w}.$$
Thus, for $\|z-w\|_1<\|(\alpha_{T-w})^{-1}\|^{-1}$, we have
\begin{equation}\label{eq:alphaexp}
\alpha_{T-z}^{-1}=\alpha_{T-w}^{-1}\big(1-\alpha_{z-w}(\alpha_{T-w})^{-1}\big)^{-1}=\sum_{k=0}^\infty(\alpha_{T-w})^{-1}\big(\alpha_{z-w}(\alpha_{T-w})^{-1}\big)^k.
\end{equation}
This yields a power series expansion in the variables
\begin{equation}\label{eq:zwvars}
z_1-w_1,\,\overline{z_1}-\overline{w_1},\cdots,z_n-w_n,\,\overline{z_n}-\overline{w_n}
\end{equation}
which converges whenever $\|z-w\|_1<\|(\alpha_{T-w})^{-1}\|^{-1}$ and whose coefficients are noncommutative polynomials in the
$L(s_j)\otimes I_\HEu$, $L(s_j)^*\otimes I_\HEu$ and $(\alpha_{T-w})^{-1}$.

Consider also the symbols $\dsp\zbar=(\dsp\overline{z_1},\ldots,\dsp\overline{z_n})$ and the 
exterior algebra $\Lambda[s,\dsp\zbar]$ in $2n$ indeterminates.
As vector spaces, we have $\Lambda[s,\dsp\zbar]=\Lambda[\dsp\zbar]\otimes\Lambda[s]$ which we could also write as
$\Lambda[\dsp\zbar]\wedge\Lambda[s]$.
We will use this decomposition when convenient, also write $L(s_j)$ and $L(\dsp\overline{z_j})$
for the left wedging operators in $B(\Lambda[s,\dsp\zbar])$.
Define the operator
\[
\beta_T:C^\infty(U,\Lambda[s,\dsp\zbar]\otimes\HEu)\to C^\infty(U,\Lambda[s,\dsp\zbar]\otimes\HEu)
\]
by 
\[
(\beta_Tg)(z)=\big(I_{\Lambda[\dsp\zbar]}\otimes(\alpha_{z-T})^{-1}\big)g(z).
\]
We consider also the operator
\[
\overline{\partial}=\big(L(\dsp\overline{z_1})\otimes I_\HEu\big)\frac{\partial}{\partialsp\overline{z_1}}+\cdots
+\big(L(\dsp\overline{z_n})\otimes I_\HEu\big)\frac{\partial}{\partialsp\overline{z_n}}
\]
on $C^\infty(U,\Lambda[s,\dsp\zbar]\otimes\HEu)$, given by 
\[
(\overline{\partial}g)(z)=\sum_{j=1}^n\big(L(\dsp\overline{z_j})\otimes I_\HEu\big)\frac{\partial g}{\partialsp\overline{z_j}}(z).
\]
We consider the linear operator
\[
M_T=\beta_T\circ\big(\overline{\partial}\circ\beta_T\big)^{n-1}\circ L(s_1)\circ L(s_2)\circ \cdots\circ L(s_n)
\]
from $C^\infty(U,\Lambda[s,\dsp\zbar]\otimes\HEu)$ to itself.
This is the same as the operator $M_T$
used by Vasilescu and described in Equation~(2.1) of~\cite{V79}
(with some minor notational differences).
We now state Vasilescu's Theorem~2.1 of~\cite{V79}, which expresses Taylor's holomorphic functional calculus
for commuting Hilbert space operators using an analogue of Martinelli's formula.
We identify $\HEu$ with $\Omega\otimes\HEu\subseteq\Lambda[s,\dsp\zbar]\otimes\HEu$, where $\Omega$ is the
unit $0$-form in $\Lambda[s,\dsp\zbar]$,
and we let $\Hol(W)$
denote the set of all complex analytic functions on an open subset $W$ of $\Cpx^n$,
endowed with the topology of uniform convergence on compact subsets.

\begin{thm}(\cite{V79}, \cite{V78}).
\label{thm:Vasilescu}
Suppose $T=(T_1,\ldots,T_n)$ is a tuple of commuting operators on a Hilbert space $\HEu$.
Let $W$ be an open subset of $\Cpx^n$ containing the Taylor joint spectrum $\Sp(T)$.
Then the formula
\begin{equation}\label{eq:fTint}
f(T)x=\frac1{(2\pi i)^n}\int_{\partial\Delta}f(z)(M_Tx)(z)\wedge dz_1\wedge\cdots\wedge dz_n,\qquad(x\in\HEu),
\end{equation}
where $\Delta$ is any bounded, open subset of $W$ with $\Sp(T)\subseteq\Delta$ and whose boundary $\partial\Delta$
is a finite union of smooth surfaces contained in $W$,
defines a continuous, unital homomorphism $f\mapsto f(T)$ from the unital algebra $\Hol(W)$ into $B(\HEu)$,
sending the coordinate function $z_j$ to $T_j$ for each $j$.
Moreover, the integral~\eqref{eq:fTint} does not depend on the choice of $\Delta$ satisfying the above conditions.
\end{thm}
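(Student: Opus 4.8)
The plan is to follow Vasilescu~\cite{V79,V78}, whose theorem this is: one option is simply to cite those papers, but the structure of the argument is as follows. The first point is that, by Vasilescu's description of $\Sp(T)$ recalled above, $\alpha_{T-z}$ is invertible in $B(\Lambda[s]\otimes\HEu)$ for every $z$ in the open set $W\setminus\Sp(T)$, and by the local expansion~\eqref{eq:alphaexp} the map $z\mapsto(\alpha_{T-z})^{-1}$ is analytic there in the real variables $z_1,\overline{z_1},\ldots,z_n,\overline{z_n}$. Hence, for each $x\in\HEu$, the $\Lambda[s,\dzb]\otimes\HEu$-valued function $z\mapsto(M_Tx)(z)$ is $C^\infty$ on $W\setminus\Sp(T)$, and since any admissible $\Delta$ satisfies $\Sp(T)\subseteq\Delta$, the boundary $\partial\Delta$ lies in $W\setminus\Sp(T)$, so the integral in~\eqref{eq:fTint} is well defined.

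Next I would establish independence of $\Delta$. The key is that, for $f\in\Hol(W)$, the $\Lambda[s]\otimes\HEu$-valued $n$-form $f(z)\,(M_Tx)(z)\wedge dz_1\wedge\cdots\wedge dz_n$ is a closed differential form on $W\setminus\Sp(T)$; this is the operator-valued analogue of the closedness of the Martinelli--Bochner kernel, and it follows from $\overline{\partial}f=0$ together with the structural identities for $\beta_T$ and $\overline{\partial}$ that are built into the definition of $M_T$. Given two admissible domains, Stokes' theorem applied on a chain interpolating their boundaries then yields equality of the corresponding integrals. Linearity of $f\mapsto f(T)$ is evident from the integral, and continuity follows from~\eqref{eq:fTint} together with the local norm bounds on $(\alpha_{T-z})^{-1}$ provided by the expansion~\eqref{eq:alphaexp}.

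It remains to identify the map as a unital homomorphism sending $z_j$ to $T_j$. For the normalisation, taking $f$ a polynomial, the integral~\eqref{eq:fTint} reduces, via the operator Martinelli--Bochner formula (after enlarging $\Delta$ to a polydisk and using independence of $\Delta$), to the value of that polynomial at $T$; in particular $1\mapsto I_\HEu$ and $z_j\mapsto T_j$. The genuinely substantial step, which I expect to be the main obstacle, is multiplicativity $(fg)(T)=f(T)g(T)$. The cleanest route is to invoke Taylor's functional calculus~\cite{T70b}, which already supplies an algebra homomorphism on germs of holomorphic functions near $\Sp(T)$ sending each $z_j$ to $T_j$, and then to show that the map defined by~\eqref{eq:fTint} satisfies the uniqueness-determining properties of that calculus (agreement on rational functions with poles off $\Sp(T)$, continuity), so that multiplicativity is inherited. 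Alternatively one can prove it directly by a Fubini-type manipulation of two Martinelli kernels, which is Vasilescu's original route. Either way, every other assertion in the theorem reduces to the bookkeeping indicated above, and this multiplicativity verification is where the real work lies.
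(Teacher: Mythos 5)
The paper does not prove this theorem: it is stated with the citation $(\cite{V79}, \cite{V78})$ precisely because it is Vasilescu's result, used here as a black box. Your opening remark that ``one option is simply to cite those papers'' is in fact the option the paper takes.

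Your sketch of the underlying argument is a reasonable outline of what Vasilescu does: well-definedness of the integral since $\partial\Delta\subseteq W\setminus\Sp(T)$ where $\alpha_{T-z}$ is invertible; independence of $\Delta$ via Stokes and closedness of the Martinelli-type form; continuity and linearity from the integral representation; polynomial normalisation via shrinking to a polydisk; and multiplicativity as the substantive step. However, I would warn you off the route you call ``cleanest'' for multiplicativity, namely invoking Taylor's calculus~\cite{T70b} and a uniqueness argument. The paper itself flags exactly this subtlety immediately after the theorem statement: ``In \cite{V78}, Vasilescu does not claim that his functional calculus is the same as Taylor's.\ \ldots\ However, that the two calculi coincide follows from results of M.\ Putinar~\cite{P83} (or of V.\ M\"uller~\cite{M02}).'' In other words, the identification of~\eqref{eq:fTint} with Taylor's calculus is a separate, later result; using it to establish multiplicativity of~\eqref{eq:fTint} would either be circular (if you already assume the calculi coincide) or would require importing Putinar's or M\"uller's arguments in addition to Taylor's. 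Since the theorem as stated only asserts that $f\mapsto f(T)$ is a continuous unital homomorphism sending $z_j\mapsto T_j$ (not that it equals Taylor's), the direct Fubini-type manipulation of two Martinelli kernels---your second-listed alternative, which is Vasilescu's actual argument---is the self-contained route and the one you should take if you wish to prove rather than cite the theorem.
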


In \cite{V78}, Vasilescu does not claim that his functional calculus is the same as Taylor's.
For application in Section~\ref{sec:hfcnu}, we don't require it to be so.
However, that the two calculi coincide follows from results of M.\ Putinar~\cite{P83} (or of V.\ M\"uller~\cite{M02}).

Suppose now that $\HEu$ is a Hilbert space that is a direct integral
\[
\HEu=\int_Z^\oplus\HEu(\zeta)\,d\omega(\zeta)
\]
of Hilbert spaces $\HEu(\zeta)$ for a Borel probability measure $\omega$ on a Polish space $Z$.
Let $\Dc\cong L^\infty(Z,\omega)$ denote the diagonal operators (with respect to this direct integral decomposition) in $B(\HEu)$
and let $\Dc'\cap B(\HEu)$ denote the commutant of $\Dc$ in $B(\HEu)$.
Thus, for $A\in\Dc'\cap B(\HEu)$ we write
\[
A=\int_Z^\oplus A(\zeta)\,d\omega(\zeta)
\]
for $A(\zeta)\in B(\HEu(\zeta))$.

Suppose $T=(T_1,\ldots,T_n)$ is a tuple of commuting operators in $\Dc'$
and consider direct integral representations
\begin{equation}\label{eq:Tjdi}
T_j=\int_Z^\oplus T_j(\zeta)\,d\omega(\zeta)
\end{equation}
for each $j$.
After redefining these
on an $\omega$-null set, if necessary, we assume $T_j(\zeta)$ and $T_i(\zeta)$ commute for all $\zeta\in Z$ and all $i$ and $j$.
Let us write $T(\zeta)=(T_1(\zeta),\ldots,T_n(\zeta))$.

\begin{lemma}\label{lem:SpTzeta}
For every $z\in\Cpx^n\setminus\Sp(T)$, there exists a neighborhood $V$ of $z$ in $\Cpx^n$ and an $\omega$-null set $N\subseteq Z$
such that for every $\zeta\in Z\setminus N$ we have $\Sp(T(\zeta))\cap V=\emptyset$.
\end{lemma}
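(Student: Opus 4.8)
The plan is to use the characterization of the Taylor spectrum in terms of invertibility of $\alpha_{T-w}$ (Vasilescu's result, recalled above) to convert the statement about $\Sp(T)$ into a statement about a single operator on the direct-integral Hilbert space, and then apply a routine measurable-selection/null-set argument. First I would fix $z\in\Cpx^n\setminus\Sp(T)$. Then $\alpha_{T-z}$ is invertible in $B(\Lambda[s]\otimes\HEu)$; since $\alpha_{T-z}=\sum_j L(s_j)\otimes(T_j-z_j)+L(s_j)^*\otimes(T_j-z_j)^*$ and each $T_j$ lies in $\Dc'$, the operator $\alpha_{T-z}$ lies in the commutant of $I_{\Lambda[s]}\otimes\Dc$, hence has a direct-integral decomposition $\alpha_{T-z}=\int_Z^\oplus \alpha_{T(\zeta)-z}\,d\omega(\zeta)$ over $Z$, where $\alpha_{T(\zeta)-z}$ is exactly the operator built from the tuple $T(\zeta)$. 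Its inverse is then also decomposable, $(\alpha_{T-z})^{-1}=\int_Z^\oplus C(\zeta)\,d\omega(\zeta)$ for some measurable field $\zeta\mapsto C(\zeta)$, and from $\alpha_{T-z}(\alpha_{T-z})^{-1}=I=(\alpha_{T-z})^{-1}\alpha_{T-z}$ one reads off, for $\omega$-almost every $\zeta$, that $\alpha_{T(\zeta)-z}$ is invertible with inverse $C(\zeta)$; thus $z\notin\Sp(T(\zeta))$ for almost every $\zeta$. (One should note there is an exceptional $\omega$-null set already implicit, coming from the choice of representatives $T_j(\zeta)$ making the tuple commute and making the direct-integral identities hold fiberwise; everything below is understood off that set.)

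The second step is to upgrade this pointwise-in-$z$, almost-everywhere-in-$\zeta$ statement to a neighborhood. Since $\Sp(T)$ is compact, $z\notin\Sp(T)$ gives $\dist(z,\Sp(T))>0$; by the power-series expansion \eqref{eq:alphaexp}, the set of $w$ with $\alpha_{T-w}$ invertible is open and $w\mapsto(\alpha_{T-w})^{-1}$ is continuous (indeed real-analytic) there, with $\|(\alpha_{T-w})^{-1}\|$ locally bounded. Fix a closed polydisk or ball $V$ centered at $z$ with $V\cap\Sp(T)=\emptyset$ and with $\sup_{w\in V}\|(\alpha_{T-w})^{-1}\|\le K<\infty$. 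For each fixed $w\in V$ the first step produces an $\omega$-null set $N_w$ outside of which $\alpha_{T(\zeta)-w}$ is invertible with inverse norm $\le K$ (the norm bound transfers fiberwise from the global bound, again off a null set). The difficulty — and I expect this to be the main obstacle — is that a priori $N_w$ depends on $w$, and an uncountable union of null sets need not be null. To get around this I would use a countable dense subset $\{w_m\}_{m\ge1}$ of $V$, set $N=\bigcup_m N_{w_m}$, which is $\omega$-null, and then argue that for $\zeta\notin N$ the operator $\alpha_{T(\zeta)-w}$ is invertible for \emph{every} $w\in V$: indeed if $\alpha_{T(\zeta)-w_m}$ is invertible with $\|(\alpha_{T(\zeta)-w_m})^{-1}\|\le K$ for all $m$, then by the Neumann-series/perturbation estimate $\|\alpha_{a-b}\|\le\|a-b\|_1$ noted in the text, $\alpha_{T(\zeta)-w}=(1+\alpha_{w_m-w}\,(\alpha_{T(\zeta)-w_m})^{-1})\alpha_{T(\zeta)-w_m}$ is invertible whenever $\|w-w_m\|_1<1/K$; choosing $w_m$ within $1/K$ of $w$ (possible by density, after shrinking $V$ so that it has diameter less than $1/(2K)$ and taking $w_m$ in $V$) covers all of $V$.

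Putting the pieces together: after shrinking $V$ to a neighborhood of $z$ of $\ell^1$-diameter less than $1/(2K)$, the null set $N=\bigcup_m N_{w_m}$ has the property that for every $\zeta\in Z\setminus N$ and every $w\in V$, $\alpha_{T(\zeta)-w}$ is invertible, i.e.\ $\Sp(T(\zeta))\cap V=\emptyset$. This is exactly the assertion of the lemma. I would present the fiberwise decomposability of $\alpha_{T-w}$ and of its inverse (the content of the first paragraph) as the key technical input, citing the standard theory of decomposable operators on direct integrals, and flag the countable-dense-net argument of the second paragraph as the one place where some care is genuinely needed; all of the norm and series estimates are the ones already written out just before Theorem~\ref{thm:Vasilescu} and require no new computation.
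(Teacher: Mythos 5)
Your proof is correct and follows the same strategy as the paper's: write $\alpha_{T-z}$ and $(\alpha_{T-z})^{-1}$ as direct integrals over $Z$, deduce fiberwise invertibility off a null set together with a uniform bound on the fiber inverse norms, and use the Neumann series to propagate invertibility to a neighborhood of $z$. However, the countable-dense-subset step in your second paragraph is unnecessary, and the ``main obstacle'' you flag (an uncountable union of null sets $N_w$, one for each $w\in V$) does not actually arise. Once you have a single null set $N$ off which $\alpha_{T(\zeta)-z}$ is invertible with $\|(\alpha_{T(\zeta)-z})^{-1}\|\le K:=\|(\alpha_{T-z})^{-1}\|$, your own perturbation estimate already shows that for every $\zeta\notin N$ the operator $\alpha_{T(\zeta)-w}$ is invertible for \emph{every} $w$ with $\|w-z\|_1<1/K$, since the Neumann series expansion around $z$ converges uniformly in such $\zeta$ and $w$. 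So you may simply take $V=\{w:\|w-z\|_1<1/K\}$ and this single null set $N$, with no density argument needed; this is precisely what the paper does. Incidentally, your convergence condition $\|w-z\|_1<\|(\alpha_{T-z})^{-1}\|^{-1}$ is the correct one; the paper's proof writes the condition as $\|z\|_1<\|(\alpha_T)^{-1}\|$, missing the final exponent, which is evidently a typo.
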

\begin{proof}
We may without loss of generality assume $z=(0,\ldots,0)$.

We have the direct integral decomposition
\[
\Lambda[s]\otimes\HEu=\int^\oplus_Z(\Lambda[s]\otimes\HEu(\zeta))\,d\omega(\zeta)
\]
with respect to which, for every $z\in U$, we have that
\begin{equation}\label{eq:alphaTdi}
\alpha_{T-z}=\int^\oplus_Z\alpha_{T(\zeta)-z}\,d\omega(\zeta).
\end{equation}
Since $(0,\ldots,0)\notin\Sp(T)$, the operator $\alpha_T$ is invertible.
Thus, there is an $\omega$-null set $N$ such that for all $\zeta\in Z\setminus N$, $\alpha_{T(\zeta)}$ is invertible and
$\|(\alpha_{T(\zeta)})^{-1}\|\le\|(\alpha_T)^{-1}\|.$
Suppose that $\|z\|_1<\|(\alpha_T)^{-1}\|$.
By \eqref{eq:alphaexp}, we have
$$\alpha_{T(\zeta)-z}^{-1}=\sum_{n=0}^\infty(\alpha_{T(\zeta)})^{-1}\big(\alpha_z(\alpha_{T(\zeta)})^{-1}\big)^n,$$
where the series is convergent. Thus, letting $V=\{z\in\Cpx^n\mid \|z\|_1<\|(\alpha_T)^{-1}\|^{-1}\}$,
we have $\Sp(T(\zeta))\cap V=\emptyset$ for every $\zeta\in Z\setminus N$, as required.
\end{proof}

\begin{prop}\label{prop:SpTzeta}
Let $T=(T_1,\ldots,T_n)$ be commuting operators in $\Dc'\cap B(\HEu)$, each with direct integral decomposition as in~\eqref{eq:Tjdi},
and assume without loss of generality that $T(\zeta)=(T_1(\zeta),\ldots,T_n(\zeta))$
is a commuting tuple for all $\zeta\in Z$.
Then there is an $\omega$-null set $N\subseteq Z$ such that $\Sp(T(\zeta))\subseteq\Sp(T)$ for every $\zeta\in Z\setminus N$.
\end{prop}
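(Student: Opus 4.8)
The plan is to reduce Proposition~\ref{prop:SpTzeta} to the local statement already proved in Lemma~\ref{lem:SpTzeta} by a standard Lindel\"of-type covering argument. The key point is that the complement $\Cpx^n\setminus\Sp(T)$ is open, hence (being a subset of a second-countable space) Lindel\"of, so we only need to control countably many ``bad'' directions at once, and a countable union of $\omega$-null sets is $\omega$-null.

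First I would fix a countable basis $\{V_m\}_{m=1}^\infty$ for the topology of $\Cpx^n$ consisting of open balls with rational centers and rational radii. For each point $z\in\Cpx^n\setminus\Sp(T)$, Lemma~\ref{lem:SpTzeta} supplies an open neighborhood $V$ of $z$ and an $\omega$-null set $N_z$ such that $\Sp(T(\zeta))\cap V=\emptyset$ for all $\zeta\in Z\setminus N_z$; shrinking $V$, I may take it to be one of the basis sets $V_m$. Let $\mathcal{J}\subseteq\Nats$ be the set of indices $m$ for which there exists an $\omega$-null set $N_m$ with $V_m\subseteq\Cpx^n\setminus\Sp(T)$ and $\Sp(T(\zeta))\cap V_m=\emptyset$ for all $\zeta\in Z\setminus N_m$. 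By the previous sentence, the sets $\{V_m\mid m\in\mathcal{J}\}$ cover $\Cpx^n\setminus\Sp(T)$. Now set $N=\bigcup_{m\in\mathcal{J}}N_m$; since $\mathcal{J}$ is countable, $N$ is $\omega$-null.

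It remains to check that $\Sp(T(\zeta))\subseteq\Sp(T)$ for every $\zeta\in Z\setminus N$. Fix such a $\zeta$ and suppose $w\in\Cpx^n\setminus\Sp(T)$. Then $w\in V_m$ for some $m\in\mathcal{J}$, and since $\zeta\notin N\supseteq N_m$, we have $\Sp(T(\zeta))\cap V_m=\emptyset$, so in particular $w\notin\Sp(T(\zeta))$. This gives $\Cpx^n\setminus\Sp(T)\subseteq\Cpx^n\setminus\Sp(T(\zeta))$, which is the desired containment, and the proposition follows.

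The only real subtlety, and the place where one must be slightly careful, is the interaction between the null sets and the choice of direct integral representatives $T_j(\zeta)$: the representatives $\alpha_{T(\zeta)-z}$ appearing in~\eqref{eq:alphaTdi} are only determined up to $\omega$-null modifications, but this is harmless since Lemma~\ref{lem:SpTzeta} has already been proved with a fixed choice of representatives and we work throughout with that same fixed choice. There is no genuine obstacle here; the argument is the routine passage from a pointwise-local statement to a global one via countability of a basis, and the main thing to get right is simply to record that the exceptional null set can be chosen uniformly because only countably many basis sets are needed.
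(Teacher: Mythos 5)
Your proof is correct and follows essentially the same route as the paper: apply Lemma~\ref{lem:SpTzeta} pointwise on $\Cpx^n\setminus\Sp(T)$, extract a countable subcover (the paper invokes the Lindel\"of property directly; you make it concrete via a countable basis of rational balls), and take the union of the corresponding $\omega$-null sets. The remarks about the choice of representatives are a reasonable sanity check but add nothing beyond what the paper implicitly assumes.
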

\begin{proof}
Applying Lemma~\ref{lem:SpTzeta}, we find an open cover $\Vc=(V_i)_{i\in I}$ of $\Cpx^n\setminus\Sp(T)$ such that for every $i\in I$, there is
an $\omega$-null set $N_i\subseteq Z$ such that for every $\zeta\in Z\setminus N_i$, $\Sp(T(\zeta))\cap V_i=\emptyset$.
There is a countable subcover $(V_{i(j)})_{j=1}^\infty$ of $\Vc$.
Letting $N=\bigcup_{j=1}^\infty N_{i(j)}$, we have $\Sp(T(\zeta))\subseteq\Sp(T)$ for every $\zeta\in Z\setminus N$.
\end{proof}

Here is the main result of this section:
\begin{prop}\label{prop:hfcdi}
Let $T=(T_1,\ldots,T_n)$ be commuting operators in $\Dc'\cap B(\HEu)$ and let $f$ be a function that is holomorphic on an open neighborhood $W$ of the Taylor joint spectrum $\Sp(T)$.
Then applying the Taylor functional calculus, we have the direct integral representation
\begin{equation}\label{eq:fTdi}
f(T)=\int_Z^\oplus f(T(\zeta))\,d\omega(\zeta).
\end{equation}
\end{prop}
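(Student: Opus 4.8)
The plan is to run Vasilescu's integral formula (Theorem~\ref{thm:Vasilescu}) fiberwise, exploiting that every operator occurring in it is decomposable with respect to the given direct integral. By Proposition~\ref{prop:SpTzeta} there is an $\omega$-null set outside of which $\Sp(T(\zeta))\subseteq\Sp(T)\subseteq W$, so $f(T(\zeta))$ is defined for almost every $\zeta$. Choose an open $\Delta$ with $\Sp(T)\subseteq\Delta$, with $\overline\Delta$ a compact subset of $W$, and with $\partial\Delta$ a finite union of smooth surfaces, as permitted in Theorem~\ref{thm:Vasilescu}; since $\Sp(T(\zeta))\subseteq\Delta$ as well, the same $\Delta$ computes $f(T(\zeta))$ via~\eqref{eq:fTint}, using the independence-of-$\Delta$ clause of that theorem.

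The key step is a uniform refinement of~\eqref{eq:alphaTdi}: there should be a single $\omega$-null set $N$ such that for every $\zeta\in Z\setminus N$ and every $z\in\partial\Delta$ the operator $\alpha_{T(\zeta)-z}$ is invertible, with $C:=\sup_{\zeta\notin N,\,z\in\partial\Delta}\|(\alpha_{T(\zeta)-z})^{-1}\|<\infty$ and $(\alpha_{T-z})^{-1}=\int_Z^\oplus(\alpha_{T(\zeta)-z})^{-1}\,d\omega(\zeta)$. To get this, cover the compact set $\partial\Delta$ by finitely many balls on each of which the Neumann series~\eqref{eq:alphaexp} converges, apply Lemma~\ref{lem:SpTzeta} and its proof on each ball to obtain the pointwise fiber decomposition together with a local uniform bound on a co-null set, and intersect the finitely many co-null sets; that the inverse of a decomposable operator is decomposable with fibers the inverses of the fibers is standard direct-integral theory. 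Differentiating the locally uniformly convergent series~\eqref{eq:alphaexp} in $z_1,\overline{z_1},\dots,z_n,\overline{z_n}$ then shows, on the same co-null set, that all $z$-derivatives of $z\mapsto(\alpha_{T(\zeta)-z})^{-1}$ are uniformly bounded over $z\in\partial\Delta$ by constants depending only on $C$ and $n$.

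Since $\overline\partial$ and the left-wedging operators $L(s_j),L(\dzb_j)$ act only on the exterior-algebra variables and the $z$-dependence and commute with the direct-integral structure, while $\beta_T$ acts fiberwise by the previous step, it follows that for $x=\int_Z^\oplus x(\zeta)\,d\omega(\zeta)\in\HEu$ the function $z\mapsto(M_Tx)(z)$ equals, at every $z$, the direct integral $\int_Z^\oplus(M_{T(\zeta)}x(\zeta))(z)\,d\omega(\zeta)$, with integrand jointly measurable in $(\zeta,z)$ and bounded uniformly for $z\in\partial\Delta$ by the derivative bounds above. Integrating over $\partial\Delta$ as in~\eqref{eq:fTint} and interchanging this finite $\Lambda[s,\dzb]\otimes\HEu$-valued integral with the direct integral — a Fubini argument justified by the uniform bounds — yields $f(T)x=\int_Z^\oplus f(T(\zeta))x(\zeta)\,d\omega(\zeta)$ for all $x\in\HEu$, which is~\eqref{eq:fTdi}.

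The main obstacle is precisely the uniform bookkeeping in the second and third paragraphs: producing one null set valid simultaneously for all $z$ on the contour $\partial\Delta$, and verifying that all the $C^\infty$-data feeding into $M_{T(\zeta)}$ depend measurably on $\zeta$ and are dominated uniformly enough to license both the fiberwise reading of $M_T$ and the interchange of the contour integral with the direct integral. Once the uniform resolvent bound on $\partial\Delta$ is secured, the remaining steps are routine, since every operation in Vasilescu's formula is either a differential operator in $z$, a fixed operator on the exterior-algebra factor, or left multiplication by a decomposable operator.
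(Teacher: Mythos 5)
Your proposal is correct and follows essentially the same approach as the paper's proof: both run Vasilescu's formula fiberwise by establishing (via Proposition~\ref{prop:SpTzeta}, the direct-integral decomposition of $(\alpha_{T-w})^{-1}$, and the power-series expansion~\eqref{eq:alphaexp}) that $M_T$ decomposes as the direct integral of the $M_{T(\zeta)}$, with enough uniformity in $z$ and $\zeta$ to interchange the contour integral with the direct integral. The paper writes $(M_Tx)(z)=G_T(z)x$ for an explicitly power-series-defined function $G_T$ and works with a countable cover of $\Cpx^n\setminus\Sp(T)$ together with a shared modulus of continuity, whereas you restrict to a finite cover of the compact contour $\partial\Delta$ and phrase the uniformity in terms of a single resolvent bound and its derivative bounds; these are minor variants of the same argument.
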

\begin{proof} 
Write each $T_j$ as a direct integral as in~\eqref{eq:Tjdi} and assume without loss of generality that $T(\zeta)=(T_1(\zeta),\ldots,T_n(\zeta))$
is a commuting tuple with $\Sp(T(\zeta))\subseteq\Sp(T)$, for all $\zeta\in Z$.
 
As Hilbert spaces, we have $\Lambda[s,\dsp\zbar]=\Lambda[\dsp\zbar]\otimes\Lambda[s]$.
Thus, we have the direct integral decomposition
\begin{equation}\label{eq:LsdzHdi}
\Lambda[s,\dsp\zbar]\otimes\HEu=\int^\oplus_Z(\Lambda[s,\dsp\zbar]\otimes\HEu(\zeta))\,d\omega(\zeta)
\end{equation}
whose diagonal operators form the algebra
\[
I_{\Lambda[s,\dsp\zbar]}\otimes\Dc\subseteq B(\Lambda[s,\dsp\zbar])\otimes B(\HEu).
\]
Moreover, writing $\alpha_{T-w}$ and $\alpha_{T(\zeta)-w}$
for $I_{\Lambda[\dsp\zbar]}\otimes\alpha_{T-w}$ and $I_{\Lambda[\dsp\zbar]}\otimes\alpha_{T(\zeta)-w}$, respectively in
$B(\Lambda[\dsp\zbar])\otimes B(\Lambda[s]\otimes\HEu)$ and $B(\Lambda[\dsp\zbar])\otimes B(\Lambda[s]\otimes\HEu(\zeta))$,
the direct integral decomposition~\eqref{eq:alphaTdi} applies also for the Hilbert space decomposition~\eqref{eq:LsdzHdi}.
We also write $L(s_j)$ for the corresponding operator in $B(\Lambda[s,\dsp\zbar])$, identified with
$I_{\Lambda[\dsp\zbar]}\otimes L(s_j)\in B(\Lambda[\dsp\zbar])\otimes B(\Lambda[s])$,
and we let $L(\dsp\overline{z_j})\in B(\Lambda[s,\dsp\zbar])$ denote the operator $\xi\mapsto\dsp\overline{z_j}\wedge\xi$.

Fixing $w\in W\setminus\Sp(T)$ and using the Leibniz rule and the power series expansion obtained from~\eqref{eq:alphaexp},
we find, for all $z$ close enough to $w$,
\[
(\overline{\partial}\circ\beta_Tg)(z)=\sum_{k=1}^nL(\dsp\overline{z_k})\left(F_k(z)\frac{\partial g}{\partialsp\overline{z}_k}(z)+G_k(z)g(z)\right),
\]
where $F_k(z)$ and $G_k(z)$ are given by
power series expansions in the variables~\eqref{eq:zwvars}
with positive radius of convergence and coefficients that are noncommutative polynomials in the $L(s_j)\otimes I_\HEu$,
$L(s_j)^*\otimes I_\HEu$ and $(\alpha_{T-w})^{-1}$.
Thus, by iterating, we find that, for all $x\in\HEu$,
\[ %\begin{equation}\label{eq:MTxzdi}
(M_Tx)(z)=\sum_{k=1}^n
L(\dsp\overline{z_1})\cdots\widehat{L(\dsp\overline{z_k})}\cdots L(\dsp\overline{z_n})H^{(n)}_k(z)x,
\] %\end{equation}
where $\widehat{L(\dsp\overline{z_k})}$ means that $L(\dsp\overline{z_k})$ is omitted and where each $H^{(n)}_k(z)$ is given by a
power series expansion in the variables~\eqref{eq:zwvars}
with positive radius of convergence and coefficients that are noncommutative polynomials
in the $L(s_j)\otimes I_\HEu$, $L(s_j)^*\otimes I_\HEu$ and $(\alpha_{T-w})^{-1}$.
Thus, the function
\begin{multline*}
\Cpx^n\setminus\Sp(T)\ni z\mapsto G_T(z):=\sum_{k=1}^n
L(\dsp\overline{z_1})\cdots\widehat{L(\dsp\overline{z_k})}\cdots L(\dsp\overline{z_n})H^{(n)}_k(z) \\
\in B(\Lambda[s,\dsp\zbar]\otimes\HEu)\cap(I_{\Lambda[s,\dsp\zbar]}\otimes\Dc')
\end{multline*}
is real analytic in the variables $z_1,\overline{z_1},\ldots,z_n,\overline{z_n}$
(note that the variables $\overline{z_1},\ldots,\overline{z_n}$ are to be distinguished from the differentials $\dsp\overline{z_1},\ldots,\dsp\overline{z_n}$)
and we have
\[
(M_Tx)(z)=G_T(z)x
\]
for all $x\in\HEu$.

As noted above, for each fixed $w$, we may write $(\alpha_{T-w})^{-1}$ as a direct integral
\begin{equation}\label{eq:alT-wdi}
(\alpha_{T-w})^{-1}=\int^\oplus_Z(\alpha_{T(\zeta)-w})^{-1}\,d\omega(\zeta)
\end{equation}
with respect to the decomposition~\eqref{eq:LsdzHdi}.
We take a countable open cover of $\Cpx^n\setminus\Sp(T)$ consisting of sets on each of which
$G_T$ has a single power series expansion.
Now in each open set of this cover,
using~\eqref{eq:alT-wdi} in the coefficients of the series expansion for each $H^{(n)}_k(z)$, above, we find an $\omega$-null set $N\subseteq Z$
such that
\[
G_T(z)=\int^\oplus_ZG_{T(\zeta)}(z)\,d\omega(\zeta),
\]
and such that, for every $\zeta\in Z\setminus N$, $G_{T(\zeta)}(z)\in B(\Lambda[s,\dsp\zbar]\otimes\HEu(\zeta))$
is given by the same power series expansion as is $G_T(z)$, but with $(\alpha_{T(\zeta)-w})^{-1}$
replacing each $(\alpha_{T-w})^{-1}$.
The same analysis as above shows that, for almost every $\zeta\in Z$ and every $z\in\Cpx^n\setminus\Sp(T)$, we have
\[
(M_{T(\zeta)}y)(z)=G_{T(\zeta)}(z)y,\qquad(y\in\HEu(\zeta)).
\]
This implies that, for every $x\in\HEu$, taking direct integral decomposition
\[
x=\int^\oplus_Zx(\zeta)\,d\omega(\zeta)
\]
we have, for all $z\in\Cpx^n\setminus\Sp(T)$,
\[
(M_Tx)(z)=G_T(z)x=\int^\oplus_ZG_{T(\zeta)}(z)x(\zeta)\,d\omega(\zeta)=\int^\oplus_Z(M_{T(\zeta)}x(\zeta))(z)\,d\omega(\zeta).
\]
We now use the power series expansions in sets of the open cover considered above and think of moduli of continuity.
So doing, we see that, for every compact subset $K$ of $\Cpx^n\setminus\Sp(T)$,
there is a modulus of continuity $\rho$ for the function $K\ni z\mapsto G_T(z)$,
so that the same modulus of continuity holds for the function $K\ni z\mapsto G_{T(\zeta)}(z)$, for every $\zeta\in Z\setminus N$.
Using this and standard approximations
and performing the real $(2n-1)$-dimensional Riemann integration in Vasilescu's formula~\eqref{eq:fTint}
found in Theorem~\ref{thm:Vasilescu}, we obtain
\[
f(T)x=\int^\oplus_Z f(T(\zeta))x(\zeta)\,d\zeta.
\]
This proves the desired formula~\eqref{eq:fTdi}.
\end{proof}

\section{Holomorphic functional calculus and joint Brown measures}
\label{sec:hfcnu}

In this section, we assume, as described before Lemma~\ref{lem:di}, that
$T=(T_1,\ldots,T_n)$ is a tuple of commuting elements of $\Mcal$, $\rho:[0,1]\to\Cpx^n$ is a continuous spectral ordering for $T$,
$q_t=P(T:\rho([0,t]))$ with $q_0=0$ is the corresponding increasing net of joint Haagerup--Schultz projections and $\Dc=W^*(\{q_t\mid 0\le t\le1\}$.
As in Section~\ref{sec:ut}, we write elements $S$ of $\Mcal\cap\Dc'$ as direct integrals
\[
S=\int_{[0,1]}^\oplus S(t)\,d\sigma(t),
\]
where $\sigma$ is the measure on $[0,1]$ defined by
\[
\sigma([0,t])=\tau(P(T:\rho([0,t])))=\nu_T(\rho([0,t])).
\]
Let $X_j=\Ec_{\Mcal\cap\Dc'}(T_j)$, for $1\le j\le n$.
Note that, by Lemma~\ref{lem:U}, $X_1,\ldots,X_n$ commute.

The next result generalizes Lemma~\ref{lem:fX}
to the Taylor holomorphic functional calculus.

\begin{lemma}\label{lem:holofX}
Let $f$ be a holomorphic function of $n$ variables defined on a neighborhood of the Taylor joint spectrum
$\Sp(X_1,\ldots,X_n)$.
Let $Y=f(X_1,\ldots,X_n)$.
By Proposition~\ref{prop:hfcdi}, we have
\begin{equation}\label{eq:holoYdi}
Y=\int_{[0,1]}^\oplus Y(t)\,d\sigma(t),
\end{equation}
where $Y(t)=f(X_1(t),\ldots,X_n(t))$ for $\sigma$-almost every $t\in[0.1]$.
Then for $\sigma$-almost every $t\in[0,1]$, the Brown measure $\nu_{Y(t)}$ of $Y(t)$ is the Dirac mass $\delta_{f(\rho(t))}$.
\end{lemma}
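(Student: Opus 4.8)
The plan is to reduce, for $\sigma$-almost every $t$, to the assertion that $Y(t)-f(\rho(t))1$ is s.o.t.-quasinilpotent, and then to obtain this by deforming the tuple $X(t)$ to the scalar tuple $(\rho_1(t)1,\dots,\rho_n(t)1)$ inside the domain of $f$. First, applying Proposition~\ref{prop:SpTzeta} to the commuting tuple $(X_1,\dots,X_n)$ in $\Mcal\cap\Dc'$ together with its direct integral decomposition over $\Dc$, there is a $\sigma$-null set off which $\Sp(X_1(t),\dots,X_n(t))\subseteq\Sp(X_1,\dots,X_n)$, so that $f$ is holomorphic on a neighbourhood of each of these spectra; hence Proposition~\ref{prop:hfcdi} yields the direct integral formula~\eqref{eq:holoYdi} with $Y(t)=f(X_1(t),\dots,X_n(t))$ for $\sigma$-almost every $t$. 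Intersecting with the $\sigma$-conull set of Lemma~\ref{lem:di}, I fix such a $t$ for which moreover $N_j(t):=X_j(t)-\rho_j(t)1$ is s.o.t.-quasinilpotent for every $j$ (recall $\nu_{X_j(t)}=\delta_{\rho_j(t)}$ means exactly this); the $N_j(t)$ commute since the $X_i(t)$ do. As the Brown measure of a scalar operator is the Dirac mass there, it suffices to prove that $Y(t)-f(\rho(t))1$ is s.o.t.-quasinilpotent.

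For this I would use the path of commuting tuples $X(t)_s=(\rho_1(t)1+sN_1(t),\dots,\rho_n(t)1+sN_n(t))$, $s\in[0,1]$, which is norm-continuous, runs from the scalar tuple at $s=0$ to $X(t)$ at $s=1$, and (by affine covariance of the Taylor spectrum) has $\Sp(X(t)_s)=(1-s)\rho(t)+s\,\Sp(X_1(t),\dots,X_n(t))$. Granting for the moment that $f$, hence each $\partial_j f$, is holomorphic on a neighbourhood of the whole set $\bigcup_{s\in[0,1]}\Sp(X(t)_s)$, a standard chain rule for the holomorphic functional calculus of commuting tuples (obtained, e.g., by differentiating Vasilescu's formula~\eqref{eq:fTint} under the integral sign, using that $\tfrac{d}{ds}X_j(t)_s=N_j(t)$ commutes with every $X_i(t)_s$) gives, in operator norm, $\tfrac{d}{ds}f(X(t)_s)=\sum_{j=1}^n(\partial_j f)(X(t)_s)\,N_j(t)$; since $s\mapsto(\partial_j f)(X(t)_s)$ is norm-continuous by continuity of the Taylor calculus, integrating over $s\in[0,1]$ yields
\[
Y(t)-f(\rho(t))1=f(X(t)_1)-f(X(t)_0)=\sum_{j=1}^n B_j\,N_j(t),\qquad B_j:=\int_0^1(\partial_j f)(X(t)_s)\,ds.
\]
Each $B_j$ lies in the bicommutant of $\{X_1(t),\dots,X_n(t)\}$, which is abelian because this family is commutative, so $B_j$ commutes with $N_j(t)$ and, by Lemma~\ref{lem:prodqn}, $B_j N_j(t)$ is s.o.t.-quasinilpotent; as the $n$ summands $B_j N_j(t)$ pairwise commute (all lying in that abelian bicommutant), Lemma~\ref{lem:sumqn} and induction show that $\sum_j B_j N_j(t)=Y(t)-f(\rho(t))1$ is s.o.t.-quasinilpotent, i.e.\ $\nu_{Y(t)}=\delta_{f(\rho(t))}$.

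The main obstacle is the deferred point: for a general open $W\supseteq\Sp(X_1,\dots,X_n)$ the ``cone'' $\bigcup_{s\in[0,1]}\Sp(X(t)_s)$ with vertex $\rho(t)$ over $\Sp(X_1(t),\dots,X_n(t))$ need not be contained in $W$, so the functional calculus $f(X(t)_s)$ may fail to be defined along the whole deformation. I would handle this by first shrinking $W$: one knows that $\Sp(X_1(t),\dots,X_n(t))$ is connected and contains $\rho(t)$ (via Shilov's idempotent theorem combined with the Brown-measure decomposition of Theorem~\ref{thm:PtTQ}), and one can use this, after replacing $W$ by a suitable smaller neighbourhood — for instance by noting that in the intended applications $f$ is holomorphic on a neighbourhood of the polydisk $\prod_{j}\{z\in\Cpx:|z|\le\|T_j\|\}$, which is convex and hence contains every such cone. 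An alternative, equivalent-looking route is to replace the deformation by a division argument, writing $f(z)-f(\rho(t))=\sum_j(z_j-\rho_j(t))g_j(z)$ with the $g_j$ holomorphic on a neighbourhood of $\Sp(X_1(t),\dots,X_n(t))$ (a Cousin/Cartan-type problem, solvable once that spectrum is known to admit a Stein neighbourhood inside $W$) and then evaluating to get $Y(t)-f(\rho(t))1=\sum_j N_j(t)\,g_j(X(t))$; either way the crux is the same geometric fact about keeping the relevant spectra inside the domain of $f$, after which Lemmas~\ref{lem:prodqn} and~\ref{lem:sumqn} close the argument exactly as above.
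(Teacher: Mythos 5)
Your initial reduction matches the paper's: invoke Proposition~\ref{prop:SpTzeta} and Proposition~\ref{prop:hfcdi} to get the direct-integral formula with $Y(t)=f(X_1(t),\ldots,X_n(t))$ for $\sigma$-a.e.\ $t$, combine with Lemma~\ref{lem:di} to write $X_j(t)=\rho_j(t)+X_j\oup(t)$ with $X_j\oup(t)$ s.o.t.-quasinilpotent, and reduce to showing $Y(t)-f(\rho(t))1$ is s.o.t.-quasinilpotent. After that, the two proofs diverge.

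Your primary route, the deformation $X(t)_s$ together with a chain rule for the Taylor calculus, has the gap you yourself flag: the cone $\bigcup_{s\in[0,1]}\Sp(X(t)_s)$ need not lie in the domain $W$ of $f$, and the lemma only assumes $f$ holomorphic near $\Sp(X_1,\ldots,X_n)$, which may be small and far from convex. Your remedy, that in applications $f$ is holomorphic on the convex polydisk $\prod_j\{|z_j|\le\|T_j\|\}$, assumes strictly more than the lemma states, so it does not prove the lemma as formulated (and Theorem~\ref{thm:holomut} needs exactly the generality of an arbitrary $W$). Note also that s.o.t.-quasinilpotence of $X_j\oup(t)$ does \emph{not} make its norm spectrum $\{0\}$, so $\Sp(X(t))$ can genuinely be a large set near the boundary of $W$ and the cone really can escape; and even granting the domain issue, the asserted operator-norm chain rule for the Taylor calculus along a path of commuting tuples is itself a nontrivial claim that you'd need to justify rather than cite.

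The paper instead takes what you describe as the ``alternative, equivalent-looking route'': set $h(z)=f(\rho(t)+z)-f(\rho(t))$, produce $g_1,\ldots,g_n$ with $h=\sum_j z_j g_j$ by a short explicit induction (subtract $h(z_1,\ldots,z_{n-1},0)$ and divide by $z_n$, then repeat on the remainder), evaluate the $g_j$ on the commuting quasinilpotent tuple $(X_1\oup(t),\ldots,X_n\oup(t))$, and finish with Lemmas~\ref{lem:prodqn} and~\ref{lem:sumqn} exactly as you do. No Stein neighbourhood, Cousin data, or Cartan's Theorem~B is invoked. Your remark that ``either way the crux is the same geometric fact'' is not accurate: the division route only needs the $g_j$ holomorphic near $\Sp(X(t))$ together with the intermediate coordinate slices appearing in the induction, a substantially weaker requirement than containment of the entire cone. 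So to make your proposal into a proof, drop the deformation and carry out the division argument; the paper's iterative formula is the elementary way to produce the $g_j$.
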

\begin{proof}
Fix $t$ and let $h(z_1,\ldots,z_n)=f(\rho_1(t)+z_1,\ldots,\rho_n(t)+z_n)-f(\rho_1(t),\ldots,\rho_n(t))$.
Since $h(0,\ldots,0)=0$, there exist functions $g_1,\ldots,g_n$, each with the same domain of holomorphy as $h$, such that
\begin{equation}\label{eq:hg}
h(z_1,\ldots,z_n)=\sum_{j=1}^nz_jg_j(z_1,\ldots,z_n).
\end{equation}
Indeed, for $n=1$ this is clear, while for $n\ge2$,
letting
\[
g_n(z_1,\ldots,z_n)=\begin{cases}\frac{h(z_1,\ldots,z_n)-h(z_1,\ldots,z_{n-1},0)}{z_n},&z_n\ne0 \\
\frac{\partial}{\partial z_n}\Big|_{z_n=0}h(z_1,\ldots,z_n),&z_n=0\end{cases}
\]
we have $h(z_1,\ldots,z_n)=h(z_1,\ldots,z_{n-1},0)+z_ng_n(z_1,\ldots,z_n)$
and we may argue by induction on $n$.

By Lemma~\ref{lem:di}, for $\sigma$-almost every $t\in[0,1]$ and every $j\in\{1,\ldots,n\}$, we have
$X_j(t)=\rho_j(t)+X_j\oup(t)$, where $X_j\oup(t)$ is an s.o.t.-quasinilpotent operator.
Using~\eqref{eq:hg}, we have
\[
Y(t)=f(\rho_1(t),\ldots,\rho_n(t))+\sum_{j=1}^nX_j\oup(t)g_j(X_1\oup(t),\ldots,X_n(t)\oup).
\]
By Lemmas~\ref{lem:prodqn} and~\ref{lem:sumqn}, we have, for almost all $t\in[0,1]$, 
\[
Y(t)=f(\rho_1(t),\ldots,\rho_n(t))+Y\oup(t),
\]
where $Y\oup(t)$ is s.o.t.-quasinilpotent.
Thus, for such values of $t$, we have $\nu_{Y(t)}=\delta_{f(\rho_1(t),\ldots,\rho_n(t))}$, as required.
\end{proof}

In this section we employ the holomorphic functional calculus in a Banach algebra, due to Arens~\cite{Ar61}
(see also Waelbrock~\cite{W54} and the exposition of Bourbaki~\cite{B67}).
Given a unital, commutative, Banach algebra $\Afr$, $n\in\Nats$ and $A_1,\ldots,\allowbreak A_n\in\Afr$,
letting $\sigma_\Afr(A_1,\ldots,A_n)$ denote the classical joint spectrum (see the start of Section~\ref{sec:jointspec} for the definition),
given an open neighborhood $U$ of this joint spectrum and given $f:U\to\Cpx$ holomorphic on $U$, the functional calculus assigns an
element $f(A_1,\ldots,A_n)\in\Afr$.
This map is the unique continuous algebra homomorphism from the algebra $\Hol(\sigma_\Afr(A_1,\ldots,A_n))$
of germs of holomorphic functions on $\sigma_\Afr(A_1,\ldots,A_n)$ into $\Afr$
that sends the $j$-th coordinate function to $a_j$, for every $j\in\{1,\ldots,n\}$,
where $\Hol(\sigma_\Afr(A_1,\ldots,A_n))$ is endowed with the appropriate topology
of uniform convergence on compact subsets of neighborhoods $U$ of $\sigma_\Afr(A_1,\ldots,A_n)$.
(See~\cite{B67} for details.)
Thus, it takes the expected values when $f$ is a polynomial or is given by an absolutely convergent power series.
Moreover, (see~\cite{B67}, Ch.\ I, \S4, Prop.\ 2.), if $\pi:\Afr\to\Bfr$ is a bounded unital algebra homomorphism between Banach algebras, then
$\pi(f(A_1,\ldots,A_n))=f(\pi(A_1),\ldots,\pi(A_n))$.

Let us observe that, if $A=(A_1,\ldots,A_n)$ is a tuple of commuting operators on a Hilbert space $\HEu$ and if $\Afr$ is the unital Banach
subalgebra of $B(\HEu)$ that they generate, then for every $f$ holomorphic on an open set containing the joint spectrum
$\sigma_\Afr(A_1,\ldots,A_n)$, the operator $f^{\text{Taylor}}(A)\in B(\HEu)$ defined by the Taylor functional calculus equals the operator
$f^{\text{Arens}}(A)\in\Afr$ defined by the Arens functional calculus in $\Afr$.
Indeed, $f\mapsto f^{\text{Taylor}}(A)$ is an algebra homomorphism from $\Hol(\sigma_{\Afr''}(A))$ into $\Afr''$,
that is continuous with respect to the topology of uniform convergence on compact subsets of open sets containing $\sigma_{\Afr''}(A)$;
this can be deduced, for example from the formula in Theorem~\ref{thm:Vasilescu}.
By uniqueness, this map must agree with the Arens functional calculus in $\Afr''$;
so we may write $f^{\text{Taylor}}(A)=f^{\text{Arens}(\Afr'')}(A)$.
However, considering the inclusion $\Afr\hookrightarrow\Afr''$ and the intertwining property of the Arens functional calculus
for bounded algebra homomorphisms,
the Arens functional calculus $f^{\text{Arens}(\Afr'')}(A)$ when $f$ is holomorphic on an open set containing $\sigma_\Afr(A_1,\ldots,A_n)$,
equals the Arens functional calculus taken in $\Afr$.
Thus, for such functions $f$, we have $f^{\text{Taylor}}(A)=f^{\text{Arens}}(A)$.

For the rest of this section, we let $\Afr\subseteq\Mcal$ be the unital Banach algebra generated by $\{T_1,\ldots,T_n\}$.
The next result generalizes our earlier simultaneous upper triangularization result, Theorem~\ref{thm:simultut},
from polynomials to holomorphic functions of operators, in the Arens functional calculus relative to $\Afr$.
\begin{thm}\label{thm:holomut}
Suppose $f$ is a holomorphic
function of $n$ variables whose domain contains a neighborhood of the joint spectrum $\sigma_\Afr(T_1,\ldots,T_n)$.
Let $S=f(T_1,\ldots,T_n)$.
Let $N=\Ec_\Dc(S)$.
Then $\nu_N=\nu_S$ and $S-N$ is s.o.t.-quasinilpotent.
In particular, the Brown measure $\nu_S$ equals the push-forward measure $f_*\nu_T$ of the joint Brown measure $\nu_T$
by $f$.
\end{thm}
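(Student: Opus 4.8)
The plan is to run the proof of Theorem~\ref{thm:simultut} almost verbatim, replacing Lemma~\ref{lem:fX} by Lemma~\ref{lem:holofX} and the polynomial functional calculus by the Arens functional calculus relative to $\Afr$. The only genuinely new point is the identification $\Ec_{\Mcal\cap\Dc'}(S)=f(X_1,\ldots,X_n)$, together with the bookkeeping needed to check that all the functional calculi involved are defined on the right domains and agree there.

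First I would note that $S=f(T_1,\ldots,T_n)$ lies in $\Afr$, the norm closure of the polynomials in $T_1,\ldots,T_n$, hence in the set $\Uc$ of operators upper triangular with respect to $(q_t)_{0\le t\le1}$, since by Lemma~\ref{lem:U} the latter is a norm-closed subalgebra of $\Mcal$ containing each $T_j$. Thus $S$ is upper triangular, and so is $S-N$, because $N=\Ec_\Dc(S)$ lies in $\Dc\subseteq\Uc$. By Lemma~\ref{lem:22} we then have $\nu_S=\nu_Y$ and $\nu_{S-N}=\nu_{Y-N}$, where $Y=\Ec_{\Mcal\cap\Dc'}(S)$ and, since $\Dc\subseteq\Mcal\cap\Dc'$, also $N=\Ec_\Dc(Y)$.

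The crucial step is to show $Y=f(X_1,\ldots,X_n)$. By Lemma~\ref{lem:U}, the restriction of $\Ec_{\Mcal\cap\Dc'}$ to $\Uc$ is a bounded unital algebra homomorphism, so it maps $\Afr$ onto a dense subalgebra of the commutative unital Banach algebra $\Bfr$ generated by $X_1,\ldots,X_n$; in particular $\sigma_\Bfr(X_1,\ldots,X_n)\subseteq\sigma_\Afr(T_1,\ldots,T_n)$, so $f$ is holomorphic on a neighbourhood of $\sigma_\Bfr(X_1,\ldots,X_n)$, and the naturality of the Arens functional calculus under bounded unital homomorphisms (see~\cite{B67}) gives $Y=\Ec_{\Mcal\cap\Dc'}\big(f(T_1,\ldots,T_n)\big)=f(X_1,\ldots,X_n)$, the Arens calculus being computed in $\Bfr$. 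Realising $\Mcal$ on $L^2(\Mcal,\tau)$, so that $X_1,\ldots,X_n\in\Dc'\cap B(L^2(\Mcal,\tau))$, and using $\Sp(X_1,\ldots,X_n)\subseteq\sigma_\Bfr(X_1,\ldots,X_n)$ together with the coincidence of the Taylor and Arens calculi recorded at the start of this section, I may regard $Y$ as $f^{\mathrm{Taylor}}(X_1,\ldots,X_n)$. Hence Lemma~\ref{lem:holofX} applies and gives $Y=\int_{[0,1]}^\oplus Y(t)\,d\sigma(t)$ with $\nu_{Y(t)}=\delta_{f(\rho(t))}$ for $\sigma$-almost every $t\in[0,1]$.

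From here I would conclude exactly as in the proof of Theorem~\ref{thm:simultut}. By Theorem~\ref{thm:5.6} and~\eqref{eq:nuTrhosig}, $\nu_Y=\int_{[0,1]}\delta_{f(\rho(t))}\,d\sigma(t)=(f\circ\rho)_*\sigma=f_*\nu_T$, so $\nu_S=\nu_Y=f_*\nu_T$. Applying $\Ec_\Dc$ to the direct integral of $Y$ and using~\eqref{eq:EDX} and Brown's Lidskii theorem gives $N=\int_{[0,1]}^\oplus\tau_t(Y(t))\,d\sigma(t)=\int_{[0,1]}^\oplus f(\rho(t))\,d\sigma(t)$, whose fibres are scalars, so $\nu_N=\int_{[0,1]}\delta_{f(\rho(t))}\,d\sigma(t)=\nu_Y=\nu_S$; while $Y-N=\int_{[0,1]}^\oplus\big(Y(t)-f(\rho(t))\big)\,d\sigma(t)$ has s.o.t.-quasinilpotent fibres, whence $\nu_{S-N}=\nu_{Y-N}=\delta_0$ and $S-N$ is s.o.t.-quasinilpotent. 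I expect the third paragraph to be the main obstacle: reconciling the Arens calculus for $(T_1,\ldots,T_n)$ in $\Afr$, the Arens calculus for $(X_1,\ldots,X_n)$ in $\Bfr$, and the Taylor calculus for $(X_1,\ldots,X_n)$ on $L^2(\Mcal,\tau)$ underlying Lemma~\ref{lem:holofX}, and verifying that the chain $\Sp(X_1,\ldots,X_n)\subseteq\sigma_\Bfr(X_1,\ldots,X_n)\subseteq\sigma_\Afr(T_1,\ldots,T_n)$ keeps $f$ in the domain of each --- this is the ``significant technical difficulties'' alluded to in the introduction.
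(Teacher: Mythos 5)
Your proof is correct and follows essentially the same route as the paper: identify $Y=\Ec_{\Mcal\cap\Dc'}(S)=f(X_1,\ldots,X_n)$ via the intertwining of the Arens functional calculus with the bounded homomorphism $\Ec_{\Mcal\cap\Dc'}\restrict_\Afr$ (Lemma~\ref{lem:U}), reconcile the Arens and Taylor calculi using the spectral inclusion $\Sp(X_1,\ldots,X_n)\subseteq\sigma_\Bfr(X_1,\ldots,X_n)\subseteq\sigma_\Afr(T_1,\ldots,T_n)$ so that Lemma~\ref{lem:holofX} applies, and then conclude via Lemma~\ref{lem:22} and Theorem~\ref{thm:5.6} exactly as in Theorem~\ref{thm:simultut}. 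Your more explicit treatment of the intermediate Banach algebra $\Bfr$ and the chain of inclusions spells out what the paper leaves terse, but it is the same argument.
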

\begin{proof}
Let $Y=\Ec_{\Mcal\cap\Dc'}(S)$.
We write $Y$ as a direct integral as in~\eqref{eq:holoYdi}.
By Lemma~\ref{lem:U} and Proposition~\ref{prop:EDhomom}, respectively, $\Ec_{\Mcal\cap\Dc'}\restrict_\Afr$ and $\Ec_{\Dc}\restrict_\Afr$ are
algebra homomorphisms.
Since the holomorphic functional calculus is intertwined with bounded algebra homomorphisms, we have
$Y=f(X_1,\ldots,X_n)$, where $X_j=\Ec_{\Mcal\cap\Dc'}(T_j)$.
The joint spectrum of the tuple $(X_1,\ldots,X_n)$ in the unital Banach algebra that it generates
is contained in the joint spectrum of $(T_1,\ldots,T_n)$ in $\Afr$.
Recall we observed, above,
that the Arens functional calculus and the Taylor functional calculus agree for functions, like $f$, that are holomorphic
on neighborhoods of $\sigma_\Afr(T_1,\ldots,T_n)$.
Thus, using Lemma~\ref{lem:holofX} and Brown's version of Lidskii's Theorem, we have
$\tau_t(Y(t))=\int_\Cpx z\,d\nu_{Y(t)}(z)=f(\rho(t))$ for $\sigma$-almost all $t\in[0,1]$.
Thus, we have
\begin{equation}\label{eq:Nintf}
N=\Ec_\Dc(Y)=\int_{[0,1]}^\oplus \tau_t(Y(t))\,d\sigma(t)=\int_{[0,1]}^\oplus f(\rho(t))\,d\sigma(t).
\end{equation}
Now Theorem~\ref{thm:5.6} yields
\[
\nu_N=\int_{[0,1]}\delta_{f(\rho(t))}\,d\sigma(t)=(f\circ\rho)_*\sigma,
\]
where the latter is the push-forward measure of $\sigma$ under $f\circ\rho$.
Since $\sigma([0,t])=\nu_t(\rho([0,1])$, by Lemma~\ref{lem:spfill} we have $\nu_T=\rho_*\sigma$.
Thus, $\nu_N=f_*\nu_T$.

By Lemma~\ref{lem:22}, $\nu_S=\nu_Y$ and $\nu_{S-N}=\nu_{Y-N}$.
But from~\eqref{eq:Nintf} we get
\[
Y-N=\int_{[0,1]}^\oplus\big(Y(t)-f(\rho(t))\big)\,d\sigma(t).
\]
Using Theorem~\ref{thm:5.6} and Lemma~\ref{lem:holofX}, we get
\begin{gather*}
\nu_Y=\int_{[0,1]}\nu_{Y(t)}\,d\sigma(t)=\int_{[0,1]}\delta_{f(\rho(t))}\,d\sigma(t)=(f\circ\rho)_*\sigma, \\
\nu_{Y-N}=\int_{[0,1]}\nu_{Y(t)-f(\rho(t))}\,d\sigma(t)=\int_{[0,1]}\delta_0\,d\sigma(t)=\delta_0. \\
\end{gather*}
Thus, $\nu_S=\nu_N$ and $\nu_{S-N}=\delta_0$
Therefore, $S-N$ is s.o.t.-quasinilpotent.
\end{proof}

Let $m\in\Nats$ and let each of $h_1,\ldots,h_m$ be a holomorphic function of $n$ variables with domain containing the joint spectrum 
$\sigma_\Afr(T_1,\ldots,T_n)$.
Let $h=(h_1,\ldots,h_m)$ denote the $\Cpx^m$-valued function.
We write $h_j(T)=h_j(T_1,\ldots,T_n)$ for the Arens functional calculus applied to the $n$-tuple $T$ and we write
\[
h(T)=(h_1(T),\ldots,h_m(T)).
\]
Note that $h(T)$ is an $m$-tuple of commuting operators.

Given $k\in\Nats$, $z=(z_1,\ldots,z_k)\in\Cpx^k$ and $\eps>0$, we let $B_\eps(z)$ denote the open polydisk
\[
B_\eps(z)=\{(w_1,\ldots,w_k)\in\Cpx^k\mid \forall j\,|z_j-w_j|<\eps\}
\]
\begin{lemma}\label{lem:PhTBeps}
Let $z\in\Cpx^n$ and let $\eps>0$.
Let $c>0$ be at least as large as $\sqrt n$ times the Lipschitz constant of $h$ in $B_\eps(z)$.
Then
\[
P(T:B_\eps(z))\le P\big(h(T):B_{c\eps}(h(z))\big).
\]
\end{lemma}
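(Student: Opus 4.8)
The plan is to deduce the lemma from the single‑variable push‑forward result (Theorem~\ref{thm:holomut}), applied in the compression of $\Mcal$ by $Q:=P(T:B_\eps(z))$. If $Q=0$ the inequality is trivial, so assume $Q\neq0$; and since for $Q=1$ we have $\nu_T(B_\eps(z))=\tau(Q)=1$ and the argument below goes through verbatim with $Q\Mcal Q=\Mcal$, we may concentrate on the case $Q\notin\{0,1\}$.

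First I would check that $Q$ is invariant under each $h_j(T)$. By Theorem~\ref{thm:main.nu.P} the projection $Q=P(T:B_\eps(z))$ is $T$-hyperinvariant, and since each $h_j(T)$ lies in the unital Banach algebra $\Afr$ generated by $T_1,\dots,T_n$ it commutes with every $T_i$; hence $h_j(T)Q=Qh_j(T)Q$. The same observation shows that the compression $\pi\colon\Afr\to Q\Mcal Q$, $X\mapsto QXQ$, is multiplicative (using $AQ=QAQ$ for $A\in\Afr$), hence a bounded unital algebra homomorphism with $\pi(T_i)=T_iQ$. A character of the unital Banach subalgebra $\Bfr\subseteq Q\Mcal Q$ generated by $T_1Q,\dots,T_nQ$ pulls back along $\pi$ to a character of $\Afr$, so $\sigma_\Bfr(T_1Q,\dots,T_nQ)\subseteq\sigma_\Afr(T_1,\dots,T_n)$; in particular each $h_j$ is holomorphic on a neighbourhood of $\sigma_\Bfr(TQ)$. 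By the intertwining property of the Arens functional calculus with bounded homomorphisms, $h_j(T)Q=\pi\big(h_j(T)\big)=h_j(T_1Q,\dots,T_nQ)$, the Arens functional calculus of the commuting tuple $TQ:=(T_1Q,\dots,T_nQ)$ in $\Bfr$.

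Next I would apply Theorem~\ref{thm:holomut} in the tracial von Neumann algebra $Q\Mcal Q$ (with the renormalised trace) to the tuple $TQ$ and to the holomorphic function $h_j$; this gives that the Brown measure $\nu^{(Q)}_{h_j(T)Q}$ equals the push‑forward $(h_j)_*\nu^{(Q)}_{TQ}$ of the joint Brown measure of $TQ$. By Theorem~\ref{thm:PTlargest}(a), $\nu^{(Q)}_{TQ}$ is concentrated in the open polydisk $B_\eps(z)$. The geometric input is the elementary estimate that if $\ell$ denotes the Lipschitz constant of $h$ on $B_\eps(z)$ then for $w\in B_\eps(z)$ one has $\|w-z\|_2<\sqrt n\,\eps$, whence $|h_j(w)-h_j(z)|\le\|h(w)-h(z)\|\le\ell\|w-z\|_2<\ell\sqrt n\,\eps\le c\eps$; thus $h_j$ maps $B_\eps(z)$ into the open disk $D_j:=\{\zeta:|\zeta-h_j(z)|<c\eps\}$, and consequently $\nu^{(Q)}_{h_j(T)Q}=(h_j)_*\nu^{(Q)}_{TQ}$ is concentrated in $D_j$. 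Since $Q$ is $h_j(T)$-invariant and $\nu^{(Q)}_{h_j(T)Q}$ is concentrated in $D_j$, the characterisation of Haagerup--Schultz projections recalled at the start of \S\ref{subsec:HS} yields $Q\le P(h_j(T),D_j)$.

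Finally, since $B_{c\eps}(h(z))=\prod_{j=1}^m D_j$ and $h(T)$ is a commuting tuple, Definition~\ref{def:PTR} gives $P(h(T):B_{c\eps}(h(z)))=\bigwedge_{j=1}^m P(h_j(T),D_j)\ge Q$, which is the assertion. I expect the genuine technical work to sit in the second paragraph: verifying that $\pi$ is an algebra homomorphism, that the joint spectrum behaves well under it, and hence that $h_j(T)Q$ really is the Arens calculus of the compressed tuple, so that Theorem~\ref{thm:holomut} may legitimately be invoked in $Q\Mcal Q$. The Lipschitz estimate and the lattice bookkeeping are routine.
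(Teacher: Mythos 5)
Your proof is correct and follows essentially the same route as the paper: reduce to a single $h_j$, compress by $Q=P(T:B_\eps(z))$, use the algebra homomorphism $A\mapsto AQ$ together with Theorem~\ref{thm:holomut} in $Q\Mcal Q$ to see that $\nu^{(Q)}_{h_j(T)Q}=(h_j)_*\nu^{(Q)}_{TQ}$ is concentrated in the target disk, and conclude $Q\le P(h_j(T),B_{c\eps}(h_j(z)))$. The only cosmetic differences are that the paper invokes Theorem~\ref{thm:PTQB} rather than the ``largest invariant projection'' characterisation, and skips your (correct and worthwhile) verification that $\sigma_\Bfr(TQ)\subseteq\sigma_\Afr(T)$, which justifies applying the Arens calculus of $h_j$ to the compressed tuple.
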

\begin{proof}
Since
\[
P\big(h(T):B_{c\eps}(h(z))\big)=\bigwedge_{j=1}^m P\big(h_j(T):B_{c\eps}(h_j(z))\big),
\]
it will suffice to show, for every $j$, the inequality
\[
P(T:B_\eps(z))\le P\big(h_j(T):B_{c\eps}(h_j(z))\big).
\]
If $P(T:B_\eps(z))=0$, then there is nothing to show;
so we may without loss of generality assume $\nu_T(B_\eps(z))>0$.
Let $Q=P(T:B_\eps(z))$.
Now $Q$ is invariant under every $T_j$ and, therefore, also under every element of the Banach algebra $\Afr$.
Since $h_j(T)$ belongs to $\Afr$,
by Theorem~\ref{thm:PTQB} we have
\begin{equation}\label{eq:PhTQB}
P\big(h_j(T):B_{c\eps}(h_j(z))\big)\wedge Q=P^{(Q)}\big(h_j(T)Q:B_{c\eps}(h_j(z))\big),
\end{equation}
where $P^{(Q)}$ means the Haagerup--Schultz projection computed in $Q\Mcal Q$ with respect to the renormalized trace.
But the map $\Afr\to Q\Afr Q$ given by $A\mapsto AQ$ is a bounded algebra homomorphism, so we have
$h_j(T)Q=h_j(TQ)$,
where $TQ=(T_1Q,\ldots,T_nQ)$.
By Theorem~\ref{thm:holomut}, the Brown measure, $\nu_{h_j(TQ)}^{(Q)}$, of $h_j(TQ)$ computed in $Q\Mcal Q$
is equal to the push-forward measure $(h_j)_*\nu_{TQ}^{(Q)}$
of the joint Brown measure $\nu_{TQ}^{(Q)}$ under $h_j$.
Thus, the measure $(h_j)_*\nu_{TQ}^{(Q)}$ is concentrated in $h_j(B_\eps(z))$.
But $\nu_{TQ}^{(Q)}$ is the renormalized restriction of $\nu_T$ to the polydisk $B_\eps(z)$ and $h_j$ maps this set into the disk $B_{c\eps}(h_j(z))$.
Thus, the Brown measure $\nu_{h_j(TQ)}^{(Q)}$ is concentrated in this latter disk, and the Haagerup--Schultz projection on the right hand side
of~\eqref{eq:PhTQB} is equal to the identity of $Q\Mcal Q$, namely, $Q$.
Therefore, we have 
\[
P\big(h_j(T):B_{c\eps}(h_j(z))\big)\ge Q,
\]
as required.
\end{proof}

\begin{lemma}\label{lem:PfTfX}
Suppose $U\subseteq\Cpx^n$ is an open set whose closure is a compact subset of the domain of $h$.
Then
\[
P(T:U)\le P\big(h(T):\overline{h(U)}\;\big).
\]
\end{lemma}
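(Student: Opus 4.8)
The plan is to bootstrap from the single-polydisk estimate in Lemma~\ref{lem:PhTBeps} by a covering-and-limiting argument. One cannot shortcut this using a joint push-forward theorem for $h$, since such a result (Theorem~\ref{thm:PhT}) comes later and will depend on the present lemma; so the argument must stay at the level of Haagerup--Schultz projections and the lattice identities of Theorem~\ref{thm:main.nu.P}. Throughout we use, without comment, monotonicity of $X\mapsto P(\,\cdot\,:X)$, which is immediate from the lattice property.

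First I would reduce to the case of a single polydisk. Since $U$ is open in $\Cpx^n$ and $\overline U$ is a compact subset of the (open) domain of $h$, we may write $U=\bigcup_{i=1}^\infty B_{\eps_i}(z_i)$ with each closed polydisk $\overline{B_{\eps_i}(z_i)}$ a compact subset of $U$. By Theorem~\ref{thm:main.nu.P}\eqref{it:PTlattice}, $P(T:U)=\bigvee_{i=1}^\infty P(T:B_{\eps_i}(z_i))$, so it suffices to show $P(T:V)\le P\big(h(T):\overline{h(U)}\,\big)$ for a single $V=B_{\eps_i}(z_i)$ with $\overline V\subseteq U$ compact.

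Now fix such a $V$ and fix $\delta_0>0$ so small that the closed $\delta_0$-neighbourhood $K$ of $\overline V$ is a compact subset of $U$. As $h$ is holomorphic it is Lipschitz on $K$, so Lemma~\ref{lem:PhTBeps} furnishes a constant $c>0$ (namely $\sqrt n$ times a Lipschitz constant for $h$ on $K$, noting $B_\delta(w)\subseteq K$ whenever $w\in\overline V$ and $\delta\le\delta_0$) with
\[
P(T:B_\delta(w))\le P\big(h(T):B_{c\delta}(h(w))\big)\qquad\text{for all }w\in\overline V,\ 0<\delta\le\delta_0.
\]
For each such $\delta$, cover the compact set $\overline V$ by finitely many polydisks $B_\delta(w_1),\dots,B_\delta(w_k)$ with $w_l\in\overline V$. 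Since $V\subseteq\bigcup_{l}B_\delta(w_l)$, Theorem~\ref{thm:main.nu.P}\eqref{it:PTlattice} gives
\[
P(T:V)\le\bigvee_{l=1}^k P(T:B_\delta(w_l))\le\bigvee_{l=1}^k P\big(h(T):B_{c\delta}(h(w_l))\big)\le P(h(T):G_\delta),
\]
where $G_\delta=\bigcup_{x\in h(\overline V)}B_{c\delta}(x)$ is the open $c\delta$-neighbourhood of the compact set $h(\overline V)$. Letting $k\to\infty$ along $\delta=\delta_0/k$: the sets $G_{\delta_0/k}$ decrease to $\bigcap_k G_{\delta_0/k}=h(\overline V)$, since $h(\overline V)$ is closed; hence, again by Theorem~\ref{thm:main.nu.P}\eqref{it:PTlattice}, $\bigwedge_k P(h(T):G_{\delta_0/k})=P\big(h(T):h(\overline V)\big)$. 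As this meet dominates $P(T:V)$, we obtain $P(T:V)\le P\big(h(T):h(\overline V)\big)$, and finally $\overline V\subseteq U$ yields $h(\overline V)\subseteq h(U)\subseteq\overline{h(U)}$, so $P(T:V)\le P\big(h(T):\overline{h(U)}\,\big)$. Taking the join over $i$ finishes the proof.

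The main obstacle is the discrepancy between the enlarged polydisk $B_{c\delta}(h(w))$ produced by Lemma~\ref{lem:PhTBeps} and the target set $\overline{h(U)}$: there is no reason for $B_{c\delta}(h(w))$ to lie inside $\overline{h(U)}$. This is exactly what the $\delta\to0$ limit repairs, the finitely many enlarged polydisks collapsing onto the closed set $h(\overline V)\subseteq\overline{h(U)}$; it is also the reason one must secure a single constant $c$ valid for every small polydisk at once, which is why the argument passes through the fixed compact neighbourhood $K$ of $\overline V$.
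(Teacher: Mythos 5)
Your argument is correct and is essentially the paper's proof: both cover a compact subset of $U$ by small polydisks, apply Lemma~\ref{lem:PhTBeps} with a single Lipschitz constant valid across the cover, and let the mesh and the resulting tubular neighbourhood of the image shrink to zero. The only structural difference is that you first reduce to a single compactly contained polydisk $V$ and shrink a tube around $h(\overline V)$, whereas the paper works directly with the compact exhaustion $X_\eps$ of $U$ and the open $c\eps$-neighbourhoods $Y_\eps$ of $h(U)$, which avoids the outer join over $i$ but is otherwise the same one-parameter limit.
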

\begin{proof}
Let $c$ be at least $\sqrt n$ times the Lipschitz constant of $h$ on $U$.
For each $\eps>0$, let
\[
X_\eps=\{z\in\Cpx^n\mid\dist_\infty(z,U^c)\ge\eps\},\qquad Y_\eps=\{w\in\Cpx^m\mid \dist(w,h(U))<c\eps\}.
\]
Here $\dist_\infty$ is the distance with respect to the norm, $\|z-w\|_\infty=\max_j|z_j-w_j|.$ Clearly, $X_\eps$ is compact and $Y_\eps$ is open.
By Lemma~\ref{lem:PhTBeps}, for every $z\in X_\eps$, we have
\[
P(T:B_\eps(z))\le P\big(h(T):B_{c\eps}(h(z))\big)\le P(h(T):Y_\eps).
\]
By compactness, we may choose $z^{(1)},\ldots,z^{(k)}\in X_\eps$ such that $X_\eps\subseteq\bigcup_{j=1}^kB_\eps(z^{(j)})$.
Then, by the lattice properties (Theorem~\ref{thm:main.nu.P}\eqref{it:PTlattice}), we have
\[
P(T:X_\eps)\le P\left(T:\bigcup_{j=1}^kB_\eps(z^{(j)})\right)
=\bigvee_{j=1}^nP(T:B_\eps(z^{(j)}))\le P(h(T):Y_\eps).
\]
Let $\eps(k)$ decrease to $0$ as $k\to\infty$.
Then
\[
U=\bigcup_{k=1}^\infty X_{\eps(k)},\qquad\overline{h(U)}=\bigcap_{k=1}^\infty Y_{\eps(k)}.
\]
By the lattice properties again and since $P(T:X_{\eps(k)})$ is increasing and $P(h(T):Y_{\eps(k)})$ is decreasing in $k$, we get
\[
P(T:U)=\bigvee_{k=1}^\infty P(T:X_{\eps(k)})\le\bigwedge_{k=1}^\infty P(h(T):Y_{\eps(k)})=P(h(T):\overline{h(U)}\;),
\]
as required.
\end{proof}

\begin{thm}\label{thm:PhT}
We have
\begin{equation}\label{eq:nuhT}
\nu_{h(T)}=h_*\nu_T
\end{equation}
and for every Borel set $X\subseteq\Cpx^m$,
\begin{equation}\label{eq:PhTX}
P(h(T):X)=P(T:h^{-1}(X)).
\end{equation}
\end{thm}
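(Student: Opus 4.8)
The plan is to prove \eqref{eq:PhTX} first, since \eqref{eq:nuhT} then follows by taking traces via Theorem~\ref{thm:main.nu.P}\eqref{it:tauPTX}; in fact both will drop out of a single chain of inequalities. As a preliminary reduction I would note that, by Proposition~\ref{prop:suppspec} together with the elementary inclusion $\spec^\ell_\Mcal(T)\subseteq\sigma_\Afr(T_1,\ldots,T_n)$ (a relation $\sum_j a_j(T_j-\lambda_j)=1$ with $a_j\in\Afr$ is also a relation in $\Mcal$, using commutativity), the support of $\nu_T$ lies in $\sigma_\Afr(T_1,\ldots,T_n)$ and hence inside the common open domain of $h_1,\ldots,h_m$; thus $\nu_T$ is concentrated there, and one may fix a compact $K$ with $\supp\nu_T\subseteq\operatorname{int}K\subseteq K\subseteq\operatorname{dom}h$.

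The key step is to show, for $X\subseteq\Cpx^m$ \emph{open}, that $P(T:h^{-1}(X))\le P(h(T):X)$. Since $\nu_T(\Cpx^n\setminus\operatorname{int}K)=0$ one may replace $h^{-1}(X)$ by the open set $V:=h^{-1}(X)\cap\operatorname{int}K$ without changing $P(T:\cdot)$. Then exhaust $V=\bigcup_{k\ge1}W_k$ by the increasing open sets
\[
W_k=\bigl\{z\in V:\ \dist(z,\Cpx^n\setminus V)>1/k,\ \dist(h(z),\Cpx^m\setminus X)>1/k\bigr\}.
\]
One checks that $\overline{W_k}$ is a compact subset of $V\subseteq\operatorname{dom}h$ and that $\overline{h(W_k)}\subseteq X$, so Lemma~\ref{lem:PfTfX} applies and gives $P(T:W_k)\le P(h(T):\overline{h(W_k)})\le P(h(T):X)$. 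Joining over $k$ and using the lattice property of Theorem~\ref{thm:main.nu.P}\eqref{it:PTlattice} yields $P(T:h^{-1}(X))=P(T:V)=\bigvee_k P(T:W_k)\le P(h(T):X)$.

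Next I would extend this inequality to all Borel sets. The family $\mathcal L$ of Borel $X\subseteq\Cpx^m$ with $P(T:h^{-1}(X))\le P(h(T):X)$ contains all open sets by the previous step, and it is closed under countable unions and countable intersections: this is immediate from $h^{-1}(\bigcup_n X_n)=\bigcup_n h^{-1}(X_n)$, $h^{-1}(\bigcap_n X_n)=\bigcap_n h^{-1}(X_n)$, and Theorem~\ref{thm:main.nu.P}\eqref{it:PTlattice} applied on both sides. Since in a metric space every closed set is a $G_\delta$, $\mathcal L$ contains all closed sets, and a routine transfinite induction up the Borel hierarchy ($\Sigma^0_\alpha$ and $\Pi^0_\alpha$ are built from strictly lower levels by countable unions and intersections) shows $\mathcal L$ equals the full Borel $\sigma$-algebra. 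Finally, to promote the inequality to the desired equality, apply it to $X$ and to $X^c=\Cpx^m\setminus X$, take traces via Theorem~\ref{thm:main.nu.P}\eqref{it:tauPTX}, and add:
\[
1=\nu_T(h^{-1}(X))+\nu_T(h^{-1}(X^c))\le\nu_{h(T)}(X)+\nu_{h(T)}(X^c)=1,
\]
the first equality using that $\nu_T$ is concentrated in $\operatorname{dom}h=h^{-1}(X)\sqcup h^{-1}(X^c)$. Hence both summand inequalities are equalities; the first says $\nu_T(h^{-1}(X))=\nu_{h(T)}(X)$ for all Borel $X$, which is \eqref{eq:nuhT}. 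Then $P(T:h^{-1}(X))\le P(h(T):X)$ with $\tau(P(T:h^{-1}(X)))=\nu_T(h^{-1}(X))=\nu_{h(T)}(X)=\tau(P(h(T):X))$ and faithfulness of $\tau$ force $P(T:h^{-1}(X))=P(h(T):X)$, giving \eqref{eq:PhTX}.

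The main obstacle is the open-set step: one must package $h^{-1}(X)$ — which may be unbounded and may approach the boundary of $\operatorname{dom}h$ — into an increasing sequence of open sets with \emph{compact closures inside} $\operatorname{dom}h$ whose images have closure inside $X$, so that the hypotheses of Lemma~\ref{lem:PfTfX} are genuinely met; once that is arranged, the remainder of the argument is formal, relying only on the countable lattice identities of Theorem~\ref{thm:main.nu.P} and the faithfulness of the trace.
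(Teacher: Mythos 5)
Your proposal is correct, and it takes a genuinely different route from the paper. The paper proves the inequality $P(T:h^{-1}(\cdot))\le P(h(T):\cdot)$ first on bounded open sets $V$ (via Lemma~\ref{lem:PfTfX} applied to $h^{-1}(V)\cap B_M(0)$), passes to compacts by shrinking $\eps$-neighborhoods, deduces the measure inequality on compacts, upgrades to all Borel sets by inner regularity, obtains the measure \emph{equality} by the complement trick, and then repeats the regularity argument a second time to get the projection equality. You instead establish the projection inequality on opens, extend to \emph{all} Borel sets in one stroke by the transfinite Borel-hierarchy argument (valid since $\mathcal{L}$ contains opens, is closed under countable unions and intersections, and closed sets are $G_\delta$ in a metric space), and then get both the measure equality and the projection equality simultaneously from the complement trick plus faithfulness of $\tau$. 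This is cleaner: one pass up the Borel hierarchy replaces two rounds of inner regularity.

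Your treatment of the open-set step is also more careful than the paper's. The paper applies Lemma~\ref{lem:PfTfX} directly to $U=h^{-1}(V)\cap B_M(0)$, claiming it is bounded and open, but does not check the lemma's hypothesis that $\overline U$ is a compact subset of $\operatorname{dom}h$; if $h$ is defined only on a small neighborhood of $\sigma_\Afr(T)$, the closed polydisk $\overline{B_M(0)}$ may spill out of $\operatorname{dom}h$ and the Lipschitz constant used in Lemma~\ref{lem:PfTfX} may be infinite. Your choice of a compact $K$ with $\supp\nu_T\subseteq\operatorname{int}K\subseteq K\subseteq\operatorname{dom}h$ and the exhaustion $V=\bigcup_k W_k$ by sets with $\overline{W_k}\subseteq V\subseteq K$ genuinely secures the hypothesis of Lemma~\ref{lem:PfTfX}, closing that gap. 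The implicit use of the multivariate analogue of Lemma~\ref{lem:PTBsymdif} (that $\nu_T$-null symmetric difference gives equal $P(T:\cdot)$) in replacing $h^{-1}(X)$ by $V$ is justified by monotonicity and the trace formula in Theorem~\ref{thm:main.nu.P}, though you might mention this explicitly.
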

\begin{proof}
First, suppose $V\subseteq\Cpx^m$ is bounded and open.
Let $U=h^{-1}(V)\cap B_M(0)$, where $M=1+\max_{1\le k\le n}\|T_k\|$.
Note that the support of $\nu_T$ is contained in $B_M(0)$ and that 
$U$ is bounded and open.
Thus, by using Lemma~\ref{lem:PfTfX} we have
\[
P(T:h^{-1}(V))=P(T:U)\le P(h(T):\overline{h(U)}\;)\le P(h(T):\overline{V}\;).
\]
Now suppose $K\subseteq\Cpx^m$ is compact and for $\eps>0$, let
\[
K_\eps=\{z\in\Cpx^m\mid\dist(z,K)<\eps\}.
\]
By the case just proved, we have
\[
P(T:h^{-1}(K_\eps))\le P(h(T):\overline{K_\eps}).
\]
But as $\eps$ decreases to $0$, the set $\overline{K_\eps}$ decreases to $K$ and the set $h^{-1}(K_\eps)$ decreases to $h^{-1}(K)$.
Thus, choosing a sequence $\eps(j)$ decreasing to zero and using the lattice properties (Theorem~\ref{thm:main.nu.P}\eqref{it:PTlattice}),
we have
\begin{equation}\label{eq:PThK}
P(T:h^{-1}(K))=\bigwedge_{j=1}^\infty P(T:h^{-1}(K_{\eps(j)}))\le\bigwedge_{j=1}^\infty P(h(T):\overline{K_{\eps(j)}})=P(h(T):K).
\end{equation}
Taking traces of both sides, we get 
\[
h_*\nu_T(K)=\nu_T(h^{-1}(K))=\tau(P(T:h^{-1}(K)))\le\tau(P(h(T):K))=\nu_{h(T)}(K).
\]

Let $X\subseteq\Cpx^m$ be any Borel set.
Since both $h_*\nu_T$ and $\nu_{h(T)}$ are regular measures on $\Cpx^m$, there exist compact sets $K_1\subseteq K_2\subseteq\cdots\subseteq X$
such that
\begin{equation}\label{eq:h*nuTlim}
\lim_{j\to\infty}\nu_{h(T)}(K_j)=\nu_{h(T)}(X),\qquad\lim_{j\to\infty}h_*\nu_T(K_j)=h_*\nu_T(X).
\end{equation}
Since for each $j$ we have $h_*\nu(K_j)\le\nu_{h(T)}(K_j)$, we get
\[
h_*\nu_T(X)\le\nu_{h(T)}(X).
\]
Since both $h_*\nu_T$ and $\nu_{h(T)}$ are probability measures, by considering complements we obtain the equality~\eqref{eq:nuhT}.

For every compact $K\subseteq\Cpx^m$, from~\eqref{eq:PThK} we have $P(T:h^{-1}(K))\le P(h(T):K)$.
Using~\eqref{eq:nuhT}, we have
\[
\tau(P(T:h^{-1}(K))=h_*\nu_T(K)=\nu_{h(T)}(K)=\tau(P(h(T):K)),
\]
so we must, in fact, have 
\[
P(T:h^{-1}(K))=P(h(T):K).
\]

Now for an arbitrary Borel set $X\subseteq\Cpx^m$, letting $K_j$ be an increasing sequence of compact subsets of $X$ so that~\eqref{eq:h*nuTlim} holds,
we have
\[
P(h(T):X)=\lim_{j\to\infty}P(h(T):K_j)=\lim_{j\to\infty}P(T:h^{-1}(K_j))=P(T:h^{-1}(X)),
\]
where the limits are in strong operator topology.
This, of course, is the desired equality~\eqref{eq:PhTX}.
\end{proof}

\begin{ques}\label{ques:TFC}
Do
analogues of 
Theorem~\ref{thm:holomut} and Theorem~\ref{thm:PhT} hold for the holomorphic functional calculus of Taylor?
\end{ques}

A formally easier question is:
\begin{ques}\label{ques:AFC''}
Do
analogues of 
Theorem~\ref{thm:holomut} and Theorem~\ref{thm:PhT} hold for the Arens functional calculus in $\Afr''$?
\end{ques}
The impediment to answering Question~\ref{ques:AFC''} in the same manner that we proved
Theorems~\ref{thm:holomut} and~\ref{thm:PhT}
is the question of whether the restriction of $\Ec_\Dc$ to $\Afr''$
is an algebra homomorphism.
We have $\Afr\subseteq\Afr'$, so $\Afr''\subseteq\Afr'$ and,
since the projections $q_t$ are $T$-hyperinvariant, we conclude that all elements of $\Afr''$ are upper triangular with respect to the
family $(q_t)_{0\le t\le 1}$ of projections.
Thus, Lemma~\ref{lem:U} gives us that the restriction of $\Ec_{\Mcal\cap\Dc'}$ to $\Afr''$ is an algebra homomorphism.
However, we don't know if the restriction of $\Ec_\Dc$ to $\Ec_{\Mcal\cap\Dc'}(\Afr'')$ is an algebra homomorphism.

The inclusions
\[
\Sp(T_1,\ldots,T_n)\subseteq\sigma_{\Afr''}(T_1,\ldots,T_n)\subseteq\sigma_\Afr(T_1,\ldots,T_n)
\]
are known.
Moreover, the right-most inclusion above can be proper:
even when $n=1$, we have the standard example of $\Mcal=L^\infty(\mathbb{T})$ and $T_1$ the function mapping $z\mapsto z$;
then $\Afr$ is the disk algebra and $\Afr''=\Mcal$, so $\sigma_\Afr(T_1)$ is the closed unit disk, while $\sigma_{\Afr''}(T_1)$ is the unit circle.
However, as far as we know the following question is open:
\begin{ques}
Can the inclusion
\[
\Sp(T_1,\ldots,T_n)\subseteq\sigma_{\Afr''}(T_1,\ldots,T_n)
\]
be proper for commuting operators $T_1,\ldots,T_n$ in a finite von Neumann algebra?
\end{ques}
The answer is \lq\lq yes\rq\rq if we ask instead about commuting operators on a Banach space, as was shown by Taylor~\cite{T70}.
See also~\cite{A79} for further interesting and related examples.

\section{Similarities}
\label{sec:sims}

It is clear from the definition of Brown measure that it is the same for any operators in the same similarity class.
In this section, we show that for any $S,T\in\Mcal$ with $S$ invertible,
the Haagerup--Schultz projections of $T$ and $STS^{-1}$ are related as follows:
$P(STS^{-1},B)$ is the projection onto the image of $SP(T,B)$.
We then show analogous results for families of commuting operators.

As usual, $\Mcal$ will be a von Neumann algebra equipped with a normal, faithful tracial state $\tau$, acting via a normal representation on some Hilbert space.
For $T\in\Mcal$, we will let $[T]$ denote the range projection of $T$, namely, the projection onto the closure of the range of $T$.
It is equal to the spectral projection $1_{(0,\infty)}(TT^*)\in\Mcal$.  In particular, it is independent of the action on a Hilbert space.

We will use the following easy lemmas repeatedly.
\begin{lemma}\label{lem:[SP]}
Let $S\in\Mcal$ be invertible and let
$P$ be a projection in $\Mcal$.
Then
\[
\tau([SP])=\tau(P).
\]
\end{lemma}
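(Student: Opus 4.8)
The plan is to reduce the statement to the trace-invariance of the Murray--von Neumann equivalence class of a projection under the polar decomposition of the operator $SP$. First I would recall that for any $X \in \Mcal$, the range projection $[X]$ is Murray--von Neumann equivalent to the range projection $[X^*]$ of the adjoint; indeed, writing the polar decomposition $X = V|X|$ with $V$ a partial isometry in $\Mcal$, one has $V^*V = [X^*] = [|X|]$ and $VV^* = [X]$, so $\tau([X]) = \tau(VV^*) = \tau(V^*V) = \tau([X^*])$ by the trace property. Applying this with $X = SP$ gives $\tau([SP]) = \tau([(SP)^*]) = \tau([PS^*])$.

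Next I would identify $[PS^*]$. Since $S$ is invertible, $S^*$ is invertible, hence $S^*$ has full range and trivial kernel; therefore the range of $PS^*$ is exactly $P(\HEu)$ when $S^*$ is surjective — more precisely, $\overline{\operatorname{ran}(PS^*)} = \overline{P(\operatorname{ran} S^*)} = \overline{P(\HEu)} = P(\HEu)$, because $\operatorname{ran} S^* = \HEu$. Thus $[PS^*] = P$, and we conclude $\tau([SP]) = \tau(P)$, as desired. Alternatively, and perhaps more cleanly within the von Neumann algebra without invoking a Hilbert space action, one notes that $[PS^*]$ is the range projection of $PS^*$, and since $S^*$ is invertible, $PS^* \cdot (S^*)^{-1} = P$, so $P \le [PS^*]$; conversely $[PS^*] \le P$ since $PS^* = P(PS^*)$ has range contained in $P\Mcal$; hence $[PS^*] = P$.

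I do not anticipate a serious obstacle here: the only point requiring a little care is the passage $[PS^*] = P$, where one must genuinely use invertibility of $S$ (the identity $\overline{\operatorname{ran}(PS^*)} = P(\HEu)$ fails for non-invertible $S$), and the fact that range projections of elements of $\Mcal$ lie in $\Mcal$ and are independent of the ambient Hilbert space representation, which is noted in the paragraph preceding the lemma. The trace identity $\tau(VV^*) = \tau(V^*V)$ is immediate from the trace property applied to $V^*$ and $V$.
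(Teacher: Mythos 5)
Your proof is correct and takes essentially the same approach as the paper's: both arguments reduce to the fact that the range projections $[SP]$ and $[(SP)^*]$ are Murray--von Neumann equivalent (via the polar decomposition, or equivalently via $1_{(0,\infty)}(TT^*)\sim 1_{(0,\infty)}(T^*T)$ for $T=SP$), hence have the same trace, and then use invertibility of $S$ to identify the latter projection with $P$.
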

\begin{proof}
Writing $T=SP$, we have $[SP]=1_{(0,\infty)}(TT^*)$ and, since $S$ is invertible, $P=1_{(0,\infty)}(T^*T)$.
Thus, these two projections are unitarily conjugate, and have the same trace.
\end{proof}

\begin{lemma}\label{lem:[SQ]}
Let $S\in\Mcal$ be invertible and let
$(Q_j)_{j\in J}$ be projections in $\Mcal$.
Then we have
\[
\bigvee_{j\in J}[SQ_j]=\left[S\left(\bigvee_{j\in J}Q_j\right)\right],\qquad
\bigwedge_{j\in J}[SQ_j]=\left[S\left(\bigwedge_{j\in J}Q_j\right)\right].
\]
\end{lemma}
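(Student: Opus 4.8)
The plan is to translate both identities into statements about closed subspaces of the Hilbert space $\HEu$ on which $\Mcal$ acts, and to exploit the fact that an invertible $S\in\Mcal$ is a bounded bijection with bounded inverse, hence a homeomorphism of $\HEu$. The first step I would carry out is the basic identification: for any projection $P\in\Mcal$ one has $\operatorname{ran}(SP)=\{S\eta:\eta\in P\HEu\}=S(P\HEu)$, and since $P\HEu$ is closed and $S$ is a homeomorphism, $S(P\HEu)$ is again closed; therefore $[SP]$ is exactly the orthogonal projection onto $S(P\HEu)$. (That $[SP]\in\Mcal$, via $[SP]=1_{(0,\infty)}(SPS^*)$, is not needed for the argument, only that these are genuine projections.) I would also note that $\bigvee$ and $\bigwedge$ computed in $\Mcal$ agree with those computed in $B(\HEu)$, so the statement is unambiguous.

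For the join I would compute the range of $\bigvee_{j\in J}[SQ_j]$, which is $\overline{\sum_{j\in J}S(Q_j\HEu)}$. Linearity of $S$ gives $\sum_{j\in J}S(Q_j\HEu)=S\big(\sum_{j\in J}Q_j\HEu\big)$, and since $S$ is a homeomorphism, $\overline{S\big(\sum_{j\in J}Q_j\HEu\big)}=S\big(\overline{\sum_{j\in J}Q_j\HEu}\big)=S\big((\bigvee_{j\in J}Q_j)\HEu\big)$. By the first step applied to $P=\bigvee_{j\in J}Q_j$, this subspace is precisely the range of $[S(\bigvee_{j\in J}Q_j)]$; since two projections with the same range coincide, the first identity follows.

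For the meet I would observe that the range of $\bigwedge_{j\in J}[SQ_j]$ is $\bigcap_{j\in J}S(Q_j\HEu)$, and that injectivity of $S$ yields $\bigcap_{j\in J}S(Q_j\HEu)=S\big(\bigcap_{j\in J}Q_j\HEu\big)=S\big((\bigwedge_{j\in J}Q_j)\HEu\big)$, which is the range of $[S(\bigwedge_{j\in J}Q_j)]$. This gives the second identity.

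I do not expect a real obstacle here; the two points that need a word of justification are that an invertible bounded operator maps closed subspaces to closed subspaces and commutes with closure (this is just that it is a homeomorphism), and the elementary set-theoretic fact $\bigcap_{j}S(A_j)=S\big(\bigcap_{j}A_j\big)$, valid because $S$ is injective. A De Morgan style reduction of the meet to the join is tempting but fails, since $[S(1-Q)]$ is a (non-orthogonal) complement of $[SQ]$ rather than $1-[SQ]$; the direct subspace computation above avoids this issue.
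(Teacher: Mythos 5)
Your proof is correct. Both identities are reduced to the single clean observation that for an invertible $S$ and a projection $P$, the subspace $S(P\HEu)$ is closed (since $S$ is a homeomorphism of $\HEu$), so $[SP]$ is exactly the orthogonal projection onto $S(P\HEu)$; then the join identity follows because $S$ is linear and commutes with closure, and the meet identity follows because $S$ is injective and so commutes with arbitrary intersections. The remark that a De Morgan reduction of the meet to the join via $1-Q$ fails is apt and worth keeping in mind.

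The paper's own proof is given in one line: verify the claim for finite $J$, then pass to monotone nets for arbitrary $J$. Your argument is a genuine streamlining: you treat arbitrary $J$ directly through the subspace picture, avoiding the two-stage structure entirely. Notably, the paper's route is not actually shorter in substance, since the net-limit step itself requires the same kind of subspace reasoning (that $S$ commutes with closures of increasing unions and with intersections of decreasing families of ranges) to conclude that $[S(\cdot)]$ is continuous along monotone nets of projections. Your version makes that reasoning explicit and applies it once, which is arguably the cleaner presentation of what both proofs are really doing.
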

\begin{proof}
It is straightforward to verify this for $J$ finite, and then we may take a limit of a (monotone) net for arbitrary $J$.
\end{proof}

\begin{thm}\label{thm:simHS}
Suppose $A,S\in\Mcal$ with $S$ invertible.
Then 
\begin{enumerate}[(a)]
\item\label{it:nuSAS}
$\nu_{SAS^{-1}}=\nu_A$
\item\label{it:PSAS}
for every Borel set $X\subseteq\Cpx$, 
the Haagerup--Schultz projections satisfy 
\[
P(SAS^{-1},X)=[SP(A,X)].
\]
\end{enumerate}
\end{thm}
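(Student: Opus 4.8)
Part~(a) is immediate and is really just the remark opening Section~\ref{sec:sims}: Brown measure is a similarity invariant because the Fuglede--Kadison determinant is multiplicative and $\Delta(S)$ is finite and nonzero, so $\Delta(SAS^{-1}-\lambda)=\Delta\bigl(S(A-\lambda)S^{-1}\bigr)=\Delta(A-\lambda)$ for every $\lambda\in\Cpx$, whence $\nu_{SAS^{-1}}=\nu_A$.

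For Part~(b), write $T=SAS^{-1}$, $P=P(A,X)$ and $Q=[SP]$. The plan is to prove the inequality $Q\le P(T,X)$ for \emph{every} invertible $S\in\Mcal$ and \emph{every} $A\in\Mcal$, and then to deduce the reverse inclusion by instantiating this inequality with the pair $(S^{-1},T)$ in place of $(S,A)$, using that $S^{-1}TS=A$. To prove $Q\le P(T,X)$ I will use the characterization, recalled at the start of \S\ref{subsec:HS}, of $P(T,X)$ as the largest $T$-invariant projection $R$ for which $\nu_{TR}^{(R)}$ is concentrated in $X$.

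First, $Q$ is $T$-invariant: since $P$ is $A$-invariant, i.e.\ $AP=PAP$, we have $T(SP\xi)=SAP\xi=SP(AP)\xi\in SP\HEu$ for all $\xi\in\HEu$, and taking closures (and using continuity of $T$) gives $T\bigl(\overline{SP\HEu}\bigr)\subseteq\overline{SP\HEu}$. Next I identify $\nu_{TQ}^{(Q)}$. Assume $P\ne0$, hence $Q\ne0$, and let $SP=UH^{1/2}$ be the polar decomposition, where $H=PS^*SP$. Because $S$ is invertible, $H\ge\|S^{-1}\|^{-2}P$, so $H$ is positive and invertible inside $P\Mcal P$, and $U$ is a partial isometry with $U^*U=P$ and $UU^*=[SP]=Q$. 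Then $U=SPH^{-1/2}$, and using $AP=PAP$ (so also $(1-P)AP=0$) one computes $PS^*SAP=(PS^*SP)(AP)=H(AP)$, whence
\[
U^*TU=H^{-1/2}\,(PS^*SAP)\,H^{-1/2}=H^{1/2}(AP)H^{-1/2}.
\]
Thus, inside $P\Mcal P$, the operator $U^*TU$ is similar to $AP$. Since $x\mapsto UxU^*$ is a $*$-isomorphism of $P\Mcal P$ onto $Q\Mcal Q$ that preserves the normalized traces and carries $U^*TU$ to $QTQ=TQ$, Part~(a) applied inside $P\Mcal P$ gives $\nu_{TQ}^{(Q)}=\nu_{AP}^{(P)}$, which is concentrated in $X$ because $P=P(A,X)$; the characterization above then yields $Q\le P(T,X)$. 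The degenerate cases are trivial: if $P=1$ then $\nu_A(X)=1$, so by Part~(a) $\nu_T(X)=1$ and $[S]=1=P(T,X)$; the case $P=0$ will be handled by the symmetric step. Applying the inequality just established to $(S^{-1},T)$ gives $[S^{-1}P(T,X)]\le P(S^{-1}TS,X)=P(A,X)=P$, hence $P(T,X)\HEu\subseteq\overline{SP\HEu}=Q\HEu$, i.e.\ $P(T,X)\le Q$. Therefore $P(T,X)=Q=[SP(A,X)]$.

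The only genuine computation is the identity $PS^*SAP=(PS^*SP)(AP)$, and this is the one place where $A$-invariance of $P$ is used; I expect it to be the crux, with everything else routine: the polar decomposition, the verification that $x\mapsto UxU^*$ implements the claimed isomorphism $P\Mcal P\to Q\Mcal Q$, and Lemma~\ref{lem:[SP]} for the trivial cases. I note in passing that for a closed disk $X=B_r(\lambda)$ one can instead argue directly from~\eqref{eq:ET} that $E(SAS^{-1}-\lambda,r)=S\cdot E(A-\lambda,r)$, using $(SAS^{-1}-\lambda)^n=S(A-\lambda)^nS^{-1}$ and that $S$ is a homeomorphism of $\HEu$, and then pass to arbitrary Borel $X$ via the lattice properties of $B\mapsto P(T,B)$ (Theorem~\ref{thm:countable}) and of $Q\mapsto[SQ]$ (Lemma~\ref{lem:[SQ]}) together with a monotone class argument; but the similarity argument above has the advantage of treating all Borel $X$ uniformly.
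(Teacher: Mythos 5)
Part~(a) is exactly the paper's argument. Part~(b) is correct but takes a genuinely different route from the paper. The paper proves the identity first for closed disks $X=B_r(\lambda)$ directly from the sequential description of $E(T-\lambda,r)$ in~\eqref{eq:ET}, and then extends to finite unions of disks, to compact sets, and finally to arbitrary Borel sets by combining the lattice properties of $B\mapsto P(T,B)$ with Lemma~\ref{lem:[SQ]} and regularity of $\nu_A$. You instead invoke the extremal characterization of $P(T,X)$ recalled in \S\ref{subsec:HS} (largest $T$-invariant projection whose compression has Brown measure concentrated in $X$): you check that $Q=[SP(A,X)]$ is $T$-invariant, use the polar decomposition $SP=UH^{1/2}$ with $H=PS^*SP$ invertible in $P\Mcal P$ to see $U^*TU=H^{1/2}(AP)H^{-1/2}$, transport via the trace-preserving $*$-isomorphism $x\mapsto UxU^*$ of $P\Mcal P$ onto $Q\Mcal Q$, and apply Part~(a) in the corner to conclude $\nu_{TQ}^{(Q)}=\nu_{AP}^{(P)}$ is concentrated in $X$, hence $Q\le P(T,X)$; the reverse inequality then follows by running the same inequality with $(S^{-1},T)$ in place of $(S,A)$. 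The algebraic crux $PS^*SAP=(PS^*SP)(AP)$, which uses $AP=PAP$, is correct, and the trivial cases $P\in\{0,1\}$ are handled. Your approach has the advantage of treating all Borel $X$ at once, avoiding the disk/compactness bootstrap; the paper's is more elementary in that it needs no polar-decomposition machinery or the ``largest invariant projection'' characterization. Both proofs are valid.
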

\begin{proof}
The equality~\eqref{it:nuSAS} is well known, and follows from 
the multiplicativity of the Fuglede--Kadison determinant and the definition of Brown measure,
since we have $\Delta(A-\lambda)=\Delta(SAS^{-1}-\lambda)$ for every $\lambda\in\Cpx$.

To prove~\eqref{it:PSAS}, first suppose $X$ is a closed disk $\overline{B_r(\lambda)}$ in $\Cpx$.
Using the description expressed at~\eqref{eq:ET} in Section~\ref{subsec:HS}, we have
\begin{multline*}
P(SAS^{-1},\overline{B_r(\lambda)})\HEu \\
=\{\xi\in\HEu\mid\exists\,(\xi_n)_{n=1}^\infty\subseteq\HEu,\,\lim_{n\to\infty}\|\xi_n-\xi\|=0,\,
\limsup_{n\to\infty}\|(SAS^{-1}-\lambda)^n\xi_n\|^{1/n}\le r\}.
\end{multline*}
Now using
\begin{align*}
\|S^{-1}\|^{-1}\|(A-\lambda)^nS^{-1}\xi_n\|\le\|(SAS^{-1}&-\lambda)^n\xi_n\|\le\|S\|\,\|(A-\lambda)^nS^{-1}\xi_n\|, \\
\|S\|^{-1}\|\xi_n-\xi\|\le\|S^{-1}\xi_n&-S^{-1}\xi\|\le\|S^{-1}\|\,\|\xi_n-\xi\|,
\end{align*}
we have
\begin{align*}
P&(SAS^{-1},\overline{B_r(\lambda)})\HEu \\
&=\begin{aligned}[t]
\{\xi\in\HEu\mid\exists\,(\xi_n)_{n=1}^\infty\subseteq\HEu,\,&\lim_{n\to\infty}\|S^{-1}\xi_n-S^{-1}\xi\|=0, \\
&\limsup_{n\to\infty}\|(A-\lambda)^nS^{-1}\xi_n\|^{1/n}\le r\}
\end{aligned} \displaybreak[2] \\
&=\{\xi\in\HEu\mid\exists\,(\eta_n)_{n=1}^\infty\subseteq\HEu,\,\lim_{n\to\infty}\|\eta_n-S^{-1}\xi\|=0,\,
\limsup_{n\to\infty}\|(A-\lambda)^n\eta_n\|^{1/n}\le r\} \displaybreak[2] \\
&=S\{\eta\in\HEu\mid\exists\,(\eta_n)_{n=1}^\infty\subseteq\HEu,\,\lim_{n\to\infty}\|\eta_n-\eta\|=0,\,
\limsup_{n\to\infty}\|(A-\lambda)^n\eta_n\|^{1/n}\le r\} \\
&=SP(A,\overline{B_r(\lambda)})\HEu.
\end{align*}
This proves~\eqref{it:PSAS} when $X$ is a closed disk.

Using Lemma~\ref{lem:[SQ]} and the lattice properties of Haagerup--Schultz projections,
if
\[
Y=\bigcup_{j=1}^n\overline{B_{r(j)}}(\lambda(j))
\]
is a union of closed disks in $\Cpx$, then
\begin{multline*}
P(SAS^{-1},Y)=\bigvee_{j=1}^nP(SAS^{-1},\overline{B_{r(j)}(\lambda(j))})
=\bigvee_{j=1}^n[SP(A,\overline{B_{r(j)}(\lambda(j))})] \\
=\left[S\left(\bigvee_{j=1}^nP(A,\overline{B_{r(j)}(\lambda(j))})\right)\right]
=[SP(A,Y)].
\end{multline*}

Suppose $K$ is a compact subset of $\Cpx$.
Then by a standard compactness argument, there is a sequence $Y_1\supseteq Y_2\supseteq\cdots$ such that
each $Y_j$ is a union of finitely many closed disks in $\Cpx$ and $K=\bigcap_{j=1}^\infty Y_j$.
Then by the lattice properties of Haagerup--Schultz projections, we have
\begin{multline*}
P(SAS^{-1},K)=\bigwedge_{j=1}^\infty P(SAS^{-1},Y_j)
=\bigwedge_{j=1}^\infty [SP(A,Y_j)] \\
=\left[S\left(\bigwedge_{j=1}^\infty P(A,Y_j)\right)\right]
=[SP(A,K)].
\end{multline*}

Suppose $X$ is an arbitrary Borel subset of $\Cpx$.
Since $\nu_A$ is regular, there is an increasing sequence $K_1\subseteq K_2\subseteq\cdots\subseteq X$ of compact
subsets of $X$ such that
$\nu_A(X)=\nu_A(\bigcup_{j=1}^\infty K_j)$.
Thus, (see Lemma~\ref{lem:PTBsymdif}) we have
\begin{multline*}
P(SAS^{-1},X)=P(SAS^{-1},\bigcup_{j=1}^\infty K_j)
=\bigvee_{j=1}^\infty P(SAS^{-1},K_j) \\
=\bigvee_{j=1}^\infty [SP(A,K_j)]
=\left[S\left(\bigvee_{j=1}^\infty P(A,K_j)\right)\right]
=[SP(A,X)].
\end{multline*}
\end{proof}

We now extend the result to the case of Brown measures and joint Haagerup--Schultz projections
for commuting families of operators.

\begin{thm}\label{thm:sim}
Let $I$ be a set and 
suppose $T=(T_i)_{i\in I}$ is a family of commuting operators in $\Mcal$.
Suppose $S\in\Mcal$ is invertible.
Then writing $STS^{-1}=(ST_iS^{-1})_{i\in I}$, we have
\begin{enumerate}[(a)]
\item\label{it:nuSTS}
$\nu_{STS{^-1}}=\nu_T$
\item\label{it:PSTS}
for every Borel set $X\subseteq\Cpx^I$, 
$P(STS^{-1}:X)=[SP(T:X)]$.
\end{enumerate}
\end{thm}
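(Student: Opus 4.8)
The plan is to establish part~\eqref{it:PSTS} first, by passing from coordinate-finite rectangles up to arbitrary Borel sets along the very construction of the joint Haagerup--Schultz projection, and then to read off part~\eqref{it:nuSTS} by taking traces. The key preliminary remarks are that similarity preserves spectra, so $\sigma(ST_iS^{-1})=\sigma(T_i)$ for each $i$ and hence the product space $Z=\prod_{i\in I}\sigma(T_i)$, together with its algebra $\Afr_0$ of (finite unions of) coordinate-finite rectangles, is literally the same for $T$ and for $STS^{-1}$; and that the operators $ST_iS^{-1}$ pairwise commute. For a coordinate-finite rectangle $R=\prod_{i\in I}B_i$, Definition~\ref{def:PTR} gives $P(STS^{-1}:R)=\bigwedge_{i\in I}P(ST_iS^{-1},B_i)$; applying Theorem~\ref{thm:simHS}\eqref{it:PSAS} to each single operator $T_i$ yields $P(ST_iS^{-1},B_i)=[SP(T_i,B_i)]$, and then Lemma~\ref{lem:[SQ]} (valid since $S$ is invertible) lets me pull out the meet:
\[
P(STS^{-1}:R)=\bigwedge_{i\in I}[SP(T_i,B_i)]=\Big[S\,\bigwedge_{i\in I}P(T_i,B_i)\Big]=[SP(T:R)].
\]

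Next I extend this to all of $\Afr_0$: writing $X\in\Afr_0$ as a finite union of coordinate-finite rectangles $R^{(j)}$ and invoking Definition~\ref{def:PTX} (meaningful by Theorem~\ref{thm:ARS}) together with Lemma~\ref{lem:[SQ]} once more,
\[
P(STS^{-1}:X)=\bigvee_j P(STS^{-1}:R^{(j)})=\bigvee_j[SP(T:R^{(j)})]=\Big[S\,\bigvee_j P(T:R^{(j)})\Big]=[SP(T:X)].
\]
For an arbitrary Borel set $X\subseteq\Cpx^I$ I may replace $X$ by $X\cap Z$ without changing either side (Definition~\ref{def:PTBorel}), and then use the explicit formula of Proposition~\ref{prop:PtTE},
\[
P(T:X)=\bigwedge\Big\{\,\bigvee_{j=1}^\infty P(T:A_j)\ \Big|\ A_j\in\Afr_0,\ X\subseteq\bigcup_{j=1}^\infty A_j\,\Big\}.
\]
Applying Lemma~\ref{lem:[SQ]} to the outer meet (indexed by the admissible sequences $(A_j)$) and then to each inner join, and using the $\Afr_0$-case just proved, gives
\[
[SP(T:X)]=\bigwedge_{(A_j)}\bigvee_{j=1}^\infty[SP(T:A_j)]=\bigwedge_{(A_j)}\bigvee_{j=1}^\infty P(STS^{-1}:A_j),
\]
and the right-hand side is precisely the Proposition~\ref{prop:PtTE} formula for $P(STS^{-1}:X)$, since $\Afr_0$ is common to both tuples. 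Hence $P(STS^{-1}:X)=[SP(T:X)]$ for every Borel $X$, which is~\eqref{it:PSTS}.

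Part~\eqref{it:nuSTS} then follows at once: by Theorem~\ref{thm:main.nu.P}\eqref{it:tauPTX}, part~\eqref{it:PSTS}, and Lemma~\ref{lem:[SP]},
\[
\nu_{STS^{-1}}(X)=\tau\big(P(STS^{-1}:X)\big)=\tau\big([SP(T:X)]\big)=\tau\big(P(T:X)\big)=\nu_T(X)
\]
for all Borel $X\subseteq\Cpx^I$. I do not expect a genuine obstacle here: the analytic content is carried entirely by the single-operator result Theorem~\ref{thm:simHS} and by the lattice identities for $Q\mapsto[SQ]$ in Lemma~\ref{lem:[SQ]}, and the extension to commuting families is formal once one checks that the ambient space $Z$, the algebra $\Afr_0$, and every meet and join used in building $P(T:\cdot)$ are compatible with conjugation by $S$. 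The only step needing a little care is the last one above; should one prefer to avoid the precise form of Proposition~\ref{prop:PtTE}, one can instead prove~\eqref{it:nuSTS} first on coordinate-finite rectangles (hence everywhere, by regularity of $\nu_T$ and $\nu_{STS^{-1}}$) from the rectangle case of~\eqref{it:PSTS} and Lemma~\ref{lem:[SP]}, then choose via Lemma~\ref{lem:updown} a decreasing sequence $E_n\in\Afr_0^\infty$ with $X\subseteq E_n$ and $P(T:X)=\bigwedge_n P(T:E_n)$, obtain $P(STS^{-1}:X)\le\bigwedge_n P(STS^{-1}:E_n)=[SP(T:X)]$, and conclude equality by comparing traces.
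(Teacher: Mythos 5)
Your proof is correct and follows essentially the same route as the paper's: establish the identity $P(STS^{-1}:R)=[SP(T:R)]$ for coordinate-finite rectangles via Theorem~\ref{thm:simHS} and Lemma~\ref{lem:[SQ]}, then propagate it through the meet-of-joins formula of Proposition~\ref{prop:PtTE}, with part~\eqref{it:nuSTS} obtained by taking traces via Lemma~\ref{lem:[SP]}. The only cosmetic difference is the order: you prove~\eqref{it:PSTS} for all Borel sets before deducing~\eqref{it:nuSTS}, whereas the paper gets~\eqref{it:nuSTS} on rectangles (hence everywhere by regularity) right after the rectangle case of~\eqref{it:PSTS} and only then treats general Borel sets; your explicit intermediate passage through $\Afr_0$ is a mild clarification of what the paper leaves implicit.
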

\begin{proof}
Let $Z=\prod_{i\in I}\sigma(T_i)$.
Since $\sigma(T_i)=\sigma(ST_iS^{-1})$, it will suffice to show~\eqref{it:PSTS} only for Borel sets $X\subseteq Z$.
To show~\eqref{it:nuSTS}, it will suffice to show that $\nu_T$ and $\nu_{STS^{-1}}$ agree on Borel subsets of $Z$.

First suppose $X\subseteq Z$ is a coordinate-finite rectangle, $X=\prod_{i\in I}B_i$.
Then using Lemma~\ref{lem:[SQ]} and Theorem~\ref{thm:simHS}, we have
\begin{multline*}
P(STS^{-1}:X)=\bigwedge_{i\in I}P(ST_iS^{-1},B_i)=\bigwedge_{i\in I}[SP(T_i,B_i)] \\
=\left[S\left(\bigwedge_{i\in I}P(T_i,B_i)\right)\right]=[SP(T:X)].
\end{multline*}
In particular, by Lemma~\ref{lem:[SP]}, we have
\[
\nu_{STS^{-1}}(X)=\tau(P(STS^{-1}:X))=\tau([SP(T:X)])=\tau(P(T:X))=\nu_T(X).
\]
Since $\nu_{STS}^{-1}$ and $\nu_T$ agree on all coordinate-finite rectangles, they agree on all Borel subsets of $Z$.
So~\eqref{it:nuSTS} holds.

Now let $X\subseteq Z$ be an arbitrary Borel set.
By Definition~\ref{def:PTBorel} and Proposition~\ref{prop:PtTE}, we have
\[
P(STS^{-1}:X)=\bigwedge\left\{\bigvee_{j=1}^\infty P(STS^{-1}:R_j)\;\bigg|\;X\subseteq\bigcup_{j=1}^\infty R_j\right\},
\]
where the $R_j$ are required to be coordinate-finite rectangles in $Z$.
Thus, by the case just shown, we have
\begin{align*}
P(STS^{-1}:X)
&=\bigwedge\left\{\bigvee_{j=1}^\infty [SP(T:R_j)]\;\bigg|\;X\subseteq\bigcup_{j=1}^\infty R_j\right\} \displaybreak[2] \\
&=\bigwedge\left\{\left[S\left(\bigvee_{j=1}^\infty P(T:R_j)\right)\right]\;\bigg|\;X\subseteq\bigcup_{j=1}^\infty R_j\right\} \displaybreak[2] \\
&=\left[S\left(\bigwedge\left\{\bigvee_{j=1}^\infty P(T:R_j)\;\bigg|\;X\subseteq\bigcup_{j=1}^\infty R_j\right\}\right)\right]
=[SP(T:X)].
\end{align*}
\end{proof}

\appendix

\section{Examples of Projections}
\label{app:projs}

This appendix provides examples of projections in $B(\HEu)$, for $\HEu$
separable, infinite dimensional Hilbert space,
showing that the conclusions of Lemmas~\ref{lem:PwQ} and~\ref{lem:pvqwedger}
can fail in $B(\HEu)$.

\begin{example}\label{ex:PwQ}
Let $(e_n)_{n=1}^\infty$ be an orthonormal basis for $\HEu$.
Let $P_n$ be the projection onto $e_n^\perp$ and let $Q_n$ be the projection onto $(\frac1ne_1+e_n)^\perp$.
Both sequences $(P_n)_{n=1}^\infty$ and $(Q_n)_{n=1}^\infty$ of codimension-one projections converge in strong operator topology
to $1$.
But $P_n\wedge Q_n$ is the projection onto $\{e_1,e_n\}^\perp$, and converges in strong operator topology to the projection onto $e_1^\perp$.
This shows that the conclusions of Lemma~\ref{lem:PwQ} may fail without existence of a trace.
\end{example}

\begin{example}\label{ex:pvqwedger}
In order to show that the conclusions of Lemma~\ref{lem:pvqwedger} may fail without existence of a trace,
we will construct projections $P$, $Q$ and $R$ in $B(\HEu)$ so that $P\le R$, $Q\wedge R=0$ but $(P\vee Q)\wedge R\ne P$.
Equivalently, we find closed subspaces $\EEu$, $\FEu$ and $\GEu$ of $\HEu$ so that
\begin{equation}\label{eq:EFG}
\EEu\subseteq\GEu,\quad\text{and}\quad\FEu\cap\GEu=\{0\},\quad\text{but}\quad\overline{(\EEu+\FEu)}\cap\GEu\ne\EEu.
\end{equation}
Consider
\begin{equation}\label{eq:HC2}
\HEu=\bigoplus_{n=0}^\infty\Cpx^2,
\end{equation}
an orthogonal direct sum of infinitely many two-dimensional subspaces.
For each $n\ge0$, let $\{e_n,f_n\}$ be a basis for the $n$-th two-dimensional subspace in~\eqref{eq:HC2}
so that $\|e_n\|=\|f_n\|=1$ and so that $\langle e_n,f_n\rangle=1+o(n^{-2})$ as $n\to\infty$.
Let
\begin{align*}
\EEu&=\clspan\{e_0+ne_n\mid n\ge1\}, \\
\FEu&=\clspan\{f_n\mid n\ge0\}, \\
\GEu&=\clspan\{e_n\mid n\ge0\}.
\end{align*}
Then we clearly have $\EEu\subseteq\GEu$.
We have $\FEu\cap\GEu=\{0\}$, because if $y\in\FEu\cap\GEu$, then letting $P_n$ be the projection from $\HEu$
onto the $n$-th two-dimensional subspace in~\eqref{eq:HC2}, we must have $P_n(y)=0$ for every $n\ge0$.
This implies $y=0$.

Let
\[
x=e_0-\sum_{n=1}^\infty \frac1ne_n.
\]
Then $x\perp\EEu$.
But $\langle x,e_0\rangle=1$, so $e_0\not\in\EEu$.
However, since
\[
\|e_n-f_n\|=\big(2-2\RealPart\langle e_n,f_n\rangle\big)^{1/2}=o(n^{-1})
\]
as $n\to\infty$, we have
\[
e_0=\lim_{n\to\infty}(e_0+ne_n)-nf_n\in\overline{(\EEu+\FEu)}.
\]
Thus, $e_0\in\overline{(\EEu+\FEu)}\cap\GEu$.
This proves the last assertion of~\eqref{eq:EFG}.
\end{example}

\end{document}